\newcommand{\lb}{\left(}
\newcommand{\rb}{\right)}
\newcommand{\lcb}{\left\{}
\newcommand{\rcb}{\right\}}
\newcommand{\lsb}{\left[}
\newcommand{\rsb}{\right]}
\newcommand\lv{\left\vert}
\newcommand\rv{\right\vert}
\newcommand\rnorm{\right\Vert}
\newcommand\lnorm{\left\Vert}
\let\originalleft\left
\let\originalright\right
\renewcommand{\left}{\mathopen{}\mathclose\bgroup\originalleft}
\renewcommand{\right}{\aftergroup\egroup\originalright}
\newcommand{\Var}{\operatorname{Var}}
\newcommand\as{\operatorname{a. s.}}
\newcommand{\tr}{\operatorname{tr}}
\newcommand{\cond}{\stackrel{\mathcal{D}}{\to}}
\newcommand{\conp}{\stackrel{\mathbb{P}}{\to}}
\newcommand{\conas}{\stackrel{\as}{\to}}
\newcommand{\diag}{\operatorname{diag}}
\newcommand{\spi}{\operatorname{spi}}
\newcommand{\Laplace}{\operatorname{Laplace}}
\newcommand{\bfA}{\mathbf{A}}
\newcommand{\bfB}{\mathbf{B}}
\newcommand{\bfF}{\mathbf{F}}
\newcommand{\bfG}{\mathbf{G}}
\newcommand{\bfH}{\mathbf{H}}
\newcommand{\bfI}{\mathbf{I}}
\newcommand{\bfP}{\mathbf{P}}
\newcommand{\bfR}{\mathbf{R}}
\newcommand{\bfS}{\mathbf{S}}
\newcommand{\bfU}{\mathbf{U}}
\newcommand{\bfZ}{\mathbf{Z}}
\newcommand{\bfx}{\mathbf{x}}
\newcommand{\bfy}{\mathbf{y}}
\newcommand{\bfz}{\mathbf{z}}
\newcommand\CB{\mathcal{B}}
\newcommand\CC{\mathcal{C}}
\newcommand\CF{\mathcal{F}}
\newcommand{\E}{\mathbb{E}}
\newcommand{\N}{\mathbb{N}}
\newcommand{\R}{\mathbb{R}}
\newcommand{\PR}{\mathbb{P}}
\newcommand{\bfSigma}{\mathbf{\Sigma}}
\newcommand{\bfLambda}{\mathbf{\Lambda}}
\newtheorem{theorem}{Theorem}[section]
\newtheorem{lemma}{Lemma}[section]
\newtheorem{corollary}{Corollary}[section]
\newtheorem{remark}{Remark}[section]
\numberwithin{equation}{section}
\begin{document}
\title{A New Two-Sample Test for Covariance Matrices in High Dimensions: U-Statistics Meet Leading Eigenvalues}
\date{\today}
\author[1]{Thomas Lam}
\author[2]{Nina Dörnemann}
\author[1]{Holger Dette}
\affil[1]{Department of Mathematics, Ruhr University Bochum, Germany}
\affil[2]{Department of Mathematics, Aarhus University, Denmark}

\maketitle
 \begin{abstract}
    We propose a two-sample test for covariance matrices in the high-dimensional regime, where the dimension diverges proportionally to the sample size. Our hybrid test combines a Frobenius-norm-based statistic as considered in \cite{li_and_chen_2012} with the leading eigenvalue approach proposed in  \cite{zhang2022asymptotic}, making it sensitive to both dense and sparse alternatives. The two statistics are combined via Fisher's method, leveraging our key theoretical result: a joint central limit theorem showing the asymptotic independence of the leading eigenvalues of the sample covariance matrix and an estimator of the Frobenius norm of the difference of the two population covariance matrices, under suitable signal conditions. 
    The level of the test can be controlled asymptotically, and we show consistency against certain types of both sparse and dense alternatives. A comprehensive numerical study confirms the favorable performance of our method compared to existing approaches.
  \end{abstract}

  AMS subject classification:  15A18, 60F05, 62H15 

Keywords and phrases: Central limit theorems, covariance matrices, high-dimensional statistics, hypothesis testing.

\section{Introduction}

Testing the equality of covariance matrices for two given samples is a fundamental problem in multivariate statistics as the assumption of equal covariance matrices appears in a range of problems, including two-sample tests for mean vectors \citep{anderson2003, bai1996effect}. 
Traditional methods, however, do not meet the challenges of modern settings where the dimension $p$ is comparable to the sample size $n$ \citep{jiangyang2013}. The wide availability of data sets with a huge amount of parameters has spurred the development of new testing procedures that are specifically tailored to the high-dimensional framework. For instance, many authors have investigated adaptations of likelihood ratio tests to the high dimensional setup \citep{dettedoernemann2020, doernemann2022, jiangyang2013, qi_et_al_2019}. 
While likelihood ratio-type tests are known to be very powerful, they are not applicable if the dimension exceeds the sample size, as they rely on the log-determinant of the sample covariance matrix. Several alternative approaches have been proposed that remain valid in such cases, see, e.g., \cite{li_and_chen_2012} on a test based on the Frobenius distance, \cite{ding2024two} on a random-matrix theory approach for ultra-high dimensional data, \cite{SRIVASTAVA2014289} on a two-sample test for normally distributed data. Furthermore, tests for sparse alternatives, where the number of nonzero elements of the difference of two covariance matrices is small, have been proposed by \cite{CaiLiuXia2013, YangPan2017, ZhengLinGuoYin2020}. 

Moreover, \cite{zhang2022asymptotic} have developed a test based on linear spectral statistics and the individual leading eigenvalues of the sample covariance matrices.  
As it is well-known in random matrix theory, if the dimension is of the same order as the sample size, a signal in the population eigenvalue is only carried by its sample version if the signal is above a critical threshold \citep{bbp}.
Thus, to distinguish noise from signal in the high-dimensional setup, the population eigenvalues are assumed to be above the critical phase transition threshold. In this case, their sample counterpart satisfy a central limit theorem \citep{paul2007asymptotics, bai2008central, li_et_al_2020} which facilitates hypothesis testing \citep{zhang2022asymptotic}. 
\medskip

\noindent\textbf{Problem formulation.}
A comprehensive list of definitions and assumptions is provided in Section \ref{sec_2}, but the main ideas can already be discussed here with minimal notation.
Let $\bfx_1^{(i)}, \ldots, \bfx_n^{(i)}$ be two independent samples consisting of i.i.d. $p$-dimensional random vectors  with covariance matrix $\bfSigma_n^{(i)} \in \R^{p\times p}$, $i\in \{1,2\}.$ 
We are interested in testing 
\begin{align} \label{eq_hypothesis}
    \mathsf{H}_0: \bfSigma_n^{(1)} = \bfSigma_n^{(2)} 
    \textnormal{ vs. }
    \mathsf{H}_1: \bfSigma_n^{(1)} \neq \bfSigma_n^{(2)}
\end{align}
in the setting where $p$ is asymptotically proportional to $n.$ 
Unlike in the traditional low-dimensional setting, a key characteristic of high-dimensional data is the fundamental distinction between dense and sparse signals. Many existing methods are optimized for one of these regimes and may fail to detect signals from the other. In practice, however, the structure of the signal is unknown, and  typically tests that are sensitive to diverse types of deviations from the null hypothesis are desirable. 
In this work, we address this challenge and concentrate on the case where the dimension is asymptotically proportional to the sample size.  

An initial step in this direction has already been taken by \cite{zhang2022asymptotic}, who have proposed a test statistic that combines differences in linear spectral statistics of the two sample covariance matrices with the differences in their leading eigenvalues.
More precisely, letting $\bfS_n^{(i)}$ denote the sample covariance matrix of the $i$th sample and $\lambda_1(\bfS_n^{(i)})$ the leading sample eigenvalue, their test statistic takes on the form
\begin{align*}
 \frac{1}{\hat\zeta_2^2} \lb  \tr (\bfS_n^{(1)}) +  \tr (\bfS_n^{(1)^2}) - \tr (\bfS_n^{(2)}) - \tr (\bfS_n^{(2)^2})  \rb ^2 +  \frac{n}{\hat\zeta_1^2} \lb  \lambda_1(\bfS_n^{(1)}) - \lambda_1(\bfS_n^{(2)}) \rb^2 :=  W_{1,n} + W_{2,n}  ,
\end{align*}
where $\hat\zeta_1$ and $\hat\zeta_2$ are estimators specified in the aforementioned work. 
Their approach has aimed to capture two types of deviations: the first part $W_{1,n}$ is sensitive to dense changes that affect a large portion of the spectrum, while the second part $W_{2,n}$ targets changes in the leading eigenvalues. 
However, as the authors themselves note, $W_{2,n}$ is not a consistent estimator for a matrix norm $\| \bfSigma_n^{(1)} - \bfSigma_n^{(2)}\|$, and thus the test may fail to detect a broad class of dense alternatives. 
\medskip

\noindent\textbf{Main contributions.}
In this work, we propose a hybrid test which takes advantage from both the Frobenius-norm approach of \cite{li_and_chen_2012} and the leading eigenvalue approach in \cite{zhang2022asymptotic}, and is thus sensitive to both sparse and dense alternatives. We combine the two detectors via Fisher's combination test 
\citep{littell1971asymptotic, fisher1950statistical}. This rests on our main theoretical result saying that the two test statistics are asymptotically independent. 

Our main contributions towards developing a combined test for \eqref{eq_hypothesis} can be summarized as follows:

\begin{enumerate}
    \item We show that the leading sample eigenvalues and the $U-$statistics for the Frobenius norm satisfy a joint central limit theorem, supposing that the leading population eigenvalues are above the critical detection threshold. 
    In particular, these two components are asymptotically independent, an important question that has remained unresolved since \cite{zhang2022asymptotic}. 
    \item We estimate all unknown parameters in the above CLT and propose a Fisher-combined test. This test keeps its nominal  level asymptotically. Moreover, as it rests on the Frobenius norm and the leading eigenvalues, the proposed test is consistent for a broad class of both dense and sparse alternatives.     
    \item We generalize the test to include multiple leading eigenvalues, and provide the corresponding statistical guarantees under $\mathsf{H}_0$ and $\mathsf{H}_1.$ 
    \item The theoretical guarantees are illustrated by comprehensive numerical experiments, covering different data-generating distributions and alternative scenarios. Our tests demonstrate favorable performance compared to existing methods.
\end{enumerate}

\noindent \textbf{Outline.} The remaining part of this paper is structured as follows. 
The new tests and their theoretical guarantees are presented in Section \ref{sec_2}. In Section \ref{sec_sim}, we investigate the finite-sample performance of our tests and compare them to existing methods. The proofs of the main results are placed in Section \ref{sec_proofs}, while several auxiliary results and corresponding proofs are stated in Appendix \ref{sec_appendix}. An overview of the main steps in the proofs can be found in Section \ref{sec_proof_outline}.
Additional numerical results can be found in Section \ref{sec_appendix_simulation}.

\newpage

\section{Testing the equality of spiked covariance matrices} \label{sec_2}

We define the empirical covariance matrix of the sample $\bfx_1^{(i)}, \ldots, \bfx_n^{(i)}$ by 
\begin{align*}
    \bfS_n^{(i)} &= \frac{1}{n} \sum_{j=1}^n \lb \bfx_j^{(i)} - \overline{\bfx}^{(i)} \rb \lb \bfx_j^{(i)} - \overline{\bfx}^{(i)}  \rb^\top ,
\end{align*}
where
    \begin{align*}
        \overline{\bfx}^{(i)} &= \frac{1}{n} \sum_{j=1}^n \bfx_j^{(i)}
    \end{align*}
denotes the sample mean. In this work, the eigenvalues of $\bfS_n^{(i)}$ and its population counterpart $\bfSigma_n^{(i)}$ will be of major interest. To this end,  we denote  by $\lambda_1(\bfA) \geq \ldots \geq \lambda_p (\bfA)$ the ordered eigenvalues of any symmetric $p\times p$ matrix $\bfA$.

\subsection{Methodology}
To detect a large class of alternatives for \eqref{eq_hypothesis} in the spiked covariance model, we propose a hybrid test statistic whose main components will be asymptotically independent. To simplify notation, we first propose a statistic that takes into account only the largest eigenvalue of $\bfS_n^{(1)}$ and $\bfS_n^{(2)}$, respectively. Later, we also present a generalization that incorporates several leading eigenvalues. 
\medskip

\textbf{Frobenius norm.} The first statistic measures the distance between $\bfSigma_n^{(1)}$ and $\bfSigma_{n}^{(2)}$ with respect to the Frobenius norm. In high dimensions, the estimation of 
\begin{align*}
    \tr \lb \bfSigma_n^{(1)} - \bfSigma_n^{(2)} \rb^2 
    = \tr \lb  \bfSigma_n^{(1)} \rb^2 + \tr \lb  \bfSigma_n^{(2)} \rb^2  - 2 \tr \lb  \bfSigma_n^{(1)}\bfSigma_n^{(2)} \rb
\end{align*}
cannot be realized through the corresponding sample version, but requires a sophisticated analysis. 
To this end, we follow \cite{li_and_chen_2012} who have proposed individual estimators based on $U-$statistics for each summand, which are defined as
    \begin{align*}
        B_n ^{(i)} & =  \frac{1}{n\lb n-1\rb}\sum_{j\neq k}^{n} \lb \bfx_j^{(i)^\top} \bfx_k^{(i)}\rb^2-\frac{2}{n(n-1)(n-2)}\sum_{j,k,\ell}^{\star}\bfx_j^{(i)^\top}\bfx_k^{(i)}\bfx_k^{(i)^\top}\bfx_{\ell}^{(i)}
        \\
        &\phantom{{}={}}+\frac{1}{n(n-1)(n-2)(n-3)}\sum_{j, k , \ell, m}^{\star}\bfx_j^{(i)^\top}\bfx_k^{(i)}\bfx_{\ell}^{(i)^\top}\bfx_m^{(i)}
        \\ 
        C_n & = \frac{1}{n^2}\sum_{j=1}^{n}\sum_{k=1}^{n} \lb \bfx_j^{(1)^\top}\bfx_k^{(2)}\rb^2-\frac{1}{n^2(n-1)}\sum_{j, k}^{\star}\sum_{\ell=1}^{n} \lb\bfx_j^{(1)^\top}\bfx_k^{(2)}\bfx_k^{(2)^\top}\bfx_{\ell}^{(1)}+\bfx_j^{(2)^\top}\bfx_k^{(1)}\bfx_k^{(1)^\top}\bfx_{\ell}^{(2)}\rb
        \\
        &\phantom{{}={}}+\frac{1}{n^2(n-1)(n-2)}\sum_{j, k}^{\star}\sum_{\ell, m}^{\star} \bfx_j^{(1)^\top}\bfx_k^{(2)}\bfx_{\ell}^{(1)^\top}\bfx_{m}^{(2)},
    \end{align*}
    where $\sum\limits^{\star}$ denotes the summation over mutually distinct indices.
Then, we define the statistic
\begin{align*}
    T_{n,1} & = \frac{1}{\hat\sigma_{n,1}} \lb  B_n ^{(1)} + B_n ^{(2)} - 2 C_n \rb  ,
\end{align*}
where $\hat{\sigma}_{n,1}$ is the estimator of the variance $\Var(B_n^{(1)}+ B_n^{(2)}-2C_n)$ under the null hypothesis. A detailed description of this estimator is deferred to Section \ref{sec_proof_consistent_estimators}.

\smallskip

\textbf{Leading eigenvalues.} The second part of the statistic is designed to detect differences in the leading population eigenvalues. 
More precisely, our statistic is build on the difference  $\lb \lambda_1(\bfS_n^{(1)}) - \lambda_1(\bfS_n^{(2)}) \rb$ between the largest sample eigenvalues corresponding to the two samples and thus requires an estimate $\hat\sigma_{n,2}^2$ of the variance $\Var(\sqrt{n}  (\lambda_1(\bfS_n^{(1)}) - \lambda_1(\bfS_n^{(2)})) )$. A detailed description of the estimator $\hat\sigma_{n,2}^2$ is deferred to Section \ref{sec_proof_consistent_estimators}. 
Then, the second part of our statistic is given by 
\begin{align*}
    T_{n,2} & = \frac{\sqrt{n}}{\hat\sigma_{n,2}} \lb \lambda_1(\bfS_n^{(1)}) - \lambda_1(\bfS_n^{(2)}) \rb.
\end{align*}

\smallskip 

\noindent\textbf{Combined test statistic.}
We will show later (see Theorem \ref{thm_asympt_ind}) that the statistics $T_{n,1}$ and $T_{n,2}$ are asymptotically standard normal distributed and independent. Thus, we may use Fisher's combination test \citep{littell1971asymptotic, fisher1950statistical} to aggregate the asymptotically independent $p$-values
\begin{align}\label{eq_def_p_value}
    p_{n,1}= 1- \Phi(T_{n,1}), \quad p_{n,2} 
    =2(1-\Phi\lb \lv T_{n,2} \rv \rb).
\end{align}
Here, $\Phi$ denotes the c.d.f. of the standard normal distribution. 
Then, our Fisher-combined test statistic is given by 
\begin{align*}
    T_{n, FC}=-2\log(p_{n,1}) -2\log(p_{n,2}) .
\end{align*}
 The null hypothesis $\mathsf{H}_{0}$ is rejected whenever 
\begin{align}
    \label{eq_rejection_region}
    T_{n, FC}>q_{1-\alpha},
\end{align}
where  $q_{1-\alpha}$ denotes the $(1-\alpha)$-quantile of the $\chi_4^2$-distribution.

\subsection{Theoretical results}
To formulate our statistical guarantees, we need the following definitions. 
\medskip

\noindent\textbf{Spectral distribution.}
For any symmetric $p\times p$ matrix $\mathbf{A}$, we denote the empirical spectral distribution of $\bfA$ by 
 \begin{align*}
        F^{\bfA}=\frac{1}{p}\sum_{j=1}^{p}\delta_{\lambda_{j}(\bfA)},
    \end{align*}
    where $\delta_{\cdot}$ denotes the Dirac measure. We call the weak limit of $F^{\bfA}$ as $p\to\infty$, if existent, limiting spectral distribution of $\bfA$. 
\medskip

\noindent\textbf{Supercritical eigenvalues.}
As explained in the introduction, differences between the leading eigenvalues of $\bfSigma_n^{(1)}$ and $\bfSigma_n^{(2)}$ are only carried by their sample versions if the largest population eigenvalues are well-separated from the bulk eigenvalues. 
The magnitude of an individual population eigenvalue, compared to the bulk, is generally captured by the derivative of the function
\begin{align} \label{eq_def_psi}
    \psi^{(i)} (\alpha ) & = \alpha + y \alpha \int \frac{t}{\alpha - t} \, dH^{(i)}(t)
    , \quad i=1,2,
\end{align}
where $H^{(i)}$ denotes the limiting spectral distribution of the population covariance matrix $\bfSigma_n^{(i)}$ as $p,n \to \infty$ such that $p/n \to y >0$, and $\alpha$ lies outside the support of $H^{(i)}$.
If, for some $1 \leq j \leq p$, the derivative $\psi^{(i)^\prime} (\lambda_j(\bfSigma_n^{(i)}))>0$, then $\lambda_j(\bfSigma_n^{(i)})$ is called a supercritical eigenvalue, where $\psi^{(i)^\prime}$ is given by
\begin{align}\label{eq_def_derivative_psi}
\psi^{\lb i\rb^\prime} \lb \alpha\rb = 1- y\int\frac{t^2}{\lb \alpha-t\rb^2}\, dH^{\lb i\rb}\lb t\rb.    
\end{align}
In this case, the corresponding sample eigenvalue is detached from the bulk eigenvalues and concentrates around $\psi^{(i)}(\lambda_j(\bfSigma_n^{(i)})) $ (see, for example, \cite{zhang2022asymptotic}). In the context of our two-sample problem, this implies that the signal can be detected.
\\ ~ \medskip \\
\noindent\textbf{Assumptions.}

\begin{enumerate}[label=(A\arabic*)]
    \item \label{ass_p_n}
    As $n \to \infty$, we have $p=p\lb n\rb \to \infty$ and $\frac{p}{n}=y_n \to y \in \lb 0, \infty\rb$.
    \item \label{ass_random_vectors}For $i=1, 2$ and $j=1, \ldots, n$, we assume that the random variables $\bfx_j^{(i)}$ can be represented as 
    \begin{align*}
    \bfx_j^{(i)}=\bfSigma_n^{(i)^{1/2}}\bfz_j^{(i)}+\boldsymbol\mu^{(i)},
    \end{align*}
    where $\bfSigma_n^{(i)^{1/2}}$ is the nonnegative definite square root of the covariance matrix $\bfSigma_n^{(i)}$, $\boldsymbol\mu^{(i)}$ is $p$-dimensional nonrandom vector, and $\bfz_j^{(i)}=\lb z_{1,j}, \ldots, z_{p,j}\rb^\top$ is a $p$-dimensional random vector. Moreover, we assume that the double array
    \begin{align*}
    \lcb z_{j, k}^{(i)} : j=1, \ldots, p, k=1,\ldots ,n\rcb
    \end{align*}
    consists of i.i.d. random variables with 
    \begin{align*}
    \E \lsb z_{1, 1}^{(i)}\rsb = 0, \E \lsb\lb z_{1, 1}^{(i)}\rb^2 \rsb= 1 \quad\text{and}  \quad
    \E\lsb\lb z_{1, 1}^{(i)}\rb^8\rsb < \infty.
    \end{align*}
     \item\label{ass_population_lsd} For $i=1, 2$, $\bfSigma_n^{(i)}$ admits a nonrandom limiting spectral distribution $H^{(i)}$. Also, assume that 
     $\lambda_p(\bfSigma_n^{(i)})>C$ ($i=1,2$) for some constant $C>0$ and all sufficiently large $n.$
     \item \label{ass_leading_spiked_ev}Suppose that the largest eigenvalues $\lambda_1(\bfSigma_n^{(i)})=\alpha_{1}^{(i)}$, $i=1,2$, are supercritical and do not depend on $n$. Furthermore, we assume that $\inf_{n\in\N} \lb  \alpha_1^{(i)} - \lambda_{2}(\bfSigma_n^{(i)}) \rb > 0.$  
   \end{enumerate}

\begin{remark}
    {\rm 
    Assumptions \ref{ass_p_n} -  \ref{ass_leading_spiked_ev} are  standard in random matrix theory; see, for example, \cite{zhang2022asymptotic, li_et_al_2020, bai2004, liu2023clt}. In contrast to \cite{zhang2022asymptotic}, however, we require an $8$th moment assumption rather than a $4$th moment, because our test  statistic also involves estimates of the Frobenius norm of the difference of the two population covariance matrices \citep[see also][]{li_and_chen_2012}. Moreover, we impose the assumption that the smallest eigenvalues of the two covariance matrices are uniformly bounded away from $0$ in \ref{ass_population_lsd}. While this condition is purely technical and could potentially be relaxed, doing so would require additional technical developments beyond the scope of this work. 
    }
\end{remark}
    
\begin{theorem} \label{thm_asympt_ind}
  Suppose that assumptions \ref{ass_p_n} - \ref{ass_leading_spiked_ev} are satisfied and that the null hypothesis in \eqref{eq_hypothesis} holds true. Then, it follows that 
  \begin{align*}
      (T_{n,1}, T_{n,2})^\top  \cond \mathcal{N}_2 (\mathbf{0}_2, \bfI_2), \quad n\to\infty,
  \end{align*}
  where $\bfI_2$ denotes the identity matrix of dimension $2\times 2$.
\end{theorem}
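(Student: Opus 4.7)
The proof splits into a marginal part and a joint part, the latter being the novel content. The marginal convergence $T_{n,1} \cond \mathcal{N}(0,1)$ under $\mathsf{H}_0$ is essentially the CLT of \cite{li_and_chen_2012}, provided that $\hat\sigma_{n,1}$ is consistent (a claim verified separately in Section \ref{sec_proof_consistent_estimators}). The marginal $T_{n,2} \cond \mathcal{N}(0,1)$ follows by applying the CLT for supercritical sample eigenvalues of \cite{paul2007asymptotics, bai2008central, li_et_al_2020} to each of the two independent samples and combining the resulting limits. The heart of Theorem \ref{thm_asympt_ind} is therefore to show that these two asymptotically Gaussian quantities are asymptotically \emph{independent}.

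To prove the joint CLT, I would use the Cram\'er--Wold device: for arbitrary $a, b \in \R$, show
\begin{align*}
a T_{n,1} + b T_{n,2} \cond \mathcal{N}\lb 0, a^2 + b^2 \rb .
\end{align*}
The key analytical step is an asymptotic decomposition that separates each statistic's dependence on the spike from its dependence on the bulk. Let $\mathbf{u}_1^{(i)}$ denote the unit eigenvector of $\bfSigma_n^{(i)}$ corresponding to $\alpha_1^{(i)}$, and split each data vector into its projection onto $\mathbf{u}_1^{(i)}$ and its orthogonal remainder. On the one hand, classical perturbation arguments (see, e.g., \cite{paul2007asymptotics, bai2008central}) give an asymptotic linearization
\begin{align*}
\sqrt n \lb \lambda_1(\bfS_n^{(i)}) - \psi^{(i)}(\alpha_1^{(i)})\rb
=  \psi^{\lb i\rb^\prime}\lb\alpha_1^{(i)}\rb \, \sqrt n \lb \mathbf{u}_1^{(i)^\top} \bfS_n^{(i)} \mathbf{u}_1^{(i)} - \alpha_1^{(i)}\rb + \op ,
\end{align*}
so $T_{n,2}$ is driven, up to negligible error, only by the one-dimensional spike projections $\mathbf{u}_1^{(i)^\top} \mathbf{z}_j^{(i)}$. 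On the other hand, a rank-one perturbation analysis of the U-statistics $B_n^{(i)}$ and $C_n$ should show that the terms involving the spike projections contribute only $\op$ to $T_{n,1}$ after the Frobenius-scale normalization, so $T_{n,1}$ depends asymptotically only on the bulk components of the $\mathbf{z}_j^{(i)}$.

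After these reductions, $T_{n,1}$ and $T_{n,2}$ are governed by disjoint linear functionals of the underlying i.i.d.\ entries $z_{j,k}^{(i)}$. For Gaussian entries the spike and bulk projections are exactly independent by orthogonality, and the claimed joint Gaussian limit follows immediately from the marginal CLTs. The main obstacle — and, I expect, the bulk of the technical work — is the general non-Gaussian case allowed by \ref{ass_random_vectors}, where orthogonal projections of a vector with i.i.d.\ entries are merely uncorrelated, not independent. I would overcome this by computing the joint characteristic function (equivalently, the joint cumulants) of the linearized statistics and verifying that every cross-cumulant vanishes asymptotically. The mixed fourth-order moments that appear in this calculation, such as $\E\lsb (\mathbf{u}_1^{(i)^\top} \mathbf{z}_j^{(i)})^2 (\mathbf{z}_j^{(i)^\top} \mathbf{z}_k^{(i)})^2\rsb$, are precisely of the type for which the strengthened $8$-th moment assumption in \ref{ass_random_vectors} is needed to obtain the required asymptotic factorization. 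Combined with the consistency of $\hat\sigma_{n,1}$ and $\hat\sigma_{n,2}$, this yields the claimed bivariate standard Gaussian limit.
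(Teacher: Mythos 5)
Your overall strategy (Cram\'er--Wold device plus asymptotic independence) agrees with the paper, but two of your central reductions do not hold and would have to be replaced.

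\textbf{The linearization of the spike fluctuation is wrong.} The correct expansion (Paul 2007, Bai--Yao 2008, and eq.~(S.3.9) in the supplement to Zhang et al.~2022, which the paper reuses) is not
\(
\sqrt n\bigl(\lambda_1(\bfS_n^{(i)})-\psi^{(i)}(\alpha_1^{(i)})\bigr)
=\psi^{(i)'}(\alpha_1^{(i)})\,\sqrt n\bigl(\mathbf u_1^{(i)\top}\bfS_n^{(i)}\mathbf u_1^{(i)}-\alpha_1^{(i)}\bigr)+\op,
\)
but rather
\(
\sqrt n\bigl(\lambda_1(\bfS_n^{(i)})-\theta_{1,n}^{(i)}\bigr)=Q_{1,n}^{(i)}/(\theta_{1,n}^{(i)}R_{1,n}^{(i)}),
\)
where $Q_{1,n}^{(i)}=\sqrt n\bigl(\tfrac1n\mathbf u_{1,n}^{(i)\top}\bfZ_n^{(i)}\{\bfA_n^{(i)}(\theta_{1,n}^{(i)})\}^{-1}\mathbf\Delta_n\bfZ_n^{(i)\top}\mathbf u_{1,n}^{(i)}-\theta_{1,n}^{(i)}/\alpha_1^{(i)}\bigr)$
is a quadratic form in the spike projection $\bfZ_n^{(i)\top}\mathbf u_{1,n}^{(i)}$ whose kernel is the random resolvent $\{\bfA_n^{(i)}\}^{-1}$ of the bulk. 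The resolvent is not close to the identity (on the good event its norm is bounded but of order $1$, not $1+o(1)$), so replacing it by $\bfI_n$ changes the limit. More importantly for your argument, this resolvent is built from the bulk columns $\bfU_{n,2}^{(i)\top}\bfz_k^{(i)}$.

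\textbf{Consequently, the "disjoint functionals" claim fails.} Even after all the reductions, $T_{n,2}$ does \emph{not} become a function of the spike projections alone: its martingale increments $\E_k[\,b_n^{(1)}\delta_{n,k}^{(1)}\,]$ involve $\bfz_k^{(1)\top}\bfP_{n,k}^{(1)}\bfz_k^{(1)}$ with the random kernel $\bfP_{n,k}^{(1)}$ depending on the bulk resolvent, while $T_{n,1}$'s increments $M_{n,k}$ are centered quadratic forms in the same vector $\bfz_k^{(1)}$ with kernels built from $\bfG_{n,k-1}^{(1)}$ and $\bfSigma_n^{(1)2}-\bfH_n^{(1,2)}$. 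For non-Gaussian data, these two quadratic forms in the same column $\bfz_k^{(1)}$ are genuinely dependent; the asymptotic independence is not exact disjointness but asymptotic decorrelation. Your proposed cumulant route could in principle establish this, but the paper handles it differently: it writes $\breve T_{n,1}$ and the spiked quadratic forms as martingale difference sequences with respect to the \emph{same} column-wise filtration, applies the martingale CLT (Corollary 3.1 of Hall--Heyde), and reduces the independence to showing that the conditional cross-covariances $\sum_k\E_{k-1}[M_{n,k}W_{n,k}]$ vanish in probability (Lemma \ref{lem_cross_terms_op1}, proved via Von Neumann's trace inequality, operator-norm control of $\bfG_{n,k-1}^{(i)}$ using Yin--Bai--Krishnaiah, and trace bounds on $\E_k[\dot{\bfP}_{n,k}^{(1)}]$). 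That localized covariance argument is more tractable than a full cross-cumulant expansion, and it is also where the $8$th-moment assumption is actually used. To repair your proposal you would at least need to replace the naive linearization by the resolvent-based quadratic form and drop the disjointness claim in favour of an explicit cross-moment/cross-covariance estimate.
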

\begin{remark}
    {\rm The asymptotic marginal distributions of $T_{n,1}$ and $T_{n,2}$ have been established in \cite{li_and_chen_2012} and \cite{zhang2022asymptotic}, respectively. 
    Theorem \ref{thm_asympt_ind} pioneers in establishing the joint distribution of the random vector $(T_{n,1}, T_{n,2})^\top $.
    }
\end{remark}
\begin{theorem}\label{thm_fisher_combi_asympt}
    Suppose that assumptions \ref{ass_p_n} - \ref{ass_leading_spiked_ev} are satisfied and that the null hypothesis in \eqref{eq_hypothesis}   holds true. Then, it follows that 
    \begin{align*}
        T_{n, FC} \cond \chi_4^2, \quad n\to \infty ~.
    \end{align*}
\end{theorem}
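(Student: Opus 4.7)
The plan is to derive Theorem \ref{thm_fisher_combi_asympt} as a direct consequence of the joint CLT established in Theorem \ref{thm_asympt_ind} via the continuous mapping theorem, combined with two standard facts: the probability integral transform, and the fact that the sum of two independent $\chi_2^2$ random variables is $\chi_4^2$-distributed.

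First, I would fix an i.i.d.\ pair $(Z_1, Z_2)$ of $\mathcal{N}(0,1)$ random variables so that, by Theorem \ref{thm_asympt_ind}, $(T_{n,1}, T_{n,2})^\top \cond (Z_1, Z_2)^\top$ as $n \to \infty$. Next, I would observe that the map
\begin{align*}
    g : \R^2 \to \R, \quad (t_1, t_2) \mapsto -2\log\lb 1 - \Phi(t_1)\rb -2\log\lb 2(1 - \Phi(|t_2|))\rb
\end{align*}
is continuous on all of $\R^2$: since $\Phi$ is a continuous c.d.f., $1 - \Phi(t_1) \in (0,1)$ and $2(1 - \Phi(|t_2|)) \in (0,1]$ for every $(t_1, t_2) \in \R^2$, so both logarithms are well-defined and continuous. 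Recalling the definitions \eqref{eq_def_p_value}, we have $T_{n, FC} = g(T_{n,1}, T_{n,2})$, hence the continuous mapping theorem yields $T_{n, FC} \cond g(Z_1, Z_2)$.

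It remains to identify the distribution of $g(Z_1, Z_2)$. For the first summand, the probability integral transform gives $1 - \Phi(Z_1) \sim \operatorname{Uniform}(0,1)$, and therefore $-2\log(1-\Phi(Z_1)) \sim \chi_2^2$. For the second summand, I would verify directly that $V := 2(1-\Phi(|Z_2|)) \sim \operatorname{Uniform}(0,1)$: for $v \in (0,1)$,
\begin{align*}
    \PR(V \leq v) = \PR\lb |Z_2| \geq \Phi^{-1}(1- v/2)\rb = 2 \lb 1 - \Phi(\Phi^{-1}(1 - v/2))\rb = v,
\end{align*}
so $-2\log V \sim \chi_2^2$ as well. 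Independence of $Z_1$ and $Z_2$ transfers to independence of the two $\chi_2^2$ summands, and the sum of two independent $\chi_2^2$ variables is $\chi_4^2$-distributed. Combining these observations proves the claim.

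The proof is essentially bookkeeping once Theorem \ref{thm_asympt_ind} is in hand; there is no genuine obstacle, but the one point worth being explicit about is the joint (as opposed to marginal) convergence $(T_{n,1}, T_{n,2}) \cond (Z_1, Z_2)$, which is precisely the content of Theorem \ref{thm_asympt_ind} and the reason asymptotic independence, rather than only asymptotic normality of the marginals, is required in order to apply Fisher's combination rule.
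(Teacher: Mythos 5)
Your proof is correct and complete. The paper does not present an explicit proof of Theorem \ref{thm_fisher_combi_asympt}; it is treated as an immediate consequence of Theorem \ref{thm_asympt_ind} together with the standard Fisher-combination argument, which is exactly what you supply. Your argument handles the two minor points that require care: the continuity of the map $g$ on all of $\R^2$ (correctly noting that $1-\Phi(t_1) \in (0,1)$ and $2(1-\Phi(|t_2|)) \in (0,1]$, so both logarithms are finite), and the fact that the second $p$-value is two-sided while the first is one-sided, both of which nevertheless yield $\operatorname{Uniform}(0,1)$ limits by the probability integral transform. Your closing remark correctly identifies the crux: it is the \emph{joint} convergence and asymptotic independence from Theorem \ref{thm_asympt_ind}, not merely the two marginal normal limits, that licenses the $\chi_4^2$ conclusion — without it, the continuous mapping theorem applied to $g$ would not determine the limiting law.
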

Next, we formulate statistical guarantees for the proposed test under the null hypothesis and the alternative. Theorem \ref{thm_fisher_combi_asympt} immediately implies that the level of the test \eqref{eq_rejection_region} can be controlled asymptotically at a prescribed level $\alpha \in (0,1).$
Moreover, under the alternative $\mathsf{H}_1$, the test in \eqref{eq_rejection_region} is consistent, if either the difference between $\bfSigma_{n}^{(1)}$ and $\bfSigma_{n}^{(2)}$ with respect to the squared Frobenius norm or the difference between the leading population eigenvalue is large. To quantify the latter, we define the functions $\psi_{n}^{(i)}$ as a finite-sample analogue of $\psi^{(i)}$ in \eqref{eq_def_psi} with $(H^{(i)},y)$ replaced by $(F^{\bfLambda_{P,n}^{(i)}},y_n)$, where the diagonal matrix 
\begin{align*}
    \bfLambda_{P,n}^{(i)}=\diag \lb\lambda_{2}(\bfSigma_{n}^{(i)}),\ldots,  \lambda_P(\bfSigma_{n}^{(i)})\rb, \quad i=1,2,
\end{align*}
consists of the remaining eigenvalues of the matrix  $\bfSigma_n^{(i)}$.
More precisely, we define
\begin{align*} 
    \psi_n^{(i)} (\alpha ) & = \alpha + y_n \alpha \int \frac{\lambda}{\alpha - \lambda} d F^{\bfLambda_{P,n}^{(i)}}(\lambda)
    , \quad i=1,2,
\end{align*}
where $\alpha$ lies outside the support of $F^{\bfLambda_{P,n}^{(i)}}$.

In the following corollary, we summarize the performance of the test in \eqref{eq_rejection_region} the under null and alternative hypothesis. 
\begin{corollary}\label{cor_fisher_combi_1_norm_performance}
    Suppose that assumptions \ref{ass_p_n} - \ref{ass_leading_spiked_ev} are satisfied. 
    
    \begin{enumerate}
        \item    Under $\mathsf{H}_0,$ it holds 
         \begin{align*}
    \PR\lb T_{n, FC} > q_{1-\alpha}\rb \to \alpha, \quad n\to \infty ~.
\end{align*}

  \item  Suppose that 
   \begin{align*}
        \tr\lb \lcb \bfSigma_n^{(1)}-\bfSigma_n^{(2)}\rcb^2 \rb \to \infty, \quad n \to \infty,
    \end{align*}
    or that
\begin{align}
        \sqrt{n}\lv \psi_{n}^{(1)}\lb \alpha_1^{(1)}\rb- \psi_n^{(2)}\lb \alpha_1^{(2)}\rb \rv\to \infty, \quad n\to \infty ~.\nonumber
    \end{align}
    Then, under the alternative $\mathsf{H}_1$ we have 
    \begin{align*}
        \PR \lb T_{n, FC} > q_{1-\alpha}\rb \to 1, \quad n\to \infty ~.
    \end{align*}
    \end{enumerate}
\end{corollary}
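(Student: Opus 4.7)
Part~(1) is immediate from Theorem~\ref{thm_fisher_combi_asympt}: since the $\chi_4^2$ CDF is continuous at $q_{1-\alpha}$, the portmanteau lemma yields $\PR(T_{n,FC}>q_{1-\alpha})\to\PR(\chi_4^2>q_{1-\alpha})=\alpha$. For Part~(2), the key structural observation is that
\begin{align*}
T_{n,FC} \;\geq\; \max\lcb -2\log p_{n,1},\; -2\log p_{n,2} \rcb \;\geq\; 0,
\end{align*}
because both summands are non-negative ($p_{n,j}\in[0,1]$). Since $-2\log(x)\to\infty$ as $x\to 0^+$, it is enough to show that $p_{n,1}\conp 0$ in the Frobenius-norm case and $p_{n,2}\conp 0$ in the spike case; either conclusion forces $T_{n,FC}\conp\infty$ and hence the rejection probability tends to~$1$.

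For the Frobenius-norm case, I would start from the unbiasedness identities of \cite{li_and_chen_2012}, which give $\E[B_n^{(i)}]=\tr((\bfSigma_n^{(i)})^2)$ and $\E[C_n]=\tr(\bfSigma_n^{(1)}\bfSigma_n^{(2)})$, so that
\begin{align*}
\E \lsb B_n^{(1)}+B_n^{(2)}-2C_n \rsb \;=\; \tr \lb \{ \bfSigma_n^{(1)}-\bfSigma_n^{(2)} \}^2 \rb.
\end{align*}
A second-moment bound in the spirit of \cite{li_and_chen_2012}, using the $8$th-moment condition in \ref{ass_random_vectors} and the spectral bounds in \ref{ass_population_lsd}, yields $\Var(B_n^{(1)}+B_n^{(2)}-2C_n)=O(1)$ under both $\mathsf{H}_0$ and $\mathsf{H}_1$. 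Combined with the fact that $\hat\sigma_{n,1}$, constructed in Section~\ref{sec_proof_consistent_estimators}, is $O_P(1)$ and bounded away from zero, Chebyshev gives $T_{n,1}=\tr((\bfSigma_n^{(1)}-\bfSigma_n^{(2)})^2)/\hat\sigma_{n,1}+O_P(1)\conp+\infty$, hence $p_{n,1}=1-\Phi(T_{n,1})\conp 0$.

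For the spike case, I would invoke the CLT for supercritical sample eigenvalues (\cite{bai2008central}, also underpinning \cite{zhang2022asymptotic}), which gives $\sqrt n(\lambda_1(\bfS_n^{(i)})-\psi_n^{(i)}(\alpha_1^{(i)}))=O_P(1)$ for $i=1,2$, so that
\begin{align*}
\sqrt n \lb \lambda_1(\bfS_n^{(1)})-\lambda_1(\bfS_n^{(2)}) \rb \;=\; \sqrt n \lb \psi_n^{(1)}(\alpha_1^{(1)})-\psi_n^{(2)}(\alpha_1^{(2)}) \rb + O_P(1).
\end{align*}
Since the right-hand side diverges in absolute value by assumption and $\hat\sigma_{n,2}$ is $O_P(1)$ with a positive-in-probability lower bound (again from Section~\ref{sec_proof_consistent_estimators}), we obtain $|T_{n,2}|\conp\infty$ and therefore $p_{n,2}=2(1-\Phi(|T_{n,2}|))\conp 0$.

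The main obstacle will be controlling $\hat\sigma_{n,1}$ under $\mathsf{H}_1$: this estimator is calibrated for the null variance, yet under the alternative $B_n^{(1)}+B_n^{(2)}-2C_n$ picks up extra variability depending on $\bfSigma_n^{(1)}-\bfSigma_n^{(2)}$, so one must argue that $\hat\sigma_{n,1}$ nonetheless remains $O_P(1)$ and bounded away from zero, so that the signal-to-noise ratio in $T_{n,1}$ genuinely diverges rather than being cancelled by an inflated denominator. The analogous boundedness of $\hat\sigma_{n,2}$ under the spike alternative should be easier, reducing to the continuity of the spectral functionals entering its definition at the supercritical spikes $\alpha_1^{(i)}$.
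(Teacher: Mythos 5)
Your proof plan is correct and follows essentially the same strategy as the paper: Part~(1) is indeed an immediate consequence of Theorem~\ref{thm_fisher_combi_asympt}, and Part~(2) proceeds by showing that one of $T_{n,1}$, $|T_{n,2}|$ diverges so that the corresponding $\log p$-value tends to $-\infty$ while the other stays bounded above by~$0$. The one place where your route differs is the Frobenius branch: you propose an elementary second-moment/Chebyshev argument based on $\E[B_n^{(1)}+B_n^{(2)}-2C_n]=\tr((\bfSigma_n^{(1)}-\bfSigma_n^{(2)})^2)$, whereas the paper instead rewrites $T_{n,1}=\tfrac{\sigma_{n,1}}{\hat\sigma_{n,1}}\breve T_{n,1}+\tfrac{\lVert\bfSigma_n^{(1)}-\bfSigma_n^{(2)}\rVert_F^2}{\hat\sigma_{n,1}}$ and applies the full CLT $\breve T_{n,1}\cond\mathcal N(0,1)$ already established in Lemma~\ref{lem_ind_quad_form_frobenius_non_cen}. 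Both are valid; the paper's approach has the advantage of recycling machinery already proved, while yours is slightly more elementary. Your ``main obstacle'' — controlling $\hat\sigma_{n,1}$ under $\mathsf H_1$ — is indeed the crux and is resolved exactly as you anticipate: Lemma~\ref{lem_consistent_estimators} gives $\hat\sigma_{n,1}\big/\big(\tfrac{2}{n}\tr(\bfSigma_n^{(1)^2}+\bfSigma_n^{(2)^2})\big)\conp 1$ under $\mathsf H_1$, and the latter quantity is bounded above and below by positive constants because the spectral norms of $\bfSigma_n^{(i)}$ are bounded by assumption~\ref{ass_leading_spiked_ev} and $p/n\to y\in(0,\infty)$; together with $\inf_n\sigma_{n,1}>0$ from Lemma~\ref{lem_lower_bounded_variances}, the signal $\lVert\bfSigma_n^{(1)}-\bfSigma_n^{(2)}\rVert_F^2/\hat\sigma_{n,1}$ genuinely diverges.
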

In the following, we demonstrate how to adjust the proposed test to account for multiple supercritical eigenvalues. This requires a refinement of our assumptions. 
The following assumption says that the $K$ leading eigenvalues of $\bfSigma_n^{(i)}$ are supercritical, where the number  $K\in \N$ does not depend on the sample size  $n$. To distinguish the outlier eigenvalues from the bulk eigenvalues, we introduce the notation
    \begin{align*}
    \alpha_k^{(i)} =\lambda_k(\bfSigma_n^{(i)}), \quad 1 \leq k \leq K, ~ i= 1,2,
    \end{align*} 
    and make the following additional assumption.
    
\begin{enumerate}[label=(A\arabic*), resume]
    \item \label{ass_supercritical_ev_generalized}
    Suppose that for $i=1,2$, the eigenvalues $\alpha_1^{(i)}, \ldots, \alpha_{K}^{(i)} $ are supercritical, do not depend on  $n$, and satisfy 
    \begin{align*}
    \alpha_1^{(i)} > \ldots > \alpha_{K}^{(i)}.
    \end{align*}
    Furthermore, we assume that $\inf_{n\in \N} \lb \alpha_K^{(i)} - \lambda_{K+1}(\bfSigma_n^{(i)})\rb > 0.$
\end{enumerate}
Finally, the last assumption ensures the existence of the asymptotic variances and covariances of the supercritical sample eigenvalues. For this purpose, we consider the singular value decomposition
    \begin{align*}
        \bfSigma_{n}^{(i)} = \bfU_n^{(i)} \diag (\bfSigma_n^{(i)}) \lb  \bfU_n^{(i)} \rb ^\top , \quad i=1,2,
    \end{align*}
    where $\bfU_n^{(i)}$ denotes a $\R^{p\times p}$ orthogonal matrix. In addition,  we denote by  $\mathbf{u}_{k,n}^{(i)}=(u_{1,k,n}^{(i)}, \ldots, u_{p,k,n}^{(i)})^\top$  the $k$th column of the matrix $\bfU_{ n}^{(i)}$ ($1 \leq k \leq p$).
\begin{enumerate}[label=(A\arabic*), resume]
    \item \label{ass_asympt_var_cov} For distinct $k, \ell=1, \ldots K$, the following limits exist
    \begin{align*}
        \lim_{n\to \infty} \sum_{j=1}^{p} u_{j,k,n}^{(i)^4}, \quad \text{and} \quad \lim_{n\to \infty} \sum_{j=1}^{p} u_{j,k,n}^{(i)^2}u_{j, \ell, n}^{(i)^2}.
    \end{align*}

  \end{enumerate}
  The asymptotic variances $\sigma_{\spi, k}^{(i)^2}$ and covariances $ \sigma_{\spi, k,\ell}^{(i)}$ of the supercritical sample eigenvalues (after a $\sqrt{n}$-scaling) are given by 
  \begin{align}
  \label{det11a}
        \sigma_{\spi, k}^{(i)^2}&=  \lim_{p\to \infty} \sigma_{\spi, k ,n}^{(i)^2} = \lim_{p\to \infty} \lb \gamma_4^{(i)}-3\rb \alpha_k^{(i)^2}\lcb\psi^{(i)^{\prime}}\lb\alpha_k^{(i)}\rb\rcb^2\sum_{j=1}^{p}u_{j, k, n}^{(i)^4}+2\alpha_k^{(i)^2}\psi^{(i)^\prime}\lb \alpha_k^{(i)}\rb,
        \\
          \label{det11b}
        \sigma_{\spi, k,\ell}^{(i)}&=\lim_{p\to \infty} \sigma_{\spi, k, \ell ,n}^{(i)^2} =  \lim_{p\to \infty}\lb \gamma_4^{(i)}-3\rb\alpha_k^{(i)}\alpha_{\ell}^{(i)}\psi^{(i)^\prime}\lb \alpha_k^{(i)}\rb\psi^{(i)^\prime}\lb \alpha_{\ell}^{(i)}\rb\sum_{j=1}^{p} u_{j, k, n}^{(i)^2}u_{j,\ell,n}^{(i)^2}.
    \end{align}
    Note that assumption \ref{ass_asympt_var_cov} ensures that the limits in \eqref{det11a} and \eqref{det11b} exist. Moreover, we briefly note that $\sigma_{\spi,k,n}^{(i)^2}$ is positive.
More details can be found in Lemma \ref{lem_lower_bounded_variances} in the Appendix.
 To detect differences in the $m\in \lcb 1, \ldots, K\rcb$ leading population eigenvalues, the second part of the statistic is build on the differences between the $m$ largest sample eigenvalues corresponding to the two samples. To be more precise, the second part of our statistic is given by
  \begin{align*}
  {T}_{n,2,m}= \sqrt{n} \sum_{j=1}^{m} \lv \lambda_{j}(\bfS_n^{(1)})-\lambda_{j}(\bfS_n^{(2)})\rv.
  \end{align*}
 In the following, we present a generalization of Theorem \ref{thm_asympt_ind} involving the $K$ leading eigenvalues of $\bfS_n^{(1)}$ and $\bfS_n^{(2)}.$
 To this end, for $1\leq m \leq K$, we define 
  \begin{align*}
      \tilde{\mathbf{T}}_{n,2,m} = \sqrt{n}\lb \lambda_{1}(\bfS_n^{(1)})-\lambda_{1}(\bfS_n^{(2)}), \ldots, \lambda_{m}(\bfS_n^{(1)})-\lambda_{m}(\bfS_n^{(2)})\rb.
  \end{align*}
  \begin{theorem}\label{thm_generalized_asympt_ind}
   Suppose that assumptions \ref{ass_p_n} - \ref{ass_population_lsd}, \ref{ass_supercritical_ev_generalized} and \ref{ass_asympt_var_cov} are satisfied and that the null hypothesis in \eqref{eq_hypothesis} holds true. Then  it follows for $m\in \lcb 1, \ldots, K\rcb$ that 
      \begin{align*}
          (T_{n,1}, \tilde{\mathbf{T}}_{n,2,m})^\top \cond \mathcal{N}_{m+1}(\mathbf{0}_{m+1}, \bfSigma_m), \quad n\to \infty,
      \end{align*}
     where the matrix   $\bfSigma_m  \in \R^{(m+1)\times (m+1)}$ is  defined by 
      \begin{align*}
          \bfSigma_m = \begin{pmatrix}
        1 & 0
        \\
        0 & \bfSigma_{E,m}
        \end{pmatrix},
      \end{align*}
     and the elements $(\bfSigma_{E,m})_{k,\ell}$ of the matrix  
     $\bfSigma_{E,m} \in \R^{m \times m}$ are given by \begin{align*}
		(\bfSigma_{E,m})_{k,\ell}=\begin{cases}
			\sigma_{\spi,k}^{(1)^2} + \sigma_{\spi,k}^{(2)^2}& \text{if } k=\ell
			\\
			\sigma_{\spi,k,\ell}^{(1)} + \sigma_{\spi,k,\ell}^{(2)} & \text{if }k\neq \ell
		\end{cases} ~.
	\end{align*}
  \end{theorem}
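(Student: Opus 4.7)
The plan is to extend the strategy from the proof of Theorem \ref{thm_asympt_ind} to handle $m$ leading eigenvalues simultaneously, reducing the joint CLT to marginal ones by the Cramér-Wold device. By independence of the two samples under $\mathsf{H}_0$ (assumption \ref{ass_random_vectors}), and since $\psi_n^{(1)}\equiv \psi_n^{(2)}$ at the relevant spike locations here, it suffices first to invoke the joint CLT for the $m$ leading sample eigenvalues of a single-sample spiked model as developed in \cite{paul2007asymptotics, bai2008central, li_et_al_2020} and then add the covariances across the two samples; this yields
\begin{align*}
\tilde{\mathbf{T}}_{n,2,m}^\top \cond \mathcal{N}\lb \mathbf{0}_m, \bfSigma_{E,m}\rb ,
\end{align*}
with $\bfSigma_{E,m}$ as in the statement. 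The existence of the limits in \ref{ass_asympt_var_cov} and the strict separation $\alpha_K^{(i)}>\lambda_{K+1}(\bfSigma_n^{(i)})$ in \ref{ass_supercritical_ev_generalized} enter at this step. Combined with the marginal CLT of \cite{li_and_chen_2012} for $T_{n,1}$, only the asymptotic independence between $T_{n,1}$ and $\tilde{\mathbf{T}}_{n,2,m}$ remains to be shown.

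This is the crucial step and the only genuinely new ingredient compared to the case $m=1$ handled by Theorem \ref{thm_asympt_ind}. The guiding principle is that each supercritical sample eigenvalue depends asymptotically only on the noise projected onto the corresponding population spike eigenvector, whereas $T_{n,1}$ is a trace-type U-statistic that is insensitive to a rank-$K$ perturbation of the noise. Concretely, working in the eigenbasis of $\bfSigma_n^{(i)}$, I would split each $\bfz_j^{(i)}$ into its $K$-dimensional spike block $\bfz_j^{(i),S}$ and its $(p-K)$-dimensional bulk block $\bfz_j^{(i),B}$. A first-order resolvent expansion near $\psi^{(i)}(\alpha_k^{(i)})$ would give
\begin{align*}
\sqrt{n}\lb\lambda_k\lb\bfS_n^{(i)}\rb-\psi_n^{(i)}\lb\alpha_k^{(i)}\rb\rb = \xi_{n,k}^{(i)} + \op, \qquad k=1,\ldots,m,
\end{align*}
where each $\xi_{n,k}^{(i)}$ is a quadratic form depending only on $\{\bfz_j^{(i),S}\}_{j=1}^n$. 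Simultaneously, if $\tilde T_{n,1}$ denotes the statistic obtained by replacing every $\bfx_j^{(i)}$ in $T_{n,1}$ by its bulk part, a direct variance computation—analogous to that of \cite{li_and_chen_2012} but targeted at the rank-$K$ correction—shows that $T_{n,1}-\tilde T_{n,1}=\op$ under the 8th moment assumption in \ref{ass_random_vectors} and the boundedness from below of $\lambda_p(\bfSigma_n^{(i)})$ in \ref{ass_population_lsd}. Since $\tilde T_{n,1}$ and the collection $\{\xi_{n,k}^{(i)}\}$ are then measurable with respect to disjoint, hence independent, sub-families of the i.i.d. entries of $\bfz_j^{(i)}$, the desired block-diagonal Gaussian limit follows by combining their marginal CLTs through the Cramér-Wold device.

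The main obstacle I expect is executing the multi-spike perturbation expansion uniformly in $k\leq m$ while keeping all error terms $\op$: one must repeatedly invoke concentration of quadratic forms in the i.i.d. entries of $\bfz_j^{(i)}$, exploit the separation in \ref{ass_supercritical_ev_generalized} to control the bulk resolvent at each $\psi^{(i)}(\alpha_k^{(i)})$, and carefully track the spike–bulk cross terms that arise in the sums defining $B_n^{(i)}$ and $C_n$. The conceptually new point is the block-independence of $\tilde T_{n,1}$ from $\{\xi_{n,k}^{(i)}\}$; once established, the Cramér-Wold device closes the argument and produces the block-diagonal form of $\bfSigma_m$ displayed in the theorem.
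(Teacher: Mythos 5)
Your high-level frame (Cramér--Wold reduction, independent two-sample block for $\tilde{\mathbf T}_{n,2,m}$, and then a single decisive asymptotic-independence step between $T_{n,1}$ and $\tilde{\mathbf T}_{n,2,m}$) agrees with the paper. The gap is in the mechanism you propose for that independence step.

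You suggest rotating each $\bfz_j^{(i)}$ into the eigenbasis of $\bfSigma_n^{(i)}$, splitting into a $K$-dimensional spike block and a $(p-K)$-dimensional bulk block, arguing that the spike eigenvalue fluctuations $\xi_{n,k}^{(i)}$ depend only on the spike block while $T_{n,1}$ is asymptotically a bulk-only statistic $\tilde T_{n,1}$, and then concluding independence because the two quantities are built from ``disjoint, hence independent, sub-families'' of the noise. This last inference fails under assumption \ref{ass_random_vectors}: the vector $\bfz_j^{(i)}$ has i.i.d.\ entries in the \emph{original} coordinates, so after rotating by $\bfU_n^{(i)}$ the coordinates are uncorrelated but in general \emph{not} independent unless the $z_{k,j}^{(i)}$ are Gaussian or $\bfSigma_n^{(i)}$ is diagonal. (A two-dimensional check: for i.i.d.\ $z_1,z_2$ with kurtosis $\gamma_4$, $\cov\big((z_1+z_2)^2,(z_1-z_2)^2\big)=2(\gamma_4-3)\neq 0$ for non-Gaussian data.) The presence of the non-Gaussian correction $(\gamma_4^{(i)}-3)\sum_j u_{j,k,n}^{(i)^4}$ in $\sigma_{\spi,k}^{(i)^2}$ is exactly the fingerprint of this residual dependence, so the claimed block-independence cannot be taken for granted. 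In addition, the quadratic form governing $\sqrt n(\lambda_k(\bfS_n^{(i)})-\theta_{k,n}^{(i)})$ is $Q_{k,n}^{(i)}$, which involves the resolvent $\lcb\bfA_n^{(i)}(\theta_{k,n}^{(i)})\rcb^{-1}$ built from the bulk part $\bfSigma_{P,n}^{(i)}$ of $\bfZ_n^{(i)}$, so it is not literally a function of the spike block alone; a concentration argument for the resolvent is needed before any such reduction could even be attempted.

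The paper circumvents these issues by never rotating: $\breve T_{n,1}$ and the spike quadratic forms are both expressed as martingale difference sequences with respect to the common filtration $\CF_k=\sigma(\bfy_1,\ldots,\bfy_k)$ generated by the raw sample vectors, and asymptotic independence is established by verifying that the conditional cross-products $\sum_{k}\E_{k-1}\lsb M_{n,k}W_{n,k}\rsb$ vanish in probability (Lemma \ref{lem_cross_terms_op1}). That bound is the genuinely new technical ingredient; you would need an argument of comparable strength to control the spike--bulk dependence your decomposition ignores.
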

  Similar to Theorem \ref{thm_asympt_ind},  Theorem \ref{thm_generalized_asympt_ind} shows that the test statistics $T_{n,1}$ and ${T}_{n,2,m}$ are asymptotically independent. 
  Thus, we may use Fisher's combination test and define the test statistics
  \begin{align*}
      T_{n, FC,m} = -2\log(p_{n,1}) - 2\log(p_{n,2,m}),
  \end{align*}
  where $p_{n,1}$ is defined in \eqref{eq_def_p_value} and
  \begin{align}\label{eq_def_generalized_p_value_spi}
      p_{n,2,m}=  \PR( \lnorm \mathbf{W} \rnorm_1 \geq T_{n,2,m} \mid \mathsf{H}_{0}),
  \end{align}
  where $\lnorm . \rnorm_1$ refers to the $L^1$-norm and $\mathbf{W}\sim \mathcal{N}_m (\mathbf{0}_m, \bfSigma_{E,m})$, denotes the $p$-value with respect to the test statistic $T_{n,2,m}$.
  \begin{remark}
  {\rm 
      We emphasize that in practice the covariance matrix $\bfSigma_{E,m}$ is unknown. Thus, to compute the $p$-value $p_{n,2,m}$, it is necessary to give a consistent estimator for $\bfSigma_{E,m}$. For this purpose, we estimate each element $(\bfSigma_{E,m})_{k,\ell}$ consistently by $(\hat{\bfSigma}_{E,m,n})_{k,\ell}$, where
      \begin{align}\label{eq_def_estimator_covariance_matrix_entries}
          (\hat{\bfSigma}_{E,m,n})_{k,\ell}=\begin{cases}
			\hat{\sigma}_{\spi,k,n}^{(1)^2} + \hat{\sigma}_{\spi,k,n}^{(2)^2}& \text{if } k=\ell
			\\
			\hat{\sigma}_{\spi,k,\ell,n}^{(1)} + \hat{\sigma}_{\spi,k,\ell,n}^{(2)} & \text{if }k\neq \ell
		\end{cases} ~.
      \end{align}
      A detailed description of the estimators $\hat{\sigma}_{\spi,k,n}^{(i)^2}$ and $\hat{\sigma}_{\spi,k,\ell,n}^{(i)}$ is deferred to Section \ref{sec_proof_consistent_estimators}.
      }
  \end{remark}
  \begin{theorem}\label{thm_generalized_fisher_combi_asympt}
      Suppose that assumptions \ref{ass_p_n} - \ref{ass_population_lsd}, \ref{ass_supercritical_ev_generalized} and \ref{ass_asympt_var_cov} are satisfied and that the null hypothesis in \eqref{eq_hypothesis} holds true. Then it follows for  $m\in \lcb 1,\ldots, K\rcb$, that
      \begin{align*}
      T_{n, FC,m} \cond \chi_4^2, \quad n\to \infty~.
      \end{align*}
  \end{theorem}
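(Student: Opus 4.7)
The plan is to reduce this to the standard Fisher combination argument by exploiting the joint central limit theorem of Theorem \ref{thm_generalized_asympt_ind}: transform each marginal into an asymptotically uniform $p$-value, use the asymptotic independence provided by the block-diagonal limit covariance, and then apply continuous mapping to the $-2\log$ transformation. The mechanism mirrors the proof of Theorem \ref{thm_fisher_combi_asympt}, but with a multivariate twist in the leading-eigenvalue component.

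First, I would invoke Theorem \ref{thm_generalized_asympt_ind}: under $\mathsf{H}_0$, $(T_{n,1}, \tilde{\mathbf{T}}_{n,2,m})^\top \cond \mathcal{N}_{m+1}(\mathbf{0}_{m+1}, \bfSigma_m)$ with block-diagonal $\bfSigma_m$, so the limits $Z \sim \mathcal{N}(0,1)$ and $\mathbf{W} \sim \mathcal{N}_m(\mathbf{0}_m, \bfSigma_{E,m})$ are independent. Applying the continuous mapping theorem to $(x, \mathbf{y}) \mapsto (x, \|\mathbf{y}\|_1)$, we obtain $(T_{n,1}, T_{n,2,m})^\top \cond (Z, \|\mathbf{W}\|_1)^\top$ with independent coordinates. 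Writing $F_m(t) = \PR(\|\mathbf{W}\|_1 \geq t)$, which is continuous on $[0,\infty)$ because $\bfSigma_{E,m}$ is positive definite (by Lemma \ref{lem_lower_bounded_variances}, applied to both summands in $\bfSigma_{E,m}$), a second continuous-mapping step gives $(1-\Phi(T_{n,1}), F_m(T_{n,2,m}))^\top \cond (U_1, U_2)^\top$ with $U_1, U_2$ independent $\text{Uniform}(0,1)$ by the probability integral transform.

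The subtle step is that the $p$-value used in the test statistic, $p_{n,2,m}$, is evaluated with the plug-in covariance $\hat\bfSigma_{E,m,n}$ from \eqref{eq_def_estimator_covariance_matrix_entries} rather than the oracle $\bfSigma_{E,m}$. I would use the consistency $\hat\bfSigma_{E,m,n} \conp \bfSigma_{E,m}$ established through the estimators of Section \ref{sec_proof_consistent_estimators}, combined with a standard argument based on Lévy's continuity theorem (and Polya's theorem to upgrade pointwise to uniform convergence of continuous c.d.f.'s on $[0,\infty)$), to conclude that the associated survival functions satisfy $\sup_{t \geq 0} |\hat F_{m,n}(t) - F_m(t)| \conp 0$. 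Then Slutsky's theorem yields $p_{n,2,m} - F_m(T_{n,2,m}) \conp 0$, so jointly with $p_{n,1}$ we obtain $(p_{n,1}, p_{n,2,m})^\top \cond (U_1, U_2)^\top$. A final continuous-mapping application to $(u_1, u_2) \mapsto -2\log u_1 - 2\log u_2$ gives $T_{n, FC, m} \cond -2\log U_1 - 2\log U_2$, which is a sum of two independent $\chi_2^2$ variables and hence follows a $\chi_4^2$ distribution.

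The main obstacle is the plug-in step for $p_{n,2,m}$: showing that substituting $\hat\bfSigma_{E,m,n}$ for $\bfSigma_{E,m}$ in the definition of the $p$-value is asymptotically negligible. This relies on (i) consistency of the individual variance and covariance estimators $\hat\sigma_{\spi,k,n}^{(i)^2}$ and $\hat\sigma_{\spi,k,\ell,n}^{(i)}$, to be verified in Section \ref{sec_proof_consistent_estimators}, and (ii) continuity of the Gaussian survival functional $\bfSigma \mapsto \PR(\|\mathcal{N}_m(\mathbf{0}_m,\bfSigma)\|_1 \geq \cdot)$ in a neighborhood of the non-degenerate limit $\bfSigma_{E,m}$. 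Everything else reduces to combining Theorem \ref{thm_generalized_asympt_ind} with the continuous mapping theorem and Slutsky's theorem.
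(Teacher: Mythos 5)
Your proposal is correct and it fills in exactly the argument that the paper leaves implicit (the paper states Theorem \ref{thm_generalized_fisher_combi_asympt} without an explicit proof, as a consequence of Theorem \ref{thm_generalized_asympt_ind} and Lemma \ref{lem_consistent_estimators}). The chain — joint CLT with block-diagonal covariance, continuous mapping to $(T_{n,1},T_{n,2,m})$, transform each marginal through its continuous limiting survival/c.d.f.\ to obtain two asymptotically independent $\textnormal{Uniform}(0,1)$ $p$-values, handle the plug-in covariance $\hat\bfSigma_{E,m,n}$ via consistency plus a Polya-type uniform convergence argument, and finally map through $(u_1,u_2)\mapsto -2\log u_1-2\log u_2$ — is the standard Fisher-combination route the paper relies on. One small imprecision: Lemma \ref{lem_lower_bounded_variances} bounds the diagonal entries of $\bfSigma_{E,m}$ away from zero but does not by itself establish full positive definiteness; what is actually needed for continuity of the law of $\lnorm\mathbf{W}\rnorm_1$ is merely that $\bfSigma_{E,m}\neq 0$, which the lemma does give, so the conclusion stands.
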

  Based on Theorem \ref{thm_generalized_fisher_combi_asympt}, we propose to reject the null hypothesis $\mathsf{H}_{0}$ if
  \begin{align}\label{eq_generalized_rejection_region}
  T_{n, FC,m} > q_{1-\alpha},
  \end{align}
   where $q_{1-\alpha}$ denotes the $(1-\alpha)$-quantile of the $\chi_4^{2}$-distribution. 
  Similar to our first proposed test \eqref{eq_rejection_region}, the level of the test \eqref{eq_generalized_rejection_region} can be controlled asymptotically. Moreover, the test \eqref{eq_generalized_rejection_region} is consistent under the alternative $\mathsf{H}_1$. Since the first component of $T_{n, FC}$ and $T_{n, FC,m}$ are identical, it remains that the test \eqref{eq_generalized_rejection_region} powerful, if the difference between $\bfSigma_n^{(1)}$ and $\bfSigma_n^{(2)}$ with respect to the squared Frobenius norm is large. In particular, due to the second part of $T_{n, FC,m}$ the test in $\eqref{eq_generalized_rejection_region}$ is consistent as well, if the difference between one of the $m$ leading population eigenvalues is large. To quantify this power-consistency, we define the generalized functions $\psi_{n,K}^{(i)}$ of $\psi_{n}^{(i)}$.
  Let \begin{align*}
      \bfLambda_{P,n,K}^{(i)} = \diag\lb \lambda_{K+1}(\bfSigma_n^{(i)}), \ldots, \lambda_{p}(\bfSigma_n^{(i)})\rb, \quad i=1,2, 
  \end{align*} 
  be the matrix consisting the remaining eigenvalues of $\bfSigma_n^{(i)}$. Then, we define \begin{align}\label{eq_def_generalized_psi_n}
      \psi_{n,K}^{(i)}(\alpha) = \alpha + y_n \alpha \int \frac{\lambda}{\alpha -\lambda} dF^{\bfLambda_{P,n,K}^{(i)}}(\lambda), \quad i=1,2,
  \end{align}
  where $\alpha$ lies outside the support of $F^{\bfLambda_{P,n,K}^{(i)}}$.
  Finally, we conclude in the following corollary the performance of the test in \eqref{eq_generalized_rejection_region} under null and alternative hypothesis. 
  \begin{corollary}\label{cor_generalized_fisher_combi_1_norm_performance}
      Suppose that assumptions \ref{ass_p_n} - \ref{ass_population_lsd}, \ref{ass_supercritical_ev_generalized} and \ref{ass_asympt_var_cov} are satisfied and let $m\in \lcb 1, \ldots, K \rcb$.
      \begin{enumerate}
        \item    Under the null hypothesis $\mathsf{H}_0,$ it holds 
         \begin{align*}
    \PR\lb T_{n, FC,m} > q_{1-\alpha}\rb \to \alpha, \quad n\to \infty ~.
\end{align*}

  \item  Suppose that 
   \begin{align}\label{eq_frobenius_alternative}
        \tr\lb \lcb \bfSigma_n^{(1)}-\bfSigma_n^{(2)}\rcb^2 \rb \to \infty, \quad n \to \infty,
    \end{align}
    or for at least one $1\leq j \leq m$ that
\begin{align}\label{eq_generalized_eigenvalues_alternative}
        \sqrt{n}\lv \psi_{n,K}^{(1)}\lb \alpha_j^{(1)}\rb- \psi_{n,K}^{(2)}\lb \alpha_j^{(2)}\rb \rv\to \infty, \quad n\to \infty ~.
    \end{align}
    Then, under the alternative $\mathsf{H}_1$ we have 
    \begin{align*}
        \PR \lb T_{n, FC,m} > q_{1-\alpha}\rb \to 1, \quad n\to \infty ~.
    \end{align*}
    \end{enumerate}
  \end{corollary}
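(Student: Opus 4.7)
Part (i) follows immediately from Theorem~\ref{thm_generalized_fisher_combi_asympt} applied with index $m$, since $q_{1-\alpha}$ is a continuity point of the $\chi_4^2$ distribution function. For part (ii), the starting observation is that $p_{n,1}, p_{n,2,m} \in [0,1]$, so that both $-2\log(p_{n,1})$ and $-2\log(p_{n,2,m})$ are non-negative. Thus
\begin{align*}
T_{n,FC,m} \geq \max\bigl(-2\log(p_{n,1}),\ -2\log(p_{n,2,m})\bigr),
\end{align*}
and it suffices to show separately that condition \eqref{eq_frobenius_alternative} implies $p_{n,1}\conp 0$ while condition \eqref{eq_generalized_eigenvalues_alternative} implies $p_{n,2,m}\conp 0$.

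\textbf{Frobenius branch.} The component $T_{n,1}$ is identical to the one appearing in Corollary~\ref{cor_fisher_combi_1_norm_performance}, so this part of the argument does not change with $m$. I would use that $B_n^{(1)} + B_n^{(2)} - 2C_n$ is an unbiased $U$-statistic for $\tr((\bfSigma_n^{(1)}-\bfSigma_n^{(2)})^2)$ and concentrates around that mean with the fluctuation order derived by \cite{li_and_chen_2012}. Combined with the consistency of $\hat\sigma_{n,1}$ established in Section~\ref{sec_proof_consistent_estimators} (which gives $\hat\sigma_{n,1} = O_P(1)$ under both hypotheses in the proportional regime), condition \eqref{eq_frobenius_alternative} yields $T_{n,1}\conp \infty$ and hence $p_{n,1} = 1 - \Phi(T_{n,1}) \conp 0$.

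\textbf{Eigenvalue branch.} Fix an index $1 \leq j \leq m$ for which \eqref{eq_generalized_eigenvalues_alternative} holds. The central limit theorem for supercritical sample eigenvalues underlying Theorem~\ref{thm_generalized_asympt_ind} (under assumption \ref{ass_supercritical_ev_generalized}) gives $\sqrt{n}\bigl(\lambda_j(\bfS_n^{(i)}) - \psi_{n,K}^{(i)}(\alpha_j^{(i)})\bigr) = O_P(1)$ for $i = 1, 2$. The triangle inequality then yields
\begin{align*}
\sqrt{n}\lv \lambda_j(\bfS_n^{(1)}) - \lambda_j(\bfS_n^{(2)}) \rv
\geq \sqrt{n}\lv \psi_{n,K}^{(1)}(\alpha_j^{(1)}) - \psi_{n,K}^{(2)}(\alpha_j^{(2)}) \rv - O_P(1) \conp \infty.
\end{align*}
Because all $m$ summands of $T_{n,2,m}$ are non-negative, this forces $T_{n,2,m} \conp \infty$. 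To conclude that $p_{n,2,m}\conp 0$, I would combine the consistency of $\hat\bfSigma_{E,m,n}$ (from Section~\ref{sec_proof_consistent_estimators}) with Lemma~\ref{lem_lower_bounded_variances} to ensure the conditional distribution of $\lnorm \mathbf{W}\rnorm_1$ given the data converges weakly to the distribution of $\lnorm \mathbf{W}^\star\rnorm_1$ with $\mathbf{W}^\star\sim \mathcal{N}_m(\mathbf{0}_m,\bfSigma_{E,m})$. In particular $\lnorm\mathbf{W}\rnorm_1 = O_P(1)$ uniformly, so the divergence of $T_{n,2,m}$ drives $p_{n,2,m}\conp 0$.

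\textbf{Expected obstacle.} The Frobenius branch is essentially inherited from \cite{li_and_chen_2012} and is clean. The delicate point is the eigenvalue branch: the $p$-value $p_{n,2,m}$ is defined via a \emph{conditional} probability involving the plug-in estimator $\hat\bfSigma_{E,m,n}$, so even if $T_{n,2,m}\conp \infty$, one must rule out that the conditional reference distribution drifts off to infinity along with it. This reduces to bounding $\hat\bfSigma_{E,m,n}$ uniformly away from zero and infinity in operator norm under $\mathsf{H}_1$. Since each $\hat\sigma_{\spi,k,n}^{(i)}$ depends only on the $i$th sample and the leading eigen-structure is controlled by \ref{ass_supercritical_ev_generalized}--\ref{ass_asympt_var_cov} regardless of which hypothesis is in force, this should go through, but verifying it carefully is the main technical hurdle.
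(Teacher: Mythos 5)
Your approach matches the paper's: both decompose $T_{n,FC,m}$ using the nonnegativity of $-2\log p_{n,1}$ and $-2\log p_{n,2,m}$ (the paper uses $\log p_{n,j}\le 0$ on the branch not being driven to infinity; you write the equivalent $T_{n,FC,m}\geq\max(-2\log p_{n,1},-2\log p_{n,2,m})$), and both then show that the corresponding component statistic diverges to $+\infty$ in probability under the respective alternative, which forces that $p$-value to zero. The triangle-inequality reduction in the eigenvalue branch is exactly what the paper does via $\mathbf{T}_{n,2,m}^{\star}$.

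One correction to your ``expected obstacle'' paragraph: the $p$-value $p_{n,2,m}$ as defined in \eqref{eq_def_generalized_p_value_spi} involves the limiting covariance matrix $\bfSigma_{E,m}$, which is a fixed, $n$-independent matrix, not the plug-in estimator $\hat{\bfSigma}_{E,m,n}$. The estimator enters only in the practical implementation discussed in the remark following \eqref{eq_def_generalized_p_value_spi}; the theoretical statement of Corollary~\ref{cor_generalized_fisher_combi_1_norm_performance} uses the exact $\bfSigma_{E,m}$. Consequently, once $T_{n,2,m}\conp\infty$, the reference distribution of $\lnorm\mathbf{W}\rnorm_1$ is fixed and tight, so $p_{n,2,m}=1-F(T_{n,2,m})\conp 0$ immediately, and no uniform operator-norm control of $\hat{\bfSigma}_{E,m,n}$ is required at this stage. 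Your argument is therefore not wrong, merely overcautious, and the obstacle you flag would only become relevant if one wanted consistency of the test computed with the estimated reference distribution.
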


\section{Finite-sample properties} \label{sec_sim}
In this section, we study the finite sample properties of the tests \eqref{eq_rejection_region} and \eqref{eq_generalized_rejection_region}. We consider the model
\begin{align*}
\bfx_j^{(i)} = \bfSigma_n^{(i)^{1/2}}\bfz_j^{(i)}, \quad i=1,2,
\end{align*}
where $\bfz_1^{(i)}, \ldots, \bfz_n^{(i)}$ are $p$-dimensional random vectors with i.i.d. entries $z_{1,1}^{(i)}$, where we consider three case for their distribution:
\begin{itemize}
    \item[(a)]

 a standard normal distribution $(z_{1,1}^{(i)}\sim \mathcal{N}(0,1))$
 \item[(b)]
 a t-distribution with $7$ degrees of freedom standardized such that its variance is $1$ $(z_{1,1}^{(i)}\sim t_7/\sqrt{7/5})$ \item[(c)] a Laplace distribution standardized such that its variance is $1$ $(z_{1,1}^{(i)}\sim \Laplace(0, 1/\sqrt{2}))$
 \end{itemize}
Note that the $t$-distribution  does not satisfy assumption \ref{ass_random_vectors}. Nevertheless, we will demonstrate that the proposed tests \eqref{eq_rejection_region} and \eqref{eq_generalized_rejection_region} have a reasonable performance. All reported results are based on $500$ simulations runs, and the nominal level is $\alpha=0.05$. The structure of the covariance matrices $\bfSigma_n^{(i)}$ is specified in the following subsections. 

\paragraph*{Numerical experiments for one leading sample eigenvalue}
    \begin{figure}[p]
		\begin{center}	            
        \includegraphics[scale=0.36]{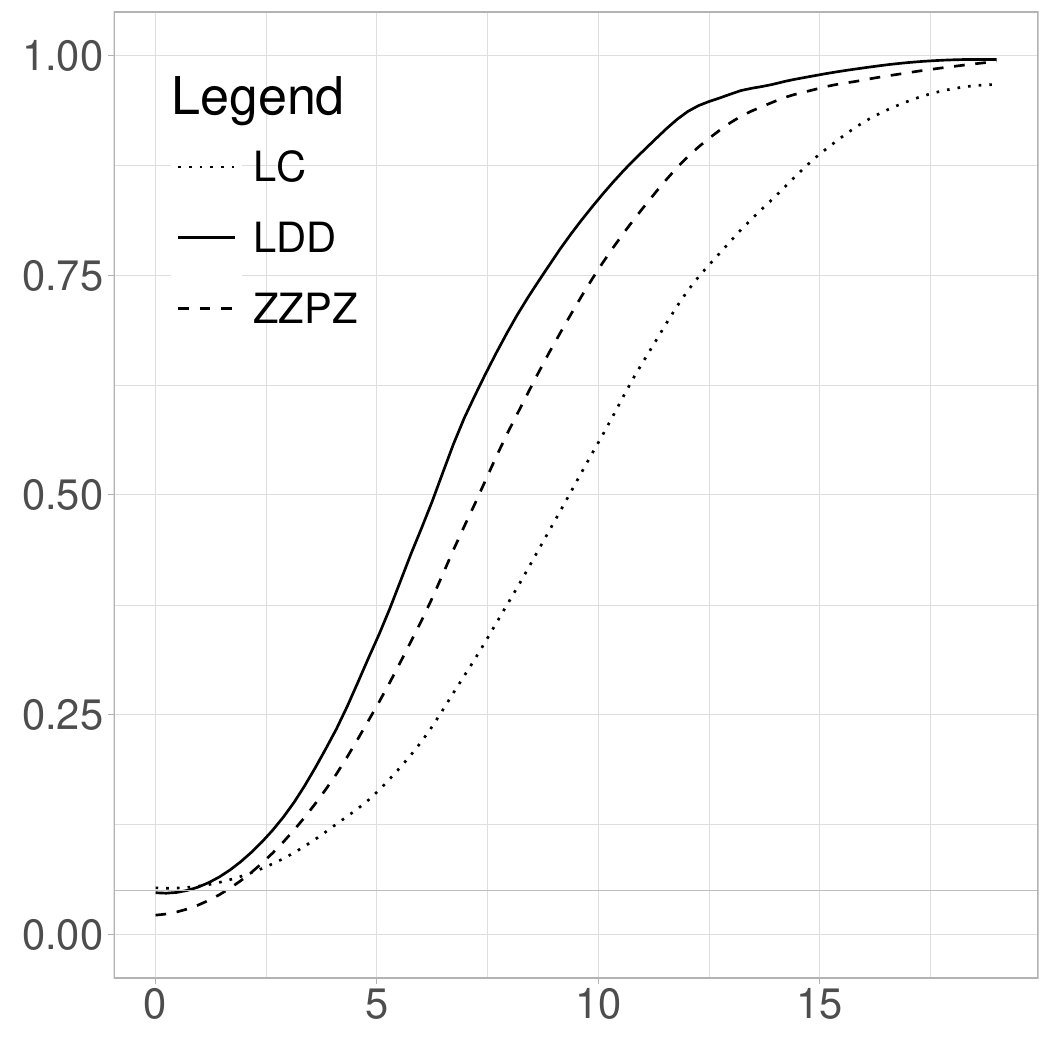}
        ~~~~ ~~~~ 
        \includegraphics[scale=0.36]{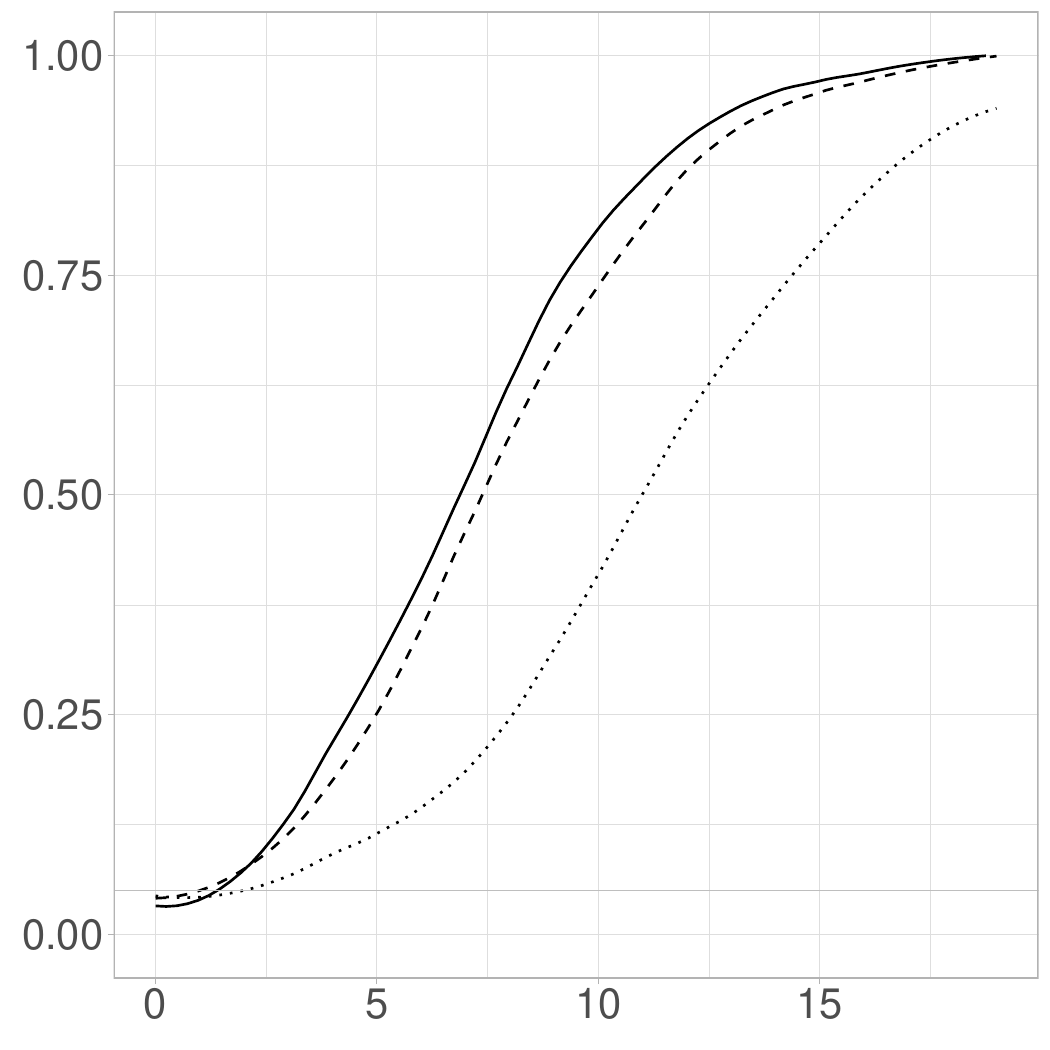}
        \includegraphics[scale=0.36]{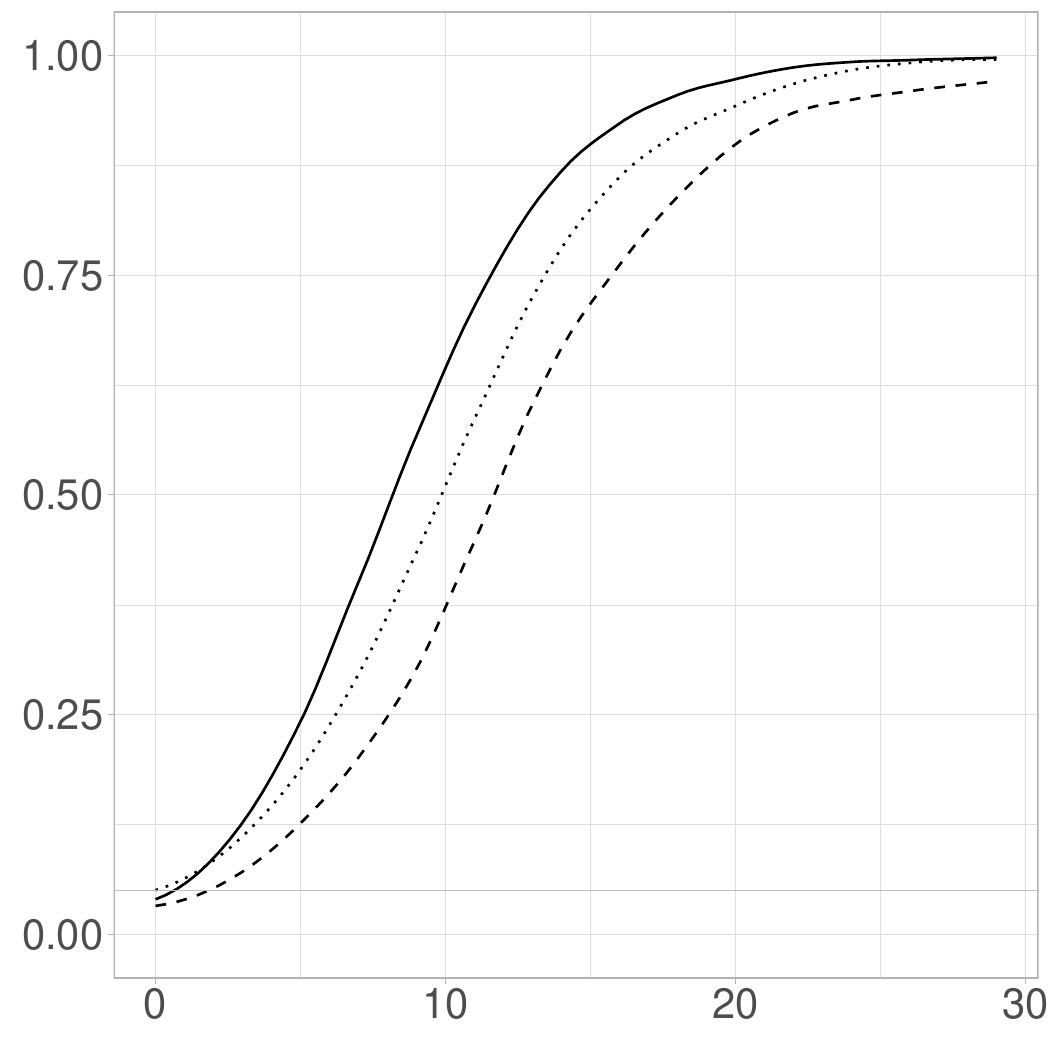}
        ~~~~ ~~~~     
        \includegraphics[scale=0.36]{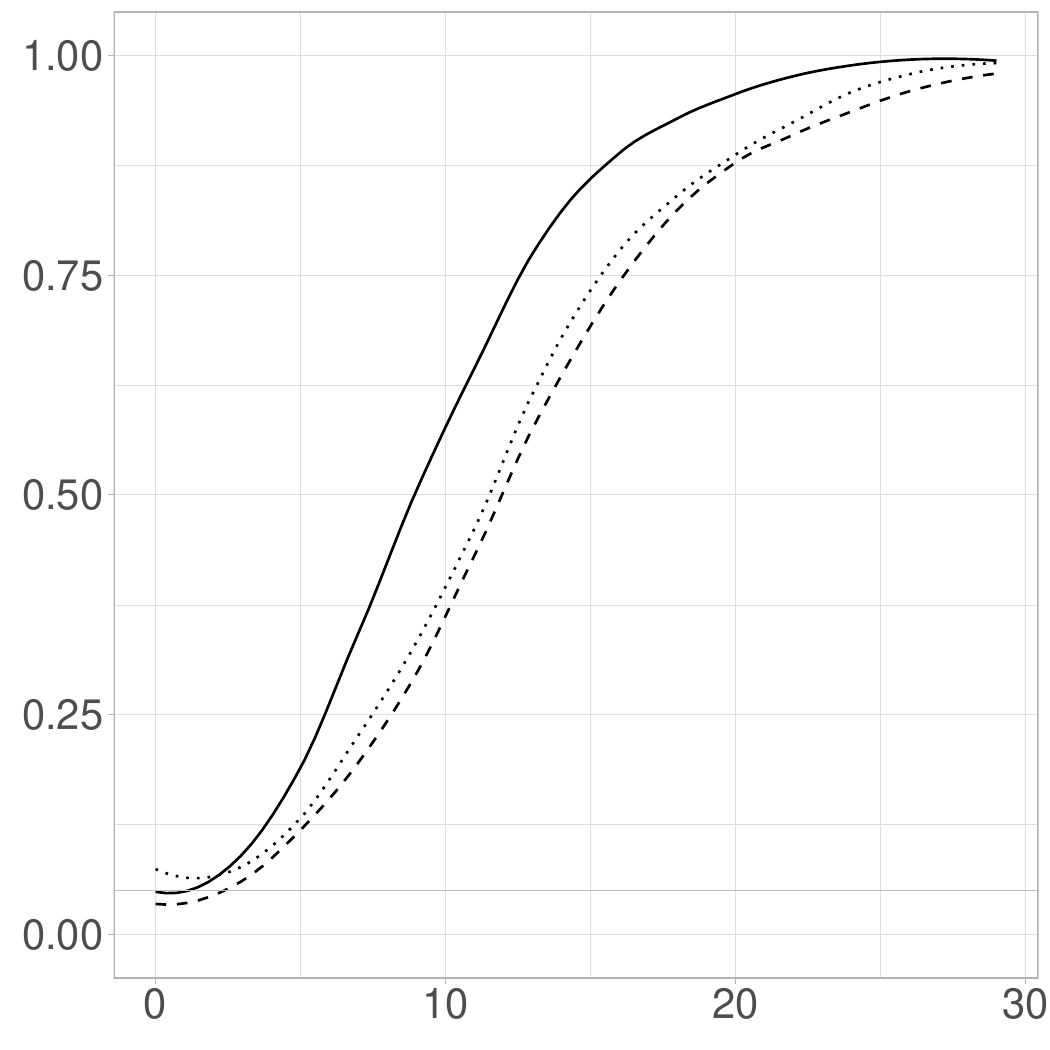}
        \includegraphics[scale=0.36]{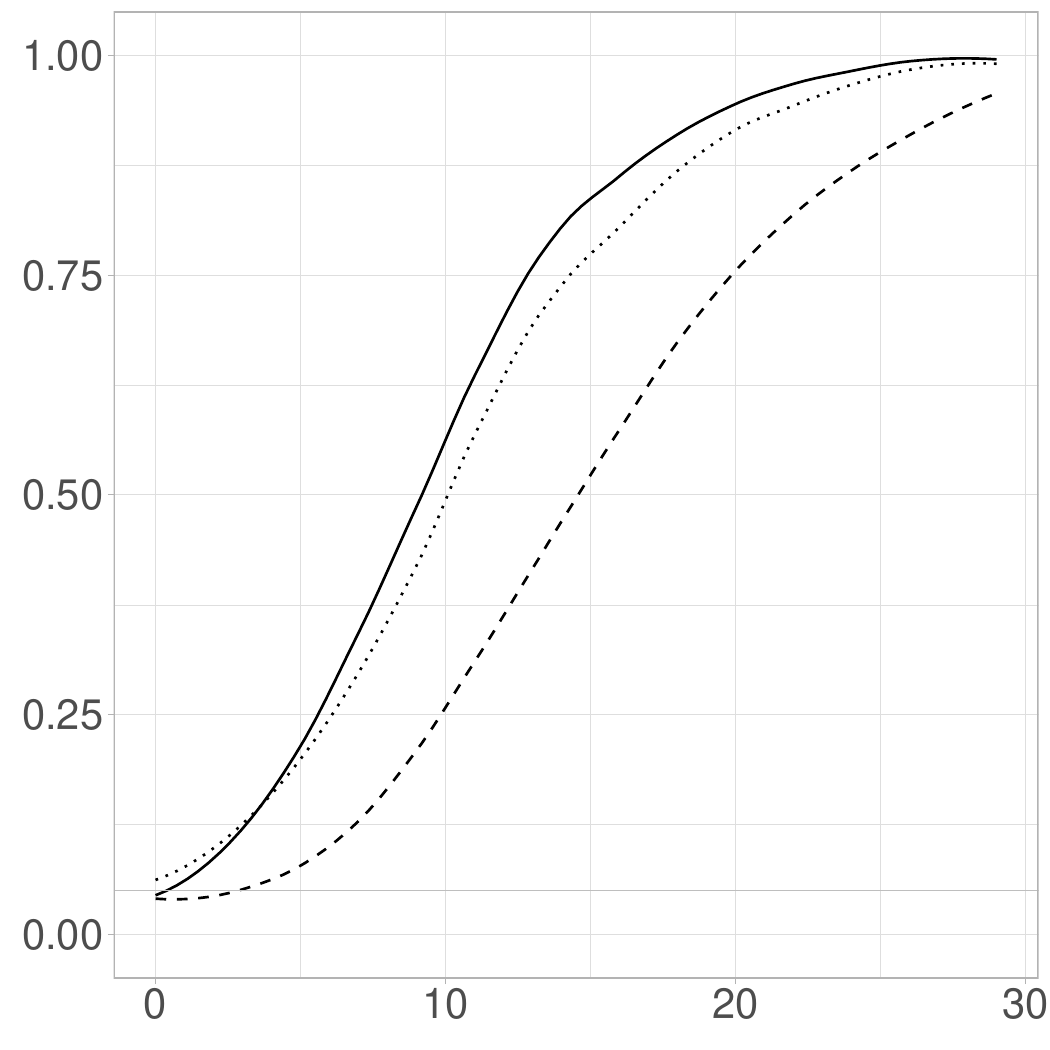}
        ~~~~ ~~~~     
        \includegraphics[scale=0.36]{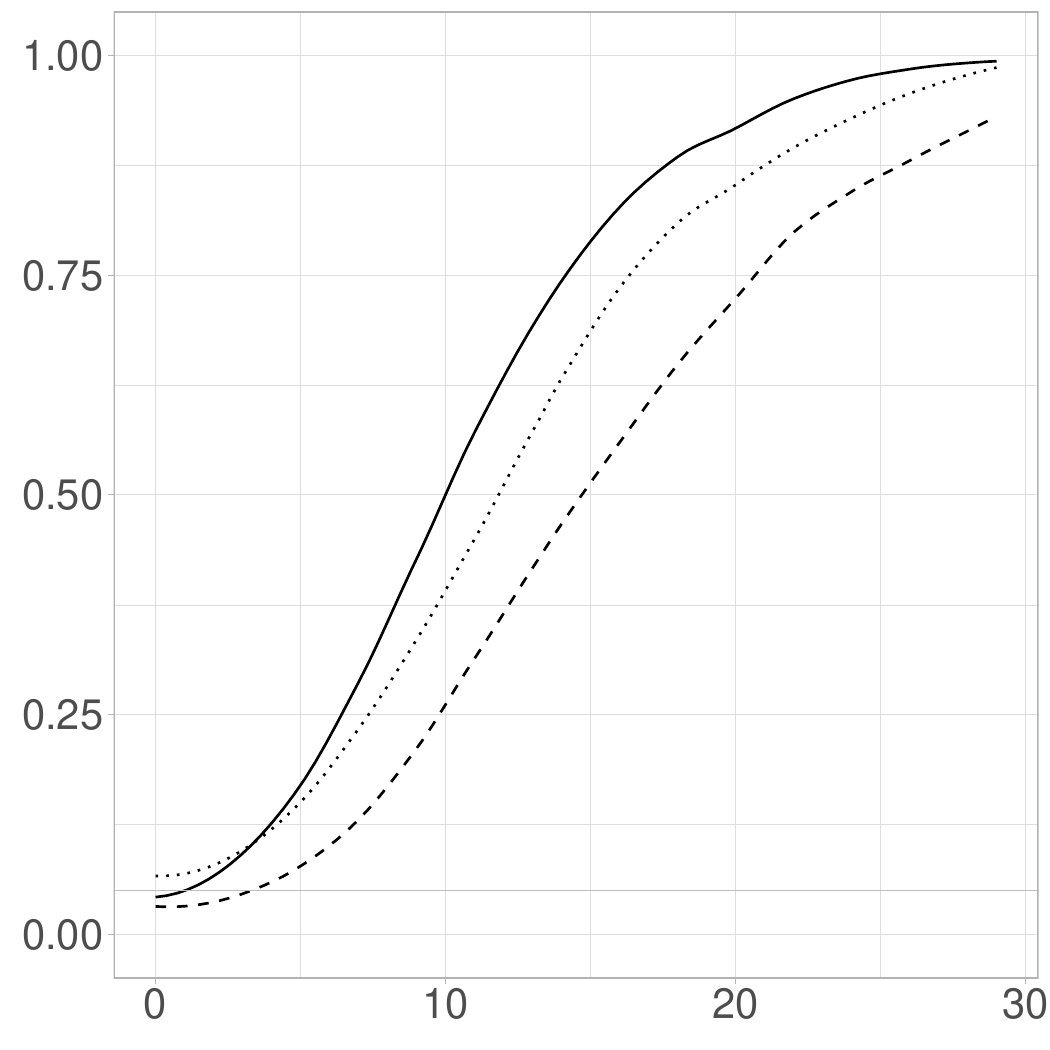}
		\end{center}
		\caption{\it Simulated rejection probabilities of the test \eqref{eq_rejection_region} (solid line, LDD), the test of \cite{zhang2022asymptotic} (dashed line, ZZPZ) and the test of \cite{li_and_chen_2012} (dotted line, LC)  in model \eqref{eq_model_1}. Left panels:   $(p,n)=(500, 100)$. Right panels: $(p,n)=(1000, 100)$. First row: $\bfz_{1,1}^{(i)} \sim \mathcal{N}(0,1)$, second row: $\bfz_{1,1}^{(i)}  \sim t_{7}/\sqrt{7/5}$, third row: $\bfz_{1,1}^{(i)}  \sim \Laplace(0, 1/\sqrt{2})$.}\label{fig_empirical_rej_model_1}
	\end{figure}
In the following, we provide numerical results on the performance of the new test \eqref{eq_rejection_region}.
First, we investigate population covariance matrices which differ in their leading  eigenvalue, that is 
\begin{align}\label{eq_model_1}
\bfSigma_n^{(1)} = \diag(10, \underbrace{7, \ldots, 7}_{10}, \underbrace{1, \ldots, 1}_{p-11}),\quad \bfSigma_n^{(2)}(\delta)= \diag(10+\delta, \underbrace{7, \ldots, 7}_{10}, \underbrace{1, \ldots, 1}_{p-11}),
\end{align}

where the leading eigenvalue changes as $\delta$ increases, and $\delta=0$ corresponds to the null hypothesis in \eqref{eq_hypothesis}. For these covariance matrices the limiting spectral distribution $H^{(1)}=H^{(2)}=\delta_{\lcb 1\rcb}$, where $\delta_{\lcb 1 \rcb}$ is the Dirac measure at point $1$. Thus, eigenvalues $\alpha_{j}$ satisfying $\alpha_j > 1+ \sqrt{y}$ are supercritical eigenvalues. We will set $p/n \leq 10$. Therefore, the eigenvalues $\alpha_1=10+\delta$ with $\delta\in \mathbb{N}_0$ and $\alpha_2=\ldots =\alpha_{11}=7$ are supercritical eigenvalues satisfying assumption \ref{ass_supercritical_ev_generalized}. The empirical rejection probabilities of the test \eqref{eq_rejection_region}
are displayed in Figure \ref{fig_empirical_rej_model_1} for $(p,n)=(500,100)$ (left panel) and $(p,n)=(1000,100)$ (right panel), where the entries $z_{1,1}^{(i)}$ are standard normal distributed (first row), $t_{7}/\sqrt{7/5}$ (second row) and $\Laplace(0, 1/\sqrt{2})$ (third row) and various values of $\delta$. For the sake of comparison, we also display the empirical rejection probabilities of the proposed test in \cite{li_and_chen_2012} as well as in \cite{zhang2022asymptotic}. 
We observe that the three tests keep the nominal level $\alpha=0.05$ in all cases under consideration. Furthermore, our new test \eqref{eq_rejection_region} consistently outperforms the proposed test in \cite{zhang2022asymptotic}. In particular, the new test \eqref{eq_rejection_region}  is the most powerful test among the three competitors in all six scenarios. Moreover, in the case that the data are not (standard) normally distributed, we can see from Figure \ref{fig_empirical_rej_model_1} that the proposed test in \cite{zhang2022asymptotic} (dashed line) suffers a considerable loss in power, such that the test proposed in \cite{li_and_chen_2012} is more powerful. 

\medskip

Next, we examine the power of the  test \eqref{eq_rejection_region} for covariance matrices, which share the same supercritical eigenvalues but are distinguished in their bulk spectrum. For this purpose, we consider
\begin{align}\label{eq_model_2}
\begin{split}
\bfSigma_n^{(1)} & =\diag(11, 7,7,7, \underbrace{1, \ldots, 1}_{p-4}), \\ 
\bfSigma_n^{(2)}(\delta) &= \diag(11, 7,7,7, \underbrace{1+\frac{\delta}{20}, \ldots, 1+\frac{\delta}{20}}_{\frac{p-4}{2}}, \underbrace{1-\frac{\delta}{20}, \ldots, 1-\frac{\delta}{20}}_{\frac{p-4}{2}})
\end{split}
\end{align}
as population covariance matrices, where the bulk changes with $\delta$, which varies from $0$ to $19$. The case $\delta=0$ corresponds to the null hypothesis in \eqref{eq_hypothesis}.

    \begin{figure}[p]
		\begin{center}	            
        \includegraphics[scale=0.36]{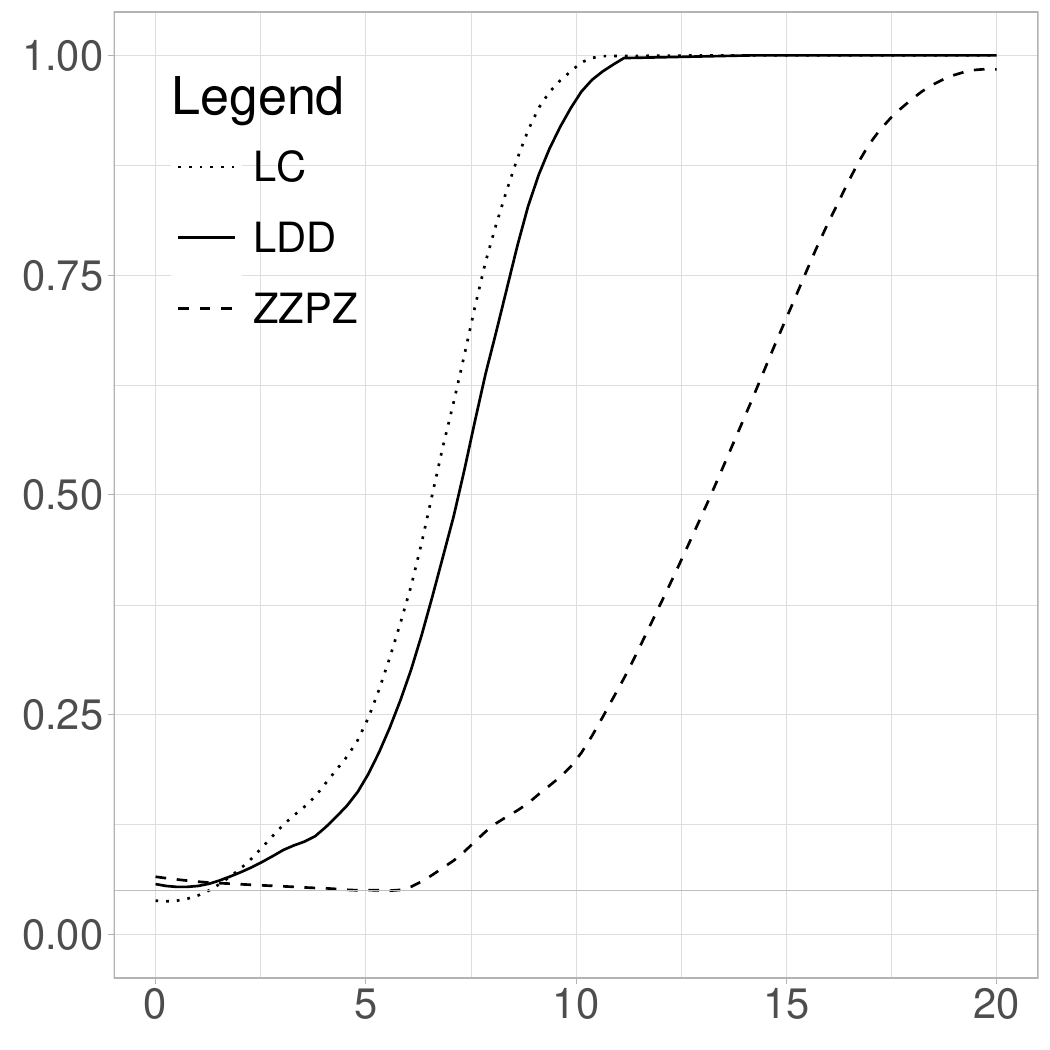}
         ~~~~ ~~~~
        \includegraphics[scale=0.36]{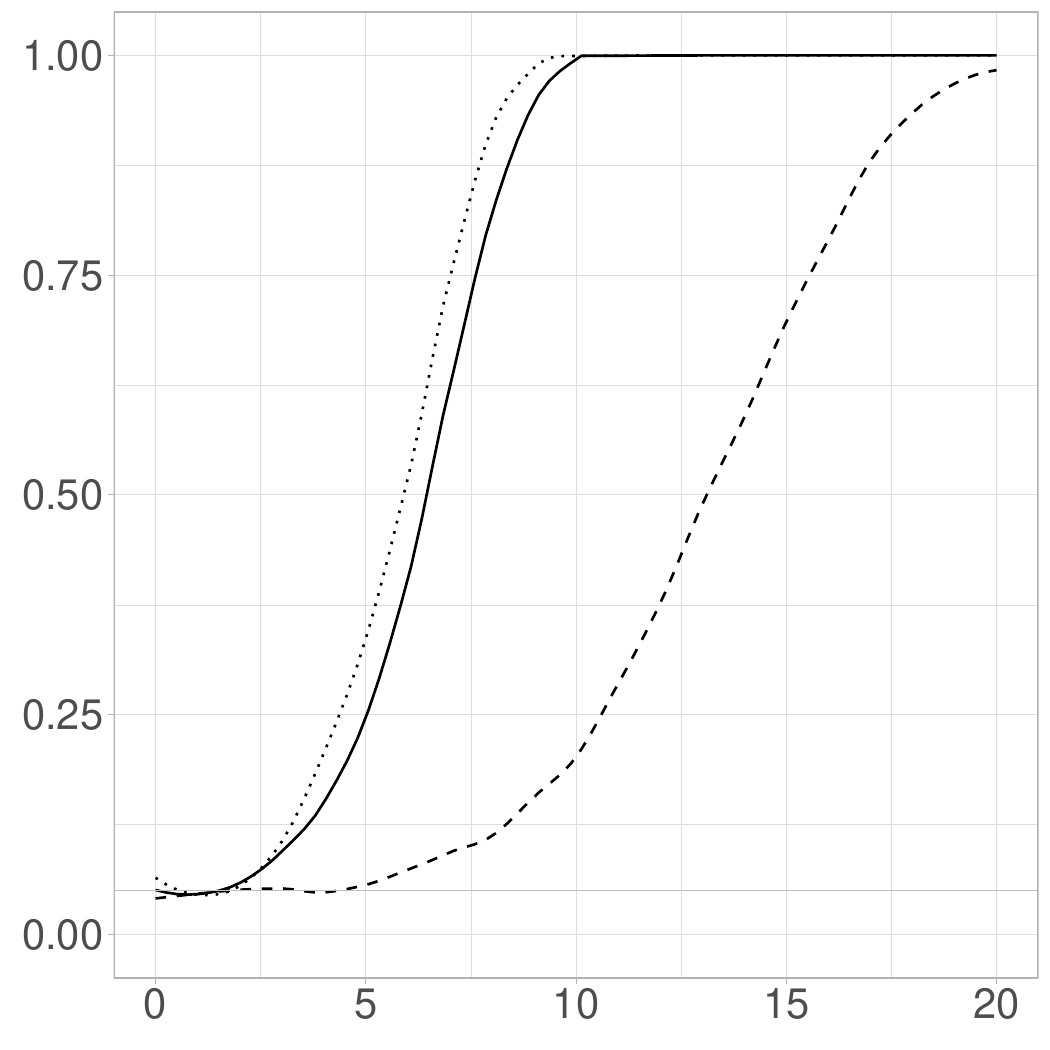}
        \includegraphics[scale=0.36]{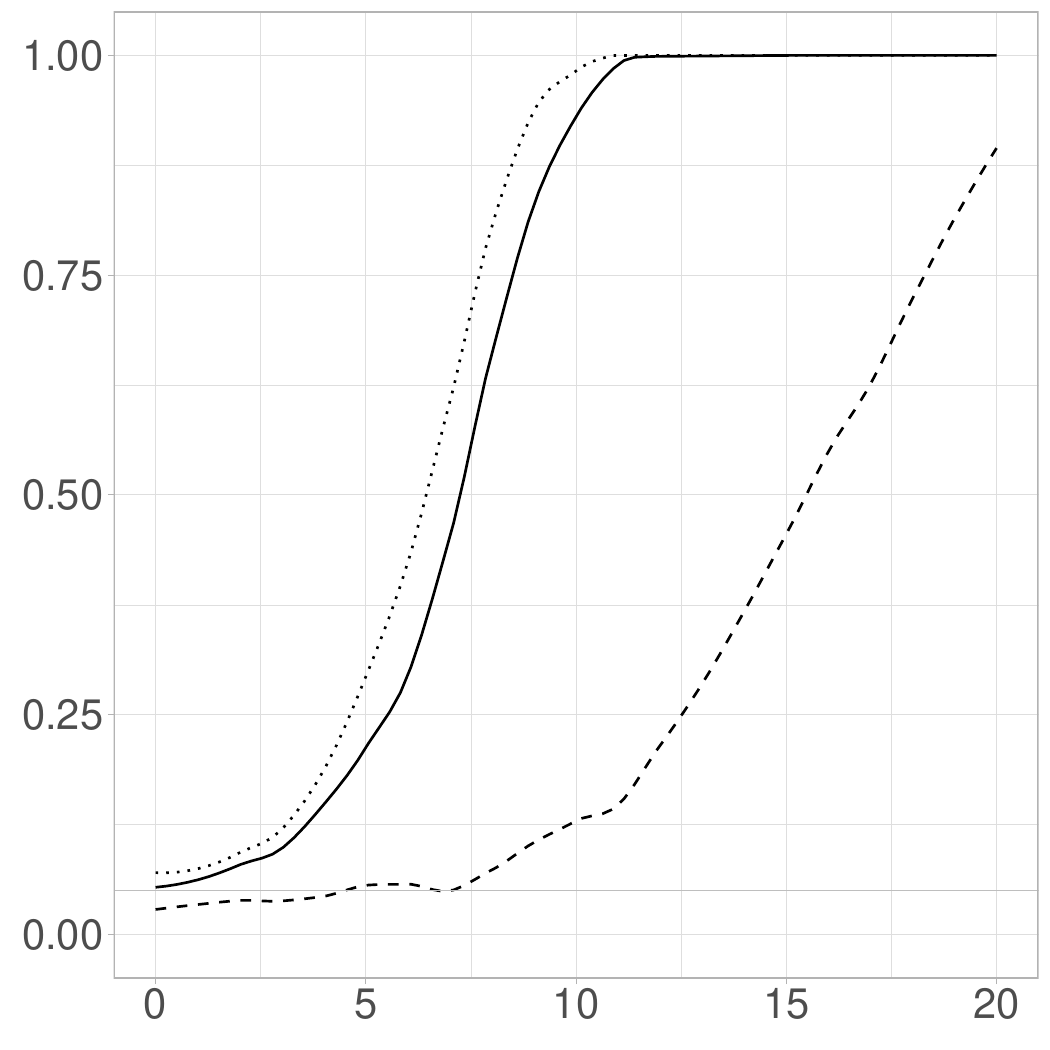}
         ~~~~ ~~~~
        \includegraphics[scale=0.36]{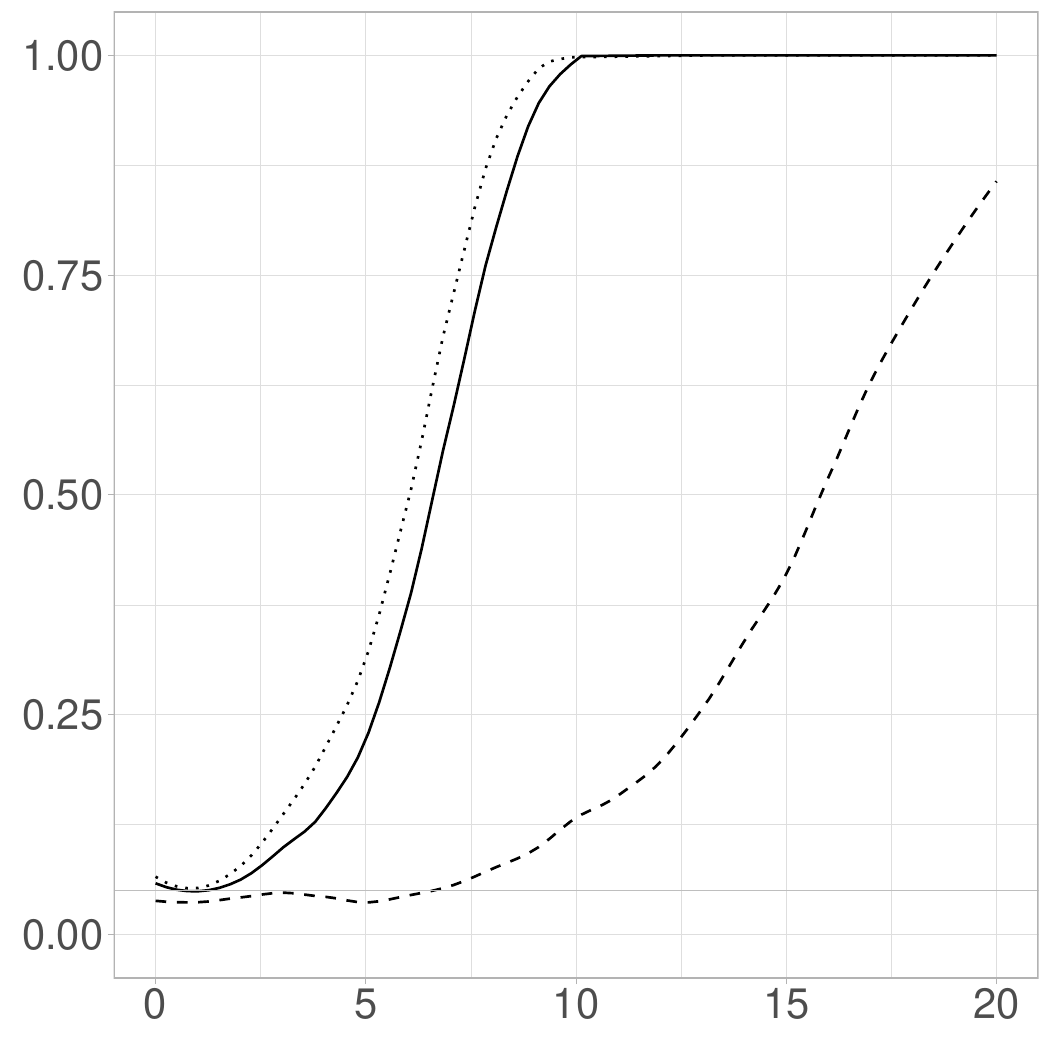}
        \includegraphics[scale=0.36]{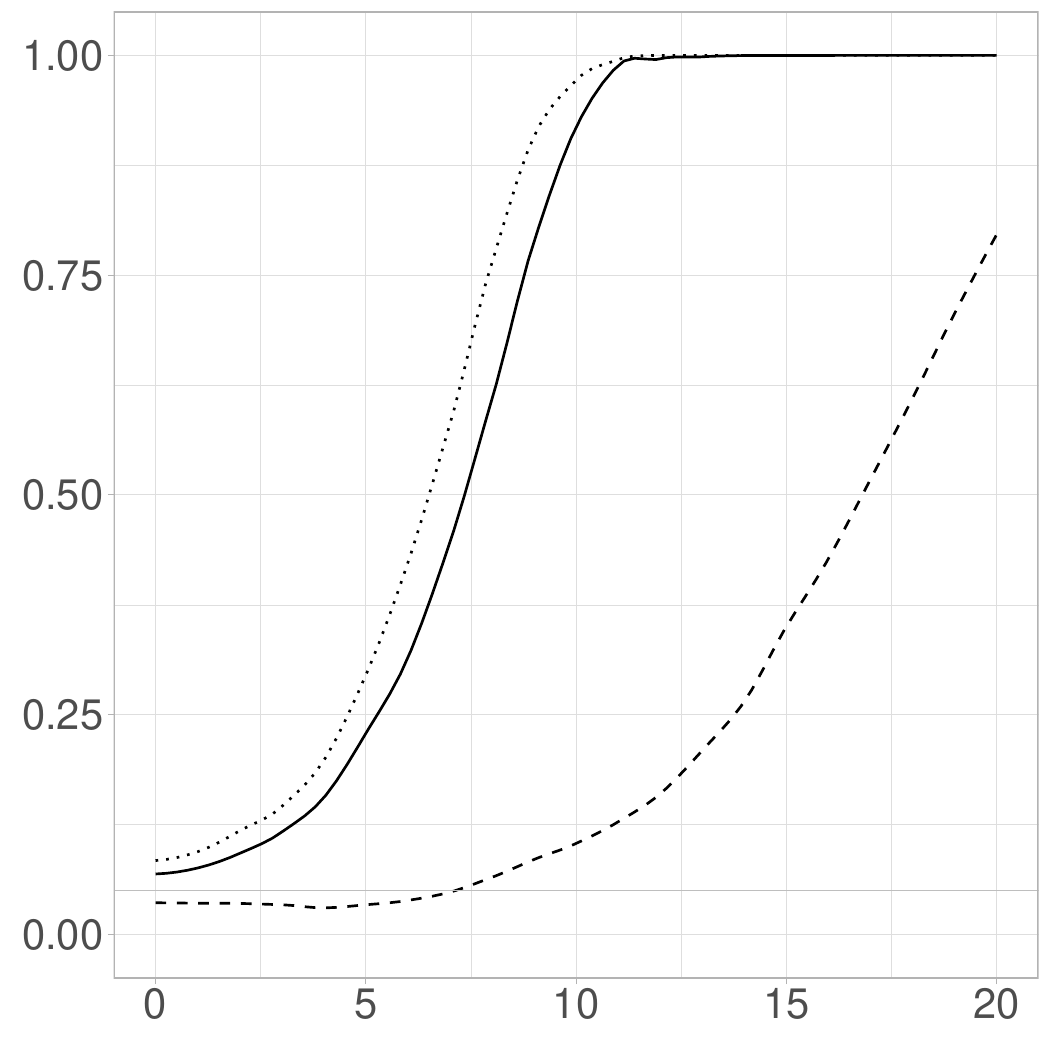}
         ~~~~ ~~~~
        \includegraphics[scale=0.36]{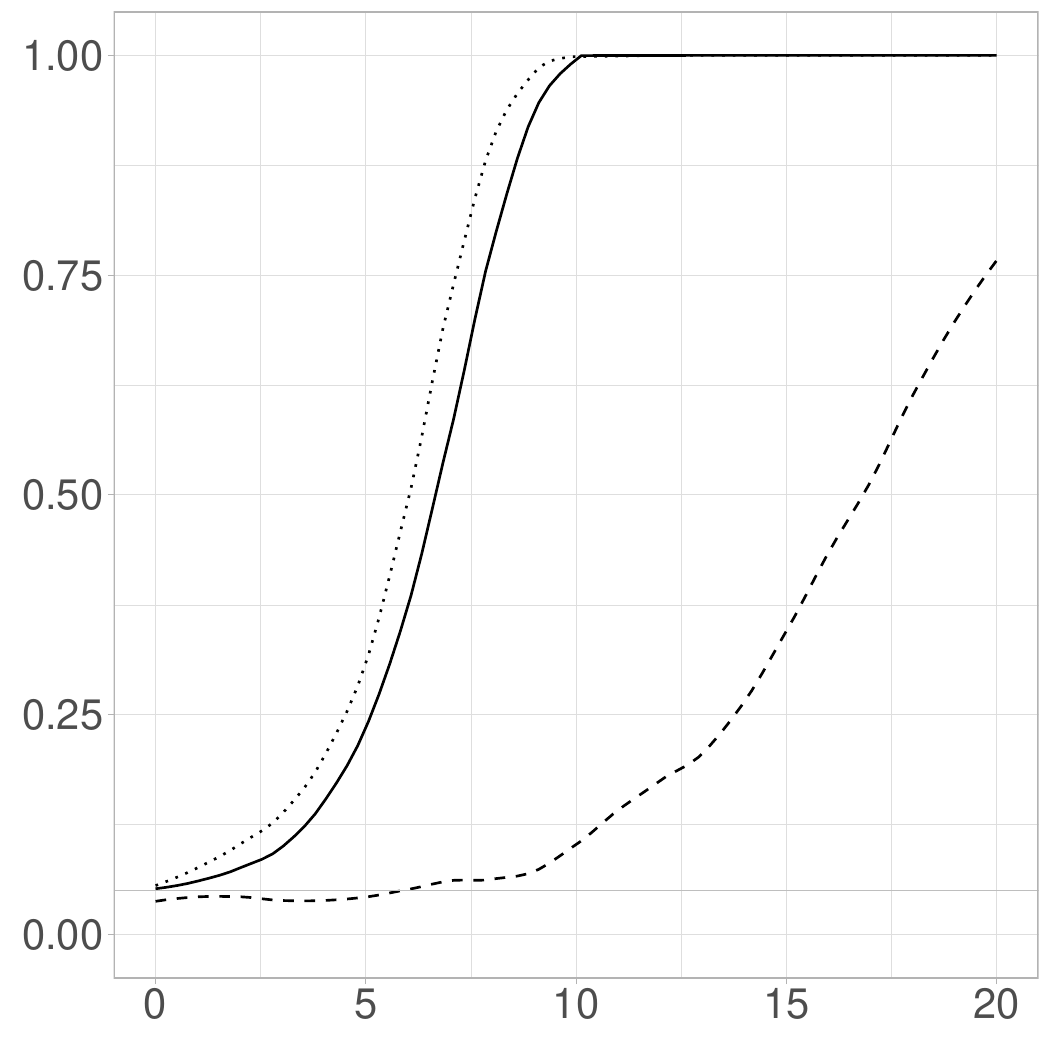}
		\end{center}
		\caption{\it Simulated rejection probabilities of the test \eqref{eq_rejection_region} (solid line, LDD), the test of \cite{zhang2022asymptotic} (dashed line, ZZPZ) and the test of \cite{li_and_chen_2012} (dotted line, LC) in model \eqref{eq_model_2}. Left panels:   $(p,n)=(500, 100)$. Right panels: $(p,n)=(1000, 100)$. First row: $\bfz_{1,1}^{(i)} \sim \mathcal{N}(0,1)$, second row: $\bfz_{1,1}^{(i)}  \sim t_{7}/\sqrt{7/5}$, third row: $\bfz_{1,1}^{(i)}  \sim \Laplace(0, 1/\sqrt{2})$.}\label{fig_empirical_rej_model_2}
	\end{figure}
The limiting spectral distribution in the first sample is given by  $H^{(1)}=\delta_{\lcb 1\rcb}$, and the eigenvalues $\alpha_1=11$ and $\alpha_2=\ldots =\alpha_4=7$ are supercritical eigenvalues. The limiting distribution in the second sample is given by $H^{(2)}$ is defined as $\frac{1}{2} \big (  \delta_{\{ 1+\frac{\delta}{20}\} }+ \delta_{\{ 1-\frac{\delta}{20}\} } \big )$. Consequently, eigenvalues $\alpha_j$ satisfying the inequality  $2 > \frac{y}{400} \big ( {(\alpha_j-\frac{1}{20})^{-2} } + {(\alpha_j+\frac{1}{20})^{-2}}\big )  $ are supercritical eigenvalues. Because $p/n \leq 10$, the eigenvalues $\alpha_1=11$ and $\alpha_2=\ldots =\alpha_4=7$ are supercritical for the covariance matrix $\bfSigma_n^{(2)}$ as well. The
empirical rejection probabilities  of the three tests  are  displayed in Figure \ref{fig_empirical_rej_model_2}. In all cases under consideration, the  nominal level $\alpha=0.05$ is reasonably approximated by the three methods. With increasing $\delta$, the  test \eqref{eq_rejection_region} and the test proposed in  \cite{li_and_chen_2012} detect the alternative with reasonable power. In particular, for $\delta\geq 10$, both tests reject the null hypothesis with probability close to $1$.
The test proposed in  \cite{li_and_chen_2012} has slightly greater power than the test \eqref{eq_rejection_region}, because it is tailored to dense alternatives as specified in \eqref{eq_model_2}.  In contrast, the test proposed  in \cite{zhang2022asymptotic} has much smaller power. If  $\delta \leq 4 $, the rejection probability of their test remains at the nominal level $\alpha=0.05$. For  $\delta \geq 5$, the power of their test increases, but it does not reach the same level as for the other proposed tests.

\paragraph*{Numerical experiments for three leading sample eigenvalue}
    In this section, we present some numerical results illustrating  the performance of the test \eqref{eq_generalized_rejection_region} that can detect differences across multiple population eigenvalues. Specifically, we examine the test statistic $T_{n, FC,m}$ for $m=3$. Because the $p$-value of the distribution $\lnorm \mathbf{W} \rnorm_1$ with $\mathbf{W}\sim\mathcal{N}_3(\mathbf{0}_3, \bfSigma_{E,3})$ is unknown, the $p$-value will be determined in each run with the help of resampling.  For this procedure, let $\hat{\bfSigma}_{E,3,n}$ be the consistent estimator of $\bfSigma_{E,3}$ from Lemma \ref{lem_consistent_estimators}. Then, by resampling the random variable $\lnorm \mathbf{W}_r^\star \rnorm_1$ with $\mathbf{W}_r^\star \sim \mathcal{N}_3(\mathbf{0}_3, \hat{\bfSigma}_{E,3,n})$ 
    $r=10000$ times, we determine the corresponding  $p$-value. 
  \medskip
  
    In the following, we consider two different models. First, we consider
     the population covariance matrices \begin{align}
     \label{eq_model_3}
     \begin{split}
        \bfSigma_n^{(1)} &= \diag(10, 8, 7, \underbrace{6, \ldots, 6}_{8}, \underbrace{1, \ldots, 1}_{p-11}),  \\
         \bfSigma_n^{(2)}(\delta) &= \diag(10+\delta, 8+\delta, 7+\delta, \underbrace{6, \ldots, 6}_{8}, \underbrace{1, \ldots, 1}_{p-11}).
             \end{split}
    \end{align}
 Here, the limiting distributions of the bulk spectrum are given by $H^{(1)} = H^{(2)}=\delta_{\lcb 1\rcb}$, and   
  it follows from the discussion for model \eqref{eq_model_1} that the eigenvalues $\alpha_1=10+\delta, \alpha_2=8+\delta, \alpha_3=7+\delta$ with $\delta\in \mathbb{N}_0$ and $\alpha_4=\ldots=\alpha_{11}=6$ are supercritical eigenvalues.

Second, we consider
    the population covariance matrices 
    \begin{align}
    \label{eq_model_4}
    \begin{split}
    \bfSigma_n^{(1)} &= \diag(20, 15, 13, 12, d_5, \ldots, d_p), \quad d_i=3-2.5/(p-5)\cdot(i-5), 
    \\
    \bfSigma_n^{(2)}(\delta) &= \diag(20+\delta, 15+\delta, 13+\delta, 12, d_5, \ldots, d_p), \quad d_i=3-2.5/(p-5)\cdot(i-5).
        \end{split}
    \end{align}
  Here, the limiting distributions of the bulk spectrum are given by $H^{(1)} = H^{(2)}=\mathcal{U}_{\lsb 0.5, 3\rsb}$. Therefore, we have
    \begin{align*}
        \psi^{(i)} (\alpha) = \alpha(1-y) - y\alpha^2 \frac{1}{2.5} \log \lb \frac{\alpha-3}{\alpha-0.5}\rb,
    \end{align*}
    which leads to
    \begin{align*}
        \psi^{(i)^\prime}(\alpha) = 1-y\lb \frac{4}{5} \alpha \log\lb \frac{\alpha-3}{\alpha-0.5}\rb + \alpha^2 \frac{1}{\lb \alpha-3\rb \lb \alpha-0.5\rb}+1\rb.
    \end{align*}
We will investigate $p/n\leq 10$, and  one can verify that the inequality 
    \begin{align*}
        \psi^{(i)^\prime}(\alpha) > 0 \Leftrightarrow \alpha > 8.31816
    \end{align*}
    holds. Therefore, the eigenvalues $\alpha_1 = 20+\delta, \alpha_2=15+\delta, \alpha_3=13+\delta$ and $\delta_4=12$ for $\delta\in\mathbb{N}_0$ are supercritical eigenvalues satisfying assumption \ref{ass_supercritical_ev_generalized}. 
    
  \medskip

  For both models \eqref{eq_model_3} and \eqref{eq_model_4}, the three largest  eigenvalues of the population matrix $ \bfSigma_n^{(2)}$ change as $\delta$ grows, where  $\delta=0$ corresponds to the null hypothesis in \eqref{eq_hypothesis}.  
The empirical rejection probabilities of the test \eqref{eq_generalized_rejection_region} for models \eqref{eq_model_3} and \eqref{eq_model_4} are displayed in Figure \ref{fig_empirical_rej_model_3} and \ref{fig_empirical_rej_model_4}, respectively. In these figures, we also display the empirical rejection probabilities of the test \eqref{eq_rejection_region} (which uses only one leading eigenvalue) and of the tests proposed in    \cite{li_and_chen_2012} and \cite{zhang2022asymptotic}. All tests provide a reasonable approximation of the nominal level $\alpha=0.05$ under the null hypothesis. As expected, the test  \eqref{eq_generalized_rejection_region} (dashed line) has more power than the test  \eqref{eq_rejection_region} (solid line). In particular, the test \eqref{eq_generalized_rejection_region} is in all cases the most powerful test among the four competitors. 
Furthermore, if the data are (standard) normal distributed, the proposed test in \cite{zhang2022asymptotic} (dotted line) has more power than the new test \eqref{eq_rejection_region}. However, when the data are not (standard) normal distributed, we can observe from Figure \ref{fig_empirical_rej_model_3} and \ref{fig_empirical_rej_model_4} that the proposed test in \cite{zhang2022asymptotic} suffers a considerable loss in power, and in this case the new test \eqref{eq_rejection_region} is more powerful.

   \medskip
   
    Further simulation results are presented in Section \ref{sec_appendix_simulation} of the Appendix. There, we consider covariance matrices, which have supercritical eigenvalues with multiplicity greater than $1$, such that assumption \ref{ass_supercritical_ev_generalized} is not satisfied. Similar to models \eqref{eq_model_3} and \eqref{eq_model_4}, the three largest eigenvalues of the population matrix change as $\delta$ grows.
    Although assumption \ref{ass_supercritical_ev_generalized} is not satisfied, all tests provide in all cases under consideration a reasonable approximation of the nominal level $\alpha=0.05$ under the null hypothesis. In particular, the test $\eqref{eq_generalized_rejection_region}$ is in all cases the most powerful test among the four competitors.

    \begin{figure}[p]
		\begin{center}	            
        \includegraphics[scale=0.36]{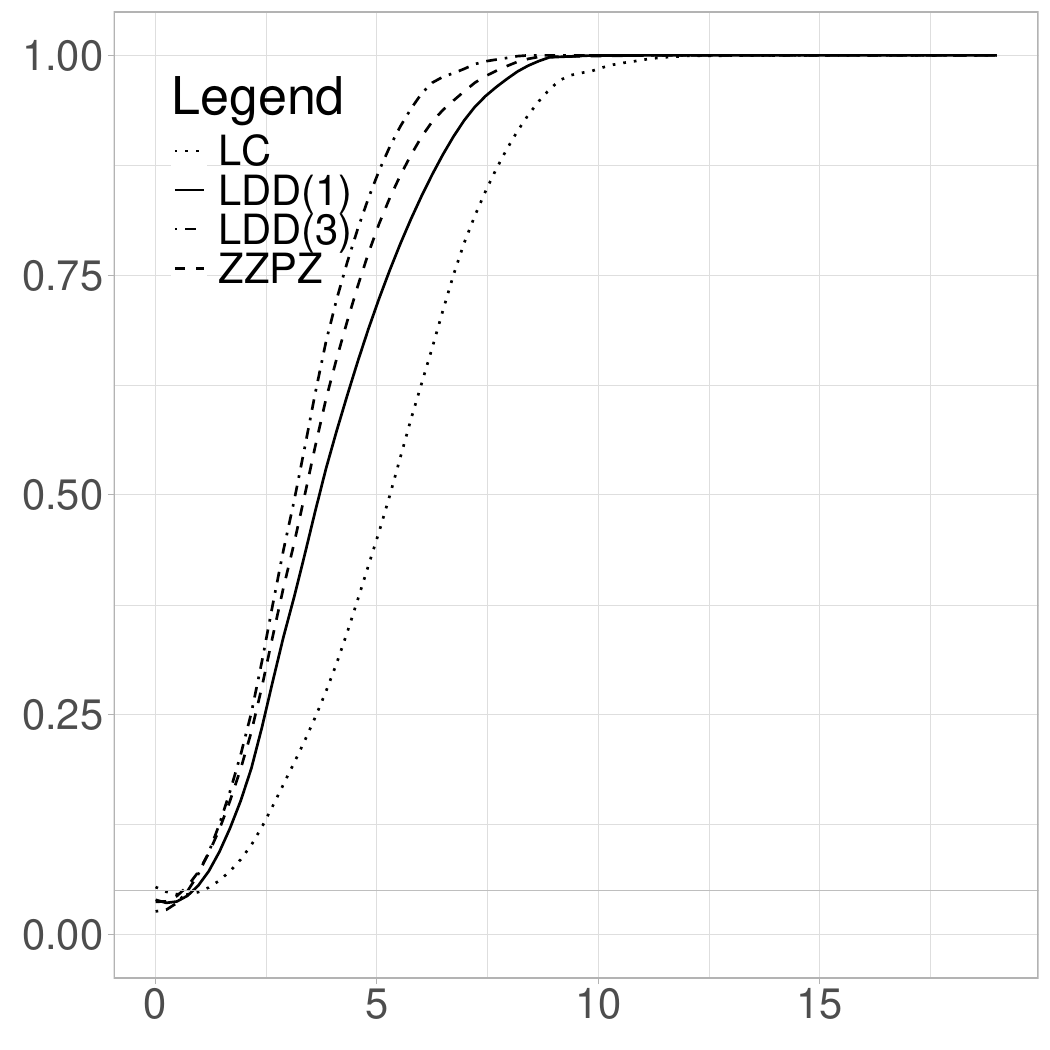}
         ~~~~ ~~~~
        \includegraphics[scale=0.36]{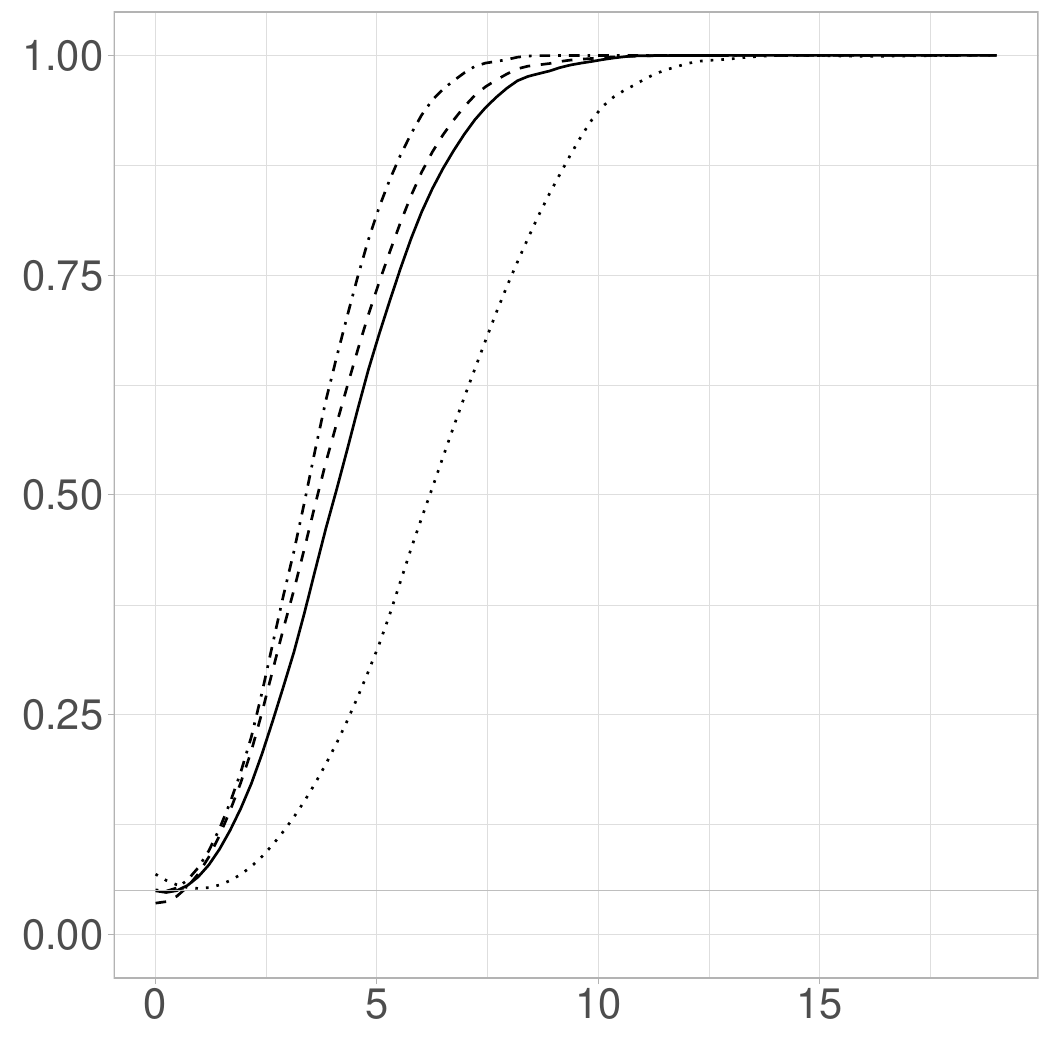}
        \includegraphics[scale=0.36]{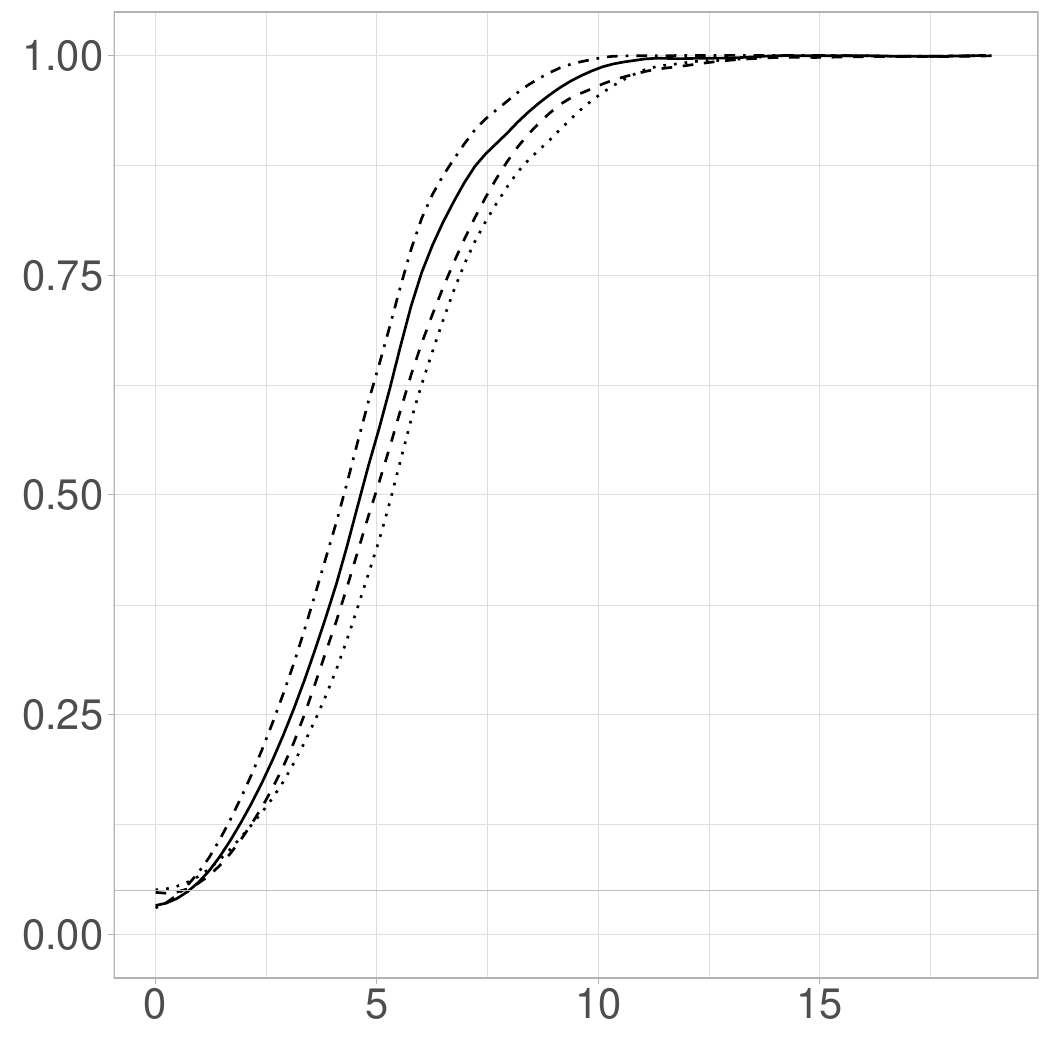}
         ~~~~ ~~~~
        \includegraphics[scale=0.36]{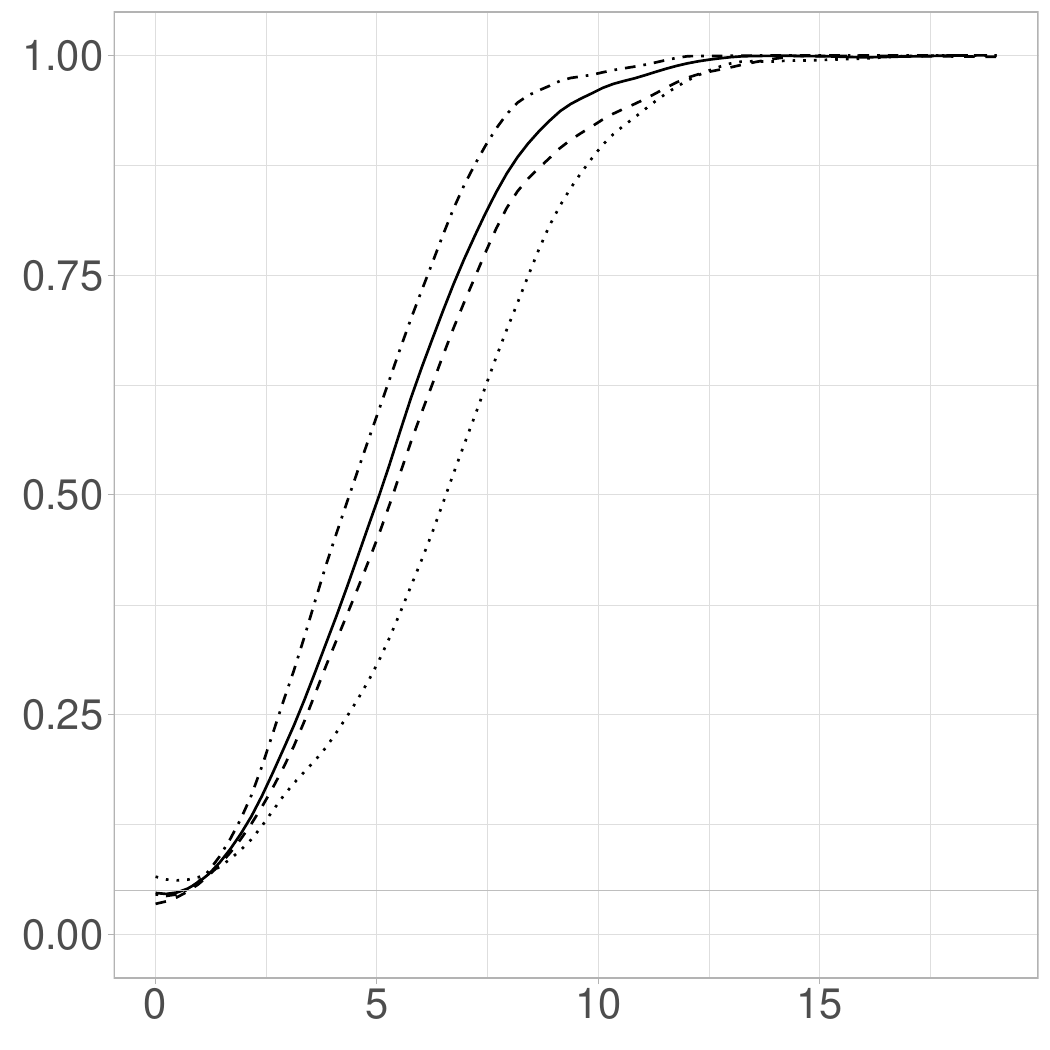}
        \includegraphics[scale=0.36]{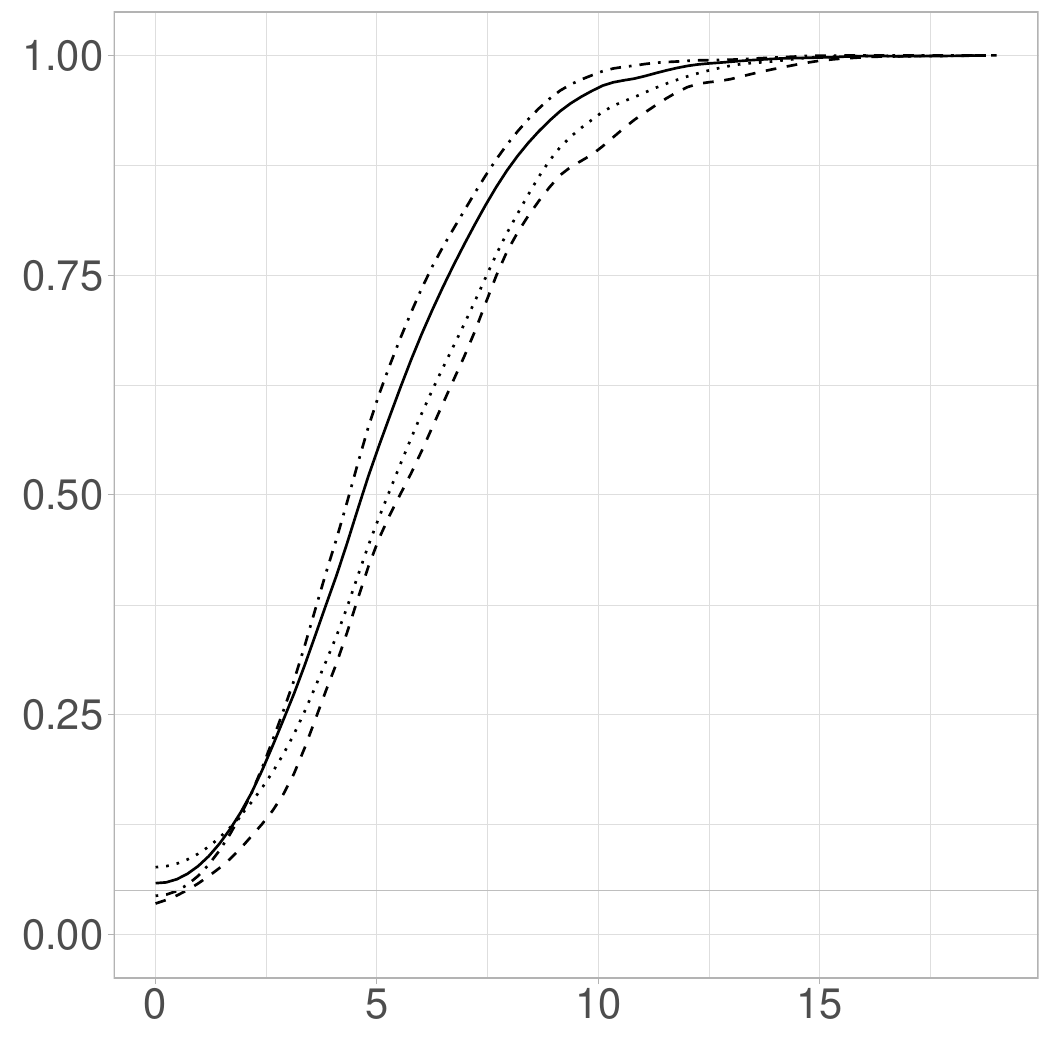}
         ~~~~ ~~~~
        \includegraphics[scale=0.36]{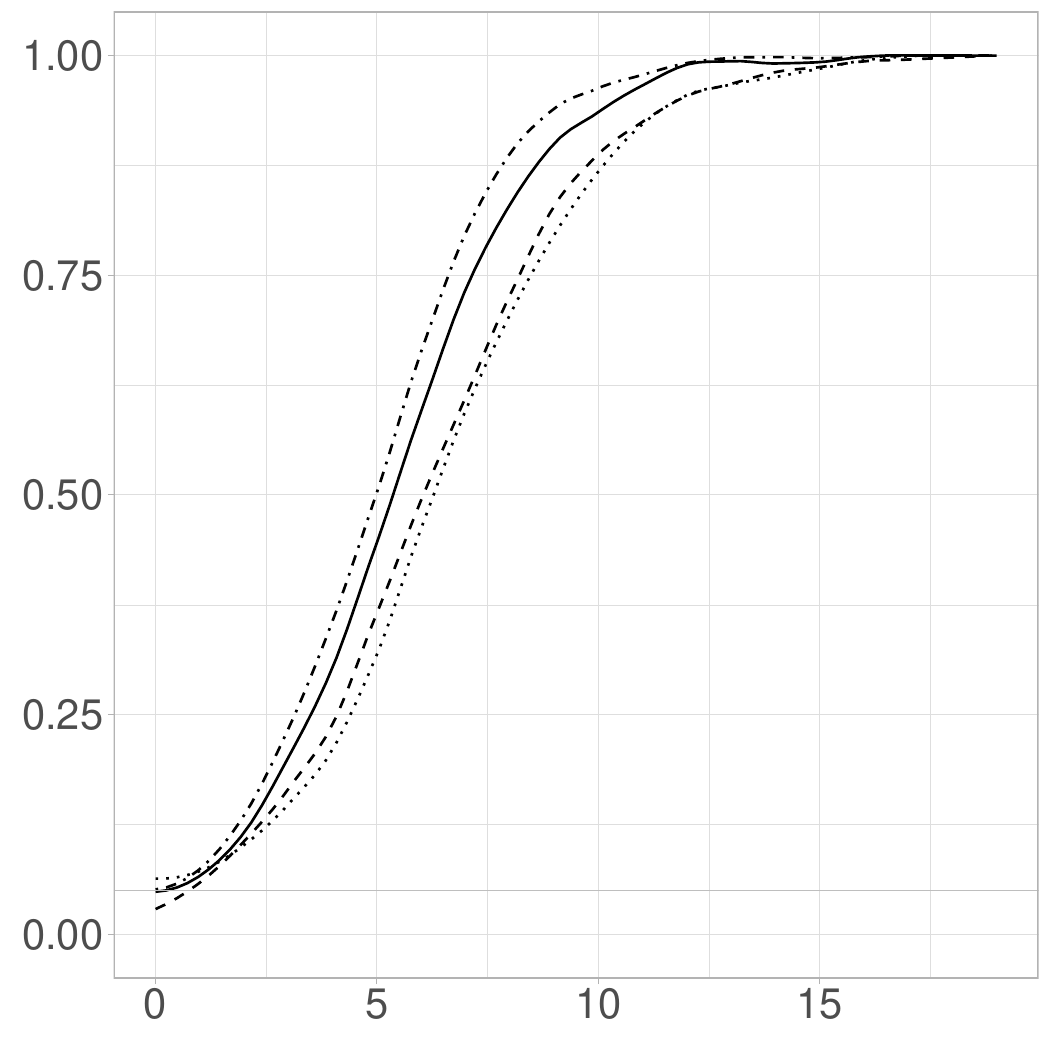}
		\end{center}
		\caption
        {\it  Simulated rejection probabilities of the test \eqref{eq_rejection_region} (solid line, LDD(1)), the test of \eqref{eq_generalized_rejection_region} (dot dashed line, LDD(3)), the test of \cite{zhang2022asymptotic} (dashed line, ZZPZ) and the test of \cite{li_and_chen_2012} (dotted line, LC)  in model \eqref{eq_model_3}. Left panels:   $(p,n)=(500, 100)$. Right panels: $(p,n)=(1000, 100)$. First row: $\bfz_{1,1}^{(i)} \sim \mathcal{N}(0,1)$, second row: $\bfz_{1,1}^{(i)}  \sim t_{7}/\sqrt{7/5}$, third row: $\bfz_{1,1}^{(i)}  \sim \Laplace(0, 1/\sqrt{2})$.}\label{fig_empirical_rej_model_3}
	\end{figure}

    \begin{figure}[p]
		\begin{center}	            
        \includegraphics[scale=0.36]{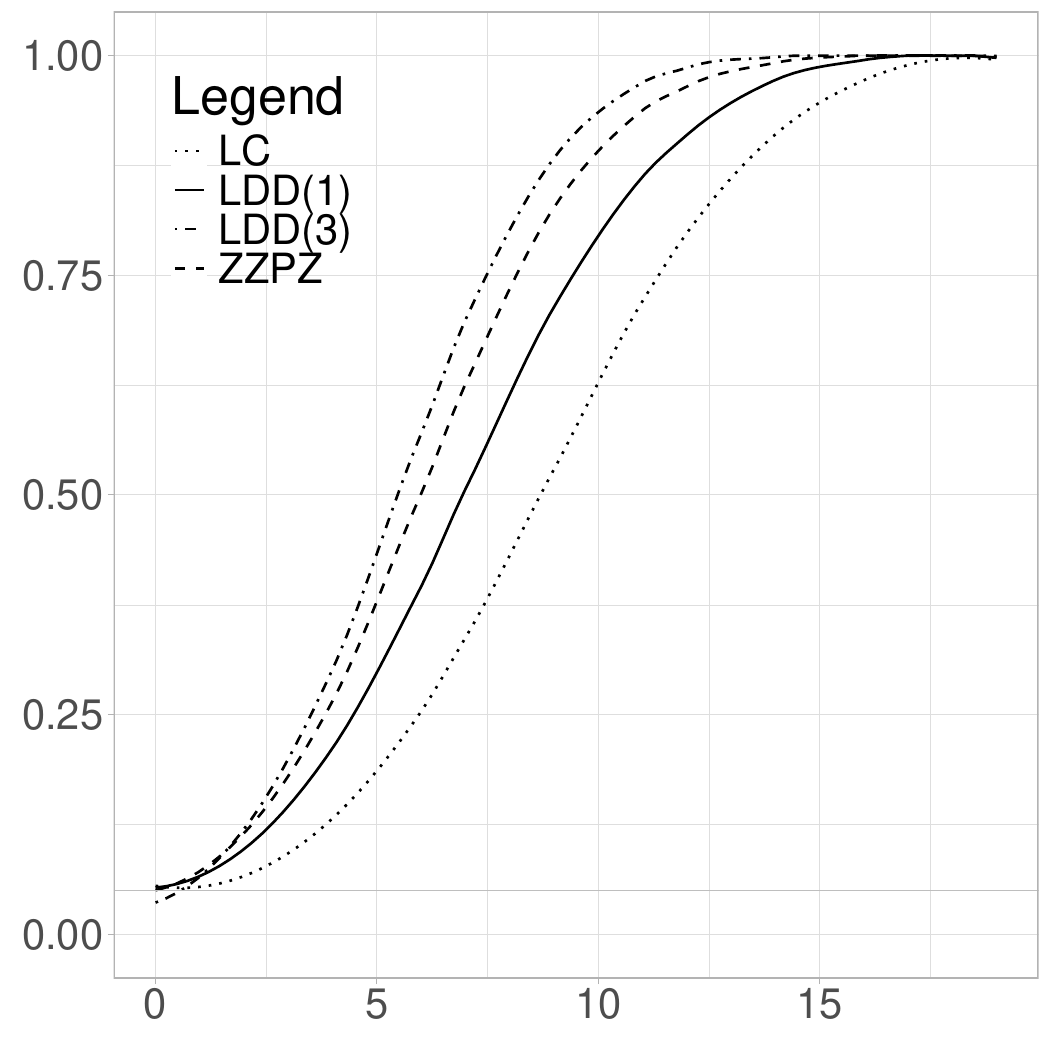}
        ~~~~ ~~~~
        \includegraphics[scale=0.36]{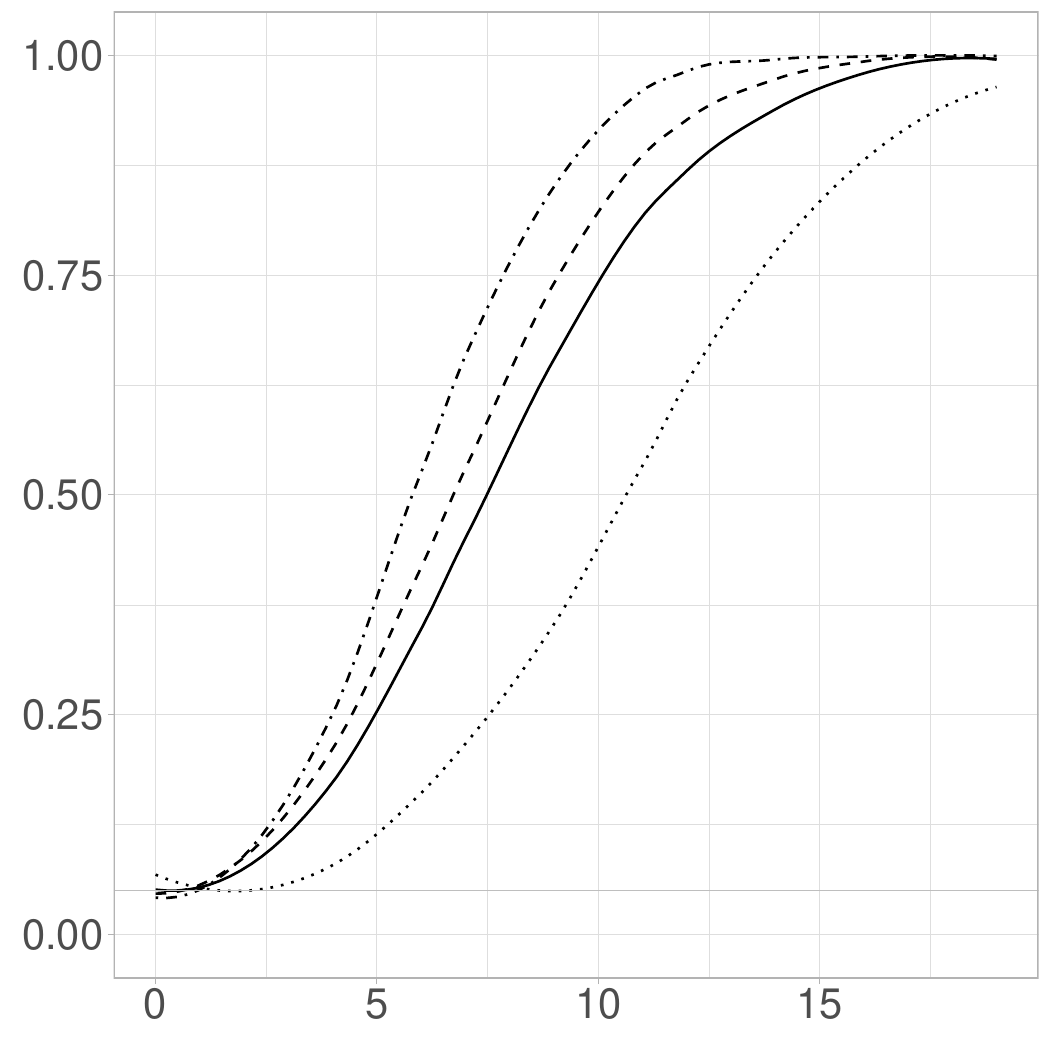}
        \includegraphics[scale=0.36]{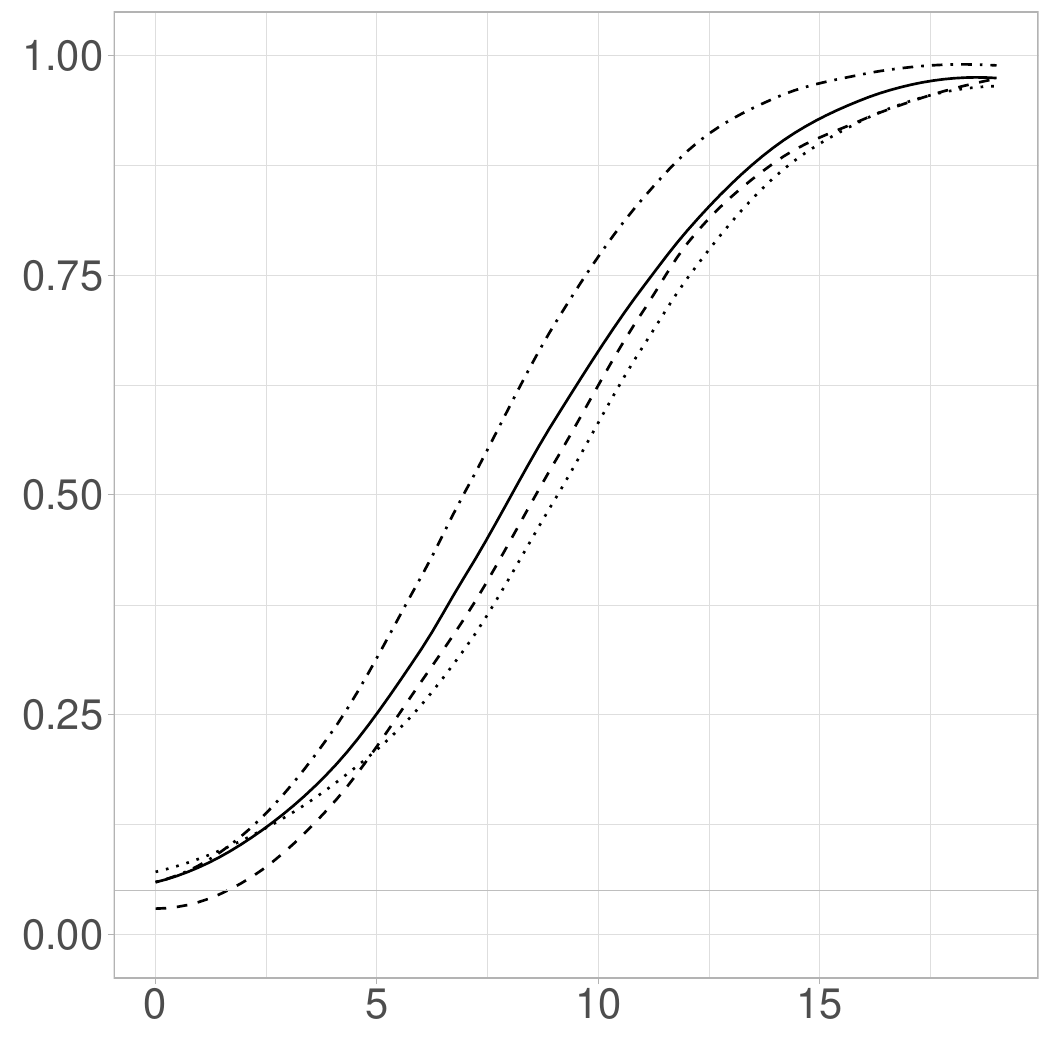}
        ~~~~ ~~~~
        \includegraphics[scale=0.36]{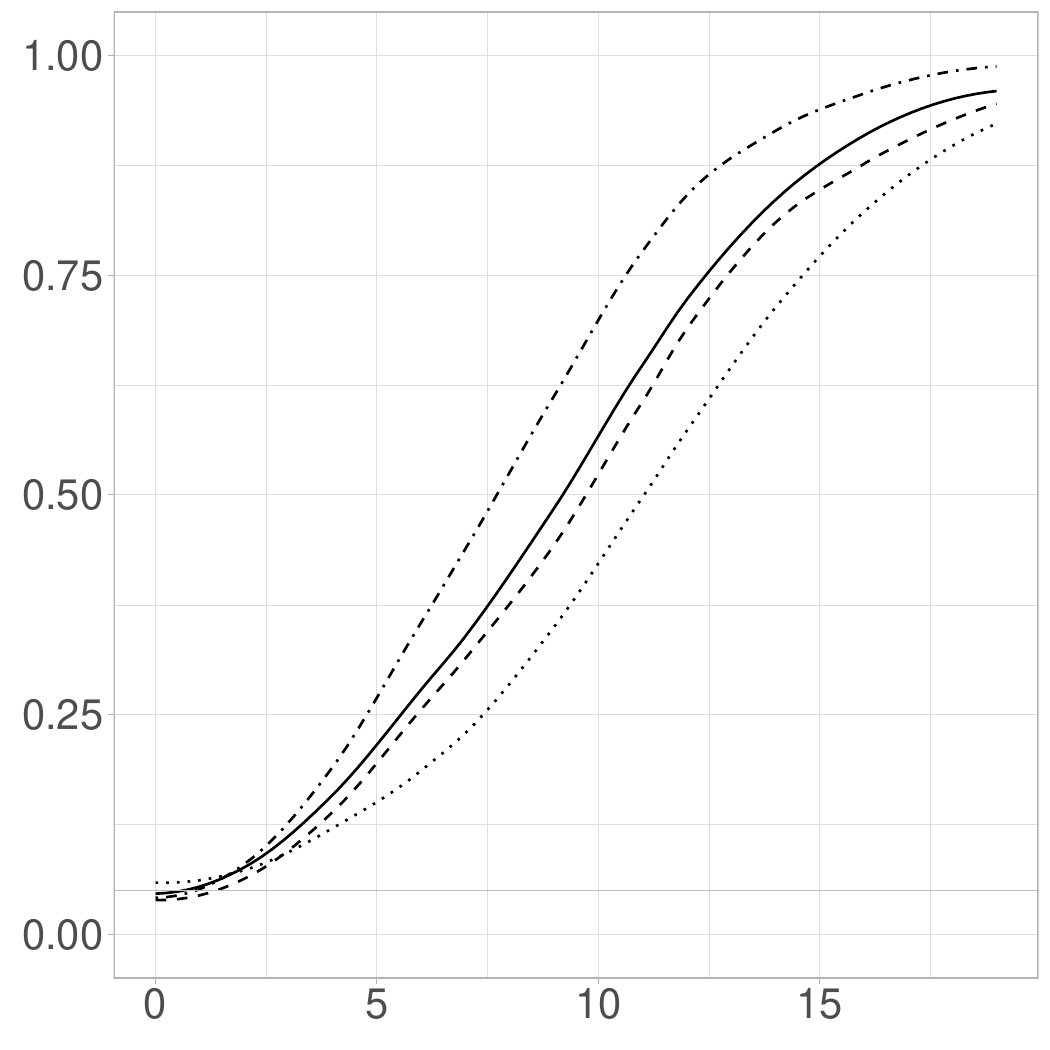}
        \includegraphics[scale=0.36]{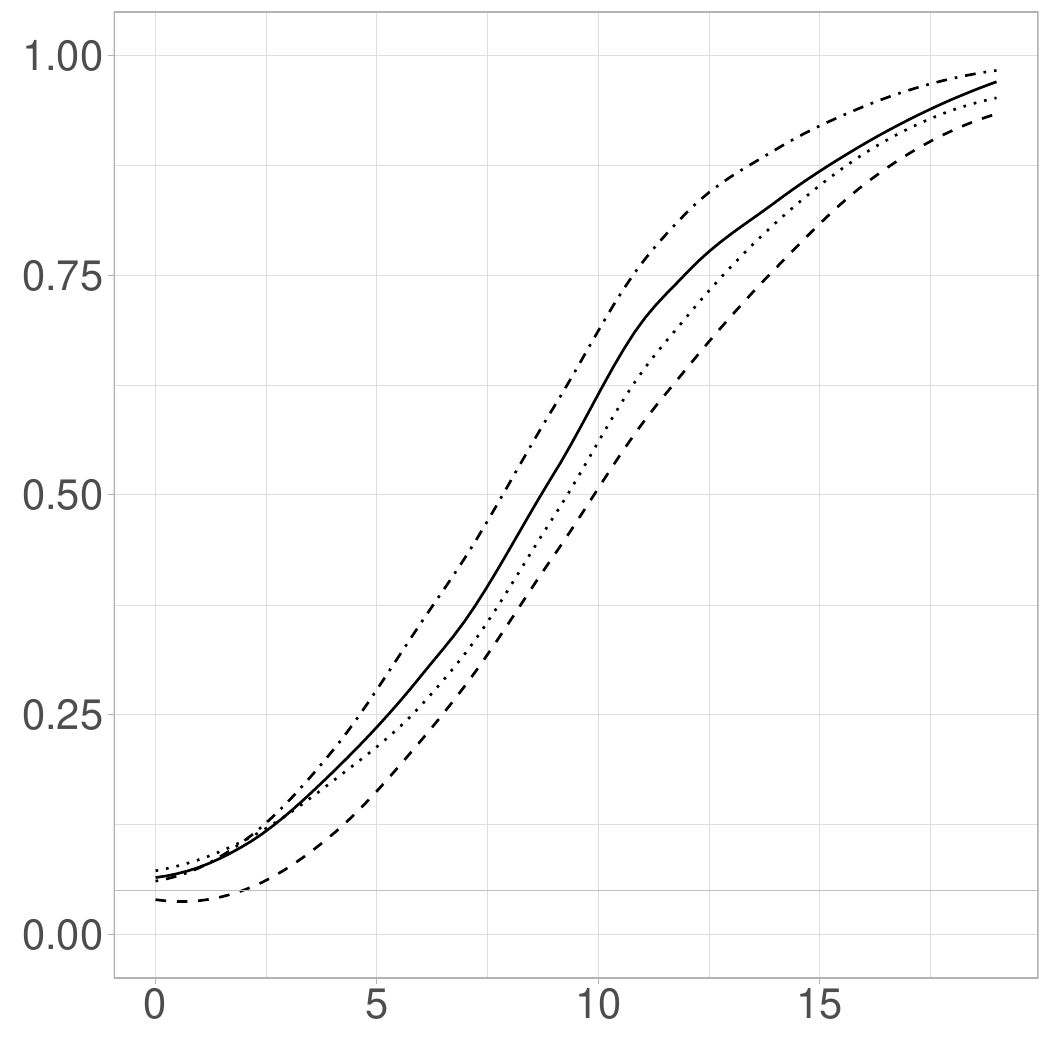}
        ~~~~ ~~~~
        \includegraphics[scale=0.36]{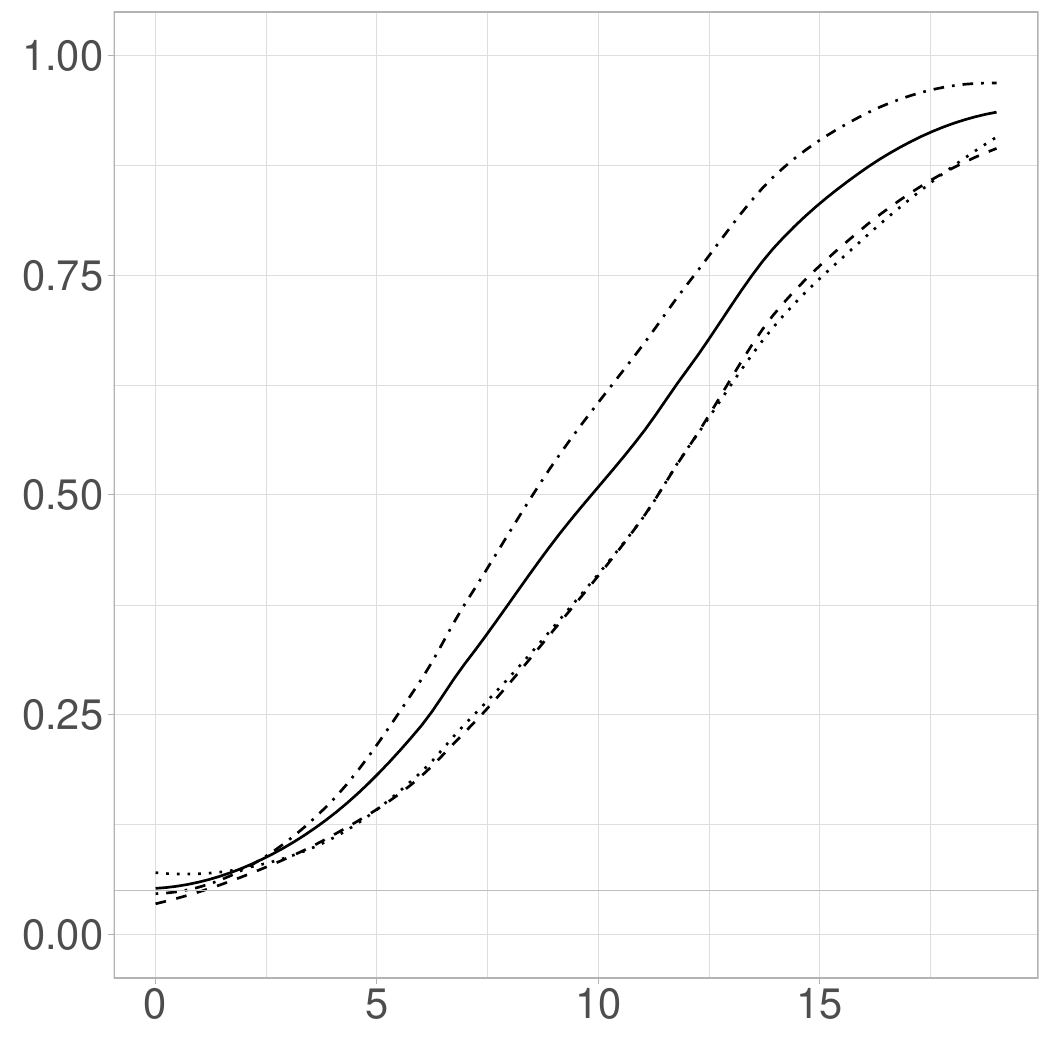}
		\end{center}
		\caption
        {Simulated rejection probabilities of the test \eqref{eq_rejection_region} (solid line, LDD(1)), the test of \eqref{eq_generalized_rejection_region} (dot dashed line, LDD(3)), the test of \cite{zhang2022asymptotic} (dashed line, ZZPZ) and the test of \cite{li_and_chen_2012} (dotted line, LC)  in model \eqref{eq_model_4}. Left panels:   $(p,n)=(500, 100)$. Right panels: $(p,n)=(1000, 100)$. First row: $\bfz_{1,1}^{(i)} \sim \mathcal{N}(0,1)$, second row: $\bfz_{1,1}^{(i)}  \sim t_{7}/\sqrt{7/5}$, third row: $\bfz_{1,1}^{(i)}  \sim \Laplace(0, 1/\sqrt{2})$.}\label{fig_empirical_rej_model_4}
	\end{figure}
\newpage 
\section{Proofs} \label{sec_proofs}
\subsection{Outline for the proofs of Theorem \ref{thm_asympt_ind} and Theorem \ref{thm_generalized_asympt_ind}} \label{sec_proof_outline}

The proofs of Theorems \ref{thm_asympt_ind} and \ref{thm_generalized_asympt_ind} are technically demanding and require substantial preparation, along with several reduction steps. To convey the main ideas without delving too much  into technical details, we provide an overview of the key steps in the proofs. Because the proof of Theorem \ref{thm_asympt_ind} closely follows that of Theorem \ref{thm_generalized_asympt_ind}, we confine our discussion to the main steps of proving the latter.

The proof of Theorem \ref{thm_generalized_asympt_ind} hinges on two auxiliary results. First, we need the consistency of the estimator $\hat{\sigma}_{n,1}$  of the variance $\Var(B_n^{(1)}+B_n^{(2)}-2C_n)$ under the null hypothesis, which is stated in  Lemma \ref{lem_consistent_estimators} below. This consistency allows us to replace the estimator $\hat{\sigma}_{n,1}$ in the definition of $T_{n,1}$ with its population counterpart $\sigma_{n,1}$. We denote the corresponding (infeasible) approximate of $T_{n,1}$ by $\breve{T}_{n,1}.$ As a result, we can concentrate on the limiting distribution of $(\breve{T}_{n,1}, \tilde{\mathbf{T}}_{n,2,m})^\top$ instead of that of $(T_{n,1}, \tilde{\mathbf{T}}_{n,2,m})^\top$.
Second, we need the asymptotic independence of the random variables $\breve{T}_{n,1}$ and $ \tilde{\mathbf{T}}_{n,2,m}$, which is stated in Lemma \ref{lem_ind_quad_form_frobenius_non_cen}.
 
The technical details for proving Theorem \ref{thm_generalized_asympt_ind} based on Lemma \ref{lem_consistent_estimators} and Lemma \ref{lem_ind_quad_form_frobenius_non_cen} can be found in Section \ref{sec_proof_thm_generalized_asympt_ind}.

A key contribution of our work lies in verifying the asymptotic independence stated in Lemma \ref{lem_ind_quad_form_frobenius_non_cen} and thus, the proof of Theorem \ref{thm_generalized_asympt_ind} is organized around proving this main lemma.  
In the following, we outline the main reduction steps in this proof, which can be found in Section \ref{sec_proof_asymptotical_independence_ind_quad_non_cen}.

As in previous related works \cite{zhang2022asymptotic, bai2008limit}, the supercritical sample eigenvalues are associated with a random quadratic form, meaning that the statement in Lemma \ref{lem_ind_quad_form_frobenius_non_cen} can be reduced to a corresponding statement for these quadratic forms (see Lemma \ref{lem_quad_form_frobenius_non_cen} below). This result, in turn, can be further reduced to the case of centered data $\boldsymbol{\mu}^{(i)}=\mathbf{0}_{p}$ (Lemma \ref{lem_quad_form_frobenius_cen}).
Having established these reductions, Lemma \ref{lem_ind_quad_form_frobenius_non_cen} is eventually implied by Lemma \ref{lem_quad_form_frobenius_cen}. Its proof is based on a martingale representation for $\breve{T}_{n,1}$ and the quadratic forms associated with the leading sample eigenvalues, which facilitates the application of the martingale CLT.
Here, to show the asymptotic independence, the crucial step lies in verifying that the terms contributing to the covariance are asymptotically negligible (see Section \ref{subsec_proof_cross_terms}).
 
\subsection{Proof of Theorem \ref{thm_generalized_asympt_ind}}\label{sec_proof_thm_generalized_asympt_ind}
First, we present the two auxiliary results. For this purpose, we need to introduce some notations and decompose the diagonal matrix $\bfLambda_n^{(i)}=\diag\lb\lambda_{1}(\bfSigma_n^{(i)}), \ldots ,\lambda_{p}(\bfSigma_n^{(i)})\rb$ as 
\begin{align*}
        \bfLambda_n^{(i)}=\begin{pmatrix}
        \bfLambda_S^{(i)} & 0
        \\
        0 & \bfLambda_{P, n, K}^{(i)}
        \end{pmatrix},
    \end{align*}
    where 
    \begin{align*}
        \bfLambda_S^{(i)} = \diag\lb\alpha_1^{(i)}, \ldots, \alpha_{K}^{(i)}\rb
    \end{align*}
   contains the leading $K$ supercritical eigenvalues, and
    \begin{align*}
        \bfLambda_{P,n, K}^{(i)} = \diag\lb\lambda_{K+1}(\bfSigma_n^{(i)}), \ldots, \lambda_{p}(\bfSigma_n^{(i)})\rb
    \end{align*}
   contains the remaining eigenvalues.
Next, we define the $2K$-dimensional random vector
\begin{align*}
        \mathbf{\Lambda}_{K, K}=\Bigg( &\sqrt{n}\lcb\lambda_1(\bfS_n^{(1)})-\theta_{1,n}^{(1)}\rcb, \ldots, \sqrt{n} \lcb\lambda_{K}(\bfS_n^{(1)})-\theta_{K,n}^{(1)}\rcb,
        \\
        &\sqrt{n}\lcb\lambda_1(\bfS_n^{(2)})-\theta_{1,n}^{(2)}\rcb, \ldots, \sqrt{n} \lcb\lambda_{K}(\bfS_n^{(2)})-\theta_{K,n}^{(2)}\rcb \Bigg)^\top,
\end{align*}
where 
$\theta_{j,n}^{(i)}= \psi_{n,K}^{(i)}(\alpha_{j}^{(i)})$ and $\psi_{n,K}^{(i)}$ is defined in \eqref{eq_def_generalized_psi_n}.
\newline
Furthermore, we define $\breve{T}_{n,1}$ as the population counterpart of $T_{n,1}$, where the empirical variance $\hat{\sigma}_{n,1}^2$ is replaced by the population variance $\sigma_{n,1}^2$. In addition, $\breve{T}_{n,1}$ is defined such that $T_{n,1}$ is unbiased under $\mathsf{H}_{0}$ as well as under $\mathsf{H}_{1}$. 
To be more precise, we set
 \begin{align}\label{def_population_t_frob_non_cen}
     \breve{T}_{n,1}=\frac{B_{n}^{(1)}+B_{n}^{(2)}-2C_n-\lnorm \bfSigma_n^{(1)} - \bfSigma_n^{(2)}\rnorm_F^2}{\sigma_{n,1}},
 \end{align}
where
     \begin{align*}
        \sigma_{n,1}^2 &= \sum_{i=1}^{2} \bigg[\frac{4}{n^2} \tr (\bfSigma_n^{(i)^2})+\frac{8}{n}\tr\lb\lcb \bfSigma_n^{(i)^2}-\bfSigma_{n}^{(1)}\bfSigma_{n}^{(2)}\rcb^2\rb
        \\
        &\phantom{{}=\sum_{i=1}^{2}\bigg[{}}+\frac{4(\gamma_4^{(i)}-3)}{n}\tr\lb\lcb\bfSigma_n^{(i)^{1/2}}\lb \bfSigma_n^{(1)}-\bfSigma_n^{(2)}\rb\bfSigma_n^{(i)^{1/2}}\rcb \circ \lcb\bfSigma_n^{(i)^{1/2}}\lb \bfSigma_n^{(1)}-\bfSigma_n^{(2)}\rb\bfSigma_n^{(i)^{1/2}}\rcb \rb\bigg]\nonumber
        \\
        &\phantom{{}={}}+\frac{8}{n^2}\tr^2\lb \bfSigma_n^{(1)}\bfSigma_n^{(2)}\rb.\nonumber
    \end{align*}
Here, $\textbf{A}\circ\textbf{B}$ denotes the Hadamard product for two matrices of the same dimension $\textbf{A}$ and $\textbf{B}$. 
Moreover, we note that the population variance $\sigma_{n,2}^2$ of $\sqrt{n}(\lambda_1(\bfS_n^{(1)})-\lambda_1(\bfS_n^{(2)}))$ is given by the sum 
    \begin{align*}
        \sigma_{n,2}^2 = \sigma_{\spi,1,n}^{(1)^2}+\sigma_{\spi,1,n}^{(2)^2},
    \end{align*}
    with 
    \begin{align} \label{def_var_spike}
        \sigma_{\spi,1,n}^{(i)^2}= (\gamma_4^{(i)}-3)\alpha_1^{(i)^2}(\psi^{(i)^\prime}(\alpha_{1}^{(i)}))^2\sum_{j=1}^{p}u_{j,1,n}^{(i)^4}+2\alpha_1^{(i)^2}\psi^{(i)^\prime}(\alpha_1^{(i)}), \quad  i=1,2,
    \end{align}
    where $\gamma_4^{(i)}:=\E\lsb z_{1,1}^{(i)}\rsb$ is defined as the kurtosis of the i.i.d. entries of the random vectors $\bfz_1^{(i)}, \ldots, \bfz_n^{(i)}$ defined in assumption \ref{ass_random_vectors} and the function $\psi^{(i)^\prime}$ is given in \eqref{eq_def_derivative_psi}. 
    We emphasize that $\sigma_{n,1}^2$ and $\sigma_{\spi,1,n}^{(i)^2}$ are positive and bounded from below. A detailed analysis can be found in Lemma \ref{lem_lower_bounded_variances}. 
\newline
Finally, we are now in the position to formulate the two auxiliary results needed in the proof of Theorem \ref{thm_generalized_asympt_ind}.
The proofs of the two lemmas is postponed to Section \ref{sec_proof_asymptotical_independence_ind_quad_non_cen} and \ref{sec_proof_consistent_estimators}, respectively.

\begin{lemma}\label{lem_consistent_estimators}
     Suppose that assumptions \ref{ass_p_n} - \ref{ass_leading_spiked_ev} are satisfied. Then, under the null hypothesis $\mathsf{H}_{0}$ we have
    \begin{align*}
        \frac{\sigma_{n,1}}{\hat{\sigma}_{n,1}}\conp 1, \quad n\to \infty,
    \end{align*}
    while under the alternative $\mathsf{H}_{1}$ we have
    \begin{align*}
        \frac{2}{n}\frac{\tr(\bfSigma_n^{(1)^2}+\bfSigma_n^{(2)^2})}{\hat{\sigma}_{n,1}}\conp 1, \quad n\to \infty ~.
    \end{align*}
    Furthermore, suppose that assumptions \ref{ass_p_n} - \ref{ass_population_lsd} and \ref{ass_supercritical_ev_generalized} are satisfied, then we have 
    \begin{align*}
        \frac{\sigma_{n,2}^2}{\hat{\sigma}_{n,2}^2}\conp 1, \quad n\to \infty ~.
    \end{align*}
    Finally, if \ref{ass_p_n} - \ref{ass_population_lsd}, \ref{ass_supercritical_ev_generalized} and \ref{ass_asympt_var_cov} are satisfied, then for $m\in \lcb 1, \ldots, K\rcb$ we have 
    \begin{align*}
    (\bfSigma_{E,m})_{k, \ell}-(\hat{\bfSigma}_{E,m,n})_{k,\ell} \conp 0, \quad n \to \infty ~.
    \end{align*}
\end{lemma}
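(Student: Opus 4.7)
The lemma combines four essentially separate consistency statements, and I would partition the argument accordingly: claims one and two (for $\hat\sigma_{n,1}$) are handled via $U$-statistic asymptotics, while claims three and four (for $\hat\sigma_{n,2}^2$ and for the entries of $\hat{\bfSigma}_{E,m,n}$) reduce to spiked random-matrix results. In both parts my strategy would be to identify an explicit functional form for each estimator built from moments of the data, verify termwise consistency using the existing tools, and then invoke the continuous mapping theorem.

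For $\hat\sigma_{n,1}$ I would first write $\sigma_{n,1}^2$ as a polynomial in the traces $\tr(\bfSigma_n^{(1)^a}\bfSigma_n^{(2)^b})$ for small exponents, together with one Hadamard-product term carrying the kurtosis correction. I would take $\hat\sigma_{n,1}^2$ to be the plug-in obtained by replacing each such trace by an unbiased $U$-statistic, in the spirit of the estimators $B_n^{(i)}$ and $C_n$ used in $T_{n,1}$ and as in \cite{li_and_chen_2012}. Variance control for each $U$-statistic would follow from a Hoeffding decomposition, with the bound on the kernel moments supplied by the eighth-moment assumption in \ref{ass_random_vectors}. Under $\mathsf{H}_0$ several of these $U$-statistics have target zero (because $\bfSigma_n^{(1)}-\bfSigma_n^{(2)}=0$ kills the Hadamard piece and the term $\tr((\bfSigma_n^{(i)^2}-\bfSigma_n^{(1)}\bfSigma_n^{(2)})^2)$), so the continuous mapping theorem gives claim one. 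Under $\mathsf{H}_1$ the same plug-in estimator need not be consistent for $\sigma_{n,1}^2$, but claim two is precisely the assertion that its leading contribution in the proportional-growth regime scales as $\frac{4}{n^2}\tr^2(\bfSigma_n^{(1)^2}+\bfSigma_n^{(2)^2})$; this is verified term-by-term by comparing the order of each $U$-statistic-estimated trace (traces grow like $p$ while the squared trace grows like $p^2$).

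For $\hat\sigma_{n,2}^2$ and $(\hat{\bfSigma}_{E,m,n})_{k,\ell}$ the targets in \eqref{det11a}, \eqref{det11b}, and \eqref{def_var_spike} are smooth functions of four ingredients: (a) the supercritical eigenvalues $\alpha_k^{(i)}$; (b) the derivative $\psi^{(i)^\prime}(\alpha_k^{(i)})$ of the Marchenko--Pastur functional; (c) the kurtosis $\gamma_4^{(i)}$; and (d) the eigenvector sums $\sum_j u_{j,k,n}^{(i)^4}$ and $\sum_j u_{j,k,n}^{(i)^2} u_{j,\ell,n}^{(i)^2}$. By continuous mapping it suffices to estimate each ingredient consistently. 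For (a) I would invert the relation $\lambda_k(\bfS_n^{(i)}) = \psi_n^{(i)}(\alpha_k^{(i)}) + o_{\PR}(1)$ from spiked-model theory (Paul 2007), using an empirical analogue of $\psi_n^{(i)}$ constructed from the spectral distribution of $\bfS_n^{(i)}$ with its $K$ outliers deleted; differentiating the same estimated functional yields (b). Ingredient (c) is a standard moment estimator based on fourth sample moments. For (d) I would use the sample eigenvectors $\hat{\mathbf{u}}_{k,n}^{(i)}$ corrected by the attenuation factor $\psi^{(i)^\prime}(\alpha_k^{(i)})^{-1}$ that arises from the inconsistency of sample eigenvectors in the proportional-growth regime.

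The main obstacle will be ingredient (d). Sample eigenvectors do not converge to the population ones when $p/n \to y > 0$: the squared overlap $|\langle \hat{\mathbf{u}}_{k,n}^{(i)}, \mathbf{u}_{k,n}^{(i)}\rangle|^2$ shrinks to a known factor depending on $\psi^{(i)^\prime}(\alpha_k^{(i)})$, and the remaining mass is distributed across the other population eigenvectors. The consistency argument for $\sum_j \hat u_{j,k,n}^{(i)^4}$ must therefore explicitly invert this attenuation, subtract the contributions arising from mixing with the bulk eigenvectors, and show that cross terms between distinct supercritical coordinates contribute only $o_{\PR}(1)$ under the gap condition in \ref{ass_supercritical_ev_generalized} and the limiting existence in \ref{ass_asympt_var_cov}. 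The analogous argument for the cross sum $\sum_j u_{j,k,n}^{(i)^2}u_{j,\ell,n}^{(i)^2}$ is similar but requires tracking the joint angles between two distinct outlier eigenvectors. Once (d) is in hand, claims three and four follow by continuous mapping applied to consistent estimators of (a)--(d).
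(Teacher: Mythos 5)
Your high-level strategy (split into four claims, prove termwise consistency, apply continuous mapping) matches the paper's, but the paper's actual proof is almost entirely a chain of citations, and your sketch diverges from it in ways that matter. For the first two claims the paper invokes Theorem~2 of \cite{li_and_chen_2012} directly. The estimator is $\hat\sigma_{n,1}^2 = \frac{4}{n^2}(B_n^{(1)}+B_n^{(2)})^2$, which is \emph{not} a full plug-in for $\sigma_{n,1}^2$: it estimates only $\frac{4}{n^2}\tr^2(\bfSigma_n^{(1)^2}+\bfSigma_n^{(2)^2})$ and deliberately ignores the kurtosis/Hadamard and $\tr((\bfSigma_n^{(i)^2}-\bfSigma_n^{(1)}\bfSigma_n^{(2)})^2)$ pieces. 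Your plan to ``replace each trace by an unbiased $U$-statistic'' would produce a different estimator (a full plug-in), and then your order argument under $\mathsf{H}_1$ does not go through as stated: the Hadamard and difference-squared pieces are $O(p/n)=O(1)$ and the $\tr^2$ piece is $O(p^2/n^2)=O(1)$, so they are generally the same order in the proportional regime, not negligible. That the Li--Chen estimator is nevertheless ratio-consistent for $\frac{2}{n}\tr(\bfSigma_n^{(1)^2}+\bfSigma_n^{(2)^2})$ under $\mathsf{H}_1$ is precisely the content of their Theorem~2, and your sketch does not recover it.

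For claims three and four, the ingredient list you give (spike location, $\psi'$, kurtosis, eigenvector-coordinate sums) is correct, and your observation that the eigenvector sums are the delicate part is right. However, the paper uses the specific estimator $\hat\kappa_{k,\ell,n}^{(i)}$ built from the sample eigenvalues, the $\vartheta_\ell^{(i)}$'s (roots of a companion equation) and the weights $\rho_k^{(i)}(m)$, and its consistency is imported wholesale from Theorem~2.6 of \cite{zhang2022asymptotic}; similarly the spike estimator $\hat\alpha_{k,n}^{(i)}$ comes from \cite{baiding2012} and the kurtosis estimator from \cite{dornemanndette2024}. Your proposal to ``invert the attenuation factor $\psi'(\alpha_k)^{-1}$'' and ``subtract the bulk mixing'' is a reasonable heuristic but is not the argument the paper makes, and carrying it out rigorously would amount to re-proving Zhang et al.'s Theorem~2.6. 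Finally, you do not address the step from additive consistency $\hat\sigma_{n,2}^2 - \sigma_{n,2}^2 \conp 0$ to the stated \emph{ratio} consistency $\sigma_{n,2}^2/\hat\sigma_{n,2}^2 \conp 1$, which in the paper relies on the uniform lower bound on $\sigma_{\spi,j,n}^{(i)^2}$ supplied by Lemma~\ref{lem_lower_bounded_variances}.
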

\begin{lemma}
    \label{lem_ind_quad_form_frobenius_non_cen}
    Suppose that assumptions \ref{ass_p_n} - \ref{ass_population_lsd}, \ref{ass_supercritical_ev_generalized} and \ref{ass_asympt_var_cov} are satisfied. Then, 
    $\breve{T}_{n,1}$ and $\bfLambda_{K,K}$ are asymptotically independent, and it holds 
    \begin{align*}
        \breve{T}_{n,1}\cond \mathcal{N}(0,1), \quad 
        \bfLambda_{K,K} \cond \mathcal{N}_{2K}(\mathbf{0}_{2K}, \bfSigma_{\Lambda}), \quad n\to\infty,
    \end{align*}
  where
    \begin{align*}
        \bfSigma_{\Lambda}=\begin{pmatrix}
        \bfSigma_{\Lambda}^{(1)} & 0
        \\
        0 & \bfSigma_{\Lambda}^{(2)}
        \end{pmatrix},
    \end{align*}
 and for $i=1,2$,   $\bfSigma_{\Lambda}^{(i)}$ is a $\R^{K \times K}$ matrix and its elements $(\bfSigma_{\Lambda}^{(i)})_{j, k}$ are given by
    \begin{align*}
        (\bfSigma_{\Lambda}^{(i)})_{j, k}=
        \begin{cases}
            \sigma_{\spi,k}^{(i)^2} & j=k
            \\
            \sigma_{\spi,j, k}^{(i)} & j\neq k
        \end{cases}
    \end{align*}
and the elements  $\sigma_{\spi,k}^{(i)^2}$ and $\sigma_{\spi, j,k}^{(i)}$ are defined in assumption \ref{ass_asympt_var_cov}.
\end{lemma}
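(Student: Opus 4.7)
The plan is to follow the reduction chain sketched in Section \ref{sec_proof_outline}. First, I would invoke the classical characterization of supercritical sample eigenvalues going back to \cite{paul2007asymptotics, bai2008central, zhang2022asymptotic}: for each $1\leq k\leq K$ and $i\in \lcb 1,2\rcb$, the fluctuation $\sqrt{n}(\lambda_k(\bfS_n^{(i)})-\theta_{k,n}^{(i)})$ is, up to an $\op$ term, a random quadratic form $Q_{k,n}^{(i)}$ in the i.i.d.\ entries of $\bfz_1^{(i)},\ldots,\bfz_n^{(i)}$, whose weight matrix carries the rank-one structure determined by the population eigenvector $\mathbf{u}_{k,n}^{(i)}$. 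This converts the joint distributional problem for $(\breve{T}_{n,1},\bfLambda_{K,K})$ into the analogous statement for $(\breve{T}_{n,1},Q_{k,n}^{(i)} : 1 \le k \le K, i=1,2)$, stated as the intermediate Lemma \ref{lem_quad_form_frobenius_non_cen}. A second standard reduction then replaces $\bfx_j^{(i)}-\overline{\bfx}^{(i)}$ by $\bfSigma_n^{(i)^{1/2}}\bfz_j^{(i)}$, since under assumption \ref{ass_random_vectors} the sample mean contributes only a negligible perturbation to both the quadratic forms and to the $U$-statistics $B_n^{(i)},C_n$; this leaves the centered-data version, Lemma \ref{lem_quad_form_frobenius_cen}, as the genuine target of the proof.

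Once reduced to centered data, I would attack the joint CLT via the martingale CLT. Let $\CF_j$ denote the $\sigma$-algebra generated by the first $j$ columns of both $\bfz^{(1)}$ and $\bfz^{(2)}$. Each quadratic form $Q_{k,n}^{(i)}$ admits a natural $\CF_j$-martingale-difference decomposition $Q_{k,n}^{(i)}=\sum_{j=1}^n \xi_{j,k}^{(i)}$. Likewise, the $U$-statistic structure of $B_n^{(1)}+B_n^{(2)}-2C_n$ yields, via its Hoeffding projections, a representation $\breve{T}_{n,1}=\sum_{j=1}^n \eta_j$ as a sum of $\CF_j$-martingale differences, which is precisely the representation underlying the marginal CLT in \cite{li_and_chen_2012}. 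The joint Lyapunov--Lindeberg condition for $(\eta_j,\xi_{j,k}^{(i)})_{j=1,\ldots,n}$ then follows from the eighth-moment bound in \ref{ass_random_vectors} combined with the standard trace bounds implied by assumptions \ref{ass_p_n}, \ref{ass_population_lsd}, \ref{ass_supercritical_ev_generalized}, \ref{ass_asympt_var_cov}. The in-block conditional variances of $\eta_j$ and $\xi_{j,k}^{(i)}$ converge, by the marginal CLTs of \cite{li_and_chen_2012} and \cite{zhang2022asymptotic, bai2008limit} respectively, to $1$ and to the corresponding entries of $\bfSigma_{\Lambda}$.

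The whole argument therefore hinges on the off-diagonal blocks of the predictable covariation, namely on showing
\begin{align*}
 \sum_{j=1}^n \E\bigl[\eta_j\,\xi_{j,k}^{(i)} \bigm| \CF_{j-1}\bigr] \conp 0
\end{align*}
for every $k\in\lcb 1,\ldots,K\rcb$ and $i\in\lcb 1,2\rcb$. This is the step I expect to be the main obstacle, and it is precisely where the structural asymmetry between the two statistics becomes decisive. Schematically, $\xi_{j,k}^{(i)}$ is concentrated along the rank-one direction $\mathbf{u}_{k,n}^{(i)}\mathbf{u}_{k,n}^{(i)\top}$, whereas $\eta_j$ assembles traces of polynomial expressions in $\bfSigma_n^{(1)}$ and $\bfSigma_n^{(2)}$ that distribute mass across the entire spectrum. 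After expanding $\eta_j$ into diagonal pieces of the form $\bfz_j^{(i)\top}\bfA\bfz_j^{(i)}-\tr\bfA$ and off-diagonal pieces $\bfz_j^{(i)\top}\bfA\bfz_\ell^{(i)}$ with $\ell<j$, and pairing these with the analogous pieces of $\xi_{j,k}^{(i)}$, each surviving cross term reduces to a rank-one contraction of the form $\mathbf{u}_{k,n}^{(i)\top}\bfSigma_n^{(i)^{a}}(\bfSigma_n^{(1)}-\bfSigma_n^{(2)})\bfSigma_n^{(i)^{b}}\mathbf{u}_{k,n}^{(i)}$, which is $O(1)$ rather than $O(p)$. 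After dividing by the non-degenerate normalizations $\sigma_{n,1}^2$ and $\sigma_{\spi,k,n}^{(i)^2}$ guaranteed by Lemma \ref{lem_lower_bounded_variances} and summing over $j$, this mismatch delivers the required $\op$ bound. The fourth-cumulant terms $\gamma_4^{(i)}-3$ and the non-Gaussian eighth-moment corrections inherit the same rank-one suppression and vanish in the limit; their careful combinatorial bookkeeping is the technical heart of Section \ref{subsec_proof_cross_terms}.
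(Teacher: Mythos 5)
Your proposal follows the paper's reduction chain (eigenvalues $\to$ quadratic forms $\to$ centered data) and its Cramér--Wold/martingale-CLT strategy, and you correctly identify the vanishing cross-covariation as the crux, so the architecture is right. The filtration differs, though: the paper conditions \emph{sequentially} over $2n$ steps, taking $\bfy_k=\bfz_k^{(1)}$ for $k\le n$ and $\bfy_k=\bfz_{k-n}^{(2)}$ for $k>n$, so that $M_{n,k}$ for $k\le n$ is a pure sample-1 object (sample 2 being averaged into the deterministic matrix $\bfH_n^{(1,2)}$) and each sample's spike contribution is confined to its own block; this is what produces the clean decomposition into the five cross terms $\CC_{1,n},\ldots,\CC_{5,n}$ of Lemma \ref{lem_cross_terms_op1}. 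A joint filtration over $n$ steps mixes the samples at every step, and it is not the representation used in \cite{li_and_chen_2012}.

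The more substantive gap is in your account of why the cross terms vanish. Lemma \ref{lem_lower_bounded_variances} shows that $\sigma_{n,1}^2$ and $\sigma_{\spi,k,n}^{(i)^2}$ are bounded above \emph{and} below, so dividing an $O(1)$ contraction by them and summing over $j$ gives $O(n)$, not $\op$. The actual smallness, worked out in Lemma \ref{lem_cross_terms_op1}, comes from the interplay of (i) the bound $\tr(\E_k[\dot{\bfP}_{n,k}^{(i)}])\le C$, where $\dot{\bfP}_{n,k}^{(i)}$ is the random rank-one resolvent projector $(\dot{\bfR}_{n,k}^{(i)})^{-1}\mathbf{u}_{j,n}^{(i)}\mathbf{u}_{j,n}^{(i)^\top}((\dot{\bfR}_{n,k}^{(i)})^\top)^{-1}$ and not the deterministic form $\mathbf{u}_{k,n}^{(i)^\top}\bfSigma_n^{(i)^a}(\bfSigma_n^{(1)}-\bfSigma_n^{(2)})\bfSigma_n^{(i)^b}\mathbf{u}_{k,n}^{(i)}$ you write (indeed for $\CC_{1,n}$, $\CC_{3,n}$ the paired matrix is the random $\bfG_{n,k-1}^{(i)}$, with no $\bfSigma_n^{(1)}-\bfSigma_n^{(2)}$ factor at all); (ii) the explicit $1/(n(n-1))$, $1/n$, $1/n^2$ prefactors carried by the $U$-statistic martingale differences $M_{n,k}$; and (iii) the $\sqrt n$ scaling of the spike martingale differences $W_{n,k}$. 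You also omit the truncation at $t_n^{(i)}n^{1/4}$ and the high-probability events $\CB_n^{(i)}$, without which $\dot{b}_n^{(i)}$ and $\tr(\E_k[\dot{\bfP}_{n,k}^{(i)}])$ are not uniformly bounded and none of the $L^2$ cross-term estimates are available.
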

\begin{proof}[Proof of Theorem \ref{thm_generalized_asympt_ind}]
    Note, if $\mathsf{H}_{0}$ is satisfied then $\lnorm \bfSigma_n^{(1)} - \bfSigma_n^{(2)}\rnorm_F^2=0$ and $\theta_{j,n}^{(1)}=\theta_{j,n}^{(2)}$ for $j=1, \ldots, m$ holds. Thus, we can write
    \begin{align*}
    T_{n,1}= \frac{\sigma_{n,1}}{\hat{\sigma}_{n,1}}\breve{T}_{n,1}   \quad \text{and} \quad \tilde{\mathbf{T}}_{n,2,m}= \sqrt{n} \big( &\lambda_1(\bfS_n^{(1)})- \theta_{1,n}^{(1)}- (\lambda_1(\bfS_n^{(2)})-\theta_{1,n}^{(2)}), \ldots, 
    \\
    &\lambda_m(\bfS_n^{(1)})- \theta_{1,m}^{(1)}- (\lambda_m(\bfS_n^{(2)})-\theta_{1,m}^{(2)})\big).
    \end{align*}
    Combining Lemma \ref{lem_consistent_estimators} and \ref{lem_ind_quad_form_frobenius_non_cen} with Slutsky's Theorem yields 
    \begin{align*}
        T_{n,1} \cond \mathcal{N}(0,1) \quad \text{and} \quad \tilde{\mathbf{T}}_{n,2,m}^\top\cond \mathcal{N}_m(\mathbf{0}_m,\bfSigma_{E,m}), \quad n\to\infty,
    \end{align*}
    where we used the fact that the two samples are independent.
    Finally, by Lemma \ref{lem_ind_quad_form_frobenius_non_cen} it follows that $T_{n,1}$ and $\tilde{\mathbf{T}}_{n,2,m}^\top$ are asymptotically independent, which implies that
    \begin{align*}
        (T_{n,1}, \tilde{\mathbf{T}}_{n,2, m})^\top \cond \mathcal{N}_{m+1}(\mathbf{0}_{m+1}, \bfSigma_{m}), \quad n\to\infty.
    \end{align*}
\end{proof}

\subsection{Proof of Lemma \ref{lem_ind_quad_form_frobenius_non_cen}}\label{sec_proof_asymptotical_independence_ind_quad_non_cen}
    As we will see in this section,  the supercritical sample eigenvalues are closely associated with a certain random quadratic form. Thus, the statement of Lemma \ref{lem_ind_quad_form_frobenius_non_cen} can be traced back to the weak convergence of these  quadratic forms. 
    For their  definition, we introduce a partition for the orthogonal matrices $\bfU_n^{(i)}=(\bfU_{n,1}^{(i)}, \bfU_{n,2}^{(i)})$,
    where $\bfU_{n,1}^{(i)}$ is a $p\times K$ sub-matrix of $\bfU_n^{(i)}$ and $\bfU_{n,2}^{(i)}$ is a $p\times (p-K)$ sub-matrix of $\bfU_n^{(i)}$. With these notations, we define
    \begin{align*}
        \bfSigma_{S,n}^{(i)} = \bfU_{n,1}^{(i)}\bfLambda_S^{(i)}\bfU_{n,1}^{(i)^\top}, \bfSigma_{P, n}^{(i)}=\bfU_{n,2}^{(i)}\bfLambda_{P, n, K}^{(i)}\bfU_{n,2}^{(i)^\top} \in \mathbb{R}^{p\times p},\quad i=1,2.
    \end{align*}
    Recall, for $i=1,2$ and $j=1,\ldots, K$, $\mathbf{u}_{j,n}^{(i)}$ is the $j$-th column of the matrix $\mathbf{U}_{n}^{(i)}$, in particular of $\mathbf{U}_{n,1}^{(i)}$. We introduce for $i=1,2$ and $j=1, \ldots, K$ the quantities\begin{align}\label{eq_def_quadratic_form_definition_non_cen}
        \begin{split}
    \textbf{A}_{n}^{(i)}(\theta_{j, n}^{(i)}) &= \textbf{I}_n -\frac{1}{\theta_{j,n}^{(i)}} \frac{1}{n}\mathbf{\Delta}_n\bfZ_n^{(i)^\top} \bfSigma_{P,n}^{(i)}\bfZ_n^{(i)},
    \\
    Q_{j, n}^{(i)} = Q_{n}^{(i)}(\alpha_j^{(i)}) &= \sqrt{n}\lb \frac{1}{n} \mathbf{u}_{j,n}^{(i)^\top}\bfZ_n^{(i)}\lcb \textbf{A}_{n}^{(i)}(\theta_{j, n}^{(i)})\rcb^{-1}\mathbf{\Delta}_n\bfZ_n^{(i)^\top}\mathbf{u}_{j,n}^{(i)}-\frac{\theta_{j,n}^{(i)}}{\alpha_j^{(i)}}\rb,    \end{split}
    \end{align}
    where  $\bfZ_n^{(i)}= (\bfz_1^{(i)}, \ldots, \bfz_n^{(i)})$ and $\mathbf{\Delta}_n=\textbf{I}_n -\frac{1}{n}\mathbf{1}_n\mathbf{1}_n^\top$,  while $\mathbf{1}_n$ denotes the $n$-dimensional vector with all entries are equal $1$. Furthermore, we define the $2K$-dimensional random vector $\mathbf{L}_{K,K}$ as the counterpart to $\mathbf{\Lambda}_{K,K}$, where we replace the sample eigenvalues by the corresponding quadratic form, that is 
    \begin{align*}
        \mathbf{L}_{K,K}=\lb Q_{1,n}^{(1)}, \ldots, Q_{K,n}^{(1)}, Q_{1,n}^{(2)}, \ldots, Q_{K,n}^{(2)}\rb^\top.
    \end{align*}
    Finally, we define the matrix
    \begin{align*}
        \bfSigma_{L}=\begin{pmatrix}
        \bfSigma_{L}^{(1)} & 0
        \\
        0 & \bfSigma_{L}^{(2)}
        \end{pmatrix},
    \end{align*}
     where $\bfSigma_{L}^{(i)}$ is a $K \times K$ matrix and the elements $(\bfSigma_{L}^{(i)})_{j, k}$ are given by 
    \begin{align*}
        (\bfSigma_{L}^{(i)})_{j, k}&=
        \begin{cases}
            \lim\limits_{n\to \infty} \frac{\psi^{(i)^2}\lb \alpha_k^{(i)}\rb}{\alpha_{k}^{(i)^2}}\lcb \lb \gamma_4^{(i)}-3\rb\sum\limits_{\ell=1}^{p} u_{\ell,k,n}^{(i)^4}+\frac{2}{\psi^{(i)^\prime}\lb \alpha_k^{(i)}\rb}\rcb & \text{if } j=k
            \\
            \lim\limits_{n\to \infty}\frac{\psi^{(i)}\lb\alpha_j^{(i)}\rb\psi^{(i)}\lb \alpha_k^{(i)}\rb}{\alpha_{j}^{(i)}\alpha_k^{(i)}}\lb \gamma_4^{(i)}-3\rb\sum\limits_{\ell=1}^{p}u_{\ell, j,n}^{(i)^2}u_{\ell, k,n}^{(i)^2} & \text{if }j\neq k
        \end{cases}
        \\
        &=
        \begin{cases}
            \frac{\psi^{(i)^2}\lb \alpha_k^{(i)}\rb }{\alpha_{k}^{(i)^4}\lcb\psi^{(i)^\prime}\lb \alpha_k^{(i)}\rb\rcb^2}\sigma_{\spi,k}^{2} & \text{if } j=k
            \\
            \frac{\psi^{(i)}\lb \alpha_j^{(i)}\rb\psi^{(i)}\lb \alpha_k^{(i)}\rb}{\alpha_j^{(i)}\alpha_k^{(i)}\psi^{(i)^\prime}\lb \alpha_{j}^{(i)}\rb\psi^{(i)^\prime}\lb \alpha_{k}^{(i)}\rb}\sigma_{\spi,j,k} & \text{if } j\neq k
        \end{cases}
    \end{align*}
   and  $\sigma_{\spi, k}^2$ and $\sigma_{\spi,j,k}$ are defined in assumption \ref{ass_asympt_var_cov}.
    Then, the proof of Lemma \ref{lem_ind_quad_form_frobenius_non_cen} is facilitated by the following result, which yields  the asymptotic independence of the random variables $\breve T_{n,1}$ defined in \eqref{def_population_t_frob_non_cen} and $\mathbf{L}_{K, K}.$
    \begin{lemma}
    \label{lem_quad_form_frobenius_non_cen}
    Suppose that assumptions \ref{ass_p_n} - \ref{ass_population_lsd}, \ref{ass_supercritical_ev_generalized}, and \ref{ass_asympt_var_cov} are satisfied. Then, the random variables 
    $\breve{T}_{n,1}$ and $\mathbf{L}_{K,K}$ are asymptotically independent, and it holds
    \begin{align*}
        \breve{T}_{n,1} \cond \mathcal{N}(0,1), \quad \mathbf{L}_{K,K} \cond \mathcal{N}_{2K}(\mathbf{0}_{2K}, \bfSigma_{L}), \quad n\to\infty.
    \end{align*}
\end{lemma}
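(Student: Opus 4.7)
The plan is to first reduce Lemma~\ref{lem_quad_form_frobenius_non_cen} to its centered analogue (Lemma~\ref{lem_quad_form_frobenius_cen}), where $\boldsymbol{\mu}^{(i)} = \mathbf{0}_p$. The $U$-statistics $B_n^{(i)}$ and $C_n$ are by construction invariant under mean shifts of the data, so $\breve{T}_{n,1}$ does not depend on $\boldsymbol{\mu}^{(i)}$ at all. In the quadratic form $Q_{k,n}^{(i)}$ the centering matrix $\mathbf{\Delta}_n = \mathbf{I}_n - \frac{1}{n}\mathbf{1}_n\mathbf{1}_n^\top$ annihilates any constant column vector, so the genuine $\boldsymbol{\mu}^{(i)}$-dependent contributions can be shown to be $o_\PR(1)$ by standard concentration bounds. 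Once reduced to the centered case, I introduce the joint filtration $\mathcal{F}_{j,n} = \sigma(\bfz_1^{(1)}, \ldots, \bfz_j^{(1)}, \bfz_1^{(2)}, \ldots, \bfz_j^{(2)})$ and write $\breve{T}_{n,1}$ and each $Q_{k,n}^{(i)}$ as a sum of forward martingale differences with respect to $\{\mathcal{F}_{j,n}\}_{j=0}^{n}$. For the Frobenius part this is the Hoeffding-type decomposition used in \cite{li_and_chen_2012}; for the spike part the Sherman--Morrison/Woodbury identity applied to $\mathbf{A}_n^{(i)}(\theta_{k,n}^{(i)})^{-1}$ (as in \cite{bai2008central, zhang2022asymptotic}) isolates the contribution of each $\bfz_j^{(i)}$.

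\textbf{Joint CLT via conditional covariance.} Weak convergence of $(\breve{T}_{n,1}, \mathbf{L}_{K,K})^\top$ then follows from the multivariate martingale central limit theorem (e.g.\ Theorem~35.12 in Billingsley). Two conditions must be verified: a conditional Lindeberg/negligibility condition on the maximal martingale increment, and convergence in probability of the conditional cross-covariance matrix of the joint increments to the block-diagonal limit $\operatorname{diag}(1,\bfSigma_L)$. The Lindeberg condition is handled by the $8$th moment bound in \ref{ass_random_vectors}, which is precisely the moment strength \cite{li_and_chen_2012} require for the Frobenius $U$-statistic and more than enough for the spike bilinear forms. The two diagonal blocks of the limiting covariance matrix are given by the marginal CLTs already established in \cite{li_and_chen_2012} and \cite{zhang2022asymptotic}, and the between-sample cross-block is zero by independence of the two samples; the substance of the argument therefore lies in the within-sample off-diagonal block that couples the Frobenius martingale with the spike martingales of the \emph{same} sample.

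\textbf{Main obstacle: the within-sample cross-covariances.} The hardest step will be showing that the conditional cross-covariance between $\breve{T}_{n,1}$ and $Q_{k,n}^{(i)}$ tends to zero in probability for each fixed $i \in \{1,2\}$ and $k \in \{1,\ldots,K\}$. Writing the $j$-th martingale increment of $Q_{k,n}^{(i)}$ as a bilinear form in $\bfz_j^{(i)}$ localized through the rank-one projection $\mathbf{u}_{k,n}^{(i)}\mathbf{u}_{k,n}^{(i)^\top}$, and the $j$-th Frobenius increment as a centered quadratic/bilinear form in $\bfz_j^{(i)}$ weighted by an operator built from $\bfSigma_n^{(i)}$ and data from the other sample, the conditional product expectation splits into a Gaussian-like trace part (of order $n^{-1}$ times an inner product such as $\mathbf{u}_{k,n}^{(i)\top}\bfSigma_n^{(1)}\bfSigma_n^{(2)}\mathbf{u}_{k,n}^{(i)}$) and a fourth-cumulant piece of order $(\gamma_4^{(i)}-3)\sum_\ell u_{\ell,k,n}^{(i)^{2}}(\bfSigma_n^{(i)})_{\ell\ell}(\bfSigma_n^{(1)}-\bfSigma_n^{(2)})_{\ell\ell}/n$. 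Summing over $j$ and dividing by $\sigma_{n,1} = \Theta(1)$, the first piece is of order $O(n^{-1})$ because a single rank-one projection of $\bfSigma_n^{(i)^2}$ carries only $O(1)$ mass, whereas the Frobenius statistic aggregates $\tr(\bfSigma_n^{(i)^2}) = \Theta(p) = \Theta(n)$ bulk mass across all $p$ directions; the fourth-cumulant piece vanishes in the same fashion. This localization-versus-aggregation mechanism is the core reason the cross-block is asymptotically null. Once these vanishing cross-covariances are established, the martingale CLT delivers the joint Gaussian limit with block-diagonal covariance, which is exactly the asymptotic independence together with the claimed marginal distributions.
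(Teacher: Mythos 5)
Your overall strategy matches the paper's: reduce to centered data, set up a martingale decomposition for $\breve T_{n,1}$ and the spike quadratic forms, verify a martingale CLT, and show that the within-sample conditional cross-covariance vanishes. The reduction step and the localization-versus-aggregation heuristic for why the cross-covariance dies are both correct and are the right intuition.

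There are two points of genuine divergence worth flagging. First, you propose the parallel filtration $\mathcal F_{j,n}=\sigma(\bfz_1^{(1)},\ldots,\bfz_j^{(1)},\bfz_1^{(2)},\ldots,\bfz_j^{(2)})$ and then cite the Hoeffding/martingale decomposition of \cite{li_and_chen_2012}; but that decomposition is built on the \emph{sequential} filtration that reveals all of sample $1$ first and then all of sample $2$, and this is exactly what the paper uses (the $2n$-step martingale $M_{n,k}$ in \eqref{mds_frob_below_n}--\eqref{mds_frob_above_n}, with $\bfy_k=\bfz_k^{(1)}$ for $k\le n$ and $\bfy_k=\bfz_{k-n}^{(2)}$ otherwise). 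With your parallel filtration the $C_n$ term produces increments containing bilinear forms $\bfz_j^{(1)\top}\bfA\bfz_j^{(2)}$ at step $j$; the approach can still be made to work because those terms are linear and mean zero in $\bfz_j^{(2)}$ conditionally on sample $1$, but the Li--Chen increment formulas would have to be re-derived, and you cannot borrow them verbatim. The paper avoids this by using the sequential filtration together with a Cramér--Wold reduction and the univariate martingale CLT (Corollary 3.1 of \cite{hallheyde1980martingales}) rather than a multivariate martingale CLT.

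Second, the cross-covariance step needs more than the order-of-magnitude heuristic. In the paper it is isolated as Lemma \ref{lem_cross_terms_op1}, whose proof requires: (i) working on the high-probability events $\CB_{1,n,k}^{(i)}\cap\CB_{3,n,k}^{(i)}$ so that $\dot b_n^{(i)}$ and the resolvent $(\dot{\bfR}_{n,k}^{(i)})^{-1}$ are uniformly bounded; (ii) a truncation of the spike-part entries at $t_n^{(i)}n^{1/4}$; and (iii) the explicit cross-covariance identity of Lemma \ref{ConditionalExpectedValueOfProductOfNormalizedQuadraticForms}, from which both the trace piece and the fourth-cumulant piece are bounded by $\tr\big(\E_k[\dot{\bfP}_{n,k}^{(1)}]\big)\cdot\|\cdot\|_2$, using the crucial fact that the spike weight $\dot{\bfP}_{n,k}^{(1)}$ has $O(1)$ trace (this is your rank-one observation, made quantitative). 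Note also that Lemma \ref{lem_quad_form_frobenius_non_cen} is \emph{not} restricted to $\mathsf H_0$, so the fourth-cumulant piece does not collapse because $\bfSigma_n^{(1)}-\bfSigma_n^{(2)}$ vanishes; it must be bounded by the same trace argument. Your sketch arrives at the right conclusion, but as written it would not survive scrutiny without these estimates spelled out.
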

The proof of Lemma \ref{lem_quad_form_frobenius_non_cen} is postponed to Section \ref{sec_proof_asymptotic_independence_quad_form_frobenius_non_cen}, and  we continue with the proof of Lemma \ref{lem_ind_quad_form_frobenius_non_cen}. 
\begin{proof}[Proof of Lemma \ref{lem_ind_quad_form_frobenius_non_cen}]
    Similar to the identity $(\text{S}.3.9)$ in the supplement to \cite{zhang2022asymptotic}, we obtain for $i=1,2$ and $j=1, \ldots, K$ that
    \begin{align*}
    \sqrt{n}(\lambda_j(\bfS_n^{(i)})-\theta_{j,n}^{(i)}) = \frac{Q_{j,n}^{(i)}}{\theta_{j,n}^{(i)}R_{j,n}^{(i)}}
    \end{align*}
    holds, where
    \begin{align*}
        R_{j,n}^{(i)}= \frac{1}{n}\frac{1}{\alpha_{j}^{(i)}\theta_{j,n}^{(i)}}\mathbf{u}_{j,n}^{(i)^\top}  \bfZ_n^{(i)}\lcb \bfA_n^{(i)}\lb \alpha_{j}^{(i)}\rb\rcb^{-1}\lcb \bfA_n^{(i)}\lb \theta_{j,n}^{(i)}\rb\rcb^{-1}\mathbf{\Phi}_n\bfZ_n^{(i)^\top}\mathbf{u}_{j,n}^{(i)}.
    \end{align*}
    Thus, $\mathbf{\Lambda}_{K,K}$ can be represented as
    \begin{align*}
    \mathbf{\Lambda}_{K,K}=\lb \frac{Q_{1,n}^{(1)}}{\theta_{1,n}^{(1)}R_{1,n}^{(1)}}, \ldots, \frac{Q_{K,n}^{(1)}}{\theta_{K,n}^{(1)}R_{K,n}^{(1)}}, \frac{Q_{1,n}^{(2)}}{\theta_{1,n}^{(2)}R_{1,n}^{(2)}}, \ldots, \frac{Q_{K,n}^{(2)}}{\theta_{K,n}^{(2)}R_{K,n}^{(2)}}\rb^\top.
    \end{align*}
    Using similar arguments to the equation $(\text{S}.4.4)$ in the supplement to \cite{zhang2022asymptotic}, it follows for $i=1,2$ and $j=1, \ldots, K$ that
    \begin{align*}
        R_{j,n}^{(i)} - \tilde{R}_{j,n}^{(i)} \conp 0, \quad n\to\infty,
    \end{align*}
    where
    \begin{align*}
        \tilde{R}_{j,n}^{(i)}= \frac{1}{n}\frac{1}{\alpha_j^{(i)}\theta_{j,n}^{(i)}}\mathbf{u}_{j,n}^{(i)^\top}  \bfZ_n^{(i)}\lcb \tilde{\bfA}_n^{(i)}\lb \alpha_j^{(i)}\rb\rcb^{-1}\lcb \tilde{\bfA}_n^{(i)}\lb \theta_{j,n}^{(i)}\rb\rcb^{-1}\bfZ_n^{(i)^\top}\mathbf{u}_{j,n}^{(i)}
    \end{align*}
    and  
    \begin{align}\label{eq_def_matrix_A_centred}
        \tilde{\bfA}_{n}^{(i)}(\theta_{j,n}^{(i)})=\bfI_{n}-\frac{1}{\theta_{j,n}^{(i)}}\frac{1}{n} \bfZ_n^{(i)^\top}\bfSigma_{P,n}^{(i)}\bfZ_n^{(i)}.
    \end{align}
    Furthermore, for $i=1,2$ and $j=1,\ldots, K$ we have due to equation $(\text{S}.3.17)$ in the supplement to \cite{zhang2022asymptotic} that
    \begin{align*}
    \tilde{R}_{j,n}^{(i)} \conp \frac{1}{\alpha_{j}^{(i)^2}\psi^{(i)^\prime}\lb\alpha_{j}^{(i)}\rb}, \quad n\to\infty.
    \end{align*}
    Since, we have 
    \begin{align*}
        \theta_{j,n}^{(i)} \conas \psi^{(i)}\lb \alpha_j^{(i)}\rb, \quad n\to\infty,
    \end{align*}
    for $i=1,2$ and $j=1, \ldots, K$, it follows from Lemma \ref{lem_quad_form_frobenius_non_cen} that $\breve{T}_{n,1}$ and $\mathbf{\Lambda}_{K,K}$ are asymptotically independent. In particular, as a consequence of Slutsky's theorem, it holds for the marginals
    \begin{align*}
        \breve{T}_{n,1}\cond \mathcal{N}(0,1), \quad 
        \bfLambda_{K,K} \cond \mathcal{N}_{2K}(\mathbf{0}_{2K}, \bfSigma_{\Lambda}), \quad n\to\infty.
    \end{align*}
\end{proof}

\subsection{Proof of Lemma \ref{lem_quad_form_frobenius_non_cen}}\label{sec_proof_asymptotic_independence_quad_form_frobenius_non_cen}
The proof of Lemma \ref{lem_quad_form_frobenius_non_cen} is facilitated by the corresponding statement in the case that $\E\big [ \bfz_j^{(i)}\big ] =\boldsymbol{\mu}^{(i)}=\mathbf{0}_p$.
Thus, we consider $\tilde{T}_{n,1}$ instead of $\breve{T}_{n,1}$, where for $i=1,2$ and $j=1, \ldots, n$ we replace each $\bfx_j^{(i)}$ in $\breve{T}_{n,1}$ by  $\tilde{\bfx}_j^{(i)}=\bfx_j^{(i)}-\boldsymbol{\mu}^{(i)}$. To be more precise, we define 
\begin{align*}
    \tilde{T}_{n,1}= \frac{\tilde{B}_{n}^{(1)}+\tilde{B}_n^{(2)}-2\tilde{C}_n-\lnorm \bfSigma_n^{(1)} - \bfSigma_n^{(2)}\rnorm_F^2}{\sigma_{n,1}},
\end{align*}
where $\tilde{B}_n^{(i)}$ and $\tilde{C}_{n}$ are given by substituting $\bfx_j^{(i)}$ with $\tilde{\bfx}_j^{(i)}$ in the definitions of $B_{n}^{(i)}$ and $C_n$. Similar to the definitions of the quantities in \eqref{eq_def_quadratic_form_definition_non_cen}, we introduce 
    \begin{align*}
    \tilde{Q}_{j,n}^{(i)}=\tilde{Q}_n^{(i)}(\alpha_{j}^{(i)})&= \sqrt{n}\lb \frac{1}{n} \mathbf{u}_{j,n}^{(i)^\top}\bfZ_n^{(i)}\lcb\tilde{ \textbf{A}}_{n}^{(i)}(\theta_{j, n}^{(i)})\rcb^{-1}\bfZ_n^{(i)^\top}\mathbf{u}_{j,n}^{(i)}-\frac{\theta_{j,n}^{(i)}}{\alpha_{j}^{(i)}}\rb, \\    \widetilde{\mathbf{L}}_{K,K} & =\lb \tilde{Q}_{1,n}^{(1)}, \ldots, \tilde{Q}_{K,n}^{(1)}, \tilde{Q}_{1,n}^{(2)}, \ldots, \tilde{Q}_{K,n}^{(2)}\rb^\top,
    \end{align*}
    where the matrix $\tilde{\textbf{A}}_n^{(i)}(\theta_{j,n}^{(i)})$ is defined in \eqref{eq_def_matrix_A_centred}.
\begin{lemma}\label{lem_quad_form_frobenius_cen}
    Suppose that assumptions \ref{ass_p_n} - \ref{ass_population_lsd}, \ref{ass_supercritical_ev_generalized} and \ref{ass_asympt_var_cov} are satisfied. Then, $\tilde{T}_{n,1}$ and $\widetilde{\mathbf{L}}_{K,K}$ are asymptotically independent, and it holds 
    \begin{align*}
        \tilde{T}_{n,1} \cond \mathcal{N}(0,1), \quad 
        \widetilde{\mathbf{L}}_{K,K} \cond \mathcal{N}_{2K}(\mathbf{0}_{2K}, \bfSigma_{L}), 
    \end{align*}
    where the matrix $\bfSigma_{L}$ is defined in Lemma \ref{lem_quad_form_frobenius_non_cen}.
\end{lemma}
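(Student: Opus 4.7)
The strategy is to apply a multivariate martingale central limit theorem to a filtration that reveals the two samples sequentially. Fix
\begin{align*}
\mathcal{F}_k = \sigma\bigl(\bfz_1^{(1)},\ldots,\bfz_{k\wedge n}^{(1)},\, \bfz_1^{(2)},\ldots,\bfz_{(k-n)\vee 0}^{(2)}\bigr), \quad k=0,1,\ldots,2n,
\end{align*}
and write $\E_k[\cdot] = \E[\cdot\mid \mathcal{F}_k]$. I would decompose $\tilde{T}_{n,1} = \sum_{k=1}^{2n} X_{n,k}$ and $\tilde{Q}_{j,n}^{(i)} = \sum_{k=1}^{2n} Y_{n,k}^{(i,j)}$ into martingale differences $X_{n,k} = (\E_k-\E_{k-1})\tilde{T}_{n,1}$ and $Y_{n,k}^{(i,j)} = (\E_k-\E_{k-1})\tilde{Q}_{j,n}^{(i)}$. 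Independence of the two samples forces $Y_{n,k}^{(1,j)}=0$ for $k>n$ and $Y_{n,k}^{(2,j)}=0$ for $k\le n$. Stacking these yields a $(2K+1)$-dimensional martingale difference array $\mathbf{u}_{n,k}$ whose sum equals $(\tilde{T}_{n,1}, \widetilde{\mathbf{L}}_{K,K}^\top)^\top$.

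The next step is to verify the two standard ingredients of the multivariate martingale CLT. The diagonal blocks of the conditional covariance $\mathbf{V}_n = \sum_{k=1}^{2n} \E_{k-1}[\mathbf{u}_{n,k}\mathbf{u}_{n,k}^\top]$ converge in probability to $1$ and to $\bfSigma_L$ respectively, and this can be read off from the marginal CLTs in \cite{li_and_chen_2012} and \cite{zhang2022asymptotic}, so no new work is needed for these blocks. For the conditional Lindeberg condition I intend to use a Lyapunov argument: each $X_{n,k}$ is a quartic form in the $\bfz$-entries scaled by a factor of order $n^{-1}\sigma_{n,1}^{-1}$, and the strengthened $8$-th moment hypothesis in \ref{ass_random_vectors} feeds Rosenthal's inequality to give $\sum_k \E[X_{n,k}^4] = o(1)$; the analogous control on $Y_{n,k}^{(i,j)}$ already appears in \cite{zhang2022asymptotic}.

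The genuinely new task is to show that the off-diagonal entries of $\mathbf{V}_n$ vanish, namely
\begin{align*}
\sum_{k=1}^{n} \E_{k-1}\bigl[X_{n,k}\, Y_{n,k}^{(1,j)}\bigr] \,\conp\, 0, \quad n\to\infty,
\end{align*}
for every $j=1,\ldots,K$ (the case $i=2$ is symmetric, and only $k\le n$ contributes). The idea is to expand the resolvent $[\tilde{\bfA}_n^{(1)}(\theta_{j,n}^{(1)})]^{-1}$ by Sherman--Morrison around its leave-$\bfz_k^{(1)}$-out analogue, which is $\mathcal{F}_{k-1}$-measurable, to obtain
\begin{align*}
Y_{n,k}^{(1,j)} \,=\, \frac{c_{j,n,k}}{\sqrt{n}}\bigl\{(\mathbf{u}_{j,n}^{(1)\top}\bfz_k^{(1)})^2 - 1\bigr\} + R_{n,k}^{(j)},
\end{align*}
with a uniformly bounded $\mathcal{F}_{k-1}$-measurable coefficient $c_{j,n,k}$ and an $L^2$-small remainder $R_{n,k}^{(j)}$. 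In parallel, the sample-$1$ part of $X_{n,k}$ reduces, via the Hoeffding decomposition of $\tilde B_n^{(1)}-\tilde C_n$, to a sum of bilinear terms $\bfz_k^{(1)\top}\bfSigma_n^{(1)}\bfz_\ell^{(1)}$ over $\ell<k$ plus a centred diagonal contribution $\bfz_k^{(1)\top}\bfSigma_n^{(1)^2}\bfz_k^{(1)} - \tr(\bfSigma_n^{(1)^2})$.

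The decisive cancellation will come from the structural identity $\bfSigma_{P,n}^{(1)}\mathbf{u}_{j,n}^{(1)} = \mathbf{0}$, since the leading eigenvector lives in the supercritical subspace while $\bfSigma_{P,n}^{(1)}$ is supported on its complement. After expanding $\E_{k-1}[X_{n,k} Y_{n,k}^{(1,j)}]$ through Isserlis-type identities corrected by the fourth cumulant $\gamma_4^{(1)}-3$, this orthogonality annihilates every would-be leading trace of the form $\mathbf{u}_{j,n}^{(1)\top}\bfSigma_n^{(1)}\bfSigma_{P,n}^{(1)}\cdots\mathbf{u}_{j,n}^{(1)}$, leaving only contributions weighted by $\mathbf{u}_{j,n}^{(1)\top}\bfSigma_n^{(1)}\mathbf{u}_{j,n}^{(1)}=\alpha_j^{(1)}$ that sum to $O(p^{-1/2})=o(1)$. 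I expect the hard part to be bounding the Sherman--Morrison remainder $R_{n,k}^{(j)}$ and the non-leading Hoeffding pieces of $X_{n,k}$ uniformly in $k$ in $L^2$: this forces control of $L^4$ norms of quadratic forms in $\bfz_k^{(1)}$ with matrix coefficients involving $[\tilde{\bfA}_{n,(k)}^{(1)}(\theta_{j,n}^{(1)})]^{-1}$ and $\bfSigma_n^{(1)}$, and it is precisely this step where the $8$-th moment assumption becomes indispensable.
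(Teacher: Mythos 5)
Your high-level architecture matches the paper: decompose $\tilde{T}_{n,1}$ and the quadratic forms $\tilde{Q}_{j,n}^{(i)}$ into martingale differences with respect to the filtration that reveals sample $1$ then sample $2$, note that the diagonal blocks of the conditional covariance converge by the existing marginal CLTs of \cite{li_and_chen_2012} and \cite{zhang2022asymptotic}, check Lindeberg via fourth-moment bounds, and reduce the independence claim to showing the off-diagonal blocks vanish. (The paper does this via Cram\'er--Wold and the scalar martingale CLT rather than a multivariate MDS CLT, but that is cosmetic.) The paper also truncates the entries of $\bfZ_n^{(i)}$ at $t_n^{(i)}n^{1/4}$ and works on the high-probability event $\CB_n^{(i)}$ before applying the MDS CLT; you omit this, but it is inherited from the cited CLT for the quadratic forms.

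The genuine gap is in your mechanism for the cross terms, which is where the actual novelty of the lemma lies. You write the martingale difference of $\tilde{Q}_{j,n}^{(1)}$ as $\frac{c_{j,n,k}}{\sqrt n}\{(\mathbf{u}_{j,n}^{(1)\top}\bfz_k^{(1)})^2-1\}+R_{n,k}^{(j)}$ with a ``uniformly bounded $\CF_{k-1}$-measurable coefficient'' and then invoke $\bfSigma_{P,n}^{(1)}\mathbf{u}_{j,n}^{(1)}=\mathbf{0}$ to kill the leading traces. Neither part of this works. First, the rank-one matrix governing this quadratic form is $\bfP_{n,k}^{(1)}=(\bfR_{n,k}^{(1)})^{-1}\mathbf{u}_{j,n}^{(1)}\mathbf{u}_{j,n}^{(1)\top}((\bfR_{n,k}^{(1)})^\top)^{-1}$, and the resolvent correction is \emph{not} small: $\|(\bfR_{n,k}^{(1)})^{-1}\mathbf{u}_{j,n}^{(1)}-\mathbf{u}_{j,n}^{(1)}\|=O(1)$, since $\|\tfrac{1}{n}\bfSigma_{P,n}^{(1)}\bfZ_{n,k}^{(1)}\bfZ_{n,k}^{(1)\top}\|_2$ is bounded away from zero on the relevant event. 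So $\bfP_{n,k}^{(1)}$ is not close to $\mathbf{u}_{j,n}^{(1)}\mathbf{u}_{j,n}^{(1)\top}$, and your reduction to $(\mathbf{u}_{j,n}^{(1)\top}\bfz_k^{(1)})^2-1$ is not a valid leading-order approximation. Second, the coefficient is not $\CF_{k-1}$-measurable: $\bfR_{n,k}^{(1)}$ depends on $\bfz_{k+1}^{(1)},\ldots,\bfz_n^{(1)}$ as well, so the martingale difference is $(\E_k-\E_{k-1})[\cdots]$ and involves a conditional expectation over the future samples, not a deterministic coefficient. Third, the orthogonality $\bfSigma_{P,n}^{(1)}\mathbf{u}_{j,n}^{(1)}=\mathbf{0}$ does not annihilate the traces that actually arise: the Isserlis-type expansion produces quantities like $\E_{k-1}\bigl[\mathbf{u}_{j,n}^{(1)\top}(\bfR_{n,k}^{(1)\top})^{-1}\bfM(\bfR_{n,k}^{(1)})^{-1}\mathbf{u}_{j,n}^{(1)}\bigr]$ with a random $\bfZ_{n,k}^{(1)}\bfZ_{n,k}^{(1)\top}$ sitting between $\bfSigma_{P,n}^{(1)}$ and $\mathbf{u}_{j,n}^{(1)}$, so the orthogonality is never invoked and these traces are generically $O(1)$, not zero.

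The paper's argument (Lemma~\ref{lem_cross_terms_op1}) shows that no cancellation is needed. It bounds each cross term $\CC_{\ell,n}$ in $L^2$ directly: apply the covariance identity for quadratic forms (Lemma~\ref{ConditionalExpectedValueOfProductOfNormalizedQuadraticForms}), bound $\tr(\E_k[\dot\bfP_{n,k}^{(1)}])\le C$ on the event $\CB_n^{(1)}$, bound $\|\bfG_{n,k-1}^{(1)}\|_2\le Cn$ via Yin--Bai--Krishnaiah, and apply Von Neumann's trace inequality. The overall scaling factors $\sqrt n\cdot\tfrac{1}{n(n-1)}$ (for $\CC_{1,n},\CC_{3,n},\CC_{5,n}$) and $\sqrt n\cdot\tfrac{1}{n}$ (for $\CC_{2,n},\CC_{4,n}$) then give $\E[\CC_{\ell,n}^2]=O(n^{-1})$, hence $\CC_{\ell,n}=o_\PR(1)$. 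The conclusion you expect ($O(n^{-1/2})$ decay) is correct, but it is driven by this order-of-magnitude counting, not by the structural orthogonality you propose. To repair your argument you would need to drop the $(\mathbf{u}_{j,n}^{(1)\top}\bfz_k^{(1)})^2$ reduction, keep the full random vector $(\bfR_{n,k}^{(1)})^{-1}\mathbf{u}_{j,n}^{(1)}$, and control the resulting random traces by operator-norm and trace bounds exactly as the paper does.
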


\begin{proof}[Proof of Lemma \ref{lem_quad_form_frobenius_non_cen}]
We note that $\breve{T}_{n,1}$ defined in \eqref{def_population_t_frob_non_cen} and $\tilde{T}_{n,1}$ are identical distributed, because $\breve{T}_{n,1}$ is invariant under location shifts  \citep[see][]{li_and_chen_2012}.
Consequently, we obtain by Lemma \ref{lem_quad_form_frobenius_cen} that
\begin{align*}
    \breve{T}_{n,1}\cond \mathcal{N}(0,1), \quad n \to \infty ~.
\end{align*}
From  equation $(\text{S}.4.4)$ in the supplement to \cite{zhang2022asymptotic}, we obtain for  the quadratic forms $Q_{j,n}^{(i)}$ and $\tilde{Q}_{j,n}^{(i)}$ that $Q_{j,n}^{(i)} -\tilde{Q}_{j,n}^{(i)}=o_{\PR}(1)$ ($i=1,2$ and $j=1,\ldots, K$). Therefore, it follows from Lemma \ref{lem_quad_form_frobenius_cen} that 
\begin{align*}
    \mathbf{L}_{K,K} \cond \mathcal{N}_{2K}(\mathbf{0}_{2K}, \bfSigma_{L}), \quad n \to \infty ~.
\end{align*}
Finally, Lemma \ref{lem_quad_form_frobenius_cen} yields that $\breve{T}_{n,1}$ and $\mathbf{L}_{K,K}$ are asymptotically independent.
\end{proof}
\subsection{Proof of Lemma \ref{lem_quad_form_frobenius_cen}}\label{sec_proof_asymptotical_independence_ind_quad_cen}
   Throughout the rest of this section, for $i=1,2$ and $1\leq k \leq n$, we will make use of the abbreviations
    \begin{align}\label{eq_definition_list}
    \bfZ_{n,k}^{(i)} &= \bfZ_{n}^{(i)}-\bfz_{k}^{(i)}\textbf{e}_k^{\top}, \quad \bfR_{n,k}^{(i)}(\theta_{j,n}^{(i)}) = \textbf{I}_p-\frac{1}{\theta_{j,n}^{(i)}}\frac{1}{n}\bfSigma_{P,n}^{(i)}\bfZ_{n,k}^{(i)}\bfZ_{n,k}^{(i)^\top},
    \\
    \bfP_{n,k}^{(i)}(\theta_{j,n}^{(i)}) &=\lb \bfR_{n,k}^{(i)}(\theta_{j,n}^{(i)})\rb^{-1}\mathbf{u}_{k,n}^{(i)}\mathbf{u}_{k,n}^{(i)^\top}\lb \lcb \bfR_{n,k}^{(i)}(\theta_{j,n}^{(i)})\rcb^{\top}\rb^{-1},\nonumber
    \\
    \delta_{n,k}^{(i)}(\theta_{j,n}^{(i)}) &= \bfz_k^{(i)^\top}\bfP_{n,k}^{(i)}(\theta_{j,n}^{(i)})\bfz_k^{(i)}-\frac{1}{n}\tr\lb \bfP_{n,k}^{(i)}(\theta_{j,n}^{(i)})\rb, \nonumber
    \\
    \bar{\beta}_{n,k}^{(i)}(\theta_{j,n}^{(i)})&=\lcb 1-\frac{1}{n}\tr\lb \frac{1}{\theta_{j,n}^{(i)}}\bfSigma_{P,n}^{(i)}\lb \lcb \bfR_{n,k}^{(i)}(\theta_{j,n}^{(i)}) \rcb^\top \rb^{-1} \rb\rcb^{-1},\nonumber
    \\
    b_n^{(i)}(\theta_{j, n}^{(i)}) &= \lcb 1-\frac{1}{n}\E\lsb\tr\lb \frac{1}{\theta_{j,n}^{(i)}}\bfSigma_{P,n}^{(i)}\lb \lcb \bfR_{n,k}^{(i)}(\theta_{j,n}^{(i)}) \rcb^\top \rb^{-1} \rb \rsb\rcb^{-1},\nonumber
    \\
    \bfF_{n,k}^{(1)}&= \sum_{j=1}^{k}(\tilde{\bfx}_{j}^{(1)}\tilde{\bfx}_{j}^{(1)^\top}-\bfSigma_{n}^{(1)}), \quad \bfF_{n,k}^{(2)}=\sum_{j=1}^{k}(\tilde{\bfx}_j^{(2)}\tilde{\bfx}_j^{(2)^\top}-\bfSigma_{n}^{(2)}), \quad\bfG_{n,k}^{(i)} = \bfSigma_{n}^{(i)^{1/2}}\bfF_{n,k}^{(i)}\bfSigma_{n}^{(i)^{1/2}},\nonumber
    \\
    \bfG_{n,k}^{(i,j)} &= \bfSigma_{n}^{(j)^{1/2}}\bfF_{n,k}^{(i)}\bfSigma_{n}^{(j)^{1/2}}, \quad \bfH_{n}^{(i,j)} = \bfSigma_{n}^{(i)^{1/2}}\bfSigma_{n}^{(j)}\bfSigma_{n}^{(i)^{1/2}}\nonumber.
    \end{align}
    We note that the quantities such as $\bar{\beta}_{n, k}^{(i)}(\theta_{j,n}^{(i)})$ and $b_{n}^{(i)}(\theta_{j,n}^{(i)})$ are not always bounded and the matrix $\textbf{R}_{n, k}^{(i)}(\theta_{j,n}^{(i)})$ is not always invertible. Thus, for $i=1, 2$ and $1\leq k\leq n$ we introduce the events
    \begin{align*}
	\CB_{1, n, k}^{( i)}(\theta_{j,n}^{(i)})&=\lcb \lnorm \frac{1}{\theta_{j,n}^{(i)}}\frac{1}{n}\mathbf{\Sigma}_{P,n}^{(i)}\textbf{Z}_{n, k}^{(i)}\textbf{Z}_{n, k}^{(i)^\top}\rnorm_{2} \leq 1-\varepsilon\rcb,
	\allowdisplaybreaks
	\\
	\CB_{3, n, k}^{(i)}(\theta_{j,n}^{(i)})&=\lcb \bigg\vert\frac{1}{n}\tr\lb \frac{1}{\theta_{j,n}^{(i)}}{\mathbf{\Sigma}}_{P,n}^{(i)}\lb\textbf{R}_{n, k}^{(i)^\top}(\theta_{j,n}^{(i)})\rb^{-1}\rb-\lb 1+\frac{1}{\theta_{j,n}^{(i)}s_{\underline{F}_{y,H^{(i)}}} (\theta_{j,n}^{(i)})}\rb\bigg\vert<\varepsilon\rcb,
\end{align*}
where $\lnorm . \rnorm_2$ denotes the spectral norm of a matrix and $\varepsilon$ is a small positive constant and the Stieltjes transform ${s}_{\underline{F}_{y,H^{(i)}}}(z)$ denotes the Stieltjes transform of the limiting spectral distribution $\underline{F}_{y,H^{(i)}}$ of $\underline{\bfS}_n^{(i)}:= \bfZ_n^{(i)^T} \bfSigma_n^{(i)} \bfZ_n^{(i)}$. The Stieltjes transform ${s}_{\underline{F}_{y, H^{\lb i\rb}}}$ is the unique solution to the equation
\begin{align*}
    z=-\frac{1}{{s}_{\underline{F}_{y, H^{\lb i\rb}}}\lb z\rb}+y\int \frac{t}{1+t{s}_{\underline{F}_{y, H^{\lb i\rb}}}\lb z\rb}\, dH^{(i)}\lb t\rb, \quad z\in \mathbb{C}^{+}=\lcb z\mid \Im (z) >0\rcb.
\end{align*}
In addition, we define
\begin{align*}
	\CB_{n}^{(i)}:= \bigcap\limits_{k=1}^{n}\bigcap\limits_{j=1}^{K}\lb \CB_{1, n, k}^{(i)}(\theta_{j,n}^{(i)})\cap \CB_{3, n, k}^{(i)}(\theta_{j,n}^{(i)})\rb,\quad  i=1, 2.
\end{align*}

\begin{lemma}\label{lem_event_high_probability}
    Suppose the assumptions \ref{ass_p_n} - \ref{ass_population_lsd}, \ref{ass_supercritical_ev_generalized} are satisfied. For $i=1, 2$, the event $\CB_n^{(i)}$ holds with high probability, that is 
	\begin{align*}
		\PR \lb\CB_n^{(i)}\rb=1-O\lb n^{-\ell}\rb
	\end{align*}
	for any $\ell >0$.
\end{lemma}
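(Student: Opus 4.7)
The plan is to reduce the lemma to super-polynomial tail bounds for each of the single events $(\CB_{1,n,k}^{(i)}(\theta_{j,n}^{(i)}))^c$ and $(\CB_{3,n,k}^{(i)}(\theta_{j,n}^{(i)}))^c$ via a union bound, and then to verify each bound using standard random matrix tools (no eigenvalues outside the support, plus concentration of linear spectral statistics) combined with a truncation step that amplifies the $8$-th moment assumption into arbitrary polynomial decay. Because $K$ is fixed and independent of $n$, a union bound yields
\begin{align*}
\PR\lb (\CB_n^{(i)})^c\rb \leq \sum_{j=1}^{K}\sum_{k=1}^{n} \lb \PR\lb (\CB_{1,n,k}^{(i)}(\theta_{j,n}^{(i)}))^c\rb + \PR\lb (\CB_{3,n,k}^{(i)}(\theta_{j,n}^{(i)}))^c \rb \rb ,
\end{align*}
so the prefactor $Kn$ is negligible once each individual complement probability is shown to decay faster than any polynomial in $n$.

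For $\CB_{1,n,k}^{(i)}(\theta_{j,n}^{(i)})$, note that the spectral norm appearing in its definition equals the largest eigenvalue of the symmetric positive semidefinite matrix $\frac{1}{\theta_{j,n}^{(i)}}\frac{1}{n}\bfSigma_{P,n}^{(i)^{1/2}}\bfZ_{n,k}^{(i)}\bfZ_{n,k}^{(i)^\top}\bfSigma_{P,n}^{(i)^{1/2}}$, which is a scaled sample covariance matrix built from $n-1$ i.i.d. samples with population spectrum $F^{\bfLambda_{P,n,K}^{(i)}}\Rightarrow H^{(i)}$ (the bulk obtained after stripping the $K$ supercritical spikes). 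By \ref{ass_supercritical_ev_generalized}, $\alpha_j^{(i)}$ is supercritical, so $\theta_{j,n}^{(i)} = \psi_{n,K}^{(i)}(\alpha_j^{(i)})$ converges to $\psi^{(i)}(\alpha_j^{(i)})$, which lies strictly above the right edge $\lambda_+^{(i)}$ of the Marchenko--Pastur bulk. Hence one can choose $\varepsilon>0$ so that $\lambda_+^{(i)}/\theta_{j,n}^{(i)} \leq 1-2\varepsilon$ for all large $n$, reducing $\CB_{1,n,k}^{(i)}(\theta_{j,n}^{(i)})$ to the event that the largest bulk sample eigenvalue stays within $\varepsilon \theta_{j,n}^{(i)}$ of its deterministic limit. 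Classical "no eigenvalues outside the support" estimates for sample covariance matrices (Bai--Silverstein type) then deliver the required super-polynomial tail.

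For $\CB_{3,n,k}^{(i)}(\theta_{j,n}^{(i)})$, work on $\CB_{1,n,k}^{(i)}$ where $\bfR_{n,k}^{(i)}(\theta_{j,n}^{(i)})$ is invertible, and rewrite
\begin{align*}
\frac{1}{n}\tr\lb \frac{1}{\theta_{j,n}^{(i)}}\bfSigma_{P,n}^{(i)}\lb\bfR_{n,k}^{(i)^\top}(\theta_{j,n}^{(i)})\rb^{-1}\rb = \frac{1}{n}\tr\lb \bfSigma_{P,n}^{(i)}\lb \theta_{j,n}^{(i)}\bfI_p - \frac{1}{n}\bfZ_{n,k}^{(i)}\bfZ_{n,k}^{(i)^\top}\bfSigma_{P,n}^{(i)} \rb^{-1}\rb,
\end{align*}
which is a linear spectral statistic of a sample covariance matrix with population spectrum $\bfLambda_{P,n,K}^{(i)}$. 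The Marchenko--Pastur defining equation for $s_{\underline{F}_{y,H^{(i)}}}$ identifies $1 + (\theta_{j,n}^{(i)} s_{\underline{F}_{y,H^{(i)}}}(\theta_{j,n}^{(i)}))^{-1}$ as the deterministic equivalent. To control the deviation from this limit, one decomposes the trace minus its expectation into a martingale difference sequence by successively removing columns of $\bfZ_{n,k}^{(i)}$, applies Burkholder's inequality with a large exponent, and handles the bias using resolvent perturbation tools combined with the MP equation. Uniformity in $n$ is guaranteed because $\theta_{j,n}^{(i)}$ stays uniformly bounded away from the bulk edge in view of the supercriticality gap assumed in \ref{ass_supercritical_ev_generalized}.

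The main obstacle will be promoting the polynomial decay available from the $8$-th moment hypothesis \ref{ass_random_vectors} into the "$O(n^{-\ell})$ for any $\ell>0$" rate required by the lemma. This is handled by the standard truncation device: replace $z_{j,k}^{(i)}$ by $z_{j,k}^{(i)}\mathbb{1}\{|z_{j,k}^{(i)}|\leq n^{\eta}\}$ for small $\eta>0$, dominate the probability that any entry is untruncated by a Markov-type bound using the $8$-th moment (yielding a factor that can be made smaller than any prescribed polynomial by selecting $\eta$), and for the truncated, now essentially bounded, variables invoke concentration inequalities of arbitrarily high order (e.g.\ Burkholder--Rosenthal with a large exponent) for both the edge-eigenvalue event and the trace-concentration event. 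The truncation does not disturb the previous identifications of limits, since the renormalization corrections it introduces vanish sufficiently fast under \ref{ass_random_vectors}.
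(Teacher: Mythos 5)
The paper defers the proof to Lemma~4.2 in the supplement of \cite{zhang2022asymptotic}, so a literal line-by-line comparison is not possible, but your argument can still be assessed on its own merits. Your overall architecture --- a union bound over the finitely many indices $j$ and the $n$ values of $k$, control of the largest bulk eigenvalue for $\CB_{1,n,k}^{(i)}(\theta_{j,n}^{(i)})$, and concentration of the resolvent trace around the deterministic equivalent dictated by the Marchenko--Pastur equation for $\CB_{3,n,k}^{(i)}(\theta_{j,n}^{(i)})$, via a column-by-column martingale decomposition and a high-moment Burkholder bound --- is the standard route in this literature and is certainly the right skeleton.

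The gap is in the truncation step, and it is not cosmetic. You propose to truncate at $n^{\eta}$ ``for small $\eta>0$'' and to make the probability that some entry escapes truncation ``smaller than any prescribed polynomial by selecting $\eta$'' via Markov's inequality with the eighth moment. Neither part of this works. Markov gives $\PR\lb |z_{j,k}^{(i)}| > n^{\eta}\rb \leq C n^{-8\eta}$, so the union bound over the $\sim np \asymp n^2$ entries yields $\PR(\text{some entry untruncated}) = O(n^{2-8\eta})$: for small $\eta$ this does not even vanish, and for any fixed $\eta$ it is a fixed polynomial rate, never ``$O(n^{-\ell})$ for any $\ell$.'' Pushing $\eta$ up does make $2-8\eta$ as negative as you like, but it also destroys the other half of your argument: the super-polynomial concentration for the bulk edge and for the trace functional relies on the effective bound of the truncated entries being slowly growing (of order $n^{1/4}$ or smaller, exactly as in the truncation level $t_n^{(i)} n^{1/4}$ used in the proof of Lemma~\ref{lem_quad_form_frobenius_cen}), and with truncation at $n^{\eta}$ for large $\eta$ the bounded-variable concentration estimates no longer deliver $O(n^{-\ell})$ for every $\ell$. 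These two requirements pull in opposite directions, which is precisely the obstruction to upgrading an eighth-moment hypothesis to super-polynomial tails. The standard resolution --- and evidently the one intended here, given how the lemma is used inside the proof of Lemma~\ref{lem_quad_form_frobenius_cen} --- is to prove the statement \emph{for the truncated model}, where all moments are effectively bounded, and then to control the effect of replacing the original entries by truncated ones at the level of the statistics, as an $o_{\PR}(1)$ remainder, not by claiming that the truncation event itself has super-polynomially small complement.

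A secondary point worth flagging: $\CB_{1,n,k}^{(i)}(\theta_{j,n}^{(i)})$ involves the spectral norm of the \emph{non-symmetric} matrix $\tfrac{1}{n\theta_{j,n}^{(i)}}\bfSigma_{P,n}^{(i)}\bfZ_{n,k}^{(i)}\bfZ_{n,k}^{(i)\top}$, and that spectral norm is in general strictly larger than the largest eigenvalue of the symmetrized matrix $\tfrac{1}{n\theta_{j,n}^{(i)}}\bfSigma_{P,n}^{(i)^{1/2}}\bfZ_{n,k}^{(i)}\bfZ_{n,k}^{(i)\top}\bfSigma_{P,n}^{(i)^{1/2}}$, which only captures the spectral radius. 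This is repairable (bound the norm by the spectral radius times the condition number of $\bfSigma_{P,n}^{(i)^{1/2}}$, the latter being bounded under \ref{ass_population_lsd} and \ref{ass_supercritical_ev_generalized}), but the reduction should be stated rather than assumed.
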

The proof of Lemma \ref{lem_event_high_probability} is similar to proof of Lemma 4.2 in \cite{zhang2022asymptotic}, which can be found in the supplement to their paper, and therefore is omitted. 

Next, we introduce the random vectors 
\begin{align*}
	\bfy_k=\begin{cases}
		\bfz_k^{(1)} & \text{if } k\leq n
		\\
		\bfz_{k-n}^{(2)} & \text{if } k\geq n+1
	\end{cases}\,.
\end{align*}
In addition, $\CF_k=\sigma\lb \bfy_1, \ldots, \bfy_k\rb$ denotes the $\sigma$-algebra generated by $\bfy_1, \ldots, \bfy_k$ and $\E_k$ denotes the conditional expectation given $\CF_k$, where $\E_0=\E$. Finally, for $1\leq k\leq 2n$ we define
\begin{align}\label{DefintionListMnk}
	M_{n, k} = \lb \E_k-\E_{k-1}\rb \lsb \sigma_{n,1} \tilde{T}_{n,1}\rsb.
\end{align}
With these preparations, we are now able to prove Lemma \ref{lem_quad_form_frobenius_cen}.
\begin{proof}[Proof of Lemma \ref{lem_quad_form_frobenius_cen}]
   Due the the CLTs from Theorem $1$ in \cite{li_and_chen_2012} and from Theorem $2.1$ in \cite{zhang2022asymptotic}, it remains to proof the asymptotically independence of $\tilde{T}_{n,1}$ and $\widetilde{\mathbf{L}}_{K,K}$.
   \newline
   For arbitrary $c_0, c_1^{(i)}, \ldots, c_{K}^{(i)}\in \R$ $(i=1,2)$ our goal is to show
   \begin{align*}
       c_0\tilde{T}_{n,1}+\sum_{i=1}^{2}\sum_{j=1}^{K} c_j^{(i)} \tilde{Q}_{j,n}^{(i)} \cond \mathcal{N}(0, \sigma^2),
   \end{align*}
   where 
   \begin{align}\label{eq_definition_sigma_CWD}
       \sigma^2= c_{0}^2+ \sum_{i=1}^{2} \sum_{j,k=1}^{K} c_{j}^{(i)}c_{k}^{(i)}(\bfSigma_L^{(i)})_{j,k}.
   \end{align}
    Since the intersection of events $\CB_{n}^{\lb 1\rb}$ and $\CB_{n}^{\lb 2\rb}$ hold with high probability by Lemma \ref{lem_event_high_probability}, it suffices to prove that
   \begin{align*}
       c_{0}\tilde{T}_{n,1}+\sum_{i=1}^{2}\sum_{j=1}^{K} c_j^{(i)} \tilde{Q}_{j,n}^{(i)}\mathds{1}(\CB_n^{(i)}) \cond \mathcal{N}(0, \sigma^2).
   \end{align*}
   For $i=1, 2$ and $j=1, \ldots, K$, the convergence in equation $(4.42)$ of \cite{zhang2022asymptotic} yields
    \begin{align*}
        \sqrt{n}\lb \E\lsb \mathbf{u}_{j,n}^{(i)^\top} \bfZ_n^{(i)} \lcb \tilde{\bfA}_n^{(i)}(\theta_{j,n}^{(i)}) \rcb^{-1}\bfZ_n^{(i)^\top}\mathbf{u}_{j,n}^{(i)}\rsb-\frac{\theta_{j,n}^{(i)}}{\alpha_{j}^{(i)}}\rb\mathds{1}\lb \CB_{n}^{\lb i\rb}\rb\rightarrow 0, \quad n \to \infty ~.
    \end{align*}
    As a consequence, it suffices to analyse the asymptotic distribution of 
    \begin{align}\label{CramerWoldDeviceWithExpectedValueInsteadOfASExpectedValue}
    \begin{split}
        c_0\tilde{T}_{n,1} & +\sum_{i=1}^{2}\sum_{j=1}^{K}c_j^{(i)}\sqrt{n}\Big (  \mathbf{u}_{j,n}^{(i)^\top} \bfZ_n^{(i)}\lcb \tilde{\bfA}_{n}^{(i)}(\theta_{j,n}^{(i)})\rcb^{-1}\bfZ_n^{(i)^\top}\mathbf{u}_{j,n}^{(i)} \\
        & ~~~~~~~~~~~~~-\E\lsb \mathbf{u}_{j,n}^{(i)^\top} \bfZ_n^{(i)}\lcb \tilde{\bfA}_{n}^{(i)}(\theta_{j,n}^{(i)})\rcb^{-1}\bfZ_n^{(i)^\top}\mathbf{u}_{j,n}^{(i)}\rsb\Big ) \mathds{1}(\CB_{n}^{(i)}).
            \end{split}
    \end{align}
    Using the truncation argument in Section $\text{S}.7$ of the supplement to \cite{zhang2022asymptotic}, it is sufficient to prove the assertion where the entries of the matrix $\textbf{Z}_n^{\lb i\rb}$ in the supercritical eigenvalues terms are truncated at $t_n^{\lb i\rb}n^{1/4}$ $\lb i=1,2\rb$, where the sequence $\lcb t_{n}^{\lb i\rb}\rcb_{n\geq 1}$ satisfies 
	\begin{align*}
		t_{n}^{\lb i\rb} \to 0 \text{ and } \E\lsb t_{n}^{(i)^4}\lv z_{1,1}^{(i)}\rv^4\mathds{1}\lcb \lv z_{1, 1}^{\lb i\rb}\rv \rcb > t_{n}^{\lb i\rb}n^{1/4}\rsb \to 0, \quad n\to \infty,
	\end{align*}
	where $z_{1,1}^{\lb i\rb}$ is a standardized random variable with existing eighth moment defined in assumption \ref{ass_random_vectors}.
    Hence, it is sufficient to consider
    \begin{align}\label{TruncatedCramerWoldDeviceWithExpectedValueInsteadOfASExpectedValue}
        \begin{split}
        c_0\tilde{T}_{n,1} & +\sum_{j=1}^{K}c_j^{(1)}\sqrt{n}\Big (  \mathbf{u}_{j,n}^{(1)^\top} \dot{\bfZ}_n^{(1)}\lcb \dot{\tilde{\bfA}}_{n}^{(1)}(\theta_{j,n}^{(1)})\rcb^{-1}\dot{\bfZ}_n^{(1)^\top}\mathbf{u}_{j,n}^{(i)} \\
        & ~~~~~~~~~~~~~~~~~~~
        -\E\lsb \mathbf{u}_{j,n}^{(1)^\top} \dot{\bfZ}_n^{(1)}\lcb \dot{\tilde{\bfA}}_{n}^{(1)}(\theta_{j,n}^{(1)})\rcb^{-1}\dot{\bfZ}_n^{(1)^\top}\mathbf{u}_{j,n}^{(1)}\Big ) \rb\mathds{1}(\CB_{n}^{(1)})
        \\
        & +\sum_{j=1}^{K}c_j^{(2)}\sqrt{n}\Big ( \mathbf{u}_{j,n}^{(2)^\top} \ddot{\bfZ}_n^{(2)}\lcb \ddot{\tilde{\bfA}}_{n}^{(2)}(\theta_{j,n}^{(2)})\rcb^{-1}\ddot{\bfZ}_n^{(2)^\top}\mathbf{u}_{j,n}^{(2)}\\
        & ~~~~~~~~~~~~~~~~~~~
        -\E\lsb \mathbf{u}_{j,n}^{(2)^\top} \ddot{\bfZ}_n^{(2)}\lcb \ddot{\tilde{\bfA}}_{n}^{(2)}(\theta_{j,n}^{(2)})\rcb^{-1}\ddot{\bfZ}_n^{(2)^\top}\mathbf{u}_{j,n}^{(2)}\Big ) \rb\mathds{1}(\CB_{n}^{(2)})
            \end{split}
    \end{align}
    instead of the random variable \eqref{CramerWoldDeviceWithExpectedValueInsteadOfASExpectedValue}, where we use the notation $"."$ to indicate terms involved in the supercritical eigenvalue expression that any entries of $\textbf{Z}_n^{\lb 1\rb}$ are truncated at $t_n^{\lb 1\rb}n^{1/4}$ and while we use the notation $".."$ to indicate terms in the supercritical eigenvalue expression that any entries of $\textbf{Z}_n^{\lb 2\rb}$ are truncated at $t_n^{\lb 2\rb}n^{1/4}$.
    \newline
    The strategy of this proof is to rewrite each term in \eqref{TruncatedCramerWoldDeviceWithExpectedValueInsteadOfASExpectedValue} as a martingale difference sequence and to use the martingale central limit theorem.
	\newline
	For the first term in \eqref{TruncatedCramerWoldDeviceWithExpectedValueInsteadOfASExpectedValue}, we obtain by the identity $\lb 4.18\rb$ from \cite{zhang2022asymptotic}
    	\begin{align}\label{MDSSpikedEigenvaluesFirstSample}
		&c_j^{(1)}\sqrt{n}\lb \mathbf{u}_{j,n}^{\lb 1\rb^\top}\dot{\textbf{Z}}_n^{\lb 1\rb}\lcb \dot{\tilde{\textbf{A}}}_n^{\lb 1\rb}\lb\theta_{j, n}^{\lb 1\rb}\rb\rcb^{-1}\dot{\textbf{Z}}_n^{\lb 1\rb^\top}\mathbf{u}_{j,n}^{\lb 1\rb}-\E\lsb \mathbf{u}_{j,n}^{\lb 1\rb^\top}\dot{\textbf{Z}}_n^{\lb 1\rb}\lcb \dot{\tilde{\textbf{A}}}_n^{\lb 1\rb}\lb \theta_{j,n}^{\lb 1\rb}\rb\rcb^{-1}\dot{\textbf{Z}}_n^{\lb 1\rb^\top}\mathbf{u}_{j,n}^{\lb 1\rb}\rsb\rb\mathds{1}\lb \CB_{n}^{\lb 1\rb}\rb
		\\
		=&\sum_{k=1}^{n}\E_k\lb c_j^{(1)}\sqrt{n}\dot{\bar{\beta}}_{n,k}^{\lb 1\rb}(\theta_{j,n}^{(1)})\dot{\delta}_{n, k}^{\lb 1\rb}(\theta_{j,n}^{(1)})\mathds{1}\lb \CB_{1, n, k}^{\lb 1\rb}(\theta_{j,n}^{(1)})\cap \CB_{3, n, k}^{\lb 1\rb}(\theta_{j,n}^{(1)})\rb\rsb + o_{\PR}\lb 1\rb\nonumber
		\\
		=&\sum_{k=1}^{n}\E_{k}\lsb c_j^{(1)}\sqrt{n} \dot{b}_{n}^{\lb 1\rb}(\theta_{j,n}^{(1)}) \dot{\delta}_{n,k}^{\lb 1\rb}(\theta_{j,n}^{(1)})\mathds{1}\lb \CB_{1, n, k}^{\lb 1\rb}(\theta_{j,n}^{(1)})\cap \CB_{3, n, k}^{\lb 1\rb}(\theta_{j,n}^{(1)})\rb\rsb+ o_{\PR}\lb 1\rb\nonumber
		\\
		=&\sum_{k=1}^{n}\lb\E_{k}-\E_{k-1}\rb\lsb c_j^{(1)}\sqrt{n} \dot{b}_{n}^{\lb 1\rb}(\theta_{j,n}^{(1)}) \dot{\delta}_{n,k}^{\lb 1\rb}(\theta_{j,n}^{(1)})\mathds{1}\lb \CB_{1, n, k}^{\lb 1\rb}(\theta_{j,n}^{(1)})\cap \CB_{3, n, k}^{\lb 1\rb}(\theta_{j,n}^{(1)})\rb\rsb+ o_{\PR}\lb 1\rb\nonumber,
	\end{align}
    where the last line holds because 
	\begin{align}\label{ConditionalExpectationK1QuadraticFormVectorIndependentOfIndicator}
		\E_{k-1}\lsb \dot{b}_n^{\lb 1\rb}(\theta_{j,n}^{(1)})\dot{\delta}_{n,k}^{\lb 1\rb}(\theta_{j,n}^{(1)})\mathds{1}\lb \CB_{1, n, k}^{\lb 1\rb}\cap \CB_{3, n, k}^{\lb 1\rb}\rb\rsb =0
	\end{align}
	for $1\leq k\leq n$.
    Similar, we get for the supercritical eigenvalues part of the second sample 
	\begin{align}\label{MDSSpikedEigenvaluesSecondSample}
		&c_j^{(2)}\sqrt{n}\lb \mathbf{u}_{j,n}^{\lb 2\rb^\top}\ddot{\textbf{Z}}_n^{\lb 2\rb}\lcb\ddot{\tilde{\textbf{A}}}_n^{\lb 2\rb}\lb \theta_{j,n}^{\lb 2\rb}\rb\rcb^{-1}\ddot{\textbf{Z}}_n^{\lb 2\rb^\top}\mathbf{u}_{j,n}^{\lb 2\rb}-\E\lsb \mathbf{u}_{j,n}^{\lb 2\rb^\top}\ddot{\textbf{Z}}_n^{\lb 2\rb}\lcb\ddot{\tilde{\textbf{A}}}_n^{\lb 2\rb}\lb \theta_{j,n}^{\lb 2\rb}\rb\rcb^{-1}\ddot{\textbf{Z}}_n^{\lb 2\rb^\top}\mathbf{u}_{j,n}^{\lb 2\rb}\rsb\rb\mathds{1}\lb \CB_{n}^{\lb 2\rb}\rb 
		\\
		=&\sum_{k=1}^{n}\E_{k+n}\lsb c_j^{\lb 2\rb}\sqrt{n}\ddot{\bar{\beta}}_{n,k}^{\lb 2\rb}(\theta_{j,n}^{\lb 2\rb})\ddot{\delta}_{n,k}^{\lb 2\rb}(\theta_{j,n}^{\lb 2\rb})\mathds{1}\lb \CB_{1, n, k}^{\lb 2\rb}(\theta_{j,n}^{\lb 2\rb})\cap\CB_{3, n, k}^{\lb 2\rb}(\theta_{j,n}^{\lb 2\rb})\rb\rsb + o_{\PR}\lb 1\rb\nonumber
		\\
		=&\sum_{k=1}^{n}\E_{k+n}\lsb c_j^{\lb 2\rb}\sqrt{n} \ddot{b}_{n}^{\lb 2\rb}(\theta_{j,n}^{\lb 2\rb}) \ddot{\delta}_{n,k}^{\lb 2\rb}(\theta_{j,n}^{\lb 2\rb})\mathds{1}\lb \CB_{1, n, k}^{\lb 2\rb}(\theta_{j,n}^{\lb 2\rb})\cap\CB_{3, n, k}^{\lb 2\rb}(\theta_{j,n}^{\lb 2\rb})\rb\rsb+ o_{\PR}\lb 1\rb\nonumber
		\\
		=&\sum_{k=1}^{n}\lb\E_{k+n}-\E_{k-1+n}\rb\lsb c_j^{\lb 2 \rb}\sqrt{n} \ddot{b}_{n}^{\lb 2\rb}(\theta_{j,n}^{\lb 2\rb}) \ddot{\delta}_{n,k}^{\lb 2\rb}(\theta_{j,n}^{\lb 2\rb})\mathds{1}\lb \CB_{1, n, k}^{\lb 2\rb}(\theta_{j,n}^{\lb 2\rb})\cap\CB_{3, n, k}^{\lb 2\rb}(\theta_{j,n}^{\lb 2\rb})\rb\rsb+ o_{\PR}\lb 1\rb,\nonumber
	\end{align}
	where the last equality holds because
	\begin{align}\label{ConditionalExpectationK1QuadraticFormVectorIndependentOfIndicator2}
		\E_{k-1+n}\lsb \ddot{b}_n^{\lb 2\rb} (\theta_{j,n}^{\lb 2\rb})\ddot{\delta}_{n,k}^{\lb 2\rb}(\theta_{j,n}^{\lb 2\rb})\mathds{1}\lb \CB_{1, n, k}^{\lb 2\rb}(\theta_{j,n}^{\lb 2\rb})\cap\CB_{3, n, k}^{\lb 2\rb}(\theta_{j,n}^{\lb 2\rb})\rb\rsb=0,
	\end{align}
	for all $1\leq k\leq n$. The derivation of \eqref{ConditionalExpectationK1QuadraticFormVectorIndependentOfIndicator} and \eqref{ConditionalExpectationK1QuadraticFormVectorIndependentOfIndicator2} is straightforward. Therefore, we skip the details here for the sake of brevity.  For the rest of the discussion, we omit the indicators $\mathds{1}\lb \CB_{1, n, k}^{\lb i\rb}(\theta_{j,n}^{\lb i\rb})\cap \CB_{3, n, 1}^{\lb i\rb}(\theta_{j,n}^{\lb i\rb})\rb$ $\lb i=1, 2, j=1, \ldots, K\rb$ for the sake of a simple notation. But, we emphasize that a suitable indicator function of events is needed, whenever handling the inverses of random matrices. 
    Using the representation by a telescope sum yields
    \begin{align}\label{mds_frobenius_representation}
        \tilde{T}_{n,1}=\frac{1}{\sigma_{n,1}}\sum_{k=1}^{2n} M_{n,k},
    \end{align}
    where $M_{n,k}$ is defined in \eqref{DefintionListMnk} and can be represented as
    	\begin{align}\label{mds_frob_below_n}
		M_{n, k}&=\frac{2}{n\lb n-1\rb}\lcb \bfz_k^{\lb 1\rb^\top}\textbf{G}_{n, k-1}^{\lb 1\rb}\bfz_k^{\lb 1\rb}-\tr\lb \textbf{G}_{n, k-1}^{\lb 1\rb}\rb\rcb
		\\
		&\phantom{{}={}}+\frac{2}{n}\lcb \bfz_k^{\lb 1\rb^\top}\lb \bfSigma_{n}^{(1)^2}-\textbf{H}_n^{\lb 1, 2\rb}\rb \bfz_k^{\lb 1\rb}-\tr\lb \bfSigma_{n}^{(1)^2}-\textbf{H}_n^{\lb 1, 2\rb}\rb\rcb\nonumber
	\end{align}
	for $1\leq k\leq n$, while for $n+1\leq k\leq 2n$ the representation
	\allowdisplaybreaks
	\begin{align}\label{mds_frob_above_n}
		M_{n, k}&=\frac{2}{n\lb n-1\rb}\lcb \bfz_{k-n}^{\lb 2\rb^\top}\textbf{G}_{n, k-n-1}^{\lb 2\rb} \bfz_{k-n}^{\lb 2\rb}-\tr\lb \textbf{G}_{n, k-n-1}^{\lb 2\rb}\rb\rcb
		\\
		&\phantom{{}={}}+\frac{2}{n}\lcb \bfz_{k-n}^{\lb 2\rb^\top}\lb \bfSigma_{n}^{(2)^2}-\textbf{H}_n^{\lb 2, 1\rb}\rb \bfz_{k-n}^{\lb 2\rb}-\tr\lb \bfSigma_{n}^{(2)^2}-\textbf{H}_{n}^{\lb 2, 1\rb}\rb\rcb\nonumber
		\\
		&\phantom{{}={}}-\frac{2}{n^2}\lcb \bfz_{k-n}^{\lb 2\rb^\top}\textbf{G}_{n, n}^{\lb 1, 2\rb}\bfz_{k-n}^{\lb 2\rb}-\tr\lb \textbf{G}_{n, n}^{\lb 1, 2\rb}\rb\rcb\nonumber
	\end{align}
	holds. We emphasize that the matrices $\bfF_{n,k}^{(i)}$ defined in \eqref{eq_definition_list} are also defined for $k=0$. In this case, $\bfF_{n,0}^{(i)}=\mathbf{0}$. Thus, for $i=1, 2$ the matrices $\bfG_{n,0}^{(i)}=0$ as well.
    A derivation of \eqref{mds_frob_below_n} and \eqref{mds_frob_above_n} is tedious but straightforward. These formulas were already used in \cite{li_and_chen_2012}, and therefore we skip more details here for the sake of brevity.
    
    Summarizing the calculations \eqref{MDSSpikedEigenvaluesFirstSample}, \eqref{MDSSpikedEigenvaluesSecondSample} and \eqref{mds_frobenius_representation} implies that the random variable in \eqref{TruncatedCramerWoldDeviceWithExpectedValueInsteadOfASExpectedValue} can be represented as
     \begin{align*}
        \frac{c_0}{\sigma_{n,1}}\sum_{k=1}^{2n}M_{n,k} & +\sum_{j=1}^{K}\sum_{k=1}^{n}\E_{k}\lsb c_j^{(1)}\sqrt{n}\dot{b}_{n}^{(1)}(\theta_{j,n}^{(1)})\dot{\delta}_{n,k}^{(1)}(\theta_{j,n}^{(1)})\rsb \\
        & +\sum_{j=1}^{K}\sum_{k=1}^{n}\E_{k+n}\lsb c_j^{(2)}\sqrt{n}\ddot{b}_{n}^{(2)}(\theta_{j,n}^{(2)})\ddot{\delta}_{n,k}^{(2)}(\theta_{j,n}^{(2)})\rsb+ o_{\PR}(1)
        \\
        =&\frac{c_0}{\sigma_{n,1}}\sum_{k=1}^{2n} M_{n,k} + \sum_{k=1}^{2n} W_{n,k} + o_{\PR}(1),
    \end{align*}
    where 
    \begin{align*}
		W_{n, k}=\begin{cases}
		\sum\limits_{j=1}^{K}c_j^{(1)}\E_k\lsb \sqrt{n}\dot{b}_n^{\lb 1\rb}\lb \theta_{j,n}^{(1)}\rb\dot{\delta}_{n, k}^{\lb 1\rb}\lb \theta_{j,n}^{(1)}\rb\rsb  & \text{if } k\leq n
		\\
		\sum\limits_{j=1}^{K}c_j^{(2)}\E_k\lsb \sqrt{n}\ddot{b}_n^{\lb 2\rb}\lb \theta_{j,n}^{(2)}\rb\ddot{\delta}_{n, k-n}^{\lb 2\rb}\lb \theta_{j,n}^{(2)}\rb\rsb &\text{if } k\geq n+1
		\end{cases}\,.
	\end{align*}
    Moreover, by these calculations it follows that 
	\begin{align*}
		\frac{c_0}{\sigma_{n, 1}}M_{n, k}+ W_{n,k}
	\end{align*}
	is a martingale difference sequence. Therefore, by the CLT for martingales (Theorem \ref{thm_CLT_for_MDS}) we need to show 
    \begin{enumerate}[label=(\arabic*)]
        \item \begin{align*}
        \sum_{k=1}^{2n}\E_{k-1}\lsb\lcb \frac{c_0}{\sigma_{n,1}}M_{n,k}+ W_{n,k}\rcb^2\rsb= \sum_{k=1}^{2n}\E_{k-1}\lsb \frac{c_0^2}{\sigma_{n,1}^2}M_{n, k}^2+\frac{2c_0}{\sigma_{n, 1}}M_{n, k}W_{n, k} +W_{n, k}^{2}\rsb \conp \sigma^2,
        \end{align*}
        where $\sigma^2$ is defined in \eqref{eq_definition_sigma_CWD},
        \item for each $\varepsilon>0$ that
        \begin{align*}
        \sum_{k=1}^{2n}\E\lsb \lb \frac{c_0}{\sigma_{n,1}}M_{n,k}+ W_{n, k}\rb^2 \mathds{1}\lb \lv \frac{c_0}{\sigma_{n, 1}}M_{n, k}+W_{n, k}\rv>\varepsilon\rb\rsb\to 0, \quad n \to \infty ~.
        \end{align*}
    \end{enumerate}
    We note, \cite{li_and_chen_2012} and \cite{zhang2022asymptotic} use martingales to prove the CLT for the Frobenius norm and for the random quadratic forms, respectively. Thus, from their proofs we have
    \begin{align*}
        \sum_{k=1}^{2n}\E_{k-1}\lsb c_0^2\frac{M_{n,k}^{2}}{\sigma_{n,1}^2}\rsb \conp c_0^2 ,\quad \sum_{k=1}^{2n}\E_{k-1}\lsb W_{n,k}^2\rsb \conp \sum_{i=1}^{2} \sum_{j,k=1}^{K} c_{j}^{(i)}c_{k}^{(i)}(\bfSigma_L^{(i)})_{j,k}
    \end{align*}
    and
    \begin{align*}
    \sum_{k=1}^{2n}\E\lsb \lcb\frac{M_{n,k}}{\sigma_{n,1}}\rcb^4\rsb\to 0, \quad\sum_{k=1}^{2n}\E\lsb W_{n, k}^{4}\rsb \to 0.
    \end{align*}
    Because
	\begin{align*}
		&\sum_{k=1}^{2n}\E\lsb \lb \frac{c_0}{\sigma_{n,1}}M_{n, k}+W_{n, k}\rb^2 \mathds{1}\lb \lv \frac{c_0}{\sigma_{n,1}}M_{n, k}+W_{n, k}\rv>\varepsilon\rb\rsb	\leq\frac{C}{\varepsilon^2}\sum_{k=1}^{2n}\E\lsb \lcb\frac{M_{n,k}}{\sigma_{n,1}}\rcb^4+W_{n, k}^{4}\rsb \to 0,
	\end{align*}
    it remains to show that
    \begin{align}\label{cross_product_needs_op1}
		\sum_{k=1}^{2n}\E_{k-1}\lsb \frac{c_0}{\sigma_{n,1}}M_{n, k}W_{n,k}\rsb = o_{\PR}\lb 1\rb.
	\end{align}
    By the representation of $M_{n, k}$ in \eqref{mds_frob_below_n} and \eqref{mds_frob_above_n}, we obtain that the left-hand side of \eqref{cross_product_needs_op1} can be represented as
    \begin{align}\label{cross_product_representation_cross_terms}
    \begin{split}
 & \sum_{k=1}^{2n}\E_{k-1}\lsb \frac{c_0}{\sigma_{n,1}}M_{n,k}W_{n,k}\rsb \\
 & ~~~= c_0\sum_{j=1}^{K}\lsb c_j^{(1)}\lb\CC_{1,n}(\theta_{j,n}^{(1)})+ \CC_{2,n}(\theta_{j,n}^{(1)})\rb+c_j^{(2)}\lb\CC_{3,n}(\theta_{j,n}^{(2)})+\CC_{4,n}(\theta_{j,n}^{(2)})+\CC_{5,n}(\theta_{j,n}^{(2)})\rb\rsb,
 \end{split}
    \end{align}
    where
    \begin{align*}
    \CC_{1, n}(\theta_{j,n}^{(1)}) &= \sum_{k=1}^{n}\E_{k-1}\lsb \E_{k}\lsb \sqrt{n}\dot{b}_{n}^{\lb 1\rb}(\theta_{j,n}^{(1)})\dot{\delta}_{n, k}^{\lb 1\rb}(\theta_{j,n}^{(1)})\rsb\frac{1}{\sigma_{n,1}}\frac{2}{n\lb n-1\rb}\lcb \bfz_k^{\lb 1\rb^\top}\textbf{G}_{n, k-1}^{\lb 1\rb}\bfz_k^{\lb 1\rb}-\tr\lb \textbf{G}_{n, k-1}^{\lb 1\rb}\rb\rcb\rsb,
    \\
    \CC_{2, n}(\theta_{j,n}^{(1)}) &=\sum_{k=1}^{n}\E_{k-1}\Bigg[ \E_{k}\lsb \sqrt{n}\dot{b}_{n}^{\lb 1\rb}(\theta_{j,n}^{(1)})\dot{\delta}_{n, k}^{\lb 1\rb}(\theta_{j,n}^{(1)})\rsb\frac{1}{\sigma_{n,1}}\frac{2}{n}\Big\{\bfz_k^{\lb 1\rb^\top}\lb \bfSigma_{n}^{(1)^2}-\textbf{H}_n^{\lb 1, 2\rb}\rb \bfz_k^{\lb 1\rb}\nonumber
	\\
	&\phantom{=\sum_{k=1}^{n}\E_{k-1}\Bigg[ }-\tr\lb \bfSigma_{n}^{(1)^2}-\textbf{H}_n^{\lb 1, 2\rb}\rb\Big\}\Bigg],\nonumber
    \\
	\CC_{3, n}(\theta_{j,n}^{(2
    )}) &=\sum_{k=n+1}^{2n}\E_{k-1}\bigg[\E_{k}\lsb\sqrt{n}\ddot{b}_n^{\lb 2\rb}(\theta_{j,n}^{(2)})\ddot{\delta}_{n,k-n}^{\lb 2\rb}(\theta_{j,n}^{(2)})\rsb\frac{1}{\sigma_{n,1}}\frac{2}{n\lb n-1\rb}\Big\{ \bfz_{k-n}^{\lb 2\rb^\top}\textbf{G}_{n, k-n-1}^{\lb 2\rb} \bfz_{k-n}^{\lb 2\rb}\nonumber
	\\
	&\phantom{=\sum_{k=n+1}^{2n}\E_{k-1}\bigg[}-\tr\lb \textbf{G}_{n, k-n-1}^{\lb 2\rb}\rb\Big\}\bigg],\nonumber
    \\
    \CC_{4, n}(\theta_{j,n}^{(2)})&=\sum_{k=n+1}^{2n}\E_{k-1}\Bigg[\E_{k}\lsb\sqrt{n}\ddot{b}_n^{\lb 2\rb}(\theta_{j,n}^{(2)})\ddot{\delta}_{n,k-n}^{\lb 2\rb}(\theta_{j,n}^{(2)})\rsb \frac{1}{\sigma_{n,1}}\frac{2}{n}\Big\{\bfz_{k-n}^{\lb 2\rb^\top}\lb \bfSigma_{n}^{(2)^2}-\textbf{H}_n^{\lb 2, 1\rb}\rb \bfz_{k-n}^{\lb 2\rb}\nonumber
    \\
	&\phantom{=\sum_{n=k+1}^{2n}\E_{k-1}\Bigg[}-\tr\lb \bfSigma_{n}^{(2)^2}-\textbf{H}_{n}^{\lb 2, 1\rb}\rb\Big\}\Bigg],\nonumber
    \\
	\CC_{5, n}(\theta_{j,n}^{(2)})&=\sum_{k=n+1}^{2n}\E_{k-1}\lsb\E_{k}\lsb\sqrt{n}\ddot{b}_n^{\lb 2\rb}(\theta_{j,n}^{(2)})\ddot{\delta}_{n,k-n}^{\lb 2\rb}(\theta_{j,n}^{(2)})\rsb \frac{1}{\sigma_{n,1}}\frac{2}{n^2}\lcb \bfz_{k-n}^{\lb 2\rb^\top}\textbf{G}_{n, n}^{\lb 1, 2\rb}\bfz_{k-n}^{\lb 2\rb}-\tr\lb \textbf{G}_{n, n}^{\lb 1, 2\rb}\rb\rcb\rsb.\nonumber
    \end{align*}
    Because of the representation of the cross in product in \eqref{cross_product_representation_cross_terms} and Lemma \ref{lem_cross_terms_op1}, which is given in the next subsection, it follows by Slutsky's Theorem that \eqref{cross_product_needs_op1} holds.
\end{proof}
\subsubsection{Auxiliary results for the proof of Lemma \ref{lem_quad_form_frobenius_cen}}\label{subsec_proof_cross_terms}
\begin{lemma}\label{lem_cross_terms_op1}
    Suppose that assumptions \ref{ass_p_n} - \ref{ass_population_lsd}, \ref{ass_supercritical_ev_generalized} and \ref{ass_asympt_var_cov} are satisfied. 
    \newline 
    For $j=1, \ldots, K$, the cross terms 
    \begin{align*}
        \CC_{\ell,n} (\theta_{j,n}^{(1)})&=o_\PR(1), \quad \ell=1,2,
        \\
        \CC_{\ell, n}(\theta_{j,n}^{(2)})&=o_\PR(1), \quad \ell =3,4,5.
    \end{align*}
\end{lemma}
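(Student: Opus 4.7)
The plan is to establish each $\CC_{\ell,n}(\theta_{j,n}^{(i)}) = o_\PR(1)$ by recognizing each summand as a conditional covariance of two quadratic forms in $\bfz_k^{(i)}$ (or $\bfz_{k-n}^{(i)}$) and then bounding the sum via a second-moment estimate that exploits the rank-one structure of $\bfP_{n,k}^{(i)}(\theta_{j,n}^{(i)})$.

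The first step is to cast $\E_{k-1}[A_k B_k]$ into closed form, where $A_k := \E_k[\sqrt{n}\dot b_n^{(i)}\dot\delta_{n,k}^{(i)}(\theta_{j,n}^{(i)})]$ denotes the inner factor from the spike MDS and $B_k$ the corresponding scaled Frobenius MDS term. Since $\bfP_{n,k}^{(i)}$ depends on $\bfZ_{n,k}^{(i)}$ but not on $\bfz_k^{(i)}$, the matrix $\bar{\bfP}_k := \E_k[\bfP_{n,k}^{(i)}(\theta_{j,n}^{(i)})]$ is $\CF_{k-1}$-measurable, and $A_k$ is a centered quadratic form in $\bfz_k^{(i)}$ with coefficient matrix $\sqrt{n}b_n^{(i)}\bar{\bfP}_k$. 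Similarly, $B_k$ is a centered quadratic form in $\bfz_k^{(i)}$ (resp.\ $\bfz_{k-n}^{(i)}$) with $\CF_{k-1}$-measurable coefficient matrix $\mathbf{M}_k$ equal to $\bfG_{n,k-1}^{(i)}$, $\bfSigma_n^{(i)^2}-\bfH_n^{(i,j)}$, or $\bfG_{n,n}^{(1,2)}$ depending on $\ell$. Conditioning on $\CF_{k-1}$ and applying the standard quadratic-form covariance identity
\begin{align*}
\E\bigl[(\bfz^\top \bfA \bfz - \tr \bfA)(\bfz^\top \bfB \bfz - \tr \bfB)\bigr] = (\gamma_4^{(i)}-3)\sum_{j=1}^{p} A_{jj} B_{jj} + 2\tr(\bfA \bfB)
\end{align*}
rewrites each $\E_{k-1}[A_k B_k]$ as a linear combination of $\sum_j (\bar{\bfP}_k)_{jj}(\mathbf{M}_k)_{jj}$ and $\tr(\bar{\bfP}_k \mathbf{M}_k)$.

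The second step would exploit the rank-one representation $\bfP_{n,k}^{(i)} = \bfv_k \bfv_k^\top$ with $\bfv_k = (\bfR_{n,k}^{(i)})^{-1}\mathbf{u}_{j,n}^{(i)}$, combined with concentration estimates for $(\bfR_{n,k}^{(i)})^{-1}$ analogous to those developed in \cite{zhang2022asymptotic}, to replace $\bar{\bfP}_k$ by its deterministic rank-one equivalent supported on the spike direction $\mathbf{u}_{j,n}^{(i)}$. This reduces $\tr(\bar{\bfP}_k \mathbf{M}_k)$ to a scalar multiple of $\mathbf{u}_{j,n}^{(i)^\top}\mathbf{M}_k \mathbf{u}_{j,n}^{(i)}$ up to lower-order fluctuations, and treats the diagonal-product term analogously. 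For $\CC_{2,n}$ and $\CC_{4,n}$, the matrix $\mathbf{M}_k$ is deterministic and $k$-independent, so the resulting $\sum_k$ is controlled directly by the $\sqrt{n}/n$ prefactor. For $\CC_{1,n}$, $\CC_{3,n}$, and $\CC_{5,n}$, the matrix $\mathbf{M}_k$ is itself a sum of i.i.d.\ centered rank-one increments (for instance $\bfG_{n,k-1}^{(i)} = \sum_{\ell=1}^{k-1}\bfSigma_n^{(i)^{1/2}}(\tilde\bfx_\ell^{(i)}\tilde\bfx_\ell^{(i)^\top}-\bfSigma_n^{(i)})\bfSigma_n^{(i)^{1/2}}$), and the $L^2$-norm of $\sum_k \E_{k-1}[A_k B_k]$ is computed by expanding over the underlying independent $\bfz_\ell^{(i)}$ and counting the non-vanishing index combinations; the eighth-moment assumption \ref{ass_random_vectors} guarantees finiteness of all quartic expectations that arise. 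Chebyshev's inequality finally converts the $L^2$-bound into the desired convergence in probability.

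The principal technical obstacle lies in $\CC_{1,n}$, $\CC_{3,n}$, and $\CC_{5,n}$, where $\bar{\bfP}_k$ and $\mathbf{M}_k$ share dependence on overlapping subsets of data vectors. A naive Cauchy--Schwarz bound based on $\lnorm \bfG_{n,k-1}^{(i)}\rnorm_F$ is far too crude, leaving a residual that does not vanish. The required cancellation must be extracted from the one-dimensional projection $\mathbf{u}_{j,n}^{(i)^\top}\bfG_{n,k-1}^{(i)}\mathbf{u}_{j,n}^{(i)}$, which is a scalar martingale in $k$ with centered i.i.d.\ increments, via a careful moment expansion that tracks exactly which index combinations in the double sum over $k$ contribute non-trivially. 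The approach is reminiscent of the estimates in Section 4 of \cite{zhang2022asymptotic}, adapted here to handle the cross interaction between the quadratic-form and Frobenius-norm martingale difference sequences.
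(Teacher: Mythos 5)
Your first step is on the money: the paper also begins by conditioning on $\CF_{k-1}$ and invoking the quadratic-form covariance identity (Lemma~\ref{ConditionalExpectedValueOfProductOfNormalizedQuadraticForms}) to express each summand $\E_{k-1}[A_kB_k]$ via the diagonal product $\sum_j(\bar\bfP_k)_{jj}(\mathbf{M}_k)_{jj}$ and the trace $\tr(\bar\bfP_k\mathbf{M}_k)$, where $\bar\bfP_k := \E_k[\dot\bfP_{n,k}^{(i)}]$ and $\mathbf{M}_k$ is the Frobenius coefficient matrix. You are also correct that a Cauchy--Schwarz bound $|\tr(\bar\bfP_k\bfG_{n,k-1}^{(i)})|\le\lnorm\bar\bfP_k\rnorm_F\lnorm\bfG_{n,k-1}^{(i)}\rnorm_F$ is insufficient: since $\lnorm\bfG_{n,k-1}^{(i)}\rnorm_F\asymp\sqrt{k}\,p\asymp n^{3/2}$, this only yields an $O(1)$ residual.

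However, the conclusion you draw from this — that the argument must extract cancellation from the martingale structure of the scalar $\mathbf{u}_{j,n}^{(i)^\top}\bfG_{n,k-1}^{(i)}\mathbf{u}_{j,n}^{(i)}$, via a deterministic rank-one replacement of $\bar\bfP_k$ plus a combinatorial moment expansion — is where you overshoot. The paper does no such thing. It observes that $\bar\bfP_k$ is positive semi-definite with $\tr\bar\bfP_k\le C$ uniformly on the good event (recall $\dot\bfP_{n,k}^{(i)}$ is rank-one with bounded resolvent factors), and then applies Von Neumann's trace inequality (Lemma~\ref{lem_von_Neumann}), giving $|\tr(\bar\bfP_k\mathbf{M}_k)|\le\lnorm\mathbf{M}_k\rnorm_2\,\tr(\bar\bfP_k)$ and a parallel bound for the diagonal product. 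The point is that passing from the Frobenius norm to the spectral norm of $\bfG_{n,k-1}^{(i)}$ costs a factor of $\sqrt{n}$: one has $\lnorm\bfG_{n,k-1}^{(i)}\rnorm_2\le Cn$ directly from Theorem~\ref{thm_largest_eigenvalue_high_dim_covariance_matrx} (Yin--Bai--Krishnaiah) applied to $\sum_{j\le k-1}\tilde\bfx_j^{(i)}\tilde\bfx_j^{(i)^\top}$. With that, each summand of $\CC_{1,n}$, $\CC_{3,n}$, $\CC_{5,n}$ is bounded \emph{deterministically} on the good event by $O(1)$, and the $\sqrt{n}/(n(n-1))$ prefactor kills the sum over $k$ without any appeal to martingale cancellation; the result follows from a plain $L^2\to 0$ estimate. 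Your more refined approach would also produce a vanishing bound (in fact a tighter one of order $n^{-1}$ in $L^2$), but it requires justifying the deterministic replacement of $\bar\bfP_k$ and a careful quartic moment expansion, neither of which you supply — so as a proposal it leaves the central estimate unproven, whereas the missing ingredient is merely one application of Von Neumann's inequality together with a spectral norm bound on the partial-sum matrix.
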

For $j=1,\ldots, K$ the proof that the associated cross terms vanish in probability is analogous. Therefore, we will provide only a proof of this lemma in the case that $j=1$.
\begin{proof}      
    We start with the first cross term $\CC_{1,n}(\theta_{1,n}^{(1)})$ and show that this quantity converges to $0$ in $L^2$. To be precise, we consider
\begin{align}\label{eq_first_cross_term_L2_start}
	\E\lsb \lv \sum_{k=1}^{n}\E_{k-1}\lsb \E_{k}\lsb \sqrt{n}\dot{b}_{n}^{\lb 1\rb}(\theta_{1,n}^{(1)})\dot{\delta}_{n, k}^{\lb 1\rb}(\theta_{1,n}^{(1)})\rsb\frac{1}{\sigma_{n,1}}\frac{2}{n\lb n-1\rb}\lcb \bfz_k^{\lb 1\rb^\top}\textbf{G}_{n, k-1}^{\lb 1\rb}\bfz_k^{\lb 1\rb}-\tr\lb \textbf{G}_{n, k-1}^{\lb 1\rb}\rb\rcb\rsb\rv^2\rsb
\end{align}
and show that the expectation is of order $o(1)$. 
\newline
Recall, we consider the above expectation and the following (conditional) expectations on the event $\CB_{1, n, k}^{\lb 1\rb}(\theta_{1,n}^{(1)})\cap \CB_{3, n, 1}^{\lb 1\rb}(\theta_{1,n}^{(1)})$ such that the expressions $\dot{b}_n^{\lb 1\rb}(\theta_{1,n}^{(1)})$ and $\dot{\delta}_{n, k}^{\lb 1\rb}(\theta_{1,n}^{(1)})$ are well defined. This also ensures that $\dot{b}_n^{\lb 1\rb}(\theta_{1,n}^{(1)}) \leq C < \infty. $

Moreover, by Lemma \ref{lem_lower_bounded_variances} it follows that
\begin{align*}
	\sup_{n\in\N}\sigma_{n,1}^{-1} \leq C <\infty.
\end{align*}
Thus, it is sufficient to prove that
\begin{align}\label{eq_first_cross_term_removing_terms}
	\E\lsb \lv \sum_{k=1}^{n}\E_{k-1}\lsb \E_{k}\lsb \sqrt{n}\dot{\delta}_{n, k}^{\lb 1\rb}(\theta_{1,n}^{(1)})\rsb\frac{2}{n\lb n-1\rb}\lcb \bfz_k^{\lb 1\rb^\top}\textbf{G}_{n, k-1}^{\lb 1\rb}\bfz_k^{\lb 1\rb}-\tr\lb \textbf{G}_{n, k-1}^{\lb 1\rb}\rb\rcb\rsb\rv^2\rsb = o(1).
\end{align}
 For this purpose, we give an estimate for each summand.  We note that for $k=1$ the summand is equal $0$ as the matrix $\bfG_{n,0}^{(1)}$ is the null matrix. For $2 \leq k \leq n$, there exists a constant $C$ by Lemma \ref{ConditionalExpectedValueOfProductOfNormalizedQuadraticForms}, such that
\begin{align}\label{eq_first_cross_term_summand_estimation_start}
	&\lv \E_{k-1}\lsb \E_{k}\lsb \dot{\delta}_{n, k}^{\lb 1\rb}(\theta_{1,n}^{(1)})\rsb \lcb \bfz_k^{\lb 1\rb^\top}\textbf{G}_{n, k-1}^{\lb 1\rb}\bfz_k^{\lb 1\rb}-\tr\lb \textbf{G}_{n, k-1}^{\lb 1\rb}\rb\rcb\rsb\rv
	\\
	\leq &\frac{C}{n}\lcb \lv\sum_{j=1}^{p}\lb \E_{k}\lsb\dot{\textbf{P}}_{n, k}^{\lb 1\rb}(\theta_{1,n}^{(1)})\rsb\rb_{j, j}\lb\textbf{G}_{n, k-1}^{\lb 1\rb}\rb_{j,j}\rv + \lv \tr\lb \E_{k}\lsb \dot{\textbf{P}}_{n, k}^{\lb 1\rb}(\theta_{1,n}^{(1)})\rsb\textbf{G}_{n, k-1}^{\lb 1\rb}\rb\rv\rcb\nonumber
\end{align}
holds. To give an estimate for the right-hand side of \eqref{eq_first_cross_term_summand_estimation_start}, we derive estimates for both summands.  Due to the definition of $\dot{\bfP}_{n,k}(\theta_{1,n}^{(1)})$ given in \eqref{eq_definition_list}, it follows that this random matrix is nonnegative definite, which implies that $\lb \dot{\textbf{P}}_{n, k}^{\lb 1\rb}(\theta_{1,n}^{(1)})\rb_{j, j} \geq 0$. Moreover, by the definition of the spectral norm we have $\lv (\textbf{G}_{n,k-1}^{(1)})_{j,j}\rv \leq \lnorm\textbf{G}_{n,k-1}^{(1)}\rnorm_2$. Thus, the first term on the right-hand side of \eqref{eq_first_cross_term_summand_estimation_start} can be estimated as follows
\begin{align}\label{eq_first_cross_term_first_summand_intermediate_estimate}
	\lv\sum_{j=1}^{p}\lb \E_{k}\lsb\dot{\textbf{P}}_{n, k}^{\lb 1\rb}(\theta_{1,n}^{(1)})\rsb\rb_{j, j}\lb\textbf{G}_{n, k-1}^{\lb 1\rb}\rb_{j,j}\rv&\leq 
	\lnorm \textbf{G}_{n, k-1}^{\lb 1\rb}\rnorm_2 \lv \sum_{j=1}^{p}\lb \E_{k}\lsb \dot{\textbf{P}}_{n, k}^{\lb 1\rb}(\theta_{1,n}^{(1)})\rsb\rb_{j, j}\rv 
    \\
    &= \lnorm \textbf{G}_{n, k-1}^{\lb 1\rb} \rnorm_2 \tr\lb \E_{k}\lsb \dot{\textbf{P}}_{n, k}^{\lb 1\rb}(\theta_{1,n}^{(1)})\rsb\rb\nonumber.
\end{align}
Next, we derive an estimate for each term in \eqref{eq_first_cross_term_first_summand_intermediate_estimate}. We start with $\tr\lb \E_{k}\lsb \dot{\textbf{P}}_{n, k}^{\lb 1\rb}(\theta_{1,n}^{(1)})\rsb\rb$ and recall that the vector $\mathbf{u}_{1,n}^{\lb 1\rb}$ is a nonrandom unit vector.
Thus, by definition of the matrix $\dot{\textbf{P}}_{n,k}^{\lb 1\rb}(\theta_{1,n}^{(1)})$ given in \eqref{eq_definition_list}, we obtain
\begin{align}\label{eq_trace_conditional_expectation_Pnk_1}
	\tr\lb \E_{k}\lsb \dot{\textbf{P}}_{n, k}^{\lb 1\rb}(\theta_{1,n}^{(1)})\rsb\rb 
	&=\E_k\lsb \mathbf{u}_{1,n}^{\lb 1\rb^\top}\lb\dot{\textbf{R}}_{n,k}^{\lb 1\rb^\top}(\theta_{1,n}^{(1)})\rb^{-1}\lb\dot{\textbf{R}}_{n,k}^{\lb 1\rb}(\theta_{1,n}^{(1)})\rb^{-1}\mathbf{u}_{1,n}^{\lb 1\rb}\rsb\leq \E_k \lsb \lnorm \lb\dot{\textbf{R}}_{n,k}^{\lb 1\rb}(\theta_{1,n}^{(1)})\rb^{-1}\rnorm_2^2\rsb
	\\
	&=\E_{k}\lsb \frac{1}{\lambda_{\text{min}}^2\lb \dot{\textbf{R}}_{n,k}^{\lb 1\rb}(\theta_{1,n}^{(1)})\rb}\rsb\leq C,\nonumber
\end{align}
where we have used the definition of the set $ \CB_{1, n, k}^{\lb 1\rb}(\theta_{1,n}^{(1)})\cap\CB_{3, n, 1}^{\lb 1\rb}(\theta_{1,n}^{(1)})$ in the last inequality. Due to assumption \ref{ass_leading_spiked_ev} the spectral norm of $\bfSigma_n^{(1)}$ is bounded, which implies that
\begin{align}\label{eq_spectral_norm_G_nk_1_intermediate_estimate}
	\lnorm \textbf{G}_{n, k-1}^{\lb 1\rb}\rnorm_2 \leq \lnorm \bfSigma_n^{(1)}\rnorm_2\lnorm\sum_{j=1}^{k-1} \lb\tilde{\bfx}_{j}^{\lb 1\rb}\tilde{\bfx}_{j}^{\lb 1\rb^\top}-\mathbf{\Sigma}_{n}^{\lb 1\rb}\rb\rnorm_2 \leq C\lb \lnorm \sum_{j=1}^{k-1}\tilde{\bfx}_j^{\lb 1\rb}\tilde{\bfx}_j^{\lb 1\rb^\top}\rnorm_2 +n\rb.
\end{align}
Furthermore, 
\begin{align}\label{eq_spectral_norm_sample_covariance_matrix_tilde_x}
	\lnorm \sum_{j=1}^{k-1} \tilde{\bfx}_j^{\lb 1\rb}\tilde{\bfx}_j^{\lb 1\rb^\top}\rnorm_2 &\leq \lb k-1\rb\lnorm \bfSigma_n^{(1)}\rnorm_2\lnorm \frac{1}{k-1}\sum_{j=1}^{k-1}\bfz_{j}^{\lb 1\rb}\bfz_{j}^{\lb 1\rb^\top}\rnorm_2\leq Cn,
\end{align}
where the last inequality follows from Theorem \ref{thm_largest_eigenvalue_high_dim_covariance_matrx}. Thus, the right-hand side of \eqref{eq_spectral_norm_G_nk_1_intermediate_estimate} can be further estimated such that
\begin{align}\label{eq_spectral_norm_G_nk_1_estimate}
    \lnorm \bfG_{n,k-1}^{(1)}\rnorm_2 \leq Cn \leq C(n+1).
\end{align}
Consequently, by the estimates \eqref{eq_trace_conditional_expectation_Pnk_1} and \eqref{eq_spectral_norm_G_nk_1_estimate}, we can further estimate the first term in \eqref{eq_first_cross_term_summand_estimation_start} by
\begin{align}\label{eq_first_cross_term_first_summand_estimate}
    \lv \sum_{j=1}^{p}\lb \E_{k}\lsb \dot{\textbf{P}}_{n, k}^{\lb 1\rb}(\theta_{1,n}^{(1)})\rsb\rb_{j, j}\lb \textbf{G}_{n, k-1}^{\lb 1\rb}\rb_{j, j}\rv\leq C(n+1).
\end{align}
For the second term on the right-hand side of \eqref{eq_first_cross_term_summand_estimation_start},
\begin{align}\label{eq_first_cross_term_second_summand_estimate}
\lv \tr\lb \E_{k}\lsb \dot{\bfP}_{n,k}^{(1)}(\theta_{1,n}^{(1)})\rsb \bfG_{n,k-1}^{(1)}\rb\rv &\leq \lv \tr\lb \E_{k}\lsb \dot{\textbf{P}}_{n, k}^{\lb 1\rb}(\theta_{1,n}^{(1)})\rsb \bfSigma_n^{(1)^{1/2}}\sum_{j=1}^{k-1} \tilde{\bfx}_j^{\lb 1\rb}\tilde{\bfx}_j^{\lb 1\rb^\top}\bfSigma_n^{(1)^{1/2}}\rb\rv
	\\
	&\phantom{{}={}}+\lv \tr\lb \E_{k}\lsb \dot{\textbf{P}}_{n,k}^{\lb 1\rb}(\theta_{1,n}^{(1)})\rsb\mathbf{\Sigma}_n^{\lb 1\rb^2}\rb\rv\nonumber
    \\
    &\leq \lnorm \bfSigma_n^{(1)^{1/2}}\sum_{j=1}^{k-1}\tilde{\bfx}_j^{\lb 1\rb}\tilde{\bfx}_j^{\lb 1\rb^\top}\bfSigma_n^{(1)^{1/2}}\rnorm_2 \tr\lb\E_{k}\lsb \dot{\textbf{P}}_{n, k}^{\lb 1\rb}(\theta_{1,n}^{(1)})\rsb\rb \nonumber
	\\
	&\phantom{{}={}}+ \tr\lb \E_{k}\lsb \dot{\textbf{P}}_{n, k}^{\lb 1\rb}(\theta_{1,n}^{(1)})\rsb\rb \lnorm \mathbf{\Sigma}_n^{\lb 1\rb^2}\rnorm_2\nonumber
	\\
    &\leq C\lcb \lnorm \bfSigma_n^{(1)}\rnorm_2\lnorm \sum_{j=1}^{k-1}\tilde{\bfx}_j^{\lb 1\rb}\tilde{\bfx}_j^{\lb 1\rb^\top}\rnorm_2 +\lnorm \bfSigma_n^{(1)}\rnorm_2^{2}\rcb\nonumber \leq C(n+1),
\end{align}
where we have used Von Neumann\textquotesingle s trace inequality (see Lemma \ref{lem_von_Neumann}) and the fact that $\dot{\bfP}_{n, k}^{\lb 1\rb}(\theta_{1,n}^{(i)})$ is positive semi-definite in the second inequality, the estimate \eqref{eq_trace_conditional_expectation_Pnk_1} in the third inequality, the estimate \eqref{eq_spectral_norm_sample_covariance_matrix_tilde_x} and the fact that the spectral norm of $\mathbf{\Sigma}_n^{\lb 1\rb}$ is bounded due to assumption \ref{ass_leading_spiked_ev} in the last inequality.
\newline
Plugging the estimates \eqref{eq_first_cross_term_first_summand_estimate} and \eqref{eq_first_cross_term_second_summand_estimate} into \eqref{eq_first_cross_term_summand_estimation_start} yields that
\begin{align*}
	&\lv \E_{k-1}\lsb \E_{k}\lsb \dot{\delta}_{n, k}^{\lb 1\rb}(\theta_{1,n}^{(1)})\rsb \lcb \bfz_k^{\lb 1\rb^\top}\textbf{G}_{n, k-1}^{\lb 1\rb}\bfz_k^{\lb 1\rb}-\tr\lb \textbf{G}_{n, k-1}^{\lb 1\rb}\rb\rcb\rsb\rv \leq \frac{C}{n}\lb n+1\rb
\end{align*}
for all $2\leq k\leq n$. By these estimates, it follows that the expectation in \eqref{eq_first_cross_term_removing_terms} can be bounded by
\begin{align*}
	&\frac{C}{n\lb n-1\rb^2}\E\lsb \lcb \sum_{k=1}^{n}\lv \E_{k-1}\lsb \E_{k}\lsb \dot{\delta}_{n, k}^{\lb 1\rb}(\theta_{1,n}^{(1)})\rsb\lcb \bfz_k^{\lb 1\rb^\top}\textbf{G}_{n, k-1}^{\lb 1\rb}\bfz_k^{\lb 1\rb}-\tr\lb \textbf{G}_{n, k-1}^{\lb 1\rb}\rb\rcb\rsb\rv\rcb^2\rsb
	\\
	\leq&\frac{C}{n\lb n-1\rb^2}\E\lsb \lcb \sum_{k=2}^{n} \frac{C}{n}\lb n+1\rb\rcb^2\rsb\leq\frac{C\lb n+1\rb^2}{n\lb n-1\rb^2}=o(1).\nonumber
\end{align*}
Hence, it follows that the expectation in \eqref{eq_first_cross_term_L2_start} is of order $o(1)$. Thus, the cross term $\CC_{1, n}(\theta_{1,n}^{(1)})$ converges in $L^2$ to $0$, which implies that
\begin{align*}
	\CC_{1,n}(\theta_{1,n}^{(1)}) = o_{\PR}\lb 1\rb
\end{align*}
holds.

\medskip

Using similar arguments as for the first cross term $\CC_{1,n}(\theta_{1,n}^{(1)})$, we show that $\CC_{2,n}(\theta_{1,n}^{(1)})$ converges to $0$ in $L^2$. To be precise, we consider
\begin{align}\label{eq_second_cross_term_L2_start}
	\E\lsb \lv \sum_{k=1}^{n}\E_{k-1}\lsb \E_{k}\lsb \sqrt{n}\dot{b}_{n}^{\lb 1\rb}(\theta_{1,n}^{(1)})\dot{\delta}_{n, k}^{\lb 1\rb}(\theta_{1,n}^{(1)})\rsb\frac{1}{\sigma_{n,1}}\frac{2}{n}\lcb \bfz_k^{\lb 1\rb^\top}\lb\bfSigma_{n}^{(1)^2}-\bfH_{n}^{(1,2)}\rb\bfz_k^{\lb 1\rb}-\tr\lb \bfSigma_{n}^{(1)^2}-\bfH_{n}^{(1,2)}\rb\rcb\rsb\rv^2\rsb
\end{align}
and show that the expectation is of order $o(1)$. As we already discussed in the previous proof for the first cross term, we consider the above expectation and the following (conditional) expectations on the event $\CB_{1, n, k}^{\lb 1\rb}(\theta_{1,n}^{(1)})\cap \CB_{3, n, 1}^{\lb 1\rb}(\theta_{1,n}^{(1)})$ such that the expressions $\dot{b}_n^{\lb 1\rb}(\theta_{1,n}^{(1)})$ and $\dot{\delta}_{n,k}^{\lb 1\rb}(\theta_{1,n}^{(1)})$ are well defined. Furthermore, we know $\dot{b}_n^{\lb 1\rb}(\theta_{1,n}^{(1)})$ and $\sigma_{n,1}^{-1}$ are uniformly bounded. Thus, it suffices to prove
\begin{align}\label{eq_second_cross_term_removing_terms}
	\E\lsb \lv \sum_{k=1}^{n}\E_{k-1}\lsb \E_{k}\lsb \sqrt{n}\dot{\delta}_{n, k}^{\lb 1\rb}(\theta_{1,n}^{(1)})\rsb\frac{2}{n}\lcb \bfz_k^{\lb 1\rb^\top}\lb\bfSigma_{n}^{(1)^2}-\bfH_{n}^{(1,2)}\rb\bfz_k^{\lb 1\rb}-\tr\lb\bfSigma_{n}^{(1)^2}-\bfH_{n}^{(1,2)}\rb\rcb\rsb\rv^2\rsb
\end{align}
is of order $o(1)$. For this purpose, we give an estimate for each summand of the
expression above. Again, for $1\leq k \leq n$, there exists there exists a constant $C>0$ by Lemma \ref{ConditionalExpectedValueOfProductOfNormalizedQuadraticForms}, such that 
\begin{align}\label{eq_second_cross_term_summand_estimation_start}
	&\lv \E_{k-1}\lsb \E_{k}\lsb \dot{\delta}_{n, k}^{\lb 1\rb}(\theta_{1,n}^{(1)})\rsb \lcb \bfz_k^{\lb 1\rb^\top}\lb\bfSigma_{n}^{(1)^2}-\bfH_{n}^{(1,2)}\rb\bfz_k^{\lb 1\rb}-\tr\lb\bfSigma_{n}^{(1)^2}-\bfH_{n}^{(1,2)}\rb\rcb\rsb\rv
	\\
	\leq &\frac{C}{n}\lcb \lv\sum_{j=1}^{p}\lb \E_{k}\lsb\dot{\textbf{P}}_{n, k}^{\lb 1\rb}(\theta_{1,n}^{(1)})\rsb\rb_{j, j}\lb\bfSigma_{n}^{(1)^2}-\bfH_{n}^{(1,2)}\rb_{j,j}\rv + \lv \tr\lb \E_{k}\lsb \dot{\textbf{P}}_{n, k}^{\lb 1\rb}(\theta_{1,n}^{(1)})\rsb\lb\bfSigma_{n}^{(1)^2}-\bfH_{n}^{(1,2)}\rb\rb\rv\rcb\nonumber
\end{align}
holds. To give an estimate for the right-hand side of \eqref{eq_second_cross_term_summand_estimation_start}, we derive estimates for both summands.
\newline
We begin with the first summand, and note that it is easy to see that
\begin{align}\label{eq_spectral_norm_covariance_matrix_H_12_estimate}
    \lv \lb\bfSigma_{n}^{(1)^2}-\bfH_{n}^{(1,2)}\rb_{j,j} \rv \leq \lnorm \bfSigma_{n}^{(1)^2}-\bfH_{n}^{(1,2)} \rnorm_2  \leq \lnorm \bfSigma_{n}^{(1)}\rnorm_2^2 + \lnorm \bfH_{n}^{(1,2)}\rnorm_2 \leq C,
\end{align}
where we used for the last inequality that $\lnorm \bfSigma_{n}^{(1)}\rnorm_2$ and $\lnorm \textbf{H}_n^{\lb 1, 2\rb}\rnorm_2$ are bounded due to assumption \ref{ass_leading_spiked_ev}.
Using this estimation and $\lb \dot{\textbf{P}}_{n, k}^{\lb 1\rb}(\theta_{1,n}^{(1)})\rb_{j, j}\geq 0$, it follows that the first sum on the right-hand side of \eqref{eq_second_cross_term_summand_estimation_start} is bounded by
\begin{align}\label{eq_second_cross_term_first_summand_estimate}
	\lv \sum_{j=1}^{p}\lb \E_{k}\lsb \dot{\textbf{P}}_{n, k}^{\lb 1\rb}(\theta_{1,n}^{(1)})\rsb\rb_{j, j}\lb \bfSigma_{n}^{(1)^2}-\textbf{H}_{n}^{\lb 1,2\rb}\rb_{j, j}\rv&\leq 
	C \lv\sum_{j=1}^{p}\lb \E_{k}\lsb \dot{\textbf{P}}_{n,k}^{\lb 1\rb}(\theta_{1,n}^{(1)})\rsb\rb_{j, j} \rv
    \\
    &= C \tr\lb \E_{k}\lsb \dot{\textbf{P}}_{n, k}^{\lb 1\rb}(\theta_{1,n}^{(1)})\rsb\rb \leq C,\nonumber
\end{align}
where we have used the estimate \eqref{eq_trace_conditional_expectation_Pnk_1} in the last inequality. For the second sum on the right-hand side of \eqref{eq_second_cross_term_summand_estimation_start}, Von Neumann\textquotesingle s trace inequality (see Lemma \ref{lem_von_Neumann}) implies
\begin{align*}
	\lv \tr\lb \E_{k}\lsb \dot{\textbf{P}}_{n, k}^{\lb 1\rb}(\theta_{1,n}^{(1)})\rsb\lb \bfSigma_{n}^{(1)^2}-\textbf{H}_{n}^{\lb 1,2\rb}\rb\rb\rv&\leq \lv \tr\lb \E_{k}\lsb \dot{\textbf{P}}_{n, k}^{\lb 1\rb}(\theta_{1,n}^{(1)})\rsb\bfSigma_{n}^{(1)^2}\rb\rv+\lv\tr\lb \E_{k}\lsb \dot{\textbf{P}}_{n,k}^{\lb 1\rb}(\theta_{1,n}^{(1)})\rsb \textbf{H}_{n}^{\lb 1, 2\rb}\rb\rv
	\\
	&\leq \lb \lnorm \bfSigma_{n}^{(1)}\rnorm_2^2 + \lnorm \textbf{H}_{n}^{\lb 1, 2\rb}\rnorm_2 \rb \tr\lb \E_{k}\lsb \dot{\textbf{P}}_{n, k}^{\lb 1\rb}(\theta_{1,n}^{(1)})\rsb\rb\leq C,
\end{align*}
where the last inequality follows from the estimations \eqref{eq_trace_conditional_expectation_Pnk_1} and \eqref{eq_spectral_norm_covariance_matrix_H_12_estimate}.
Plugging this bound and \eqref{eq_second_cross_term_first_summand_estimate} into the estimate \eqref{eq_second_cross_term_summand_estimation_start} yields that
\begin{align*}
	\lv \E_{k-1}\lsb\E_{k}\lsb \dot{\delta}_{n, k}^{\lb 1\rb}(\theta_{1,n}^{(1)})\rsb \lcb \bfz_k^{\lb 1\rb^\top}\lb \bfSigma_{n}^{(1)^2}-\textbf{H}_{n}^{\lb 1, 2\rb}\rb \bfz_{k}^{\lb 1\rb}-\tr\lb \bfSigma_{n}^{(1)^2}-\textbf{H}_n^{\lb 1, 2\rb}\rb\rcb\rsb\rv\leq \frac{C}{n}
\end{align*}
for all $1\leq k\leq n$. By these estimates, it follows that the expectation in \eqref{eq_second_cross_term_removing_terms} can be bounded by
\begin{align*}
	&\frac{C}{n}\E\lsb \lcb \sum_{k=1}^{n}\lv \E_{k-1}\lsb \E_{k}\lsb \dot{\delta}_{n, k}^{\lb 1\rb}(\theta_{1,n}^{(1)})\rsb\lcb \bfz_k^{\lb 1\rb^\top}\lb \bfSigma_{n}^{(1)^2}-\textbf{H}_n^{\lb 1, 2\rb}\rb \bfz_k^{\lb 1\rb}-\tr\lb \bfSigma_{n}^{(1)^2}-\textbf{H}_n^{\lb 1, 2\rb}\rb\rcb\rsb\rv\rcb^2\rsb
	\\
	\leq &\frac{C}{n}\E\lsb\lcb \sum_{k=1}^{n}\frac{C}{n}\rcb^2\rsb\leq\frac{C}{n}= o(1).\nonumber
\end{align*}
Hence, it follows that the expectation in \eqref{eq_second_cross_term_L2_start} is of order $o(1)$. Thus, the cross term $\CC_{2,n}(\theta_{1,n}^{(1)})$ converges in $L^2$ to 0, which implies that
\begin{align*}
	\CC_{2,n}(\theta_{1,n}^{(1)}) = o_{\PR}\lb 1\rb
\end{align*}
holds.
\newline
We emphasize that the proofs for the cross terms $\CC_{\ell,n}(\theta_{1,n}^{(2)}) = o_{\PR}\lb 1\rb$ $(\ell=3,4,5)$ are analogous to those for the previous cross terms $\CC_{\ell,n}(\theta_{1,n}^{(1)}) = o_{\PR}\lb 1\rb$ for $\ell=1,2$. Consequently, we omit these proofs. 
\end{proof}
\subsection{Proof of Corollary \ref{cor_generalized_fisher_combi_1_norm_performance}} 
Our goal is to show that
\begin{align} \label{eq_fisher_combi_power}
    \PR\lb T_{n, FC,m}> q_{1-\alpha} \mid \mathsf{H}_{1} \rb = \PR\lb \log(p_{n,1})+ \log(p_{n,2,m}) < -\frac{q_{1-\alpha}}{2}\mid \mathsf{H}_{1} \rb \to 1, \quad n\to \infty
\end{align}
holds if either \eqref{eq_frobenius_alternative} or \eqref{eq_generalized_eigenvalues_alternative} is satisfied.
\newline
First, we notice that due to the definition of the $p$-values $p_{n,1}$ and $p_{n,2,m} \, (m=1,\ldots, K)$ in \eqref{eq_def_p_value} and \eqref{eq_def_generalized_p_value_spi}, it follows immediately that
\begin{align*}
    \log(p_{n,1}), \log(p_{n,2,m}) \leq \log(2).
\end{align*}
Furthermore, we remember that because of Lemma \ref{lem_consistent_estimators} and \ref{lem_ind_quad_form_frobenius_non_cen}, we have as a consequence of Slutsky's Theorem
\begin{align}
    \breve{T}_{n,1}= \frac{\hat{\sigma}_{n,1}}{\sigma_{n,1}}T_{n,1}-\frac{\lnorm \bfSigma_n^{(1)}-\bfSigma_n^{(2)}\rnorm_{F}^2}{\sigma_{n,1}}&\cond \mathcal{N}(0,1), \label{eq_clt_breve_T}
    \\
    \mathbf{T}_{n,2,m}^{\star}&\cond \mathcal{N}_{m}(\mathbf{0}_m, \bfSigma_{E,m}) \quad n\to \infty, \label{eq_clt_tilde_T_m_critical_ev}
\end{align}
where 
\begin{align*}
    \mathbf{T}_{n,2,m}^{\star}=\sqrt{n}\lb \lambda_1(\bfS_n^{(1)}) - \theta_{1,n}^{(1)} - (\lambda_1(\bfS_n^{(2)}) - \theta_{1,n}^{(2)}), \ldots, \lambda_m(\bfS_n^{(1)}) - \theta_{m,n}^{(1)} - (\lambda_m(\bfS_n^{(2)}) - \theta_{m,n}^{(2)})\rb^\top.
\end{align*}
We start assuming that \eqref{eq_frobenius_alternative} is satisfied and let $c\in \R$ be an arbitrary constant. Because of the CLT in \eqref{eq_clt_breve_T} along with Slutsky's Theorem, we have
\begin{align}
    \PR\lb  T_{n,1} >c \mid\mathsf{H}_{1}\rb &= \PR\lb \breve{T}_{n,1} > c \frac
    {\hat{\sigma}_{n,1}}{\sigma_{n,1}}- \frac{\lnorm \bfSigma_{n}^{(1)}-\bfSigma_{n}^{(2)}\rnorm_{F}^2}{\sigma_{n,1}}\mid \mathsf{H}_{1}\rb 
    \nonumber \\
    &=1-\Phi\lb c \frac
    {\hat{\sigma}_{n,1}}{\sigma_{n,1}}- \frac{\lnorm \bfSigma_{n}^{(1)}-\bfSigma_{n}^{(2)}\rnorm_{F}^2}{\sigma_{n,1}}\rb + o(1)\nonumber
    \nonumber \\
    &=1-\Phi\lb c \frac{\hat{\sigma}_{n,1}}{\frac{2}{n}\lb \tr(\bfSigma_{n}^{(1)^2}+\bfSigma_n^{(2)^2})\rb}\mathcal{Z}_{n}(\bfSigma_n^{(1)},\bfSigma_{n}^{(2)})-\frac{\lnorm \bfSigma_{n}^{(1)}-\bfSigma_{n}^{(2)}\rnorm_{F}^2}{\sigma_{n,1}}\rb + o(1), \label{eq_frobenius_statistics_probability_unbounded_under_alternative}
\end{align}
where
\begin{align*}
    \mathcal{Z}_{n}(\bfSigma_{n}^{(1)},\bfSigma_n^{(2)})=\frac{\frac{2}{n}\lb \tr(\bfSigma_{n}^{(1)^2}+\bfSigma_n^{(2)^2})\rb}{\sigma_{n,1}}.
\end{align*}
Because of the fact that $\sigma_{n,1}^{-1}$ is bounded, it follows by the assumptions \ref{ass_p_n} and \ref{ass_leading_spiked_ev} that
\begin{align*}
    \frac{2}{n} \frac{\tr(\bfSigma_{n}^{(1)^2}+\bfSigma_n^{(2)^2})}{\sigma_{n,1}} \leq C \frac{p}{n}\leq C.
\end{align*}
Furthermore, we have by Lemma \ref{lem_consistent_estimators} that
\begin{align*}
    \frac{\hat{\sigma}_{n,1}}{\frac{2}{n}\lb \tr(\bfSigma_{n}^{(1)^2}+\bfSigma_n^{(2)^2})\rb}\conp 1.
\end{align*}
Thus, \eqref{eq_frobenius_statistics_probability_unbounded_under_alternative} is dominated by 
\begin{align*}
    1 - \Phi \lb -\lnorm \bfSigma_n^{(1)}-\bfSigma_n^{(2)}\rnorm_F^2 \rb .
\end{align*}
As we assume that \eqref{eq_frobenius_alternative} holds, we conclude 
\begin{align*}
    \PR\lb  T_{n,1} >c \mid \mathsf{H}_{1}\rb \to 1, \quad n\to \infty ~.
\end{align*}
Consequently, for every $\varepsilon>0$, we have
\begin{align*}
    \PR(p_{n,1}>\varepsilon\mid \mathsf{H}_{1})\to 0, \quad n \to \infty ~.
\end{align*}
Finally, for any arbitrary constant $c\in \R$ it follows that
\begin{align*}
    \PR(\log(p_{n,1})<c \mid \mathsf{H}_{1}) \to 1, \quad n\to \infty ~.
\end{align*}
This leads to 
\begin{align} \label{a1}
    \PR\lb \log(p_{n,1})+\log(p_{n,2,m})< -\frac{q_{1-\alpha}}{2}\mid \mathsf{H}_1\rb \to 1, \quad n \to \infty,
\end{align}
where we used the fact that $\log(p_{n,2,m})\leq \log(1)$.
\newline
Now, we assume \eqref{eq_generalized_eigenvalues_alternative} is satisfied. Then, we can show with similar arguments that \eqref{eq_fisher_combi_power} holds as well. Let $c\in \R$ be an arbitrary constant. By the CLT for leading supercritical sample eigenvalue in \eqref{eq_clt_tilde_T_m_critical_ev} and Lemma \ref{lem_ind_quad_form_frobenius_non_cen} along with Slutsky's Theorem, we get 
\begin{align}\label{eq_generalized_ev_statistics_probability_unbounded_under_alternative}
    \PR( T_{n,2,m} > c\mid \mathsf{H}_{1}) &\geq \PR\lb \lnorm\mathbf{T}_{n,2,m}^\star\rnorm_1 > c-\sqrt{n}\sum_{j=1}^{m}\lv\psi_{n,K}^{(1)}(\alpha_j^{(1)})-\psi_{n,K}^{(2)}(\alpha_{j}^{(2)})\rv\mid \mathsf{H}_{1}\rb
    \\
    &= 1-F\lb c-\sqrt{n}\sum_{j=1}^{m}\lv\psi_{n,K}^{(1)}(\alpha_j^{(1)})-\psi_{n,K}^{(2)}(\alpha_{j}^{(2)})\rv\rb + o(1)\nonumber,
\end{align}
where $F$ is the cumulative distribution function of $\lnorm \mathbf{W}\rnorm_1$ with $\mathbf{W}\sim \mathcal{N}_m (\mathbf{0}_m, \bfSigma_{E,m})$.
Thus, the right-hand side of \eqref{eq_generalized_ev_statistics_probability_unbounded_under_alternative} depends primarily on $-\sqrt{n}\sum\limits_{j=1}^{m}\lv\psi_{n,K}^{(1)}(\alpha_j^{(1)})-\psi_{n,K}^{(2)}(\alpha_{j}^{(2)})\rv$. As we assume that one summand converges to infinity, we conclude
\begin{align*}
    \PR \lb T_{n,2,m} >c \mid \mathsf{H}_{1} \rb \to 1, \quad n\to \infty ~.
\end{align*}
Similar arguments leading to \eqref{a1} show that 
\begin{align*}
    \PR\lb \log(p_{n,1})+\log(p_{n,2,m})< -\frac{q_{1-\alpha}}{2}\mid \mathsf{H}_1\rb \to 1, \quad n \to \infty ~.
\end{align*}

\bigskip

{\bf Acknowledgments}
The authors would like to thank Guangming Pan and Zhixiang Zhang for some helpful discussions about their work \cite{zhang2022asymptotic} and for sharing their code. 
\bigskip

{\bf  Funding}
The work of  H. Dette and T. Lam  was partially supported by the  
Deutsche Forschungsgemein-
schaft  (DFG),   {\it Research unit 5381: Mathematical Statistics in the Information Age}, project number 460867398,  
 and  partially funded by the Deutsche Forschungsgemeinschaft (DFG, German Research Foundation) under \textit{Germany's Excellence Strategy - EXC 2092 CASA - 390781972.}
 The work of N. Dörnemann was partially supported by  the Aarhus University Research Foundation (AUFF), project numbers 47221 and 47388.

\setlength{\bibsep}{1pt}
\begin{small}
\bibliography{references}

\begin{thebibliography}{}

\bibitem[Anderson, 1984]{anderson2003}
Anderson, T.~W. (1984).
\newblock {\em An Introduction to Multivariate Statistical Analysis}.
\newblock Wiley Series in Probability and Mathematical Statistics: Probability
  and Mathematical Statistics. John Wiley \& Sons, Inc., New York, second
  edition.

\bibitem[Bai and Ding, 2012]{baiding2012}
Bai, Z. and Ding, X. (2012).
\newblock Estimation of spiked eigenvalues in spiked models.
\newblock {\em Random Matrices: Theory and Applications}, 1(2):1150011.

\bibitem[Bai and Saranadasa, 1996]{bai1996effect}
Bai, Z. and Saranadasa, H. (1996).
\newblock Effect of high dimension: by an example of a two sample problem.
\newblock {\em Statistica Sinica}, pages 311--329.

\bibitem[Bai and Silverstein, 2010]{bai2004}
Bai, Z. and Silverstein, J.~W. (2010).
\newblock {\em Spectral analysis of large dimensional random matrices},
  volume~20.
\newblock Springer.

\bibitem[Bai and Yao, 2008]{bai2008central}
Bai, Z. and Yao, J.-f. (2008).
\newblock Central limit theorems for eigenvalues in a spiked population model.
\newblock {\em Annales de l'IHP Probabilit{\'e}s et statistiques},
  44(3):447--474.

\bibitem[Bai and Yin, 2008]{bai2008limit}
Bai, Z.-D. and Yin, Y.-Q. (2008).
\newblock Limit of the smallest eigenvalue of a large dimensional sample
  covariance matrix.
\newblock In {\em Advances In Statistics}, pages 108--127. World Scientific.

\bibitem[Baik et~al., 2005]{bbp}
Baik, J., Arous, G.~B., and P{\'e}ch{\'e}, S. (2005).
\newblock {Phase transition of the largest eigenvalue for nonnull complex
  sample covariance matrices}.
\newblock {\em The Annals of Probability}, 33(5):1643 -- 1697.

\bibitem[Cai et~al., 2013]{CaiLiuXia2013}
Cai, T., Liu, W.~L., and Xia, Y. (2013).
\newblock Two-sample covariance matrix testing and support recovery in
  high-dimensional and sparse settings.
\newblock {\em Journal of the American Statistical Association},
  108(501):265--277.

\bibitem[Dette and D{\"o}rnemann, 2020]{dettedoernemann2020}
Dette, H. and D{\"o}rnemann, N. (2020).
\newblock Likelihood ratio tests for many groups in high dimensions.
\newblock {\em Journal of Multivariate Analysis}, 178:104605.

\bibitem[Ding et~al., 2024]{ding2024two}
Ding, X., Hu, Y., and Wang, Z. (2024).
\newblock Two sample test for covariance matrices in ultra-high dimension.
\newblock {\em Journal of the American Statistical Association}, pages 1--12.

\bibitem[D{\"o}rnemann and Dette, 2024]{dornemanndette2024}
D{\"o}rnemann, N. and Dette, H. (2024).
\newblock Detecting change points of covariance matrices in high dimensions.
\newblock {\em arXiv preprint arXiv:2409.15588}.

\bibitem[Dörnemann, 2022]{doernemann2022}
Dörnemann, N. (2022).
\newblock Likelihood ratio tests under model misspecifications in high
  dimensions.
\newblock {\em arXiv preprint arXiv:2203.05423}.

\bibitem[Fisher, 1950]{fisher1950statistical}
Fisher, R.~A. (1950).
\newblock {\em Statistical Methods for Research Workers}.
\newblock Oliver and Boyd, London, 11th edition.

\bibitem[Hall and Heyde, 1980]{hallheyde1980martingales}
Hall, P. and Heyde, C. (1980).
\newblock {\em Martingale Limit Theory and its Application}.
\newblock Academic Press.

\bibitem[Jiang and Yang, 2013]{jiangyang2013}
Jiang, T. and Yang, F. (2013).
\newblock Central limit theorems for classical likelihood ratio tests for
  high-dimensional normal distributions.
\newblock {\em Annals of Statistics}, 41:2029--2074.

\bibitem[Li and Chen, 2012]{li_and_chen_2012}
Li, J. and Chen, S.~X. (2012).
\newblock {Two sample tests for high-dimensional covariance matrices}.
\newblock {\em The Annals of Statistics}, 40(2):908 -- 940.

\bibitem[Li et~al., 2020]{li_et_al_2020}
Li, Z., Han, F., and Yao, J. (2020).
\newblock {Asymptotic joint distribution of extreme eigenvalues and trace of
  large sample covariance matrix in a generalized spiked population model}.
\newblock {\em The Annals of Statistics}, 48(6):3138 -- 3160.

\bibitem[Littell and Folks, 1971]{littell1971asymptotic}
Littell, R.~C. and Folks, J.~L. (1971).
\newblock Asymptotic optimality of fisher’s method of combining independent
  tests.
\newblock {\em Journal of the American Statistical Association},
  66(336):802--806.

\bibitem[Liu et~al., 2023]{liu2023clt}
Liu, Z., Hu, J., Bai, Z., and Song, H. (2023).
\newblock A clt for the lss of large-dimensional sample covariance matrices
  with diverging spikes.
\newblock {\em The Annals of Statistics}, 51(5):2246--2271.

\bibitem[Mirsky, 1975]{mirsky1975}
Mirsky, L. (1975).
\newblock A trace inequality of \text{John von Neumann}.
\newblock {\em Monatshefte für Mathematik}, 79:303--306.

\bibitem[Paul, 2007]{paul2007asymptotics}
Paul, D. (2007).
\newblock Asymptotics of sample eigenstructure for a large dimensional spiked
  covariance model.
\newblock {\em Statistica Sinica}, pages 1617--1642.

\bibitem[Qi et~al., 2019]{qi_et_al_2019}
Qi, Y., Wang, F., and Zhang, L. (2019).
\newblock Limiting distributions of likelihood ratio test for independence of
  components for high-dimensional normal vectors.
\newblock {\em Annals of the Institute of Statistical Mathematics},
  71(4):911--946.

\bibitem[Srivastava et~al., 2014]{SRIVASTAVA2014289}
Srivastava, M.~S., Yanagihara, H., and Kubokawa, T. (2014).
\newblock Tests for covariance matrices in high dimension with less sample
  size.
\newblock {\em Journal of Multivariate Analysis}, 130:289--309.

\bibitem[Yang and Pan, 2017]{YangPan2017}
Yang, Q. and Pan, G. (2017).
\newblock Weighted statistic in detecting faint and sparse alternatives for
  high-dimensional covariance matrices.
\newblock {\em Journal of the American Statistical Association},
  112(517):188--200.

\bibitem[Yin et~al., 1988]{yinbaikrishnaiah1988}
Yin, Y.~Q., Bai, Z., and Krishnaiah, P.~R. (1988).
\newblock On the limit of the largest eigenvalue of the large dimensional
  sample covariance matrix.
\newblock {\em Probability Theory and Related Fields}, 78:509--521.

\bibitem[Zhang et~al., 2022]{zhang2022asymptotic}
Zhang, Z., Zheng, S., Pan, G., and Zhong, P.-S. (2022).
\newblock Asymptotic independence of spiked eigenvalues and linear spectral
  statistics for large sample covariance matrices.
\newblock {\em The Annals of Statistics}, 50(4):2205--2230.

\bibitem[Zheng et~al., 2020]{ZhengLinGuoYin2020}
Zheng, S., Lin, R., Guo, J., and Yin, G. (2020).
\newblock Testing homogeneity of high-dimensional covariance matrices.
\newblock {\em Statistica Sinica}, 1:35--53.

\end{thebibliography}
\end{small}
\newpage 
\appendix
\section{Appendix} \label{sec_appendix}
\subsection{Background results}

\begin{theorem}[Corollary 3.1 in \cite{hallheyde1980martingales}]\label{thm_CLT_for_MDS}
    Suppose that for each $n$, the sequence of random variables $Y_{n, 1}, Y_{n_, 2}, \ldots, Y_{n, r_n}$ is a real martingale difference sequence with respect to an increasing sequence of $\sigma$-algebras $\lcb \CF_{n, j}\rcb_{j=1}^{r_n}$ and with existing second moments. If
    \begin{enumerate}[label=(\arabic*)]
    \item $\sum\limits_{j=1}^{r_n}\E\lsb Y_{n,j}^{2}\mid \CF_{n, j-1}\rsb \conp \sigma^2$, where $\sigma^2$ is a positive constant, and
	\\
	\item for each $\varepsilon>0$, $\sum\limits_{j=1}^{r_n}\E\lsb Y_{n,j}^2\mathds{1}\lb\lv Y_{n,j}\rv >\varepsilon\rb\rsb\rightarrow 0$
	\end{enumerate}
	as $n\to\infty$ then
	\begin{align*}
		\sum_{j=1}^{r_n}Y_{n,j}\cond\mathcal{N}( 0, \sigma^2), \quad n\to \infty ~.
	\end{align*}
\end{theorem}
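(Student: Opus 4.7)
The plan is to prove convergence in characteristic function and invoke Lévy's continuity theorem: for each $t\in\R$, show that $\varphi_n(t):=\E[\exp(it\,S_n)]\to\exp(-t^2\sigma^2/2)$, where $S_n=\sum_{j=1}^{r_n}Y_{n,j}$. Since the theorem is quoted as Corollary~3.1 of Hall and Heyde (1980), I sketch only the structure of the classical argument.

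The plan has three reduction steps. \textbf{Step 1: truncation.} Using the Lindeberg-type condition (2), I would replace each $Y_{n,j}$ with the truncated and recentred martingale difference $\tilde Y_{n,j}=Y_{n,j}\mathds{1}\{|Y_{n,j}|\le\varepsilon\}-\E[Y_{n,j}\mathds{1}\{|Y_{n,j}|\le\varepsilon\}\mid\mathcal{F}_{n,j-1}]$. Condition (2) yields $\E[(S_n-\sum_j\tilde Y_{n,j})^2]\to 0$ (so the two sums have the same weak limit), while the perturbation of the conditional variances is negligible, so that $\sum_j\E[\tilde Y_{n,j}^2\mid\mathcal{F}_{n,j-1}]\to\sigma^2$ still holds in probability. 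This reduces the problem to a uniformly bounded martingale difference array with $|\tilde Y_{n,j}|\le 2\varepsilon$. \textbf{Step 2: Taylor expansion.} Writing $e^{ix}=1+ix-x^2/2+r(x)$ with $|r(x)|\le C|x|^3$ and exploiting $\E[\tilde Y_{n,j}\mid\mathcal{F}_{n,j-1}]=0$, iterated conditioning yields
\begin{align*}
\varphi_n(t)=\E\Biggl[\prod_{j=1}^{r_n}\Bigl(1-\tfrac{t^2}{2}\,\E[\tilde Y_{n,j}^2\mid\mathcal{F}_{n,j-1}]+\rho_{n,j}\Bigr)\Biggr]+o(1),
\end{align*}
where $\sum_j|\rho_{n,j}|=O_{\PR}(\varepsilon)$ by the uniform truncation bound together with condition (1). \textbf{Step 3: product-to-exponential.} Applying $\log(1+x)=x+O(x^2)$ on a fixed neighbourhood of $1$, summing the logarithms, and using condition (1) to identify the quadratic term, I obtain $\varphi_n(t)\to e^{-t^2\sigma^2/2}$ up to an error of order $O(\varepsilon)$; since $\varepsilon>0$ is arbitrary, the conclusion follows from Lévy's continuity theorem.

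The main technical obstacle is the rigorous passage from the random product of complex numbers to the deterministic exponential limit. One has to ensure that a single-valued complex logarithm is applicable uniformly on the relevant factors, which requires the uniform smallness $\max_j\bigl|\tfrac{t^2}{2}\E[\tilde Y_{n,j}^2\mid\mathcal{F}_{n,j-1}]-\rho_{n,j}\bigr|=o_{\PR}(1)$; this in turn follows by combining the truncation bound with condition (1). In practice one introduces a stopping-time truncation keeping the product inside a neighbourhood of $1$ with high probability, and argues that this stopping time is trivial in the limit. The remaining bookkeeping is standard and is carried out in Chapter~3 of Hall and Heyde (1980), whence the corollary is cited.
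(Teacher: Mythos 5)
The paper does not prove this theorem at all: it is stated verbatim in the appendix as a background result and attributed to Corollary 3.1 of \cite{hallheyde1980martingales}, so there is no proof in the paper to compare your sketch against.

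That said, your three-step outline (truncation and conditional recentering via the Lindeberg condition, iterated conditioning with a third-order Taylor expansion of $e^{itY_{n,j}}$, then the passage from the random product $\prod_j\bigl(1-\tfrac{t^2}{2}\E[\tilde Y_{n,j}^2\mid\CF_{n,j-1}]+\rho_{n,j}\bigr)$ to the exponential $e^{-t^2\sigma^2/2}$) is a faithful outline of the classical characteristic-function argument. The subsidiary claims you invoke are correct as stated: $Y_{n,j}-\tilde Y_{n,j}$ is itself a martingale difference array with $\sum_j\E[(Y_{n,j}-\tilde Y_{n,j})^2]\le\sum_j\E[Y_{n,j}^2\mathds{1}\{|Y_{n,j}|>\varepsilon\}]\to 0$; and the conditional variance sum is preserved since both $\E[Y_{n,j}^2\mathds{1}\{|Y_{n,j}|>\varepsilon\}\mid\CF_{n,j-1}]$ and $\bigl(\E[Y_{n,j}\mathds{1}\{|Y_{n,j}|>\varepsilon\}\mid\CF_{n,j-1}]\bigr)^2$ sum to $o_\PR(1)$ (the second by conditional Cauchy--Schwarz, dominated by the first). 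You rightly identify the product-to-exponential step, with the attendant stopping-time control needed to keep the complex logarithm single-valued, as the genuine technical crux and defer that bookkeeping to Hall and Heyde; that is appropriate for a result the paper itself only cites.
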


\begin{lemma}\label{ConditionalExpectedValueOfProductOfNormalizedQuadraticForms}
    Let $\bfx = (x_1, \ldots, x_p)^\top$ be a $p$-dimensional random vector with unit variance and i.i.d. entries such that $\E\lsb x_{i}\rsb =0$, and $\E\lsb x_{i}^{4}\rsb =\gamma < \infty$, $i=1, \ldots, p$. Suppose $\bfA = (a_{i,j})$ and $\bfB=(b_{i,j})$ are matrices in $\mathbb{C}^{p \times p}$. Then, we have the identity
    \begin{align*}
        \E \lsb \lb \bfx^\top \bfA \bfx - \tr\lb \bfA \rb\rb \lb \bfx^\top \bfB \bfx - \tr\lb \bfB \rb\rb\rsb = \lb \E\lsb x_1^{4}\rsb-3\rb\sum_{i=1}^{p} a_{i,i}b_{i,i} + \tr\lb \bfA\bfB^\top\rb + \tr\lb \bfA\bfB\rb.
    \end{align*}
\end{lemma}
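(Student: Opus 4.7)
The plan is to carry out a direct moment computation, reducing the left-hand side to a sum over index coincidences and then reassembling the pieces into the traces on the right. First I would expand
\begin{align*}
\bfx^\top \bfA \bfx = \sum_{i,j=1}^p a_{i,j} x_i x_j, \qquad \bfx^\top \bfB \bfx = \sum_{k,\ell=1}^p b_{k,\ell} x_k x_\ell,
\end{align*}
observe that $\E[\bfx^\top \bfA \bfx] = \tr(\bfA)$ and $\E[\bfx^\top \bfB \bfx] = \tr(\bfB)$, and use $\cov(U,V) = \E[UV] - \E[U]\E[V]$ to rewrite the quantity of interest as $\E[\bfx^\top \bfA \bfx \cdot \bfx^\top \bfB \bfx] - \tr(\bfA)\tr(\bfB)$.

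The core step is to classify the expectation
\begin{align*}
\E[x_i x_j x_k x_\ell]
\end{align*}
by the multiplicity pattern of the indices. Using the assumptions that the $x_i$ are i.i.d., centered, of unit variance, with $\E[x_1^4]=\gamma$, the expectation is $\gamma$ when $i=j=k=\ell$, it is $1$ when the multiset $\{i,j,k,\ell\}$ consists of exactly two pairs, and it vanishes otherwise. This gives the clean summary identity
\begin{align*}
\E[x_i x_j x_k x_\ell] = (\gamma-3)\, \mathbf{1}(i=j=k=\ell) + \delta_{i,j}\delta_{k,\ell} + \delta_{i,k}\delta_{j,\ell} + \delta_{i,\ell}\delta_{j,k},
\end{align*}
where the $-3$ correction absorbs the triple counting of $i=j=k=\ell$ by the three Kronecker pairings.

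Substituting this identity into the quadruple sum $\sum_{i,j,k,\ell} a_{i,j} b_{k,\ell} \E[x_i x_j x_k x_\ell]$ and splitting according to the four terms, I would recognize each pairing contribution as a trace: the $\delta_{i,j}\delta_{k,\ell}$ term yields $\tr(\bfA)\tr(\bfB)$, the $\delta_{i,k}\delta_{j,\ell}$ term yields $\sum_{i,j} a_{i,j}b_{i,j} = \tr(\bfA \bfB^\top)$, the $\delta_{i,\ell}\delta_{j,k}$ term yields $\sum_{i,j} a_{i,j}b_{j,i} = \tr(\bfA\bfB)$, and the fourth-moment-correction term produces $(\gamma-3)\sum_i a_{i,i} b_{i,i}$. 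Subtracting $\tr(\bfA)\tr(\bfB)$ to form the centered product then leaves exactly the claimed identity. No technical obstacle arises; the only care needed is the bookkeeping of the $(\gamma-3)$ correction, so that the $i=j=k=\ell$ contribution is not double-counted when the three pairing sums are extended to all $p$ indices rather than being restricted to distinct indices.
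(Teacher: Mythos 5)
Your proposal is correct, and the computation is the standard one: expand both quadratic forms, classify $\E[x_i x_j x_k x_\ell]$ by the index-coincidence pattern, encode it via the identity
\[
\E[x_i x_j x_k x_\ell] = (\gamma-3)\,\mathbf{1}(i=j=k=\ell) + \delta_{ij}\delta_{k\ell} + \delta_{ik}\delta_{j\ell} + \delta_{i\ell}\delta_{jk},
\]
and read off the three pairing sums as $\tr(\bfA)\tr(\bfB)$, $\tr(\bfA\bfB^\top)$, and $\tr(\bfA\bfB)$. The paper states this lemma in the appendix as a background fact without supplying a proof, treating it as a well-known moment identity for quadratic forms; your argument is precisely the canonical derivation one would give, and the bookkeeping of the $(\gamma-3)$ correction (which prevents triple-counting the diagonal $i=j=k=\ell$) is handled correctly.
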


\begin{theorem}[Theorem $3.1$ in \cite{yinbaikrishnaiah1988}]\label{thm_largest_eigenvalue_high_dim_covariance_matrx}
	Suppose that $\mathbf{X}_n$ is a random $p \times n$ matrix consisting of i.i.d. random variables with $\E\lsb x_{1, 1}\rsb =0$, $\Var\lb x_{1, 1}\rb =\sigma^2$ and finite fourth moment and let $\mathbf{S}_n=\frac{1}{n}\mathbf{X}_n\mathbf{X}_n^\top$. Then 
	\begin{align*}
		\lambda_{\max}\lb \mathbf{S}_n\rb \conas \sigma^2\lb 1+\sqrt{y}\rb^2
	\end{align*}
	as $n\rightarrow \infty$, $p\rightarrow\infty$ such that $p/n\rightarrow y>0$.
\end{theorem}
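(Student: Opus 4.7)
The plan is to prove the convergence by combining two matching bounds. Without loss of generality assume $\sigma^2 = 1$, since the general case follows by rescaling $\mathbf{X}_n$ by $\sigma$. For the lower bound, the \MP theorem (which applies under a finite second moment assumption via the Stieltjes transform technique) guarantees that the empirical spectral distribution $F^{\mathbf{S}_n}$ converges weakly, almost surely, to the \MP distribution with ratio $y$, whose support has right endpoint $(1+\sqrt{y})^2$. Combining this with the fact that $\lambda_{\max}(\mathbf{S}_n)$ must dominate the upper edge of the limiting bulk gives
\begin{align*}
\liminf_{n\to\infty} \lambda_{\max}(\mathbf{S}_n) \geq (1+\sqrt{y})^2 \quad \as
\end{align*}
The nontrivial direction is therefore the matching upper bound, which I would establish via the moment method.

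Before invoking moments, a truncation step is necessary because only a fourth moment is assumed. Replace $x_{i,j}$ by its truncated, recentered, and rescaled version $\tilde{x}_{i,j} = \sigma_n^{-1}\lsb x_{i,j}\mathds{1}\lb\lv x_{i,j}\rv\leq n^{1/4}\rb - \E\lsb x_{i,j}\mathds{1}\lb \lv x_{i,j}\rv\leq n^{1/4}\rb\rsb\rsb$, where $\sigma_n^2 \to 1$ is the renormalization. Standard estimates via the fourth moment bound, coupled with Borel--Cantelli, show that only finitely many entries of $\mathbf{X}_n$ are affected by the truncation almost surely, so that the two sample covariance matrices differ by a finite-rank perturbation of vanishing spectral norm; hence their largest eigenvalues share the same almost sure limit. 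Once the entries are bounded by $n^{1/4}$, the inequality $\lambda_{\max}(\mathbf{S}_n)^{k}\leq \tr(\mathbf{S}_n^{k})$ gives, for any $\varepsilon > 0$,
\begin{align*}
\PR\lb\lambda_{\max}(\mathbf{S}_n) > (1+\sqrt{y})^2 + \varepsilon\rb \leq \lcb (1+\sqrt{y})^2 + \varepsilon \rcb^{-k_n}\,\E\lsb \tr(\mathbf{S}_n^{k_n})\rsb,
\end{align*}
and it suffices to show that, for a suitably slowly growing $k_n$, the numerator is at most $p\cdot(1+\sqrt{y})^{2k_n}(1+o(1))^{k_n}$; a union bound plus Borel--Cantelli then delivers the almost sure upper bound.

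The heart of the argument is the combinatorial expansion of $\E\lsb\tr(\mathbf{S}_n^k)\rsb$ as a sum, over closed walks of length $2k$ on the bipartite graph $\lcb 1,\ldots,p\rcb\sqcup\lcb 1,\ldots,n\rcb$, of products of moments of the entries. Since the entries are centered and i.i.d., only walks in which every edge is traversed at least twice contribute. The dominant walks are those whose edge sequence traces out a tree, or equivalently those corresponding to a non-crossing pair partition of the $2k$ edges; these yield a contribution of order $p\cdot y^{(k-1)/2}\cdot C_k$ where $C_k = \frac{1}{k+1}\binom{2k}{k}$ is the $k$-th Catalan number, and the resulting generating function precisely pins down the edge at $(1+\sqrt{y})^2$. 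The main obstacle, and the technical core of the Yin--Bai--Krishnaiah argument, is controlling the contribution of walks with ``excess'' (non-tree) edges uniformly in $k = k_n \to \infty$: each excess edge contributes only a polynomial combinatorial gain while the truncation bound inflates the magnitude of such terms by at most $(n^{1/4})^{O(\text{excess})}$. A careful graph-theoretic bookkeeping, organizing walks by the number of distinct vertices they visit and the pattern of edge repetitions, is required to show that non-tree walks contribute only a negligible fraction of the tree contribution provided $k_n$ grows slowly enough. Executing this uniform-in-$k_n$ enumeration is the step that absorbs essentially all of the work.
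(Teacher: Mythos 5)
The paper does not supply a proof of this statement; it is imported verbatim from \cite{yinbaikrishnaiah1988} as a background result. Your sketch does capture the high-level road map of that reference: the lower bound falls out of the almost sure weak convergence of the empirical spectral distribution to the Mar\v{c}enko--Pastur law, and the matching upper bound comes from $\lambda_{\max}(\mathbf{S}_n)^{k_n}\leq\tr(\mathbf{S}_n^{k_n})$ together with a graph-combinatorial estimate of $\E\lsb\tr(\mathbf{S}_n^{k_n})\rsb$ for slowly growing $k_n$. That decomposition is correct.

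There is, however, a genuine quantitative error in your truncation step. You truncate the entries at level $n^{1/4}$ and assert via Borel--Cantelli that, almost surely, only finitely many entries of the infinite double array are ever affected. At step $n$ roughly $(1+y)n$ new entries $x_{i,j}$ are revealed, so the relevant Borel--Cantelli sum is $\sum_n n\,\PR\lb \lv x_{1,1}\rv > n^{1/4}\rb$. By the tail-integral identity this series converges if and only if $\E\lsb \lv x_{1,1}\rv^8\rsb<\infty$ --- not under the fourth-moment hypothesis of the theorem. A distribution with $\PR\lb \lv x\rv > t\rb \sim t^{-4}\lb \log t\rb^{-2}$ has a finite fourth moment and yet makes the series diverge; more crudely, under a fourth moment alone the expected number of truncated entries in the $p\times n$ block is of order $pn\cdot \E\lsb x_{1,1}^4\rsb/n\asymp n$, so the ``only finitely many entries affected'' and ``finite-rank perturbation'' claims collapse. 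The correct truncation level is of order $\sqrt{n}$ (say $\delta_n\sqrt{n}$ with $\delta_n\to 0$ slowly, or even $c\sqrt{n}$), for which $\sum_n n\,\PR\lb\lv x_{1,1}\rv > c\sqrt{n}\rb<\infty$ is \emph{equivalent} to $\E\lsb x_{1,1}^4\rsb<\infty$; then a.s.\ only finitely many entries of the fixed array are ever truncated, and since the contribution of those finitely many fixed entries to $\mathbf{S}_n$ has spectral norm $O(n^{-1/2})$, the reduction to bounded entries is legitimate. The price is a much weaker entrywise bound $\delta_n\sqrt{n}$ inside the trace-moment estimate, and absorbing that bound uniformly in $k_n\to\infty$ is precisely what makes the Yin--Bai--Krishnaiah graph bookkeeping delicate. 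Note that the tighter $t_n n^{1/4}$ truncation used elsewhere in this paper's own proofs is only available because the paper imposes an eighth-moment assumption \ref{ass_random_vectors}, which is strictly stronger than what the present theorem assumes.
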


\begin{lemma}(Von Neumann\textquotesingle s trace inequality) \label{lem_von_Neumann}
	Suppose that $\mathbf{A}$ and $\mathbf{B}$ are $n\times n$ complex matrices with singular values $\alpha_1\leq \ldots \leq \alpha_n$ and $\beta_1\leq \ldots \leq \beta_n$, respectively. Then
	\begin{align*}
		\lv \tr\lb \mathbf{A}\mathbf{B}\rb\rv \leq \sum_{i=1}^{n}\alpha_i\beta_i.
	\end{align*}
	In particular, if $\mathbf{A}, \mathbf{B}$ are Hermitian matrices, then
	\begin{align*}
		\sum_{j=1}^{n}\lambda_j(\bfA)\lambda_{n-j+1}(\bfB) \leq \tr\lb \mathbf{A}\mathbf{B}\rb \leq \sum_{j=1}^{n}\lambda_j(\bfA)\lambda_j(\bfB),
	\end{align*}
	where $\lambda_1(\bfA)\leq \ldots \leq \lambda_n(\bfA)$ and $\lambda_1(\bfB)\leq \ldots \leq \lambda_n(\bfB)$ are the eigenvalues of $\mathbf{A}$ and $\mathbf{B}$, respectively.
\end{lemma}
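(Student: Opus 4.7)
The plan is to reduce the inequality to a statement about permutations via the singular value decomposition, and then combine the Birkhoff--von Neumann theorem with the rearrangement inequality. The Hermitian refinement will follow by the same argument with the spectral decomposition in place of the SVD.

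First I would write singular value decompositions $\bfA = U_{1} \diag(\alpha_{1}, \ldots, \alpha_{n}) V_{1}^{*}$ and $\bfB = U_{2} \diag(\beta_{1}, \ldots, \beta_{n}) V_{2}^{*}$. By cyclic invariance of the trace,
\begin{align*}
\tr(\bfA\bfB) = \tr\bigl( \diag(\alpha_{i}) \, W \, \diag(\beta_{j}) \, W' \bigr) = \sum_{i,j=1}^{n} \alpha_{i} \beta_{j} W_{ij} W'_{ji},
\end{align*}
where $W = V_{1}^{*} U_{2}$ and $W' = V_{2}^{*} U_{1}$ are unitary. Applying the triangle inequality followed by the AM--GM inequality yields
\begin{align*}
\lv \tr(\bfA\bfB) \rv \leq \tfrac{1}{2} \sum_{i,j} \alpha_{i} \beta_{j} \bigl( |W_{ij}|^{2} + |W'_{ji}|^{2} \bigr).
\end{align*}

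Because $W$ and $W'$ are unitary, both $(|W_{ij}|^{2})_{i,j}$ and $(|W'_{ji}|^{2})_{i,j}$ are doubly stochastic, and the Birkhoff--von Neumann theorem expresses each as a convex combination of permutation matrices. The bound therefore reduces to showing that for every permutation $\pi$ of $\{1, \ldots, n\}$,
\begin{align*}
\sum_{i=1}^{n} \alpha_{i} \beta_{\pi(i)} \leq \sum_{i=1}^{n} \alpha_{i} \beta_{i},
\end{align*}
which is the classical rearrangement inequality for the two increasing sequences $(\alpha_{i})$ and $(\beta_{i})$.

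For the Hermitian refinement I would replace the SVD by the spectral decompositions $\bfA = U \diag(\lambda_{j}(\bfA)) U^{*}$ and $\bfB = V \diag(\lambda_{j}(\bfB)) V^{*}$; with $W = U^{*} V$ unitary this gives the exact identity
\begin{align*}
\tr(\bfA\bfB) = \sum_{i,j=1}^{n} \lambda_{i}(\bfA) \lambda_{j}(\bfB) |W_{ij}|^{2}.
\end{align*}
Applying Birkhoff and rearrangement as above now produces both inequalities simultaneously, with the lower bound coming from the reverse-order pairing $\pi(i) = n - i + 1$ and the upper bound from the identity permutation; since $\tr(\bfA\bfB)$ is a convex combination of the permutation sums, it is sandwiched between these two extremes. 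The only real subtlety is justifying that the matrix of squared moduli of a unitary is doubly stochastic and lies in the convex hull of permutation matrices, but this is exactly the content of Birkhoff's theorem, which I would invoke as a standard fact. Once this is granted, the remaining computation is elementary.
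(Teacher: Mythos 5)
Your proof is correct. The paper does not actually give a proof of this lemma; it simply cites Mirsky (1975). Your argument via the SVD, cyclicity of the trace, the AM--GM inequality to pass to the doubly stochastic matrices $(|W_{ij}|^2)$ and $(|W'_{ji}|^2)$, and then Birkhoff--von Neumann plus the rearrangement inequality is the standard and essentially the same route Mirsky takes; the Hermitian refinement via the spectral decomposition and the exact identity $\tr(\bfA\bfB)=\sum_{i,j}\lambda_i(\bfA)\lambda_j(\bfB)|W_{ij}|^2$ is also standard and correctly handles possibly negative eigenvalues, since the rearrangement inequality does not require nonnegativity. One small point worth making explicit in a written-up version: in the singular-value step the nonnegativity of the $\alpha_i$ and $\beta_j$ is what licenses the triangle-inequality bound $\lv\sum_{i,j}\alpha_i\beta_jW_{ij}W'_{ji}\rv\le\sum_{i,j}\alpha_i\beta_j|W_{ij}||W'_{ji}|$; this is automatic for singular values but is the reason the Hermitian version must be handled separately with the exact identity rather than a modulus bound.
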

A proof of this inequality can be found in \cite{mirsky1975}.
    \begin{lemma}\label{lem_lower_bounded_variances}
        Suppose assumption \ref{ass_p_n} - \ref{ass_population_lsd} and \ref{ass_supercritical_ev_generalized} are satisfied, then it holds that
        \begin{align*}
        0 < C \leq \inf_{n\in \mathbb{N}} \sigma_{n,1}^{2} \leq \sup_{n\in \mathbb{N}} \sigma_{n,1}^2 < \infty
        \end{align*}
        and for $j=1, \ldots, K$
        \begin{align*}
        0 < \inf_{n\in \mathbb{N}} \sigma_{\spi, j, n}^{(i)^2} \leq \sup_{n\in \mathbb{N}} \sigma_{\spi, j, n}^{(i)^2} < \infty.
        \end{align*}
    \end{lemma}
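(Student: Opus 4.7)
The plan is to verify the four bounds by reducing each to a finite combination of $n$-independent quantities and the dimension ratio $y_n$, which stays bounded and bounded away from $0$ by \ref{ass_p_n}. The recurring ingredients are the uniform spectral bound $\lnorm \bfSigma_n^{(i)} \rnorm_2 = \alpha_1^{(i)}$ (fixed by \ref{ass_leading_spiked_ev}/\ref{ass_supercritical_ev_generalized}) and the uniform lower bound $\lambda_p(\bfSigma_n^{(i)}) \geq C$ (from \ref{ass_population_lsd}).

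For $\sigma_{\spi,j,n}^{(i)^2}$ the upper bound is immediate: $\gamma_4^{(i)}$ is finite by \ref{ass_random_vectors}, $\alpha_j^{(i)}$ and $\psi^{(i)'}(\alpha_j^{(i)})$ are fixed constants, and $\sum_{\ell} u_{\ell,j,n}^{(i)^4} \leq \bigl(\sum_\ell u_{\ell,j,n}^{(i)^2}\bigr)^2 = 1$. For the strict positive lower bound I would split on the sign of $\gamma_4^{(i)} - 3$. If $\gamma_4^{(i)} \geq 3$, the first summand in the definition of $\sigma_{\spi,j,n}^{(i)^2}$ is non-negative and $\sigma_{\spi,j,n}^{(i)^2} \geq 2\alpha_j^{(i)^2}\psi^{(i)'}(\alpha_j^{(i)}) > 0$. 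If $\gamma_4^{(i)} < 3$, bounding $\sum u^4 \leq 1$ yields
\begin{align*}
\sigma_{\spi,j,n}^{(i)^2} \;\geq\; \alpha_j^{(i)^2}\psi^{(i)'}(\alpha_j^{(i)})\bigl[2 - (3-\gamma_4^{(i)})\,\psi^{(i)'}(\alpha_j^{(i)})\bigr].
\end{align*}
I would then combine (a) Jensen's inequality $\gamma_4^{(i)} \geq 1$ (since $z_{1,1}^{(i)}$ has unit variance by \ref{ass_random_vectors}, so $3-\gamma_4^{(i)} \leq 2$) with (b) the strict bound $\psi^{(i)'}(\alpha_j^{(i)}) < 1$, which follows from \eqref{eq_def_derivative_psi} because $y>0$ and $H^{(i)}$ is supported in $[C,\infty)$. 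The bracketed factor is then strictly positive, and the resulting lower bound is $n$-independent.

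For $\sigma_{n,1}^2$ the upper bound follows by bounding each trace in the defining formula by a constant multiple of $p$ using the uniform spectral bound, then dividing by the appropriate power of $n$ and using the boundedness of $y_n$. The lower bound is more delicate because the Hadamard term $\tfrac{4(\gamma_4^{(i)}-3)}{n}\tr(B_i \circ B_i)$ with $B_i := \bfSigma_n^{(i)^{1/2}}(\bfSigma_n^{(1)}-\bfSigma_n^{(2)})\bfSigma_n^{(i)^{1/2}}$ can be negative. I would pair it with the matching Frobenius term, using that for a symmetric matrix $B$,
\begin{align*}
0 \;\leq\; \tr(B \circ B) \;=\; \sum_\ell B_{\ell\ell}^2 \;\leq\; \sum_{\ell,m} B_{\ell m}^2 \;=\; \tr(B^2),
\end{align*}
and identifying $\tr(B_i^2) = \tr\bigl((\bfSigma_n^{(i)^2}-\bfSigma_n^{(1)}\bfSigma_n^{(2)})^2\bigr)$ for $i=1,2$. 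This gives the clean algebraic estimate
\begin{align*}
\frac{8}{n}\tr(B_i^2) + \frac{4(\gamma_4^{(i)}-3)}{n}\tr(B_i \circ B_i) \;\geq\; \frac{4(\gamma_4^{(i)}-1)}{n}\tr(B_i^2) \;\geq\; 0,
\end{align*}
the last inequality again by $\gamma_4^{(i)} \geq 1$. Hence all difference-dependent terms combined are non-negative, leaving $\sigma_{n,1}^2 \geq \tfrac{8}{n^2}\tr^2(\bfSigma_n^{(1)}\bfSigma_n^{(2)})$. Since $\bfSigma_n^{(1)^{1/2}}\bfSigma_n^{(2)}\bfSigma_n^{(1)^{1/2}} \succeq \lambda_p(\bfSigma_n^{(2)})\,\bfSigma_n^{(1)}$, I obtain $\tr(\bfSigma_n^{(1)}\bfSigma_n^{(2)}) \geq \lambda_p(\bfSigma_n^{(1)})\lambda_p(\bfSigma_n^{(2)})\,p \geq C^2 p$, giving $\sigma_{n,1}^2 \geq 8 C^4 y_n^2$, which stays bounded below by a positive constant for all large $n$ since $y_n \to y > 0$.

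The main obstacle is the sign-indefinite Hadamard term: without careful treatment, an elementary lower bound on $\sigma_{n,1}^2$ is not apparent under sub-Gaussian alternatives (where $\gamma_4 < 3$). Two elementary facts — the pointwise comparison $\tr(B \circ B) \leq \tr(B^2)$ for symmetric $B$, and Jensen's bound $\gamma_4 \geq 1$ for any standardized random variable — together resolve this obstacle via an algebraic cancellation that is insensitive to the sign of $\gamma_4^{(i)} - 3$.
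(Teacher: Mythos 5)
Your proposal is correct and follows essentially the same route as the paper: you use the pointwise Hadamard bound $\tr(B \circ B) \leq \tr(B^2)$ together with Jensen's inequality $\gamma_4^{(i)} \geq 1$ to neutralize the sign-indefinite Hadamard term, and then isolate one manifestly nonnegative residual term of order $y_n^2$ (you keep $\tfrac{8}{n^2}\tr^2(\bfSigma_n^{(1)}\bfSigma_n^{(2)})$, the paper keeps $\tfrac{4}{n^2}\tr^2(\bfSigma_n^{(1)^2})$; both are bounded below via $\lambda_p \geq C$ and $y_n \to y > 0$). The spiked-variance bounds are handled identically, by splitting on the sign of $\gamma_4^{(i)}-3$ and using $\sum_\ell u_{\ell,j,n}^{(i)^4} \leq 1$ and $0 < \psi^{(i)'}(\alpha_j^{(i)}) < 1$.
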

    \begin{proof}
        First, we note that for any symmetric matrix $\bfA$, the inequality $\tr\lb \bfA \circ \bfA\rb \leq \tr\lb \bfA^2 \rb$ holds. Consequently, this implies that
        \begin{align*}
				\tr\lb\lcb \bfSigma_n^{(i)^{1/2}}\lb \mathbf{\Sigma}_{n}^{\lb 1\rb}-\mathbf{\Sigma}_{n}^{\lb 2\rb}\rb\bfSigma_n^{(i)^{1/2}}\rcb \circ \lcb\bfSigma_n^{(i)^{1/2}}\lb \mathbf{\Sigma}_{n}^{\lb 1\rb}-\mathbf{\Sigma}_{n}^{\lb 2\rb}\rb\bfSigma_n^{(i)^{1/2}}\rcb\rb \leq \tr\lb \lcb \mathbf{\Sigma}_{n}^{\lb i\rb^2}-\mathbf{\Sigma}_{n}^{\lb 1\rb}\mathbf{\Sigma}_{n}^{\lb 2\rb}\rcb^2\rb
		\end{align*}
		holds for $i=1, 2$. Due to assumption \ref{ass_random_vectors} it yields that $-2\leq \gamma_4^{(i)}-3$, which leads to
        \begin{align*}
				&\frac{4(\gamma_4^{(i)}-3)}{n}\tr\lb\lcb \bfSigma_n^{(i)^{1/2}}\lb \mathbf{\Sigma}_{n}^{\lb 1\rb}-\mathbf{\Sigma}_{n}^{\lb 2\rb}\rb\bfSigma_n^{(i)^{1/2}}\rcb \circ \lcb\bfSigma_n^{(i)^{1/2}}\lb \mathbf{\Sigma}_{n}^{\lb 1\rb}-\mathbf{\Sigma}_{n}^{\lb 2\rb}\rb\bfSigma_n^{(i)^{1/2}}\rcb\rb
				\\
				&\phantom{{}={}}+\frac{8}{n}\tr\lb \lcb \mathbf{\Sigma}_{n}^{\lb i\rb^2}-\mathbf{\Sigma}_{n}^{\lb 1\rb}\mathbf{\Sigma}_{n}^{\lb 2\rb}\rcb^2\rb >0, \quad i=1,2.
			\end{align*}
        Thus, we obtain
            \begin{align*}
               \sigma_{n,1}^{2}\geq \frac{4}{n^2}\tr^2\lb \mathbf{\Sigma}_{n}^{\lb 1\rb^2}\rb\geq \frac{4}{n^2}\lb p\lambda_{\min}\lb \mathbf{\Sigma}_{n}^{\lb 1\rb}\rb\rb^2 \geq \frac{4}{n^2}\lb p L^{\lb 1\rb}\rb^2\geq C > 0, 
            \end{align*}
            where we used the assumptions \ref{ass_p_n} and \ref{ass_population_lsd} and $C$ is a positive constant. On the other hand, due to assumption \ref{ass_p_n} it is easy to verify that $\sup_{n\in \mathbb{N}} \sigma_{n,1}^{2} < \infty$.
            \newline
            First, we recall that $\mathbf{u}_{j,n}^{(i)}$ is the $j$-th column of the orthogonal matrix $\bfU_n^{(i)}$, which leads to $\sum\limits_{k=1}^{p} u_{k,j,n}^4 \leq 1$. By assumption \ref{ass_supercritical_ev_generalized} and the definition of the function $\psi^{(i)^\prime}$, we have $0<\psi^{(i)^\prime}(\alpha_j^{(i)})<1$. With the fact that $-2\leq \gamma_4^{(i)}-3$, we conclude that 
        \begin{align*}
            \sigma_{\spi, j, n}^{(i)^2} \geq \begin{cases}
                2\alpha_j^{(i)^2}\psi^{(i)^\prime}(\alpha_j^{(i)}) & \text{if } \gamma_4^{(i)}-3 \geq 0
                \\
                (\gamma_4^{(i)}-3) \alpha_j^{(i)^2} (\psi^{(i)^\prime}(\alpha_j^{(i)}))^2 + 2 \alpha_j^{(i)^2} \psi^{(i)^\prime}(\alpha_j^{(i)}) & \text{if } \gamma_4^{(i)}-3 < 0
            \end{cases}
            \, > 0
        \end{align*}
        and
        \begin{align*}
        \sigma_{\spi,j,n}^{(i)^2} \leq \begin{cases}
         (\gamma_4^{(i)}-3) \alpha_j^{(i)^2} (\psi^{(i)^\prime}(\alpha_j^{(i)}))^2 + 2 \alpha_j^{(i)^2} \psi^{(i)^\prime}(\alpha_j^{(i)}) & \text{if } \gamma_4^{(i)}-3 \geq 0
         \\
         2\alpha_j^{(i)^2}\psi^{(i)^\prime}(\alpha_j^{(i)}) & \text{if } \gamma_4^{(i)}-3 < 0
        \end{cases}
        < \infty.
        \end{align*}
    \end{proof}
\subsection{Proof of Lemma \ref{lem_consistent_estimators} (consistency of estimators)} \label{sec_proof_consistent_estimators}
In this section, we prove that the estimators 
\begin{align*}
\hat\sigma_{n,1}^2 &= \frac{4}{n^2} \lb B_n^{(1)}+B_n^{(2)}\rb^2,
\\
\hat\sigma_{n,2}^2 &=\hat{\sigma}_{n,2}^{(1)^2}+\hat{\sigma}_{n,2}^{(2)^2}
\end{align*}

and $(\hat{\bfSigma}_{E,m,n})_{k, \ell}$ given in \eqref{eq_def_estimator_covariance_matrix_entries} are consistent. 

First, we give a complete description of $\hat\sigma_{n,2}^{(i)^2}$ and $(\hat{\bfSigma}_{E,m,n})_{k, \ell}$. Recalling the definition its population version $\sigma_{n,2}^{(i)^2}$ in \eqref{def_var_spike}, we introduce for $i=1,2$ and $1\leq k\leq K$ the estimators
\begin{align*}
\hat{\alpha}_{k,n}^{(i)}=\lb\frac{1-y_n}{\lambda_k(\bfS_n^{(i)})}+\frac{1}{n}\sum_{j\neq k}^{p}\frac{1}{\lambda_k(\bfS_n^{(i)})-\lambda_j(\bfS_n^{(i)})}\rb^{-1}
\end{align*}
for the supercritical eigenvalues $\alpha_{1}^{(i)}, \ldots, \alpha_{K}^{(i)},$
\begin{align*}
\hat{\xi}_{k,n}^{(i)} = \lcb\hat{\alpha}_{k,n}^{(i)^2}\lb \frac{1-y_n}{\lambda_k^2(\bfS_n^{(i)})}+\frac{1}{n}\sum_{j\neq k}^{p}\frac{1}{\lb\lambda_k(\bfS_n^{(i)})-\lambda_j(\bfS_n^{(i)})\rb^2}\rb\rcb^{-1}          
\end{align*}
for the quantities $\psi^{(i)^\prime}(\alpha_{1}^{(i)}), \ldots, \psi^{(i)^\prime}(\alpha_{K}^{(i)})$, and 
\begin{align*}
    \hat{\gamma}_{4,n}^{(i)} = \max\lcb 3+ \frac{\hat{\nu}^{(i)}_n-2\hat{\tau}_n^{(i)}}{\hat{\omega}_n^{(i)}},1 \rcb.
\end{align*}
for the kurtosis $\gamma_4^{(i)}$, where
\begin{align*}
    \hat{\tau}_n^{(i)} &= \tr\lb \bfS_n^{(i)^2}\rb -\frac{1}{n} \tr^2 \lb \bfS_n^{(i)}\rb,
    \\
    \hat{\nu}_n^{(i)} &= \frac{1}{n-1} \sum_{j=1}^{n}\lb \lnorm \bfx_j^{(i)}-\bar{\bfx}^{(i)} \rnorm^2- \frac{1}{n}\sum_{\ell=1}^{n} \lnorm \bfx_{\ell}^{(i)}-\bar{\bfx}^{(i)}\rnorm^2 \rb ^2,
    \\
    \hat{\omega}_n^{(i)}&= \sum_{j=1}^{p} \lcb \frac{1}{n}\sum_{k=1}^{n}\lb x_{j, k}^{(i)}-\frac{1}{n}\sum_{\ell=1}^{n}x_{j,\ell}^{(i)}\rb^2 \rcb^2.
\end{align*}
Next, we describe an estimator for the function of eigenvectors appearing in \eqref{def_var_spike}. To this end, let  $\mathbf{g}_{j,n}^{(i)}$ denote the eigenvector of the sample covariance matrix $\bfS_n^{(i)}$ associated with the eigenvalue $\lambda_j(\bfS_n^{(i)})$ and $g_{k, j,n}^{(i)}$ be the $k$-th coordinate of $\mathbf{g}_{j,n}^{(i)}$. In addition, we define
\begin{align*}
		\rho_{k}^{(i)}(m) &=
		\begin{cases}
			1+\eta_{k}^{(i)}(m) & \text{if } m=k
			\\
			-\zeta_{k}^{(i)}(m) & \text{if } m\neq k
		\end{cases},
        \\
		\eta_k^{(i)}(m)&=\sum_{\substack{\ell=1\\ \ell\neq k}}^{p}\lb \frac{\lambda_\ell(\mathbf{S}_n^{(i)})}{\lambda_m( \mathbf{S}_n^{(i)})-\lambda_\ell(\mathbf{S}_n^{(i)})}-\frac{\vartheta_\ell^{(i)}}{\lambda_m( \mathbf{S}_n^{(i)})-\vartheta_\ell^{(i)}}\rb,
		\\
		\zeta_k^{(i)}(m) &= \frac{\lambda_k( \bfS_n^{( i)})}{\lambda_m( \bfS_n^{(i)})-\lambda_k( \bfS_n^{(i)})}-\frac{\vartheta_k^{(i)}}{\lambda_m( \bfS_n^{(i)})-\vartheta_k^{( i)}},
	\end{align*}
	where $\vartheta_1^{(i)}\geq \vartheta_2^{(i)}\geq \ldots \geq \vartheta_p^{(i)}$ are the real valued solutions to the equation in $x$
	\begin{align*}
		\frac{1}{p}\sum_{j=1}^{p}\frac{\lambda_j(\mathbf{S}_n^{(i)})}{\lambda_j(\mathbf{S}_n^{(i)})-x}=\frac{1}{y}.
	\end{align*}
	When $y>1$, take $\vartheta_{n}^{(i)} =\ldots =\vartheta_p^{(i)}=0.$ In the expressions of $\zeta_k^{(i)}(m)$ and $\eta_k^{(i)}(m)$, we use the convention that any term of the form $\frac{0}{0}$ is $0$.
Then, for $k, \ell =1, \ldots, K$ we set 
\begin{align*}
    \hat{\kappa}_{k,\ell,n}^{(i)}=\sum_{\ell^\prime=1}^{p} \lcb \sum_{m=1}^{p}\rho_{k}^{(i)}(m) g_{\ell^\prime, m,n}^{(i)^2}\rcb \lcb \sum_{m^\prime=1}^{p}\rho_{\ell}^{(i)}(m^\prime) g_{\ell^\prime, m^\prime,n}^{(i)^2}\rcb
\end{align*}
as an estimator for $\sum\limits_{j=1}^p ( u_{j,k,n}^{(i)} ) ^2( u_{j,\ell,n}^{(i)} )^2$.
\newline
Finally, the complete description of the estimator $\sigma_{n,2}^2$ is given by 
\begin{align*}
    \hat{\sigma}_{n,2}^{2}=\hat{\sigma}_{\spi,1,n}^{(1)^2}+\hat{\sigma}_{\spi,1,n}^{(2)^2},
\end{align*}
and the entries $(\hat{\bfSigma}_{E,m,n})_{k,\ell}$  of the estimator $(\hat{\bfSigma}_{E,m,n})$ for $\bfSigma_{E,m}$ is given by 
\begin{align*}
    (\hat{\bfSigma}_{E,m,n})_{k, \ell}=\begin{cases}
        \hat{\sigma}_{\spi, k,n}^{(1)^2} + \hat{\sigma}_{\spi, k,n}^{(2)^2}  & \text{if }k= \ell
        \\
        \hat{\sigma}_{\spi, k, \ell,n}^{(1)} + \hat{\sigma}_{\spi, k, \ell,n}^{(2)} & \text{if }k\neq \ell
    \end{cases},
\end{align*}
where
\begin{align*}
    \hat{\sigma}_{\spi,k,n}^{(i)^2} & = (\hat{\gamma}_{4,n}^{(i)}-3)\hat{\alpha}_{k, n}^{(i)^2}\hat{\xi}_{k,n}^{(i)^2}\hat{\upsilon}_{k,n}^{(i)}+2\hat{\alpha}_{k, n}^{(i)^2}\hat{\xi}_{k,n}^{(i)},
    \\
    \hat{\sigma}_{\spi, k, \ell,n}^{(i)} &= (\hat{\gamma}_{4,n}^{(i)} -3) \hat{\alpha}_{k,n}^{(i)}\hat{\alpha}_{\ell,n}^{(i)} \hat{\xi}_{k,n}^{(i)}\hat{\xi}_{\ell, n}^{(i)} \hat{\kappa}_{k, \ell, n}^{(i)}.
\end{align*}
\begin{lemma}\label{lem_kurtosis_supercrit_estimators}
    Suppose that assumptions \ref{ass_p_n} - \ref{ass_population_lsd} and \ref{ass_supercritical_ev_generalized} holds. For $i=1,2$ and $k, \ell =1,\ldots, K$, then 
    \begin{enumerate}[label=(\alph*)]
        \item $\hat{\alpha}_{k,n}^{(i)}\conp \alpha_k^{(i)}$,
        \item $\hat{\xi}_{k,n}^{(i)} \conp \psi^{(i)^\prime} (\alpha_k^{(i)})$,
        \item $\hat{\kappa}_{k,\ell,n}^{(i)} - \sum\limits_{j=1}^p ( u_{j,k,n}^{(i)} ) ^2( u_{j,\ell,n}^{(i)} )^2\conp 0$,
        \item $\hat{\gamma}_{4,n}^{(i)} \conp \gamma_4^{(i)}$.
    \end{enumerate}
\end{lemma}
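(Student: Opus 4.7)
The plan is to prove the four consistency statements separately, each by identifying the relevant estimator with a functional of the empirical spectrum whose limit is already available from classical random matrix theory.

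Parts (a) and (b) both follow from the Silverstein relation between the empirical Stieltjes transform of $\bfS_n^{(i)}$ and the empirical companion Stieltjes transform of $\underline{\bfS}_n^{(i)}$. Writing $\underline{s}_n^{(i,k)}$ for the empirical companion Stieltjes transform of $\bfS_n^{(i)}$ in which the $k$-th sample eigenvalue has been deleted from the sum, a direct calculation gives
\begin{align*}
\hat{\alpha}_{k,n}^{(i)^{-1}} = -\underline{s}_n^{(i,k)}\bigl(\lambda_k(\bfS_n^{(i)})\bigr) \quad \text{and} \quad \hat{\alpha}_{k,n}^{(i)^{-2}}\hat{\xi}_{k,n}^{(i)^{-1}} = \underline{s}_n^{(i,k)\prime}\bigl(\lambda_k(\bfS_n^{(i)})\bigr).
\end{align*}
Under \ref{ass_supercritical_ev_generalized} one has $\lambda_k(\bfS_n^{(i)}) \conas \psi^{(i)}(\alpha_k^{(i)})$, a point strictly outside the support of the bulk, so both $\underline{s}_n^{(i,k)}$ and its derivative converge uniformly on a neighbourhood of this point to $s_{\underline{F}_{y,H^{(i)}}}$ and its derivative, respectively; the removal of one eigenvalue is an $O(1/n)$ perturbation. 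The defining identity $\alpha_k^{(i)} = -1/s_{\underline{F}_{y,H^{(i)}}}(\psi^{(i)}(\alpha_k^{(i)}))$ then gives (a), while implicit differentiation of this identity yields $\psi^{(i)^\prime}(\alpha_k^{(i)}) = \bigl(\alpha_k^{(i)^2}\, s_{\underline{F}_{y,H^{(i)}}}^{\prime}(\psi^{(i)}(\alpha_k^{(i)}))\bigr)^{-1}$, which combined with (a) gives (b).

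Part (d) is handled by classical moment arguments. The statistic $\hat{\omega}_n^{(i)}$ is the sum over coordinates $j$ of the sample variance of the $j$-th coordinate and consistently estimates $\sum_j (\bfSigma_n^{(i)})_{jj}^2$ (which is of order $p$ under \ref{ass_population_lsd}); the statistic $\hat{\tau}_n^{(i)}$ is the classical unbiased estimator of $\tr(\bfSigma_n^{(i)^2})$ analysed in \cite{li_and_chen_2012}; and $\hat{\nu}_n^{(i)}$ is the sample variance of $\lnorm \bfx_j^{(i)}-\bar{\bfx}^{(i)}\rnorm^2$, which consistently estimates $\Var(\bfz^\top \bfSigma_n^{(i)} \bfz) = (\gamma_4^{(i)}-3)\sum_j (\bfSigma_n^{(i)})_{jj}^2 + 2\tr(\bfSigma_n^{(i)^2})$ by Lemma \ref{ConditionalExpectedValueOfProductOfNormalizedQuadraticForms}. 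Combining the three statements with the continuous mapping theorem yields $(\hat{\nu}_n^{(i)}-2\hat{\tau}_n^{(i)})/\hat{\omega}_n^{(i)} \conp \gamma_4^{(i)}-3$, and the truncation by $\max\{\cdot,1\}$ does not affect the limit because $\gamma_4^{(i)}\geq 1$ by Cauchy--Schwarz.

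Part (c) is the main obstacle and requires the eigenvector formalism going back to Mestre and further developed by Bai--Yao--Zheng and subsequent authors. The auxiliary quantities $\vartheta_j^{(i)}$ are defined as the roots of an empirical version of the Mar\v cenko--Pastur fixed-point equation and consistently estimate the ``pre-images" of the sample eigenvalues under the analogue of $\psi^{(i)}$ applied to the empirical spectrum. The corrections $\eta_k^{(i)}(m)$ and $\zeta_k^{(i)}(m)$ inside $\rho_k^{(i)}(m)$ are designed precisely to cancel the well-known first-order leakage bias of the naive overlap $g_{\ell, m, n}^{(i)^2}$ as an estimator of $u_{\ell, k, n}^{(i)^2}$, which arises from the mixing of sample eigenvectors across nearby bulk eigenspaces. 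The plan is first to show via a resolvent expansion around the supercritical sample eigenvalues, in the spirit of \cite{paul2007asymptotics}, that $\sum_m \rho_k^{(i)}(m)\, g_{\ell, m, n}^{(i)^2}$ is a consistent estimator of $u_{\ell, k, n}^{(i)^2}$ \emph{uniformly in} $\ell$, and then to deduce consistency of the double sum defining $\hat{\kappa}_{k,\ell,n}^{(i)}$ by controlling the resulting cross terms. The principal technical hurdle is precisely the uniform-in-coordinate control, since coordinate-wise consistency is not sufficient to pass to the sum over $\ell^\prime$; obtaining the uniformity requires second-order control of the sample spectral projectors associated with the supercritical eigenvalues.
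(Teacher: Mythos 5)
Your proposal takes a fundamentally different route from the paper: the paper disposes of this lemma almost entirely by citation, whereas you attempt a from-scratch reconstruction. Specifically, the paper proves~(a) by invoking Theorem~3.1 of \cite{baiding2012}, obtains~(b) from~(a) via the continuous mapping theorem, and refers to Theorem~2.6 of \cite{zhang2022asymptotic} for~(c) and~(d), noting additionally that the kurtosis estimator used in~(d) was introduced in \cite{dornemanndette2024} for $p/n\in(0,1)$ and extends to $p/n\in(0,\infty)$ upon inspection of their proof. Your reconstruction of parts~(a), (b) and~(d) is correct and arguably more informative than the paper's terse citation: the Silverstein-relation identities $\hat{\alpha}_{k,n}^{(i)^{-1}}=-\underline{s}_n^{(i,k)}\bigl(\lambda_k(\bfS_n^{(i)})\bigr)$ and $\hat{\alpha}_{k,n}^{(i)^{-2}}\hat{\xi}_{k,n}^{(i)^{-1}}=\underline{s}_n^{(i,k)\prime}\bigl(\lambda_k(\bfS_n^{(i)})\bigr)$ check out against the definitions in the paper, the implicit-differentiation formula $\psi^{(i)^\prime}(\alpha_k^{(i)})=\bigl(\alpha_k^{(i)^2}\,s_{\underline{F}_{y,H^{(i)}}}^{\prime}(\psi^{(i)}(\alpha_k^{(i)}))\bigr)^{-1}$ is correct, and the identification of $\hat{\nu}_n^{(i)}$, $\hat{\tau}_n^{(i)}$, $\hat{\omega}_n^{(i)}$ with (ratio-consistent) estimators of $\Var(\bfz^\top\bfSigma_n^{(i)}\bfz)$, $\tr(\bfSigma_n^{(i)^2})$ and $\sum_j(\bfSigma_n^{(i)})_{jj}^2$ via Lemma~\ref{ConditionalExpectedValueOfProductOfNormalizedQuadraticForms} is exactly right.

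However, your treatment of part~(c) is not a proof but a plan, and you acknowledge this yourself: you identify the \emph{uniform-in-coordinate} control of the debiased overlaps $\sum_m\rho_k^{(i)}(m)\,g_{\ell,m,n}^{(i)^2}$ as the principal technical hurdle and leave it open. This is a genuine gap, and it is precisely the hard part of the lemma. The paper does not close this gap either --- it outsources part~(c) to Theorem~2.6 of \cite{zhang2022asymptotic}, where the full Mestre/Bai--Ding-type eigenvector calculus (second-order resolvent expansion around the supercritical sample eigenvalues, with the requisite concentration to justify summing over coordinates) is carried out. If you want the write-up to be self-contained you would need to actually perform that resolvent analysis; otherwise you should, as the paper does, cite the prior work and verify that its hypotheses are met under assumptions \ref{ass_p_n}--\ref{ass_population_lsd} and \ref{ass_supercritical_ev_generalized}.
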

The consistency of the estimators for the supercritical eigenvalues in \textit{(a)} is provided in Theorem 3.1 in \cite{baiding2012}. As a consequence of the continuous mapping theorem, \textit{(b)} follows from \textit{(a)}.
The assertions \textit{(c)} and \textit{(d)} are given in Theorem 2.6 in
\cite{zhang2022asymptotic}. The estimator for the kurtosis was proposed by \cite{dornemanndette2024} for the case $p/n \in (0,1)$. A closer examination of the proof reveals that the consistency of this estimator remains valid in the case $p/n \in (0,\infty)$.

Now we are in the position to prove Lemma \ref{lem_consistent_estimators}.
\begin{proof}[Proof of Lemma \ref{lem_consistent_estimators}]
By the continuous mapping theorem, Theorem $2$ in \cite{li_and_chen_2012} implies that
\begin{align*}
    \frac{\sigma_{n,1}}{\hat{\sigma}_{n,1}} \conp 1, \quad n\to \infty
\end{align*}
holds under $\mathsf{H}_{0}$ while
\begin{align*}
    \frac{2}{n}\frac{\tr(\bfSigma_n^{(1)^2}+\bfSigma_n^{(2)^2})}{\hat{\sigma}_{n,1}}\conp 1, \quad n\to \infty
\end{align*}
holds under $\mathsf{H}_{1}$. 
\newline
We note, by utilizing the results Lemma \ref{lem_kurtosis_supercrit_estimators} along with the continuous mapping theorem, it follows that
\begin{align*}
    &\hat{\sigma}_{n,2}^2 - \sigma_{n,2}^2 \conp 0,
    \\
    &(\hat{\bfSigma}_{E,m,n})_{k, \ell} - (\bfSigma_{E,m})_{k, \ell} \conp 0.
\end{align*}
Due to Lemma \ref{lem_lower_bounded_variances}, we emphasize that $\hat{\sigma}_{n,2}^2$ is ratio-consistent to the true variance $\sigma_{n,2}^2$. In particular,
\begin{align*}
    \frac{\sigma_{n,2}^2}{\hat{\sigma}_{n,2}^2}\conp 1, \quad n \to \infty ~.
\end{align*}
\end{proof}

\subsection{Simulation}\label{sec_appendix_simulation}
In this section, we provide numerical results on the performance of the test \eqref{eq_generalized_rejection_region} for covariance matrices that have supercritical eigenvalues with multiplicity greater than $1$. Specifically, we consider
\begin{align}\label{eq_model_5}
\begin{split}
    \bfSigma_n^{(1)} &= \diag(10, \underbrace{7, \ldots, 7}_{10}, \underbrace{1, \ldots, 1}_{p-11}), 
    \\
    \bfSigma_n^{(2)}(\delta) &= \diag(10+\delta, 7+\delta, 7+\delta, \underbrace{7, \ldots, 7}_{8},\underbrace{1, \ldots, 1}_{p-11}) 
\end{split}
    \end{align}
    as covariance matrices, where $\delta=0$ corresponds to the null hypothesis in \eqref{eq_hypothesis}. Analogous to Model \eqref{eq_model_1}, $\alpha_1=10+\delta$, $\alpha_2=\alpha_3=7+\delta$ and $\alpha_4=\ldots = \alpha_{10}=7$ are for $\delta \in \N_0$ supercritical eigenvalues. The comparison of the empirical rejection probabilities is displayed in Figure \ref{fig_empirical_rej_model_5}. We observe that in this model, all tests maintain the nominal level $\alpha=0.05$ under the null hypothesis, although the assumption \ref{ass_supercritical_ev_generalized} is not satisfied. Nonetheless, we can see from Figure \ref{fig_empirical_rej_model_5} that the test \eqref{eq_generalized_rejection_region} (dot dashed line) is the most powerful test among the four competitors. Beyond that, we can observe once again, that the proposed test in \cite{zhang2022asymptotic} loses considerable power, if the data is not normal distributed.
     \begin{figure}[p]
		\begin{center}	            
        \includegraphics[scale=0.36]{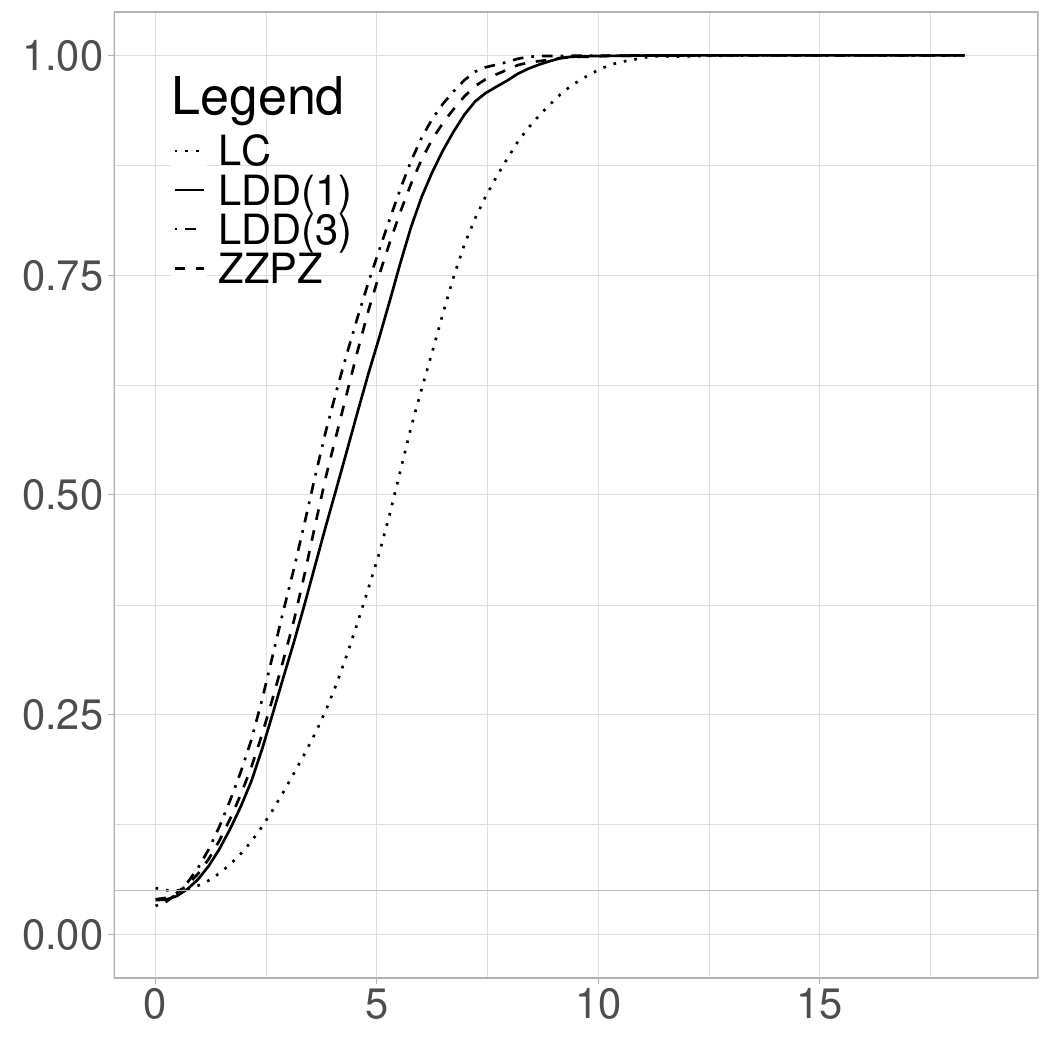}
         ~~~~ ~~~~
        \includegraphics[scale=0.36]{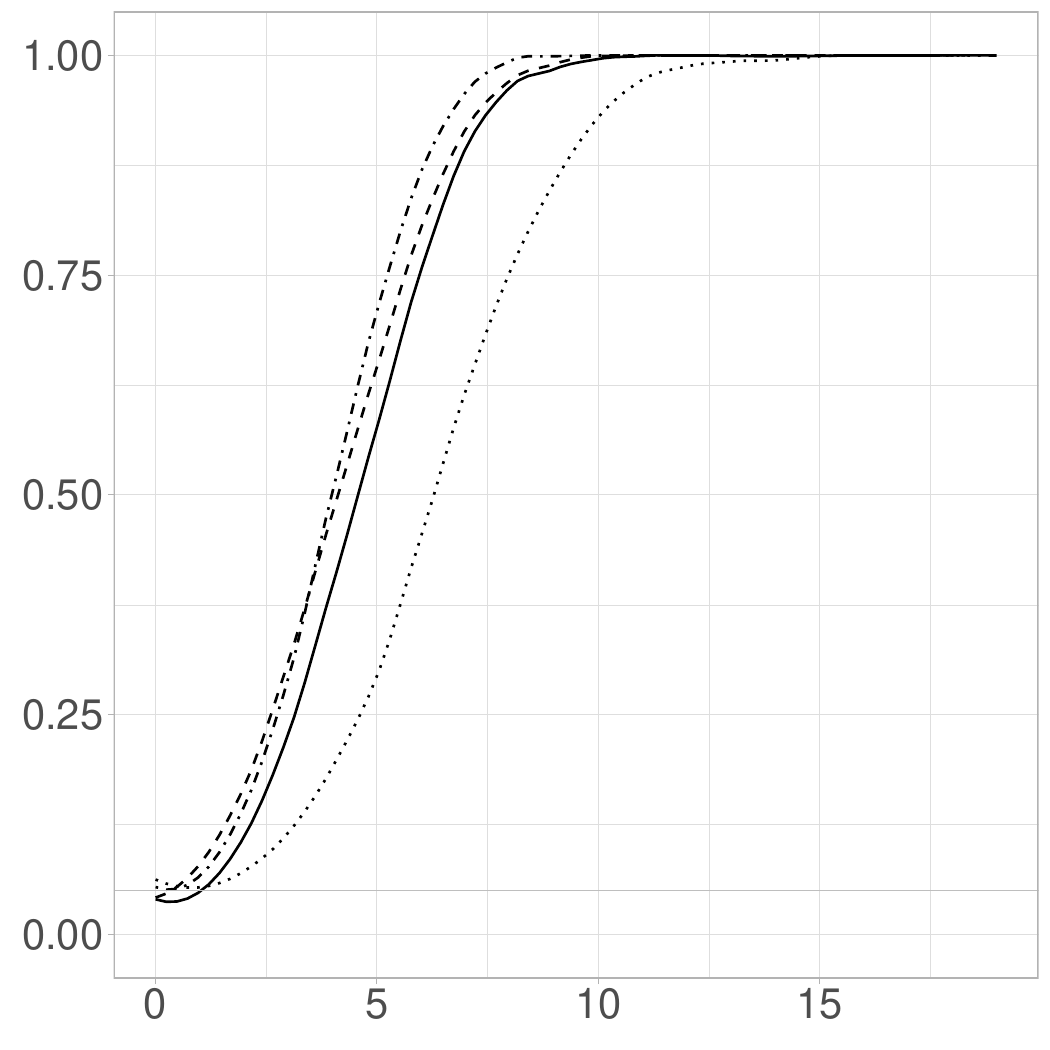}
        \includegraphics[scale=0.36]{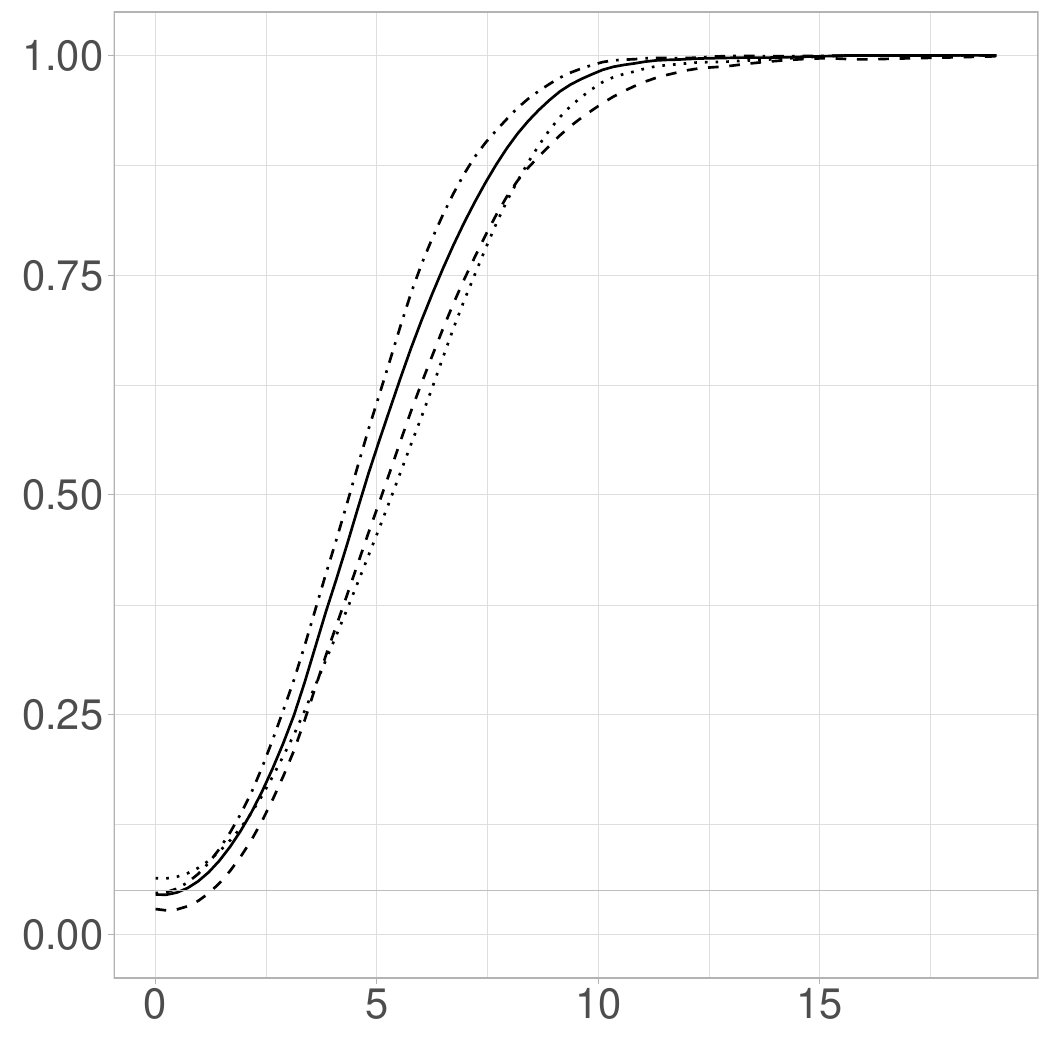}
         ~~~~ ~~~~
        \includegraphics[scale=0.36]{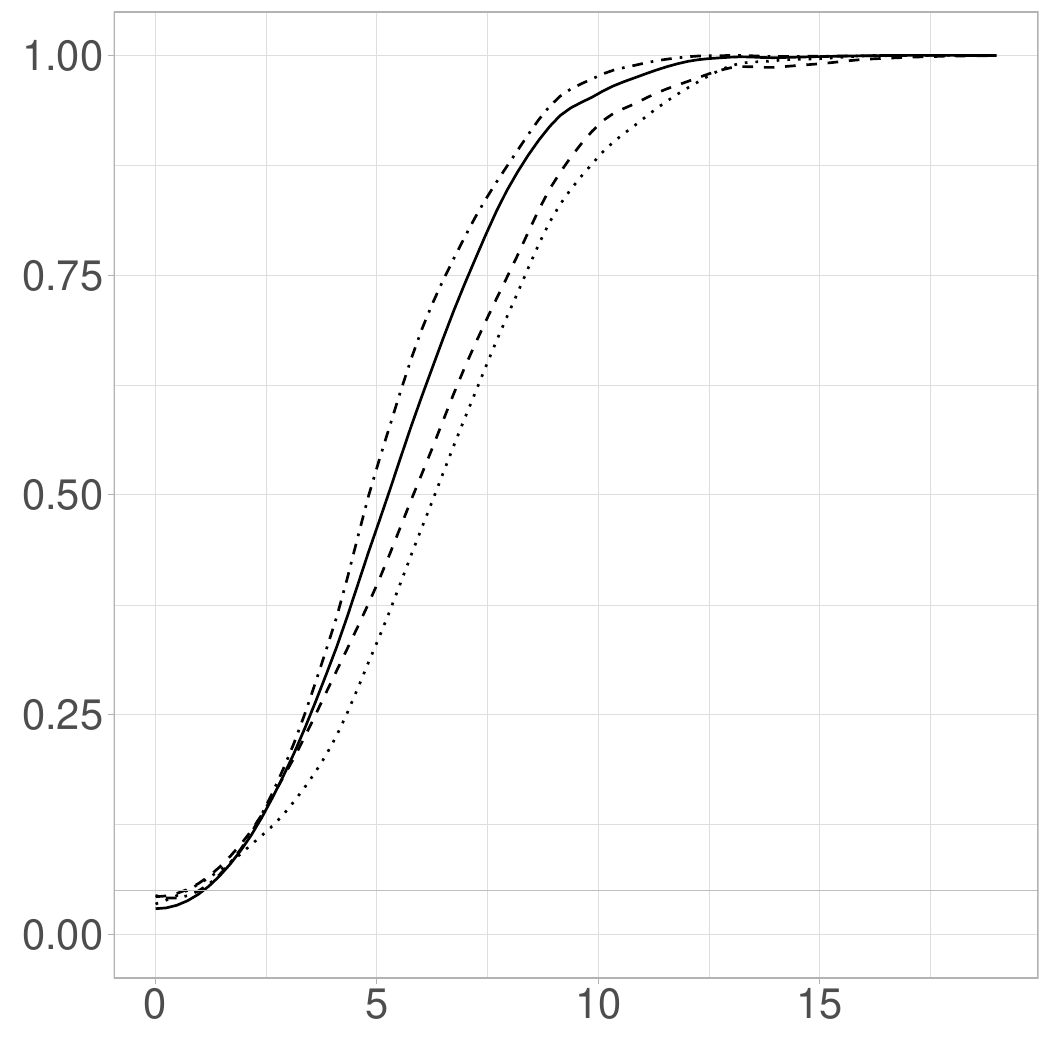}
        \includegraphics[scale=0.36]{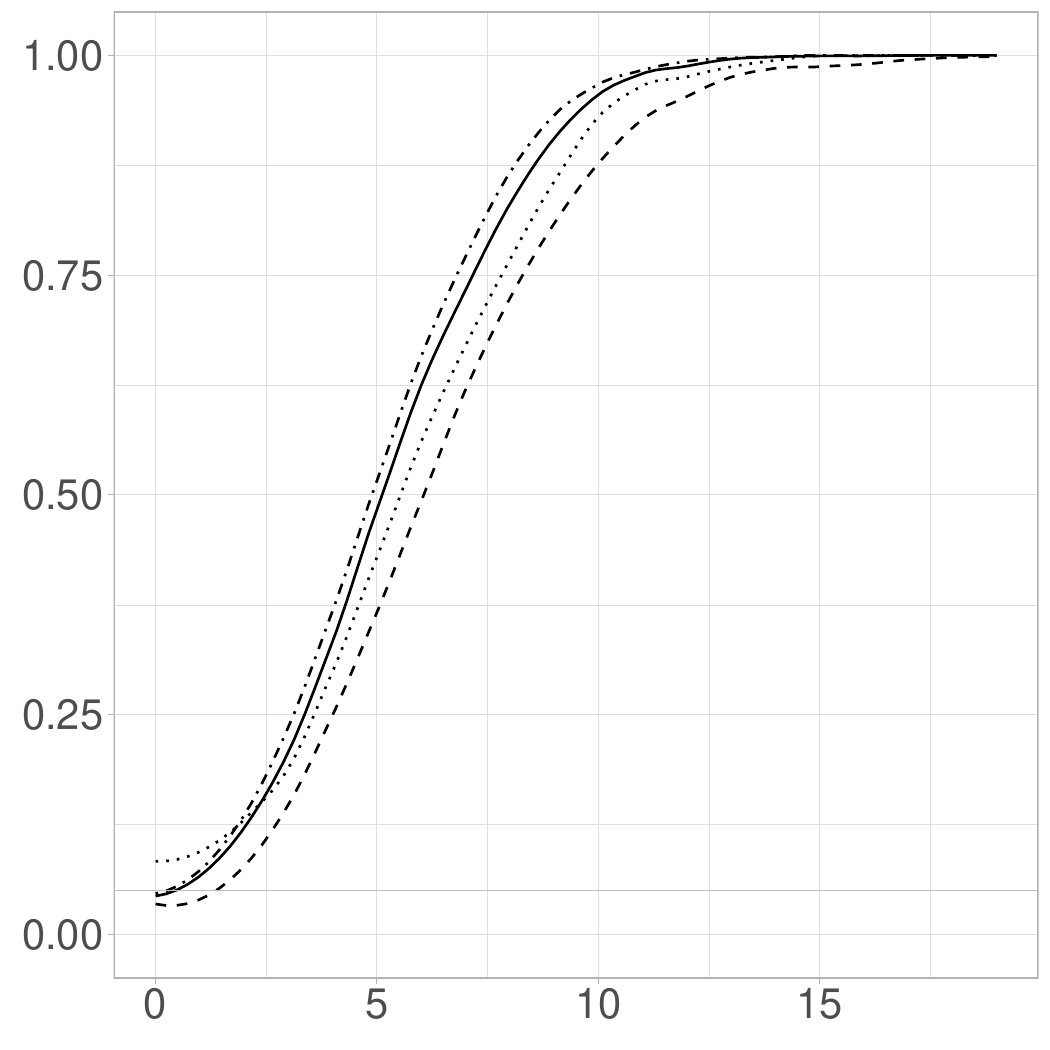}
         ~~~~ ~~~~
        \includegraphics[scale=0.36]{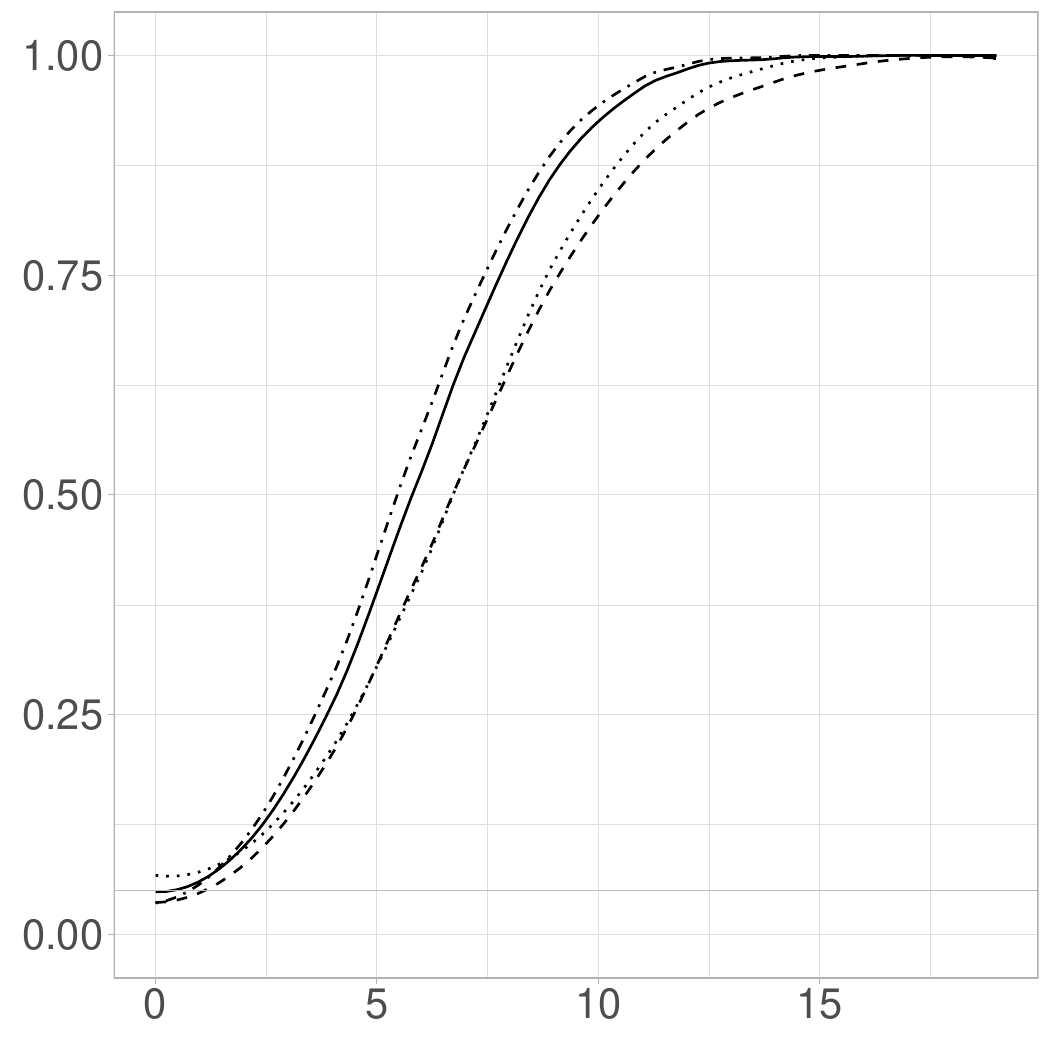}
		\end{center}
		\caption
        {\it Simulated rejection probabilities of the test \eqref{eq_rejection_region} (solid line, LDD(1)), the test of \eqref{eq_generalized_rejection_region} (dot dashed line, LDD(3)), the test of \cite{zhang2022asymptotic} (dashed line, ZZPZ) and the test of \cite{li_and_chen_2012} (dotted line, LC)  in model \eqref{eq_model_5}. Left panels:   $(p,n)=(500, 100)$. Right panels: $(p,n)=(1000, 100)$. First row: $\bfz_{1,1}^{(i)} \sim \mathcal{N}(0,1)$, second row: $\bfz_{1,1}^{(i)}  \sim t_{7}/\sqrt{7/5}$, third row: $\bfz_{1,1}^{(i)}  \sim \Laplace(0, 1/\sqrt{2})$.}\label{fig_empirical_rej_model_5}
	\end{figure}
\end{document}